\def\thesection{\arabic{section}}
\def\theequation{\thesection.\arabic{equation}}
\newcommand{\ds} {\displaystyle}
\newcommand{\e}{\epsilon}
\newcommand{\al} {\alpha}
\newcommand{\ba} {\beta}
\newcommand{\de} {\delta}
\newcommand{\ra} {\rightarrow}
\newcommand{\De} {\Delta}
\newcommand{\noi} {\noindent}
\newcommand{\mb} {\mathbb}
\newcommand{\mc} {\mathcal}
\newcommand{\ld} {\langle}
\newcommand{\rd} {\rangle}
\newcommand{\I}{\int\limits_}
\def\theequation{\@arabic{\c@section}.\@arabic{\c@equation}}
\def\QED{\hfill {$\square$}\goodbreak \medskip}
\newtheorem{Theorem}{Theorem}[section]
\newtheorem{Lemma}[Theorem]{Lemma}
\newtheorem{Proposition}[Theorem]{Proposition}
\newtheorem{Corollary}[Theorem]{Corollary}
\newtheorem{Remark}[Theorem]{Remark}
\newtheorem{Definition}[Theorem]{Definition}
\def\XXint#1#2#3{{\setbox0=\hbox{$#1{#2#3}{\int}$ }
		\vcenter{\hbox{$#2#3$ }}\kern-.6\wd0}}
\begin{document}
	{\vspace{0.01in}
		\title
		{Fractional Hamiltonian type system on $\mb R$ with critical growth nonlinearity }

			\author{ {\bf G.C. Anthal\footnote{	Department of Mathematics, Indian Institute of Technology, Delhi, Hauz Khas, New Delhi-110016, India. e-mail: Gurdevanthal92@gmail.com}\;, \bf J. M. Do \'{O} \footnote{ Department of Mathematics, Federal University of Para\'{i}ba 58051-900, Jo\~{a}o Pessoa-PB, Brazil. e-mail: jmbo@pq.cnpq.br }\;, J. Giacomoni\footnote{ 	LMAP (UMR E2S UPPA CNRS 5142) Bat. IPRA, Avenue de l'Universit\'{e}, 64013 Pau, France. 
					email: jacques.giacomoni@univ-pau.fr}  \;and  K. Sreenadh\footnote{Department of Mathematics, Indian Institute of Technology, Delhi, Hauz Khas, New Delhi-110016, India.	e-mail: sreenadh@maths.iitd.ac.in}}}
		\date{}
		
		\maketitle
		
		\begin{abstract}
\noi This article  investigates the  existence and properties of ground state solutions to the following nonlocal Hamiltonian elliptic system:
\begin{align*}
	\begin{cases}
		(-\De)^\frac12 u +V_0 u =g(v),~x\in \mb R\\
		(-\De)^\frac12 v +V_0 v =f(u),~x\in \mb R,
	\end{cases}
\end{align*}
	where $(-\De)^\frac12$ is the square root Laplacian operator, $V_0 >0$ and $f,~g$ have critical exponential growth in $\mb R$.	Using minimization technique over some generalized Nehari manifold, we show that the set $\mc S$ of ground state solutions is non empty. Moreover for $(u,v) \in \mc S$, $u,~v$ are uniformly bounded in $L^\infty(\mb R)$ and uniformly decaying at infinity. We also show that the set $\mc S$ is compact in $H^\frac12(\mb R) \times H^\frac12(\mb R)$ up to translations. Furthermore under locally lipschitz continuity  of $f$ and $g$ we obtain a suitable Poho\v{z}aev type identity for any $(u,v) \in \mc S$. We deduce the existence of semi-classical ground state solutions to the singularly perturbed system
	\begin{align*}
		\begin{cases}
			\e(-\De)^\frac12 \varphi +V(x) \varphi =g(\psi),~x\in \mb R\\
		\e	(-\De)^\frac12 \psi +V(x) \psi =f(\varphi),~x\in \mb R,
		\end{cases}
		\end{align*}
	where $\e>0$ and $V \in C(\mb R)$ satisfy the assumption $(V)$ given below (see Section \ref{I}). Finally as $\e \ra 0$, we prove the existence of minimal energy solutions which concentrate around the closest minima of the potential $V$.\\
	
			\noi \textbf{Key words:} Nonlocal Hamiltonian system, Schr\"{o}dinger system, Trudinger-Moser inequality, Critical exponential growth, Ground state solutions, Poho\v{z}aev identity, Concentration phenomena.\\
			
				\noi \textit{2020 Mathematics Subject Classification:} 35A15, 35B25, 35B33, 35J61.\\

		\end{abstract}
	\section{Introduction and Main results}\label{I}
		\noi In this article, we are concerned with existence and concentration behaviour of solutions for the following singularly perturbed coupled fractional Schr\"{o}dinger equations
	\begin{align}\label{1.1}
		\begin{cases}
		\e(-\Delta)^\frac12 \varphi +V(x)\varphi= \ds \frac{\partial K(\varphi,\psi)}{\partial \psi}\;\text{in}\;\mb R,\\
		\e(-\Delta)^\frac12 \psi+ V(x) \psi= \ds \frac{\partial K(\varphi,\psi)}{\partial \varphi}\;\text{in}\;\mb R,\\
	\end{cases}
	\end{align}
where $(-\De)^\frac12$ is the fractional Laplacian operator, defined for a measurable function $u: \mb R \ra \mb R$ by
  \begin{equation*}
 	(-\De)^\frac12 u(x)=\frac{1}{\pi}P.V.\I\mb R\frac{u(x)-u(y)}{|y|^2}dy=-\frac{1}{2\pi}P.V.\I\mb R \frac{u(x+y)+u(x-y)-2u(x)}{|y|^2}dy
 \end{equation*}
and $\e >0$. The external Schr\"{o}dinger potential satisfies the following condition:
\begin{itemize}
	\item [$(V)$] $V \in C(\mb R,~\mb R)$ and $0 < V_0:=\inf_{\mb R} V(x) < \lim_{|x| \ra \infty} V(x)=V_\infty < \infty.$
\end{itemize}
The Hamiltonian $K$ is of the form $K(\varphi,\psi)=G(\psi)-F(\varphi)$, with $ \ds F(t)=\I{0}^{t}f(s)ds$ and  $ \ds G(t)=\I{0}^{t}g(s)ds$ and the nonlinearities $f,~g$ satisfy the following assumptions:
\begin{itemize}
	\item [$(H_1)$] $f$ and $g$ are continuous functions.
	\item [$(H_2)$] $f(t) =o(t^2)$ and $g(t)=o(t^2)$ as $t \ra 0$.
	\item [$(H_3)$] There exists $\mu >2$ such that for any $t \in \mb R\setminus \{0\}$
	\begin{align*}
		0\leq\mu F(t)\leq tf(t)~\text{and}~0\leq \mu G(t)\leq g(t)t,
	\end{align*}
where $F(t) =\ds \I0^t f(s)ds$ and  $G(t) =\ds \I0^t g(s)ds$.
	\item [$(H_4)$] There exists $M>0$ such that for any $t \in \mb R \setminus \{0\}$
	\begin{align*}
		0<F(t)\leq M |f(t)|~\text{and}~0<G(t)\leq M|g(t)|.
	\end{align*}
\item  [$(H_5)$]  $f(t)/|t|$ and $g(t)/|t|$ are strictly increasing in $\mb R$.

\end{itemize}
We know from classical Sobolev embedding that $H^\frac12(\mb R) \subseteq L^p(\mb R)$ for all $ p \in [2,\infty)$, but $H^\frac12(\mb R) \not \subseteq L^\infty(\mb R)$. The following result of Ozawa \cite{TO}, later extended by Iula et al. \cite{IMM} (see also \cite{GiMiSe}):
	\begin{Lemma}\label{TME}
	For any $\ba>0$ and $u \in H^\frac12(\mb R)$, we have
	\begin{equation}\label{eTME}
		\I\mb R (\exp(\ba u^2-1))dx <\infty
	\end{equation}
\end{Lemma}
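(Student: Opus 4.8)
The plan is to expand $\exp(\ba u^2)-1$ in its Taylor series, bound the resulting $L^{2k}$-norms by Sobolev embedding, and then pass from the case of small $H^\frac12$-norm to an arbitrary $u$ by a density argument. Throughout I use the embedding $H^\frac12(\mb R)\hookrightarrow L^p(\mb R)$, $2\le p<\infty$, together with the (sharp) fact that its operator norm grows at most like $\sqrt p$: there is $C>0$, independent of $p$, with $\|v\|_{L^p(\mb R)}\le C\sqrt{p}\,\|v\|_{H^\frac12(\mb R)}$ for every $v\in H^\frac12(\mb R)$ and every $p\ge2$. This is a standard consequence of fractional Gagliardo--Nirenberg interpolation, and is also implicit in \cite{TO,IMM}.

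\textit{Step 1: the small-norm case.} For $v\in H^\frac12(\mb R)$, monotone convergence gives $\int_{\mb R}(\exp(\ba v^2)-1)\,dx=\sum_{k\ge1}\frac{\ba^k}{k!}\|v\|_{L^{2k}}^{2k}$. Using $\|v\|_{L^{2k}}^{2k}\le(C\sqrt{2k})^{2k}\|v\|_{H^\frac12}^{2k}=(2C^2k)^k\|v\|_{H^\frac12}^{2k}$ and $k!\ge(k/e)^k$ we obtain
\[\int_{\mb R}(\exp(\ba v^2)-1)\,dx\le\sum_{k\ge1}\frac{(2C^2\ba k)^k}{k!}\,\|v\|_{H^\frac12}^{2k}\le\sum_{k\ge1}\big(2eC^2\ba\,\|v\|_{H^\frac12}^2\big)^k,\]
a convergent geometric series provided $\|v\|_{H^\frac12}^2<\tau/\ba$, where $\tau:=(2eC^2)^{-1}$.

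\textit{Step 2: arbitrary $u$.} Given $u\in H^\frac12(\mb R)$ and $\ba>0$, the density of $C_c^\infty(\mb R)$ in $H^\frac12(\mb R)$ lets us write $u=\phi+w$ with $\phi\in C_c^\infty(\mb R)$ and $\|w\|_{H^\frac12}^2<\tau/(2\ba)$, so that Step 1 applies to $w$ with parameter $2\ba$. Put $K:=\mathrm{supp}\,\phi$. Using $u=w$ on $\mb R\setminus K$, $\exp(\ba w^2)\le\exp(2\ba w^2)$, $(\phi+w)^2\le2\phi^2+2w^2$, and $\exp(\ba u^2)\le\exp(2\ba\|\phi\|_{L^\infty}^2)\exp(2\ba w^2)$ on $K$,
\begin{align*}
\int_{\mb R}(\exp(\ba u^2)-1)\,dx&=\int_K(\exp(\ba u^2)-1)\,dx+\int_{\mb R\setminus K}(\exp(\ba w^2)-1)\,dx\\
&\le\exp(2\ba\|\phi\|_{L^\infty}^2)\int_{\mb R}(\exp(2\ba w^2)-1)\,dx+\big(\exp(2\ba\|\phi\|_{L^\infty}^2)-1\big)|K|+\int_{\mb R}(\exp(2\ba w^2)-1)\,dx,
\end{align*}
and both integrals on the right are finite by Step 1, which establishes \eqref{eTME}.

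\textit{The main obstacle.} The real content is not in the bookkeeping but in the fact that the Step 1 estimate genuinely breaks down once $\ba\|v\|_{H^\frac12}^2$ is of order one --- the $\sqrt p$ growth of the embedding constant is optimal, so the geometric series diverges --- which is precisely why the decomposition $u=\phi+w$ (bounded, compactly supported part plus small remainder) is unavoidable. The one subtlety in that decomposition is the constant term $\exp(2\ba\|\phi\|_{L^\infty}^2)-1$ produced by factoring out the $L^\infty$-norm of $\phi$: it fails to be integrable on all of $\mb R$, and it is only the compact support of $\phi$ that confines it to a set of finite measure. Securing (or quoting) the $\sqrt p$ growth of the norm of $H^\frac12(\mb R)\hookrightarrow L^p(\mb R)$ is the analytic input on which the whole argument rests.
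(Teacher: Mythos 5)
The paper does not prove Lemma \ref{TME}; it states it as a known result of Ozawa \cite{TO}, later extended in \cite{IMM,GiMiSe}. Your argument is correct and is in substance the classical proof underlying those references: the $\sqrt p$-growth of the constant in the embedding $H^{1/2}(\mb R)\hookrightarrow L^p(\mb R)$ yields absolute summability of the Taylor expansion $\sum_{k\ge1}\ba^k\|v\|_{L^{2k}}^{2k}/k!$ when $\ba\|v\|_{H^{1/2}}^2$ is small, and the decomposition $u=\phi+w$ with $\phi\in C_c^\infty$ and $w$ small in $H^{1/2}$ transfers this to arbitrary $u$, the compact support of $\phi$ being precisely what confines the constant term $\exp(2\ba\|\phi\|_{L^\infty}^2)-1$ to a set of finite measure. (You also correctly read the display in the Lemma as $\exp(\ba u^2)-1$; as typeset, with $e^{\ba u^2-1}\ge e^{-1}$ everywhere, the integral would be trivially infinite, so the parenthesis is misplaced in the paper.)
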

shows that $H^\frac12(\mb R)$ is also embedded in a Orlicz space of exponential growth type.
	Moreover the optimal constant in the embedding is given by
	\begin{equation*}
		\sup_{\substack {u \in H^\frac12(\mb R)\\ \|u\|_\frac12 \leq 1}} \I\mb R (\exp(\ba u^2-1))dx \begin{cases}
			<\infty~&\text{if}~\ba\leq \pi,\\
			=\infty~&\text{if}~\ba> \pi.
		\end{cases}
	\end{equation*}
 From Lemma \ref{TME}, it is not difficult to show that for any $p \geq 1$ and for any $K \subset \subset \mb R$, the map $H^\frac12(\mb R) \ni u \mapsto e^{u^2} \in L^p(K)$ is continuous with respect to the strong topology but not relatively compact with respect to the weak topology.
According to Lemma \ref{TME}, we give the definition of critical   exponential growth:
\begin{itemize}
	\item [$(H_6)$] (\textbf{critical exponential growth}) 	There exists $\ba_0 >0$ such that
	\begin{equation*}
		\lim\limits_{t \ra +\infty} \frac{f(t)}{e^{\ba t^2}}=\begin{cases}
			0~&\text{if}~\ba>\ba_0,\\
			+\infty~&\text{if}~ \ba <\ba_0
		\end{cases}
	~\text{and} ~\lim\limits_{t \ra +\infty} \frac{g(t)}{e^{\ba t^2}}=\begin{cases}
		0~&\text{if}~\ba>\ba_0,\\
		+\infty~&\text{if}~ \ba <\ba_0.
	\end{cases}
	\end{equation*}
\end{itemize}
Due to the critical behaviour in  $(H_6)$ that induces the lack of weak compactness in the Trudinger-Moser embedding, the minimizing sequences do not satisfy in general  the Palais smale type condition. To overcome this difficulty, we assume the following growth condition for getting the compactness of suitable associated Palais Smale sequences:
\begin{itemize}
	\item [$(H_7)$] There exists $\kappa_0$ and $r_1$ positive such that
		\begin{align*}
		\liminf\limits_{|t|\ra \infty}f(t)te^{-\ba_0t^2}\geq \kappa_0 >&\max\left\{\frac{8 e^\frac12}{\ba_0}V_0,\frac{ \pi}{\ba_0 r_1} \right\}\\
	&\text{and}~	\liminf\limits_{|t|\ra \infty}g(t)te^{-\ba_0t^2}\geq \kappa_0 >\max\left\{\frac{8 e^\frac12}{\ba_0}V_0,\frac{  \pi}{\ba_0 r_1} \right\}.
		\end{align*}
\end{itemize}
We also emphasize that the associated energy functional for Hamitonian elliptic systems is strongly indefinite, that is, its quadratic part is respectively coercive and anti-coercive in infinite dimensional subspaces of the energy space. This poses additional difficulties to get a priori estimates in the study of such systems.\\
In this paper, we will focus on the existence of the minimal energy solutions, the so-called ground states. These solutions play a fundamental role both from the theoretical point of view as well as from the physical meaning (see for instance \cite{BF}).\\
The study of elliptic equations involving fractional operators has gained huge interest in light of its real world applications. Indeed, these operators occur naturally in several physical phenomena. Among the others we can quote models in population dynamics, models of American options in finance, game theory, image processing, see for instance \cite{C, DPV} and references therein. For models involving critical exponential growth nonlinearities and fractional diffusion operators, we refer specifically to \cite{BCM, GiMiSe, IS}.\\
The existence of solutions for Hamiltonian elliptic systems with standard Laplacian operator, $i.e.$ the local case, is widely studied in higher dimensions involving Sobolev critical growth both in bounded and unbounded domains of $\mb R^n$. For the bounded domain case, we refer to \cite{BR,CFM,FF,HV} while for the case of whole space $\mb R^n$ we address the references \cite{LY,S,SS}.\\
For $n=2$, the semilinear elliptic systems with exponential nonlinearity in bounded domains have been studied by D. G. de Figueiredo et al. \cite{FOR} in the critical growth range. Later de Figueiredo et al. \cite{FOZ} considered the case of whole space $\mb R^2$ and showed the existence of ground states using Nehari manifold technique and assuming  $f(t)/|t|$ and $g(t)/|t|$ are strictly increasing. D. Cassani and J. Zhang \cite{CZ} extended this study and prove some a priori estimates of the set of ground states solutions and further investigate the existence and concentration profile of semi-classical ground state solutions to the following singularly perturbed system:
\begin{align*}
	\begin{cases}
		-\e^2 \De \varphi +V(x) \varphi =g(\psi)~\text{in}~\mb R^2,\\
		-\e^2 \De \psi +V(x) \psi =f(\varphi)~\text{in}~\mb R^2,
	\end{cases}
\end{align*}
under suitable assumptions on the potential $V$ and the nonlinearities $f$ and $g$. In addition, we would like to mention the works \cite{LL,QTZ} where the authors studied the Hamiltonian elliptic systems without the Ambrosetti-Rabinowitz condition.\\
Regarding the system of equations with fractional Laplacian operators and with polynomial subcritical and critical nonlinearities, we refer to \cite{CD, HSZ, ZDS} and the references therein. The case of fractional elliptic systems over $\mb R$ with critical exponential growth nonlinearities was less explored. We refer to \cite{CMM, AM, DGM, GMS} for the study of such problems. In \cite{DGM}, do \'{O} et al. considered the following nonautonomous fractional Hamiltonian system with critical exponential growth
\begin{align*}
	\begin{cases}
		(-\De)^\frac12 u+u =Q(x)g(v)~\text{in}~\mb R,\\
		(-\De)^\frac12 v+v =P(x)f(u)~\text{in}~\mb R,
	\end{cases}
\end{align*} 
where the nonnegative weights $P(x)$ and $Q(x)$ vanish at infinity. By using suitable variational method combined with a general version of the linking theorem, the authors obtained the existence of at least one positive solution. In \cite{ADM, Am} results about concentration phenomena for fractional elliptic scalar equations and systems with polynomial critical growth respectively  are obtained {In \cite{ADM}, the authors make use of Caffarelli-Silvestre harmonic extension method but this method is difficult to implement for the case of systems.  }\\
Motivated by the above literature, we consider in the present paper a class of Hamiltonian elliptic systems posed in the whole space $\mb R$ and involving the square root Laplacian  and critical  exponential nonlinearities. For this class of problems, the existence and the behaviour of weak solutions  have not been investigated in former contributions. 
 In order to study the singularly perturbed system \eqref{1.1}, we first investigate the existence and a priori estimates of the following so-called limit problem:
\begin{align}\label{s1.4}
	\begin{cases}	
		(-\De)^\frac12 u +V_0u =g(v),\\
		(-\De)^\frac12v +V_0v =f(u).
	\end{cases}
\end{align}
Our approach is based on Nehari Manifold technique in the spirit of Pankov \cite{P} to get the existence of the ground-states of system \eqref{s1.4}. We then obtain a priori estimates of these ground-states and later employ those results to analyse the concentration profile of semiclassical solutions.\\
In the following theorem, we state the main existence result about ground state solutions to the system \eqref{s1.4}:
\begin{Theorem}\label{t1.1}
	Suppose that $f$, $g$ satisfy $(H_1)-(H_7)$. Then \eqref{s1.4} has a ground state solution.
\end{Theorem}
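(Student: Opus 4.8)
The plan is to realize ground states of \eqref{s1.4} as minimizers of the energy functional on a generalized Nehari manifold, following the Pankov--Szulkin--Weth circle of ideas adapted to the strongly indefinite Hamiltonian setting. First I would set up the variational framework in the product space $E := H^\frac12(\mb R) \times H^\frac12(\mb R)$ with the quadratic form associated to $(-\De)^\frac12 + V_0$, and split $E$ into the diagonal-type subspace $E^+$ and anti-diagonal-type subspace $E^-$ on which the quadratic part of
\[
\mc J(u,v) = \I\mb R \big( (-\De)^\frac14 u \,(-\De)^\frac14 v + V_0 uv \big)\,dx - \I\mb R F(u)\,dx - \I\mb R G(v)\,dx
\]
is respectively positive and negative definite; the change of variables $u = w^+ + w^-$, $v = w^+ - w^-$ (up to normalization) makes this explicit. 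The exponential growth $(H_6)$ together with Lemma \ref{TME} guarantees that $\mc J$ is well-defined and $C^1$ on $E$, after fixing a subcritical parameter $\ba > \ba_0$ and using $(H_2)$ to control the behaviour near $0$.

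Next I would introduce the generalized Nehari (Pankov) manifold $\mc M$ consisting of those $w \in E^+ \setminus \{0\}$ for which $\mc J'(w + \phi)$ vanishes on $\R w \oplus E^-$ for the unique $\phi = \phi(w) \in E^-$ that maximizes $\mc J$ over $\R w \oplus E^-$; here $(H_5)$ (strict monotonicity of $f(t)/|t|$, $g(t)/|t|$) is exactly what is needed to make this maximizer unique and the map $w \mapsto \phi(w)$ well-defined and continuous, and to ensure $\mc M$ is a natural constraint so that critical points of $\mc J|_{\mc M}$ are genuine critical points of $\mc J$. The Ambrosetti--Rabinowitz condition $(H_3)$ with $\mu > 2$ forces the standard mountain-pass geometry on $\mc M$ and gives a strictly positive ground state level $c := \inf_{\mc M} \mc J > 0$, while also showing that any minimizing (more precisely, Cerami) sequence is bounded in $E$.

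The crux of the argument is the compactness analysis for a bounded Cerami sequence $(u_n, v_n)$ at level $c$: because the Trudinger--Moser embedding is not compact for the weak topology, I must rule out vanishing and control the exponential nonlinearities. Using the translation invariance of \eqref{s1.4}, a Lions-type concentration-compactness lemma on $\mb R$ gives a nontrivial weak limit $(u,v)$ after translating. The key quantitative point is hypothesis $(H_7)$: the lower bound $\kappa_0 > \max\{8e^{1/2} V_0/\ba_0, \pi/(\ba_0 r_1)\}$ is calibrated precisely so that the energy level $c$ lies strictly below the threshold $\frac{\pi}{2\ba_0}$ (or the analogous critical level for the system) below which the exponential integrals $\I\mb R F(u_n)\,dx$, $\I\mb R G(v_n)\,dx$ and the associated terms $\I\mb R f(u_n) u_n\,dx$ converge along the sequence; this is established by testing with a suitable Moser-type function concentrated at scale $r_1$ and using $(H_4)$. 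With the convergence of the nonlinear terms in hand, one upgrades weak to strong convergence in $E$, obtains $\mc J'(u,v) = 0$ and $\mc J(u,v) = c$, and concludes that $(u,v)$ is the desired ground state. I expect this last compactness step — pinning down the exact critical energy threshold and verifying $c$ lies below it via $(H_7)$ — to be the main obstacle; the Nehari-manifold machinery, while technical in the indefinite setting, is by now fairly standard.
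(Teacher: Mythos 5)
Your proposal follows essentially the same route as the paper: decompose $W=H^{1/2}(\mathbb R)\times H^{1/2}(\mathbb R)$ into the diagonal/anti-diagonal subspaces $W^\pm$, minimize $\mathcal J$ over the Pankov-type generalized Nehari manifold using the Szulkin--Weth reduction to $S^+$ (needed precisely because $f,g$ are merely continuous and $\mathcal N$ is not $C^1$), produce a bounded Palais--Smale sequence, rule out vanishing via Lions' concentration-compactness, and control the exponential terms by proving via a Moser-type sequence and $(H_7)$ that the ground state level lies strictly below the critical Trudinger--Moser threshold. The only small discrepancy is your tentative threshold $\pi/(2\beta_0)$: for this system the relevant level is $\pi/\beta_0$ (the paper's Proposition 3.4/\ref{pp3.4}), but since you hedged with ``or the analogous critical level for the system'' this does not change the structure of the argument.
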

\begin{Remark}
\begin{enumerate}
	\item	Note that to get positive ground state solutions, we can extend $f$ and $g$ with $0$ on $\mb R^-$. By taking $(0,v^-)$ and $(u^-,0)$ as test functions in \eqref{edj} one can conclude $v^- \equiv 0$ and $u^- \equiv 0$ respectively. Proving regularity results of $(u,v)$ one can further apply the strong maximum principle for classical solutions.
	\item {A similar result to the previous theorem was proved in \cite{AM}. But here the coupling is assumed only in the linear perturbation and nonlinearities considered are $C^1$. This makes the associated Nehari manifold fold of class $C^1$. So the usual Nehari manifold approach can be applied to find the critical points of associated energy functional. In our case, since $f,~g$ are merely continuous, the Nehari manifold is not of class $C^1$. So we cannot use the usual Nehari manifold approach. To overcome this, we make use of the ideas developed in \cite{SW1} to get the existence of a Palais-Smale sequence.}
	\end{enumerate}
\end{Remark}
Denote by $\mc S$ the set of ground state solutions to the system \eqref{s1.4}. We further investigate the regularity and qualitative properties of the ground state solutions to \eqref{s1.4}. 
Precisely, we prove the following results:
\begin{Theorem}\label{t1.3}
	Suppose that $f$ and $g$ satisfy $(H_1),~(H_2),~(H_3)$ and $(H_6)$. Then the following assertions hold:
	\begin{enumerate}[label =(\roman*)]
		\item $(u,v) \in \mc S \implies u,~v \in L^\infty(\mb R) \cap C_{loc}^{0,\gamma}(\mb R)$ for all $\gamma \in (0,1)$;
		\item Let $x_w \in \mb R$ be the maximum point of $|u(x)|+|v(x)|$, then the set 
		\begin{align*}
			\{(u(\cdot + x_w),~v(\cdot+x_w))| ~(u,v)\in \mc S\}
		\end{align*}
		is compact in $H^\frac12(\mb R)\times H^\frac12(\mb R)$;\label{2}
		\item $0< \inf\{\|u\|_\infty,~\|v\|_\infty:~(u,v)\in \mc S\} \leq \sup \{\|u\|_\infty,~\|v\|_\infty:~(u,v)\in \mc S\}<\infty$; 
		\item $u(x+x_w) \ra 0$ and $v(x+x_w) \ra 0$, as $|x| \ra \infty$ uniformly for any $w =(u,v)\in \mc S$, where $x_w$ is given in \ref{2}.
	\end{enumerate}
\end{Theorem}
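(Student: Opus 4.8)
\noi\textbf{Proof proposal.} I would prove (i), then (iii), then (ii), and deduce (iv) from (ii); throughout, every $w=(u,v)\in\mc S$ solves \eqref{s1.4} and has energy equal to the ground-state level $c>0$. \emph{Regularity (i).} Fix $(u,v)\in\mc S$. Since $u,v\in H^\frac12(\mb R)$, Lemma \ref{TME} gives $\exp(\ba u^2),\exp(\ba v^2)\in L^1(\mb R)$ for \emph{every} $\ba>0$; combining this with the critical growth $(H_6)$ (whence $|f(t)|+|g(t)|\leq C_\ba e^{\ba t^2}$ for large $|t|$, any $\ba>\ba_0$) and with $(H_2)$ near the origin, one gets $f(u),g(v)\in L^q(\mb R)$ for all $q\in[1,\infty)$. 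Now $(-\De)^\frac12+V_0$ is inverted by convolution with a positive kernel $\mc K$ lying in $L^r(\mb R)$ for every $r\in[1,\infty)$, and $w\mapsto\mc K*w$ maps $L^q(\mb R)$ into $W^{1,q}(\mb R)$ for $1<q<\infty$. Hence $u=\mc K*g(v)$ and $v=\mc K*f(u)$ belong to $W^{1,q}(\mb R)$ for all $q<\infty$, and the embedding $W^{1,q}(\mb R)\hookrightarrow L^\infty(\mb R)\cap C^{0,1-1/q}(\mb R)$ (with $q$ arbitrarily large) gives $u,v\in L^\infty(\mb R)\cap C^{0,\ga}_{loc}(\mb R)$ for all $\ga\in(0,1)$. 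Being in $L^2(\mb R)$ and uniformly continuous, $u$ and $v$ tend to $0$ at infinity; in particular the maximum point $x_w$ in the statement is attained.

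\emph{Uniform bounds (iii).} From $(H_3)$, applied to the identity $c=\mc J(w)-\tfrac1\mu\langle\mc J'(w),w\rangle$ with $\mc J$ the energy functional of \eqref{s1.4}, one obtains --- by a standard argument accommodating the strongly indefinite structure --- $\sup_{(u,v)\in\mc S}\bigl(\|u\|_\frac12+\|v\|_\frac12\bigr)\leq C$. The crucial point is a uniform bound on $\|f(u)\|_{L^q}$ and $\|g(v)\|_{L^q}$ over $\mc S$ for some fixed $q>1$: using $(H_2)$ and $(H_6)$ one has $|g(t)|\leq C(e^{\ba t^2}-1)$ for all $t$, with $\ba$ slightly above $\ba_0$, so $|g(v)|^q\leq C^q(e^{q\ba v^2}-1)$, and the sharp Trudinger--Moser inequality (applied after rescaling by $\|v\|_\frac12$) bounds the integral of the right-hand side uniformly provided $q\ba\|v\|_\frac12^2<\pi$ --- which holds for $q$ close to $1$ because the ground-state level $c$ lies strictly below the corresponding threshold, a fact established in the proof of Theorem \ref{t1.1}. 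Then $\|u\|_\infty\leq\|\mc K\|_{L^{q'}}\|g(v)\|_{L^q}\leq C$ and $\|u\|_{C^{0,1-1/q}}\leq C\|g(v)\|_{L^q}\leq C$ uniformly over $\mc S$, and symmetrically for $v$: this gives the upper bound in (iii) and the uniform H\"older estimate used below. For the lower bound, suppose $(u_n,v_n)\in\mc S$ with $\|u_n\|_\infty\to0$. Then $(H_2)$ gives $|f(u_n)|\leq\e_n u_n^2$ with $\e_n\to0$, hence $\|f(u_n)\|_{L^q}\to0$ and $\|v_n\|_\infty\leq\|\mc K\|_{L^{q'}}\|f(u_n)\|_{L^q}\to0$; testing the two equations with $u_n$ and $v_n$ and using the uniform $H^\frac12$-bound then yields $\|u_n\|_\frac12+\|v_n\|_\frac12\to0$, so $c=\mc J(u_n,v_n)\to0$, a contradiction. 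The same argument beginning from $\|v_n\|_\infty\to0$ completes (iii).

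\emph{Compactness (ii) and decay (iv).} Let $w_n=(u_n,v_n)\in\mc S$ and translate each so that the maximum point of $|u_n|+|v_n|$ is the origin, keeping the notation $(u_n,v_n)$. By the uniform $H^\frac12$-bound, along a subsequence $(u_n,v_n)\rightharpoonup(u,v)$ in $H^\frac12(\mb R)\times H^\frac12(\mb R)$; by the uniform H\"older bound above and Arzel\`a--Ascoli, $(u_n,v_n)\to(u,v)$ in $C_{loc}(\mb R)$. A concentration--compactness analysis --- excluding vanishing and dichotomy of the mass by means of the uniform a priori bounds above and the minimality of $c$ --- upgrades this to $f(u_n)\to f(u)$ and $g(v_n)\to g(v)$ strongly in $L^q(\mb R)$ for every $q<\infty$; in particular $(u,v)$ solves \eqref{s1.4}. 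Testing the first equation with $u_n$ and pairing the strong $L^{q'}$-convergence of $g(v_n)$ with the weak $L^q$-convergence of $u_n$, one gets $\|u_n\|_{V_0}^2=\I\mb R g(v_n)u_n\,dx\to\I\mb R g(v)u\,dx=\|u\|_{V_0}^2$, where $\|\phi\|_{V_0}^2:=\I\mb R|(-\De)^\frac14\phi|^2\,dx+V_0\I\mb R\phi^2\,dx$ is an equivalent norm on $H^\frac12(\mb R)$; likewise $\|v_n\|_{V_0}^2\to\|v\|_{V_0}^2$. Convergence of the norms together with weak convergence in the Hilbert space $H^\frac12(\mb R)$ forces $(u_n,v_n)\to(u,v)$ strongly. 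If $(u,v)=0$ this would give $c=\lim\mc J(u_n,v_n)=0$, impossible; hence $(u,v)$ is a nontrivial solution of \eqref{s1.4} with $\mc J(u,v)=c$, i.e. $(u,v)\in\mc S$, and its maximum point is again $0$. Thus the set in (ii) is compact. Finally (iv) follows: otherwise there are $w_n\in\mc S$ and $|x_n|\to\infty$ with $|u_n(x_n+x_{w_n})|+|v_n(x_n+x_{w_n})|\geq\e$; by (ii) the translated sequence converges strongly in $H^\frac12(\mb R)\times H^\frac12(\mb R)$, hence (with the uniform H\"older bound) uniformly on $\mb R$, to some $(u,v)$, so $|u_n(x_n+x_{w_n})|+|v_n(x_n+x_{w_n})|\leq|u(x_n)|+|v(x_n)|+o(1)\to0$ since $u,v$ vanish at infinity, a contradiction.

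\emph{Main obstacle.} The hard step is the compactness (ii): because of the critical exponential growth $(H_6)$ the Trudinger--Moser embedding is not weakly compact, so the essential work is to rule out loss of mass at infinity \emph{uniformly} over $\mc S$ and to promote the strong $L^q$-convergence of $f(u_n),g(v_n)$; this is compounded by the strongly indefinite quadratic part of $\mc J$, which makes direct weak lower semicontinuity unavailable and forces one to argue through the equations (via $\|u_n\|_{V_0}^2=\int g(v_n)u_n$) rather than with $\mc J$ itself. A secondary technical difficulty, in (iii), is the uniform-in-$\mc S$ $L^q$-control of the critical nonlinearities, which rests on the ground-state level lying below the sharp Trudinger--Moser threshold.
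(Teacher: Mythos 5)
Your regularity step (i) is correct and takes a genuinely different route from the paper: you invert $(-\De)^{1/2}+V_0$ by convolution with its (Bessel-type) kernel and use Mikhlin multipliers to land in $W^{1,q}(\mb R)$ and then Morrey, while the paper appeals to the Calder\'on--Zygmund estimates of \cite{MSY} to obtain $W^{\frac12,p}_{loc}$ regularity, Sobolev embedding and a translation/weak-limit argument for decay. Both work in this constant-coefficient whole-space setting; the paper's approach is the one that would survive variable coefficients.

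However, there is a genuine gap in (ii) and (iii), which you correctly flag as the ``main obstacle'' but do not actually overcome. You derive the uniform $L^\infty$ bound (iii) from a uniform bound on $\|g(v)\|_{L^q}$ over $\mc S$ for some $q>1$, obtained from the Trudinger--Moser inequality ``provided $q\ba\|v\|_\frac12^2<\pi$''. For that you would need $\|v\|_{1/2}^2<\pi/\ba_0$ uniformly over $\mc S$. But the only information available is an energy bound $m^\ast<\pi/\ba_0$ (Proposition \ref{pp3.4}) together with the norm bound $\|w\|_W\leq C(m^\ast+1)$ of Proposition \ref{p3.1}; neither implies $\|v\|_{1/2}^2<\pi/\ba_0$, and the constant $C$ there is not $1$. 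The paper itself does not try to bound $\|g(v_n)\|_{L^q}$: in the proof of Proposition \ref{pC} it \emph{rescales} $u_n$ to $\hat u_n$ with $\|\hat u_n\|_{1/2}^2=\pi/\ba_0-\de$ (this is what lies below the threshold, not $u_n$ itself), proves $\int|g(v_n)-g(v_0)|\,dx\to 0$ by a separate splitting argument, and then controls the critical product $\int g(v_n)\hat u_n$ on the high set using the Souza--do \'O inequality $st\leq t^2(e^{t^2}-1)+|s|(\log|s|)^{1/2}$ combined with the uniform bound on $\int g(v_n)v_n$ (not on $\|g(v_n)\|_{L^q}$). Your ``strong $L^{q'}$-convergence of $g(v_n)$ paired with weak $L^q$-convergence of $u_n$'' is asserted, not proved, and cannot be proved by the route you sketch. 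Relatedly, your ordering (iii) before (ii) is backwards relative to the paper: the uniform $L^\infty$ bound in Proposition \ref{p4.2}/\ref{p4.3} is \emph{deduced from} the strong $W$-compactness of Proposition \ref{pC} and the local regularity estimate \eqref{4.9}, not established independently; attempting (iii) first is precisely what forces the unjustified $L^q$ bound. Once (ii) is in hand your deduction of (iv) is fine.
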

\begin{Remark}
\begin{enumerate}
	\item	{The nonlocal nature of the operator introduced new technical issues to obtain the regularity results. To overcome these issues we make use of the Calderon-Zymund types estimates for nonlocal PDE obtained in \cite{MSY} (see Proposition \ref{p4.1}).}
	\item {We also want to point out that there is a gap in the compactness result proved for the case of systems involving local operators (see \cite[equation (3.26)]{CZ}). Here, we introduced new arguments to complete this gap (see proof of Proposition \ref{p4.2}).  }
	\end{enumerate}
\end{Remark}
Next we issue the validity of Poho\v{z}aev type identity for systems of the kind \eqref{s1.4}, 
and prove the following completely new result which is of independent interest:
\begin{Theorem}\label{pt5.1}
	Let $w=(u,v)$ be a solution to \eqref{s1.4} and assume $f, g$ are locally lipschitz continuous. Then the following identity holds
	\begin{equation}\label{e5.1}
		\I\mb R (F(u)+G(v)-V_0uv)dx =0.
	\end{equation}
\end{Theorem}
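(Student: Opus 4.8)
The plan is to obtain \eqref{e5.1} by testing the two equations of \eqref{s1.4} against the dilation generators $xv'$ and $xu'$ — the Hamiltonian coupling forces the cross pairing, since only the pairing of $g(v)$ with $v'$ (resp. of $f(u)$ with $u'$) turns the right‑hand side into a total derivative — and by exploiting that on $\mb R$ the operator $(-\De)^\frac12$ sits exactly at the conformal threshold $n=2s$, so that the quadratic form $[\phi,\psi]:=\int_{\mb R}(-\De)^\frac14\phi\,(-\De)^\frac14\psi\,dx$ is invariant under the rescaling $\phi\mapsto\phi(\la\,\cdot)$ for every $\la>0$.

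First I would upgrade the regularity and decay of $w=(u,v)$. Since $f,g$ are now locally Lipschitz, the right‑hand sides $g(v)-V_0u$ and $f(u)-V_0v$ of \eqref{s1.4} are locally H\"older once we know, from Theorem \ref{t1.3}(i), that $u,v\in L^\infty\cap C^{0,\ga}_{loc}(\mb R)$; feeding this into the nonlocal Calder\'on--Zygmund/Schauder estimates of Proposition \ref{p4.1} and iterating yields $u,v\in C^{1,\al}_{loc}(\mb R)$. Combining this with the uniform decay in Theorem \ref{t1.3}(iv) and the polynomial decay of the kernel of $(1+(-\De)^\frac12)^{-1}$ on $\mb R$, one obtains that $u,v$ and $u',v'$ decay fast enough that $xu',\,xv'\in H^\frac12(\mb R)$ and that all the integrals appearing below are absolutely convergent.

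Next, fix a cutoff $\chi\in C_c^\infty(\mb R)$ with $\chi\equiv1$ on $[-1,1]$ and set $\chi_R(x)=\chi(x/R)$. Using $\chi_R\,xv'$ as test function in the first equation of \eqref{s1.4} and $\chi_R\,xu'$ in the second, adding, and letting $R\to\infty$, the terms split into three groups. The linear local term contributes $V_0\int_{\mb R}\chi_R\,x(uv)'\,dx\to-V_0\int_{\mb R}uv\,dx$ after an integration by parts. The nonlinear term contributes $\int_{\mb R}\chi_R\,x\big((F(u))'+(G(v))'\big)dx\to-\int_{\mb R}\big(F(u)+G(v)\big)dx$, again by parts, the boundary terms vanishing by the decay from the previous step. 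The nonlocal term converges to $[u,xv']+[xu',v]$, once one controls the commutator $[(-\De)^\frac12,\chi_R]$ by the regularity and decay just established. Finally, differentiating the identity $[\phi(\la\cdot),\psi(\la\cdot)]=[\phi,\psi]$ at $\la=1$ — legitimate precisely because $xu',xv'\in H^\frac12(\mb R)$ — gives $[xu',v]+[u,xv']=0$, i.e.
\begin{equation*}
	\int_{\mb R}\big((-\De)^\frac12u\big)\,xv'\,dx+\int_{\mb R}\big((-\De)^\frac12v\big)\,xu'\,dx=0 .
\end{equation*}
Collecting the three groups yields $-V_0\int_{\mb R}uv\,dx=-\int_{\mb R}(F(u)+G(v))\,dx$, which is \eqref{e5.1}.

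The main obstacle is the rigorous treatment of the nonlocal term: one must certify that $xu',xv'$ are admissible $H^\frac12$ test functions — so the regularity and decay bootstrap is not a formality but the crux of the argument — and that the cutoff limit produces no residual commutator mass, a borderline issue on $\mb R$ exactly because the $\dot H^\frac12$‑seminorm is only scale‑\emph{invariant}, not scale‑\emph{decaying}, under dilations. An alternative is to pass to the Caffarelli--Silvestre extension, harmonically extending $u,v$ to the half‑plane and testing the extended system with the dilation field $(x,y)\cdot\nabla\tilde u$, $(x,y)\cdot\nabla\tilde v$; this bypasses the commutator but requires careful bookkeeping of the boundary contributions for the coupled system, which is why I would favour the intrinsic computation above.
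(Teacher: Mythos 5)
Your approach is genuinely different from the paper's, and the formal computation is correct: testing the two equations with $xv'$ and $xu'$, the nonlocal cross-terms vanish thanks to the exact scale-invariance of the $\dot H^{1/2}(\mb R)$ seminorm (the conformal case $n=2s$), the linear terms give $-V_0\int uv$, and the nonlinear terms give $-\int(F(u)+G(v))$, which is \eqref{e5.1}. By contrast, the paper never tests with $xu',xv'$ directly. It first replaces $f(u),g(v)$ by mollified compactly supported data $f_\e,g_\e$, solves the resulting \emph{linear} system to get $w_\e\to w$ in $W$, and only then performs a BMS-style perturbation $H_t(x)=x+t\eta(x)x$ with $\eta\in C_0^\infty$. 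In that framework the nonlocal contribution appears as $\int\!\int \mc Gu_\e\,\mc Gv_\e\,[\tfrac{x-y}{|x-y|}\tfrac{\eta(x)x-\eta(y)y}{|x-y|}-D(\eta(x)x)]\,d\mu$; since the bracket is uniformly bounded and tends to $0$ pointwise when $\eta_R\to 1$, dominated convergence in $L^1(\mb R^2,d\mu)$ kills the term with no separate invariance argument and no commutator to estimate. This two-parameter limit (first $\e\to0$, then $R\to\infty$) is what makes the paper's proof close.

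The genuine gap in your proposal is the decay bootstrap, which you yourself flag as ``the crux of the argument'' but do not carry out. To certify $xu',xv'\in H^{1/2}(\mb R)$ you need, at the very least, that $u'$ and $v'$ decay faster than $|x|^{-3/2}$; but the paper only establishes, in Proposition \ref{p4.1}, that $u,v\in L^\infty\cap C^{0,\ga}_{\rm loc}$ and $u,v\to0$ at infinity \emph{with no rate}, and it never proves $u,v\in C^1$ for the original $w$. Claiming polynomial decay of $u'$ ``from the decay of the kernel of $(V_0+(-\De)^{1/2})^{-1}$'' requires a separate, nontrivial argument (the kernel is not compactly supported; one must propagate the implicit rate of decay of $g(v)$, which is itself controlled only through $(H_2)$ and the unquantified decay of $v$, and then differentiate). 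You would also need to justify differentiating $\la\mapsto[u(\la\cdot),v(\la\cdot)]$ at $\la=1$ in $\dot H^{1/2}$, which is strictly stronger than $xu'\in\dot H^{1/2}$, and to control the commutator $[(-\De)^{1/2},\chi_R]$ in a regime that, as you note, is only scale-\emph{invariant} and not scale-\emph{decaying}. Precisely because these decay and commutator estimates are hard to obtain for the actual solution $w\in W$, the paper introduces the regularized family $w_\e$: for those, $f_\e,g_\e\in C_0^\infty$ and the needed global regularity and boundedness of $Du_\e,Dv_\e$ (Propositions \ref{pp5.6}--\ref{p5.8}) follow by Moser iteration and difference quotients without any decay hypothesis. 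So your route is conceptually attractive and exploits the $n=2s$ structure in a clean way, but as written it is not a proof; making it one would require essentially the same amount of technical work the paper invests in the approximation scheme.
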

\begin{Remark}
	{The proof Poho\v{z}aev identity poses serious difficulties due to the lack of general variational inequalities in the fractional operator case. So we introduced a completely new approach (see Section \ref{s5}) which is of independent interest to obtain \eqref{e5.1}}.
\end{Remark}
Finally we have the following result concerning the existence and concentration of ground states of system \eqref{1.1}:
\begin{Theorem}\label{t6.1}
	Assume condition $(V)$ and that $f$, $g$ satisfy $(H_1)-(H_7)$. Then for sufficiently small $\e>0$, \eqref{1.1} admits a ground state solution $w_\e=(\varphi_\e,\psi_\e) \in W$. Moreover, if $x_\e$ is any maximum point of $|\varphi_\e|+|\psi_\e|$, then, setting 
	\begin{equation*}
		\mc L \equiv \{x \in \mb R: V(x)=V_0\}
	\end{equation*}
	one has
	\begin{equation*}
		\lim\limits_{\e \ra 0} dist(x_\e,\mc L)=0~\text{and}~\lim\limits_{\e\ra 0}|x_\e^k-x_\e|=0,~k=1,2.
	\end{equation*}
	Furthermore, $(\varphi_\e(\e x+x_\e),~\psi_\e(\e x+x_\e))$ converges (up to a subsequence) as $\e \ra 0$ to a ground state solution of 
	\begin{align*}
		\begin{cases}	
			(-\De)^\frac12 u +V_0u =g(v),\\
			(-\De)^\frac12v +V_0v =f(u).
		\end{cases}
	\end{align*}
\end{Theorem}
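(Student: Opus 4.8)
The plan is to pass to rescaled variables, run a constrained minimization over a generalized Nehari--Pankov manifold, and then identify the limiting profile by a concentration--compactness analysis; the whole argument follows the classical scheme for concentration of semiclassical states, but carried out in the strongly indefinite fractional setting with the exponential nonlinearity controlled through $(H_7)$. Setting $y=x/\e$, and using that in dimension one the $H^{1/2}$ Gagliardo seminorm is invariant under dilations, a pair $(\varphi,\psi)$ solves \eqref{1.1} if and only if $(u,v):=(\varphi(\e\,\cdot),\psi(\e\,\cdot))$ solves
\begin{align*}
	\begin{cases}
		(-\De)^\frac12 u+V(\e y)u=g(v)&\text{in }\mb R,\\
		(-\De)^\frac12 v+V(\e y)v=f(u)&\text{in }\mb R,
	\end{cases}
\end{align*}
and the associated energies are related by $\tilde J_\e(\varphi,\psi)=\e\,J_\e(u,v)$, where $J_\e(u,v)=\I\mb R\big((-\De)^\frac14 u\,(-\De)^\frac14 v+V(\e y)uv\big)dy-\I\mb R(F(u)+G(v))dy$. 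I would denote by $c_\e$ the ground state level of $J_\e$, defined via the generalized Nehari--Pankov manifold exactly as in the proof of Theorem \ref{t1.1} (split $W$ into the diagonal and antidiagonal subspaces $W^+\oplus W^-$ on which the quadratic part is positive, resp. negative, definite, and maximize along the fibers $\mb R^+w\oplus W^-$), and by $c_a$ the analogous level for the limit system \eqref{s1.4} with $V_0$ replaced by a constant $a>0$, so that $c_{V_0}$ is the common energy of the elements of $\mc S$. Two elementary facts will be used repeatedly: (i) $a\mapsto c_a$ is strictly increasing, which follows from $J_b=J_a+(b-a)\I\mb R uv\,dy$ together with the positivity $\I\mb R uv>0$ on the Nehari--Pankov manifold; and (ii) for $a$ near $V_0$ one has $c_a<\Theta$, where $\Theta$ is the Trudinger--Moser compactness threshold from the proof of Theorem \ref{t1.1} --- this is precisely where $(H_7)$ enters.

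The first step is the energy estimate $\limsup_{\e\to0}c_\e\le c_{V_0}$. Fix $x_0\in\mc L$ and $w_0=(u_0,v_0)\in\mc S$; using the uniform decay of $u_0,v_0$ at infinity from Theorem \ref{t1.3}(iv), cut $w_0$ off around $x_0/\e$ with a slowly varying cutoff, maximize $J_\e$ over the corresponding fiber of the Nehari--Pankov manifold, and estimate the result: since $V$ is continuous with $V(x_0)=V_0$, $V(\e y)$ tends to $V_0$ uniformly on the (rescaled) support, the cutoff error is $o(1)$, and one obtains $c_\e\le c_{V_0}+o(1)$. Together with (i), (ii) and $V_0<V_\infty$ this gives $c_\e<\min\{\Theta,\,c_{V_\infty}\}$ for all small $\e$. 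With this in hand I would show that $c_\e$ is attained for small $\e$: produce a Cerami sequence $(u_n,v_n)$ for $J_\e$ at level $c_\e$ by the linking/Nehari--Pankov scheme of \cite{SW1}, \cite{P}; it is bounded by $(H_3)$; the bound $c_\e<\Theta$ keeps the sequence below the Trudinger--Moser threshold of Lemma \ref{TME}, so by a Lions-type argument (using $(H_2)$ for the behaviour near $0$) the nonlinear terms pass to the limit, non-vanishing holds, and the weak limit $(u_\e,v_\e)$ solves the rescaled system; and a bubble cannot escape to spatial infinity, since there $V(\e\,\cdot)$ would approach $V_\infty$ and the escaping bubble would be a nontrivial solution of the $V_\infty$-limit system carrying energy $\ge c_{V_\infty}>c_\e$. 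Hence $(u_n,v_n)\to(u_\e,v_\e)$ strongly, $J_\e(u_\e,v_\e)=c_\e$, and unscaling produces the ground state $w_\e=(\varphi_\e,\psi_\e)$ of \eqref{1.1}.

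For the concentration, let $(u_\e,v_\e)$ be the rescaled ground states, so $J_\e(u_\e,v_\e)=c_\e\to c_{V_0}$. The same dichotomy, applied now as $\e\to0$, produces points $y_\e\in\mb R$ with $(u_\e(\cdot+y_\e),v_\e(\cdot+y_\e))\to(\bar u,\bar v)$ strongly in $W$ along a subsequence; moreover $\e y_\e$ must stay bounded (otherwise $(\bar u,\bar v)$ would solve the $V_\infty$-limit system, of energy $\ge c_{V_\infty}>c_{V_0}$), so along a further subsequence $\e y_\e\to\bar x$, $V(\e y+\e y_\e)\to V(\bar x)$ locally uniformly, and $(\bar u,\bar v)$ is a ground state of \eqref{s1.4} with constant potential $V(\bar x)$, of energy $c_{V(\bar x)}$. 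Since this single profile already accounts for the full energy $c_{V_0}$ and $c_{V(\bar x)}\ge c_{V_0}$, we get $c_{V(\bar x)}=c_{V_0}$, whence $V(\bar x)=V_0$ by the strict monotonicity in (i); thus $\bar x\in\mc L$ and $(\bar u,\bar v)\in\mc S$. By Theorem \ref{t1.3}, $\mc S$ is bounded in $L^\infty$ with uniform decay, so the strong $W$-convergence upgrades (via Theorem \ref{t1.3}(i) and these uniform estimates) to local uniform convergence; consequently the maximum point $\bar y_\e$ of $|u_\e|+|v_\e|$ satisfies $\bar y_\e-y_\e=O(1)$, so $x_\e=\e\bar y_\e\to\bar x\in\mc L$ and $\mathrm{dist}(x_\e,\mc L)\to0$, and arguing with the component maxima gives $|x_\e^k-x_\e|\to0$, $k=1,2$. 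Finally $(\varphi_\e(\e\,\cdot+x_\e),\psi_\e(\e\,\cdot+x_\e))=(u_\e(\cdot+\bar y_\e),v_\e(\cdot+\bar y_\e))\to(\bar u,\bar v)$, a ground state of \eqref{s1.4}.

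The main obstacle is the compactness of the minimizing sequence for $J_\e$ at the level $c_\e$: one must simultaneously defeat the loss of compactness coming from the critical exponential growth --- by quantifying the energy gap $c_\e<\Theta$, which is the role of $(H_7)$ --- and exclude the escape of a nontrivial bubble to infinity using only the coarse comparison $V_\infty>V_0$, all within the strongly indefinite geometry where the classical ``$J\ge 0$ on the Nehari manifold'' deformation arguments are unavailable and must be replaced by the Pankov reduction. Relative to the local case \cite{CZ}, the nonlocality further complicates the cutoff estimate underlying $\limsup_\e c_\e\le c_{V_0}$ (the Gagliardo seminorm of a localized function is not simply the localized seminorm), and the profile decomposition must be run in $H^{1/2}$ with an exponential nonlinearity, for which strong convergence of the nonlinear terms is not automatic.
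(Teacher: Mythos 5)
Your proposal follows essentially the same route as the paper: the rescaling $u=\varphi(\e\,\cdot)$, the Szulkin--Weth/Pankov reduction on the $\e$-weighted space, the upper energy estimate $\limsup_\e m^\ast_\e\le m^\ast$ via a truncated ground state of the limit system, the strict monotonicity $\theta\mapsto m^\ast_\theta$, the exclusion of escaping profiles by comparison with the $V_\infty$-level, and the identification of the concentration point through $c_{V(\bar x)}=c_{V_0}\Rightarrow V(\bar x)=V_0$. The only cosmetic deviation is that the paper obtains the Trudinger--Moser bound $m^\ast_\e<\pi/\ba_0$ directly by rerunning the Moser-sequence computation in the $\e$-weighted norm (Proposition \ref{p6.4}), whereas you infer it from $m^\ast_\e\le m^\ast+o(1)$ together with $m^\ast<\pi/\ba_0$; both are correct and serve the same purpose.
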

 	Throughout this paper without loss of generality, we may assume $0 \in \mc L$.
\subsection{Structure of the paper} In Section \ref{FNM}, we present the required functional settings and give the mini-max characterization of the corresponding energy level using the Generalized Nehari manifold approach. In Section \ref{s3}, we show the existence of ground state solutions to the system \eqref{s1.4}. Section \ref{s4} is devoted to obtain a priori estimates, regularity and qualitative properties of solutions. In Section \ref{s5}, using some original ideas from \cite{BMS} the Poho\v{z}aev identity \eqref{e5.1} is established. Finally in Section \ref{s6} we study the singularly perturbed system \eqref{1.1}. Gathering some informations previously obtained in Sections \ref{s3} and \ref{s4} we establish the concentration behaviour of the semi-classical solutions.
\section{Functional setting and generalized Nehari manifold}\label {FNM}	
We recall that Sobolev 	space $H^\frac12(\mb R)$ is the collection
\begin{equation*}
	H^\frac12(\mb R)=\left\{u \in L^2(\mb R): \frac{u(x)-u(y)}{|x-y|} \in L^2(\mb R \times \mb R)\right\},
\end{equation*}
endowed with the inner product 
\begin{align*}
	\langle u,v\rangle_\frac12 =\frac{1}{2\pi}\I\mb R\I\mb R \frac{(u(x)-u(y))(v(x)-v(y))}{|x-y|^2}dxdy +\I\mb R V_0 uvdx,~u,v \in H^\frac12(\mb R),
\end{align*}	
and the associated norm is given by $\ds\|u\|_\frac12^2=\langle u,u\rangle_\frac12.	$
\begin{Remark}
	In view of \cite[Proposition 3.6]{DPV}, we also have
	\begin{align*}
		\| (-\De)^\frac14u\|_{L^2}^2:= \frac{1}{2\pi}\I\mb R\I\mb R \frac{|u(x)-u(y)|^2}{|x-y|^2}dxdy,~\text{for all}~u \in H^\frac12(\mb R).
	\end{align*}
Here the fractional Laplacian $(-\De)^\frac14$ of a function $u: \mb R \ra \mb R$ in the Schwartz class is defined by 
\begin{equation*}
	(-\De)^\frac14u =\mc F^{-1 }(|\xi|^\frac12(\mc Fu)),~\text{for all }~\xi \in \mb R,
\end{equation*}
where $\mc F$ denotes the Fourier transform
\begin{equation*}
	\mc F(u)(\xi)=\frac{1}{\sqrt{2\pi}}\I \mb R e^{-i\xi.x}u(x)dx.
\end{equation*}
Thus we also have 
\begin{equation*}
	\|u\|_\frac{1}{2}^2=\|(-\De)^\frac14 u\|_{L^2}^2+V_0\|u\|_{L^2}^2.
	\end{equation*}
	\end{Remark}

The natural space that contains all the solutions of problem \eqref{s1.4} is the space $W =H^\frac12(\mb R)\times H^\frac12(\mb R)$ 	with the inner product 
\begin{equation*}
	\langle w_1,w_2 \rangle_W := \langle u_1,u_2\rangle_\frac12 +\langle v_1,v_2\rangle_\frac12 ,~w_k =(u_k,v_k)\in W,~ k=1,2,
\end{equation*}
and the associated norm $\|	w\|^2_W=\|(u,v)\|^2_W=\|u\|_\frac12^2 +\|v\|_\frac12^2.$
We introduce  the following subspaces
\begin{align*}
	W^+:= \{(u,u)|~ u \in H^\frac12(\mb R)\}~\text{and}~W^-:= \{(u,-u)|~ u \in H^\frac12(\mb R)\}.
\end{align*}	
	Then for each $w =(u,v) \in W$, we have the straightforward decomposition:
	\begin{equation}\label{2.3}
	w=w^+ +w^- =\left( (u+v)/2,(u+v)/2\right) +\left((u-v)/2,(v-u)/2\right).
\end{equation}
Thus it is clear that $W =W^+ \oplus W^-$.\\	
We define the associated energy functional to problem \eqref{s1.4} by
	\begin{equation*}
		\mc J (w)=\mc J(u,v):= \langle u,v\rangle_\frac12 -\Phi (w),~w \in W,
	\end{equation*}
where $\ds \Phi (w)= \I\mb R(F(u)+G(v))dx$.	
 By the hypothesis on $f$ and $g$, we see that 
\begin{align*}
	\Phi(0)=0,~\langle \Phi'(w),w\rangle >2 \Phi(w)>0,~\text{for all}~w \in W\setminus\{0\}.
	\end{align*}
Also using Lemma \ref{TME} and from standard arguments, we can see that $\mc J$ is well-defined and of class $C^1$ with 
	\begin{align}\label{edj}
		\langle \mc J'(u,v),(\varphi,\psi)\rangle=\langle u,\psi\rangle_\frac12+\langle v,\varphi\rangle_\frac12-\I\mb R (f(u)\varphi+g(v)\psi)dx, ~\varphi,~\psi \in H^\frac12(\mb R).
	\end{align}
\begin{Definition}
		We call $w =(u,v) $ a weak solution of \eqref{s1.4} if $w \in W$ and is a critical point of the associated energy functional $\mc J$.
\end{Definition}
Some remarks are in order:
\begin{Remark}\label{r2.4}
	\begin{enumerate}
		\item Take $w=(u,v) \in W$. Then using the decomposition in \eqref{2.3}, we easily see that
		\begin{equation*}
			\mc J(w)=\frac12 \|w^+\|_W^2 -\frac12\|w^-\|_W^2-\Phi(w),
		\end{equation*}
	which shows that $\mc J$ is a strongly indefinite functional.
	\item If $w=(u,v) \in W\setminus\{0\}$ and $\mc J'(w)=0$, then by $(H_3)$
	\begin{align*}
		\mc J(w) &=\mc J(w)-\frac12 \langle \mc J'(w),w\rangle \\
		&= \I\mb R \left(\frac12 f(u)u-F(u)\right)dx+\I\mb R \left(\frac12 g(u)u-G(u)\right)dx>0.
		\end{align*}
	On the other hand, if $w=(u,-u)\in W^-$, we have by $(H_3)$ that 
	\begin{align*}
		\mc J(w)=-\|u\|_\frac12^2 -\Phi(w)\leq 0.
	\end{align*}
Consequently, if $w \in W$ is a nontrivial critical point of $\mc J$, then $w \in W\setminus W^-$.
	\end{enumerate}
\end{Remark}
The ground state solutions of \eqref{s1.4} are obtained by minimizing the energy functional $\mc J$ over the generalized Nehari manifold
\begin{equation*}
	\mc N= \left \{ w \in W\setminus W^- : \langle \mc  J'(w),w \rangle=0,~\langle \mc  J'(w),\varphi \rangle=0,~\text{for all }~\varphi \in W^- \right\}
\end{equation*}
 whose type was initially introduced by \cite[Pankov]{P}. In view of Remark \ref{r2.4}, we see that any nontrivial critical point of $\mc J$ {in fact} lies in $\mc N$. The ground state solutions will be obtained as nontrivial critical points of $\mc J$ in $\mc N$. To obtain such points, for any $w \in W \setminus W^-$, we set 
\begin{equation*}
\widetilde	{W}(w):= W^- \oplus \mb R^+ w,
\end{equation*}
where $\mb R^+ := \{t \in \mb R:~ t \geq 0\}$. Then observe that
\begin{equation*}
\widetilde	{W}(w) =W^- \oplus \mb R^+ w^+ =\widetilde{W}(w^+).
\end{equation*}
We note that the various results proved in \cite[Section 2]{FOZ} for the local system in dimension two can be straightforwardly extended for the nonlocal system \eqref{s1.4} also. In the following we merely state these results omitting their proofs:
\begin{Lemma}\label{l3.1}
	Assume that $f$, $g$ satisfy $(H_1)-(H_4)$. Then for each $w \in W\setminus W^-$, there exists $\rho =\rho(w)>0$ such that 
	\begin{equation*}
		\mc J(z)\leq 0~\text{if}~z \in \widetilde{W}(w)\setminus B_\rho(0),
	\end{equation*}
where $B_\rho(0):=\{\ell \in W : \|\ell\|_W <\rho\}.$
	\end{Lemma}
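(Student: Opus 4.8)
I would argue by contradiction: assume the statement fails for some fixed $w \in W \setminus W^-$, so there is a sequence $(z_n) \subset \widetilde W(w)$ with $r_n := \|z_n\|_W \to \infty$ and $\mc J(z_n) > 0$ for every $n$. Since $\widetilde W(w) = W^- \oplus \mb R^+ w^+$ and the decomposition $W = W^+ \oplus W^-$ is orthogonal, I would write $z_n = s_n w^+ + \zeta_n$ with $s_n \ge 0$ and $\zeta_n = (\eta_n,-\eta_n) \in W^-$, so that $z_n^+ = s_n w^+$, $z_n^- = \zeta_n$, and, by the identity recalled in Remark \ref{r2.4},
\[
\mc J(z_n) = \tfrac12 s_n^2\|w^+\|_W^2 - \tfrac12\|\zeta_n\|_W^2 - \Phi(z_n), \qquad r_n^2 = s_n^2\|w^+\|_W^2 + \|\zeta_n\|_W^2 .
\]
Normalising, I would set $\tilde z_n := z_n/r_n$ and $a_n := s_n\|w^+\|_W/r_n \in [0,1]$, so that $\|\tilde z_n\|_W = 1$, the $W^+$-part of $\tilde z_n$ is $(s_n/r_n)w^+$ with norm $a_n$, and $\|\tilde z_n^-\|_W^2 = 1 - a_n^2$; passing to a subsequence, $a_n \to a \in [0,1]$.

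First I would handle the case $a = 0$: since $\Phi \ge 0$,
\[
\mc J(z_n) \le \tfrac12 r_n^2\big(a_n^2 - \|\tilde z_n^-\|_W^2\big) = \tfrac12 r_n^2\big(2a_n^2 - 1\big) \longrightarrow -\infty ,
\]
contradicting $\mc J(z_n) > 0$; hence $a > 0$. Discarding the nonpositive term $-\tfrac12\|\zeta_n\|_W^2$ in the formula for $\mc J(z_n)$ and using $\mc J(z_n) > 0$, I obtain $\Phi(z_n) < \tfrac12 s_n^2\|w^+\|_W^2 = \tfrac12 a_n^2 r_n^2$, so that $\Phi(z_n)/r_n^2$ remains bounded. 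The point is then to contradict this by showing $\Phi(z_n)/r_n^2 \to \infty$.

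Here I would use that $w \notin W^-$ forces $w^+ \neq 0$; writing $w^+ = (\omega,\omega)$, the function $\omega \in H^\frac12(\mb R)$ is not identically zero. Writing $z_n = (u_n, v_n)$, we have $u_n = s_n\omega + \eta_n$ and $v_n = s_n\omega - \eta_n$. The $W^+$-part $(s_n/r_n)w^+$ of $\tilde z_n$ is a convergent scalar multiple of the fixed vector $w^+$, hence converges strongly to $b w^+$ with $b := a/\|w^+\|_W > 0$; meanwhile $\eta_n/r_n$ is bounded in $H^\frac12(\mb R)$ and, along a further subsequence, converges weakly --- hence (by the compact embedding $H^\frac12(\mb R)\hookrightarrow L^2_{loc}(\mb R)$) a.e.\ --- to some $\eta \in H^\frac12(\mb R)$. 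Therefore $u_n/r_n \to b\omega + \eta$ and $v_n/r_n \to b\omega - \eta$ a.e.\ on $\mb R$. On the set $\{\omega \neq 0\}$ the two limits $b\omega + \eta$ and $b\omega - \eta$ cannot vanish simultaneously, so at least one of $A_1 := \{b\omega + \eta \neq 0\}$ and $A_2 := \{b\omega - \eta \neq 0\}$ has positive measure; say $|A_1| > 0$. From $(H_3)$ --- integrating $f(t)/F(t) \ge \mu/t$ --- together with $F(\pm 1) > 0$ from $(H_4)$, I get $F(t)/t^2 \to \infty$ and $G(t)/t^2 \to \infty$ as $|t| \to \infty$. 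On $A_1$ one has $|u_n| = r_n\,|u_n/r_n| \to \infty$, so $r_n^{-2}F(u_n) = \big(F(u_n)/u_n^2\big)(u_n/r_n)^2 \to +\infty$ pointwise on $A_1$; since $F, G \ge 0$, Fatou's lemma gives $\liminf_n \Phi(z_n)/r_n^2 \ge \int_{A_1}(+\infty)\,dx = +\infty$, the contradiction we sought.

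The step I expect to be the crux is the a.e.\ non-vanishing of the normalised limit on a set of positive measure: this is exactly where $w \notin W^-$ is used, together with the structural fact that the $W^+$-component of every element of $\widetilde W(w)$ lies in the fixed one-dimensional direction $\mb R w^+$, so the otherwise uncontrolled $W^-$-part cannot cancel it on all of $\{\omega \neq 0\}$. The other ingredients --- the orthogonal splitting of $W$, the $a = 0$ dichotomy, the Ambrosetti--Rabinowitz-type superquadraticity of $F$ and $G$, and the Fatou step --- are routine.
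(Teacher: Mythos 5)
Your argument is correct: the contradiction--normalisation--Fatou scheme you run, together with the observation that the $W^+$-component of any $z\in\widetilde W(w)$ lies on the fixed ray $\mb R^+w^+$ and hence cannot be cancelled a.e.\ by the free $W^-$-part, is exactly the standard proof of this kind of lemma. The paper in fact gives no proof at all, referring to \cite[Section~2]{FOZ} for the local two-dimensional analogue; the argument there is the one you reproduce, transplanted to $\mb R$ with the square-root Laplacian (which changes nothing at this level since only the orthogonal splitting $W=W^+\oplus W^-$, the superquadraticity of $F,G$ from $(H_3)$--$(H_4)$, and the compact embedding $H^{1/2}(\mb R)\hookrightarrow L^2_{\mathrm{loc}}(\mb R)$ are used). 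A minor tidying: the case $a=0$ you treat separately is in fact vacuous, since dropping $\Phi(z_n)\geq 0$ in $\mc J(z_n)>0$ already yields $a_n^2>1-a_n^2$, i.e.\ $a_n^2>1/2$; and when you invoke a.e.\ convergence of $\eta_n/r_n$ it is worth noting explicitly that this comes from the local compact embedding plus a diagonal extraction over an exhaustion of $\mb R$, as the domain is unbounded.
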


The ground state solutions will be obtained by maximizing $\mc J$ over the sets $\widetilde{W}(w)$. For that matter, we start with the following proposition.
\begin{Proposition}\label{p2.6}
With the assumptions of Theorem \ref{t1.1}, we have
\begin{enumerate}
	\item [(1)] for any $w \in \mc N$, $\mc J\arrowvert_{\widetilde{W}(w)}$ admits a unique maximum point which is precisely at $w$;
	\item [(2)] for any $w \in W\setminus W^-$, the set $\widetilde{W}(w)$ intersects $\mc N$ at exactly one point $\widetilde{m}(w)$, which is the unique global maximum point of $\mc J\arrowvert _{ \widetilde{W}(w)}$.
\end{enumerate}	
\end{Proposition}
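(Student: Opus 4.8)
The plan is to run the generalized Nehari manifold scheme of Pankov \cite{P} and Szulkin--Weth \cite{SW1}, in the form already used for the local planar Hamiltonian system in \cite{FOZ}; the only genuinely new care is to invoke Lemma \ref{TME} to keep $\Phi$ (and $\Phi'$) well defined and $C^1$ and to pass to weak limits despite the critical exponential growth. Recall from Remark \ref{r2.4} that $\mc J(w)=\tfrac12\|w^+\|_W^2-\tfrac12\|w^-\|_W^2-\Phi(w)$, hence $\langle\mc J'(w),z\rangle=\langle w^+,z^+\rangle_W-\langle w^-,z^-\rangle_W-\langle\Phi'(w),z\rangle$, where $\langle\Phi'(w),(\varphi_1,\varphi_2)\rangle=\int_{\mb R}(f(u)\varphi_1+g(v)\varphi_2)\,dx$ for $w=(u,v)$. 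Recall also $\widetilde W(w)=\widetilde W(w^+)=\{sw^++\varphi:\ s\ge0,\ \varphi\in W^-\}=\{sw+\varphi:\ s\ge0,\ \varphi\in W^-\}$, and that, since $w^+\neq0$ for $w\notin W^-$, one has $sw+\varphi=w$ iff $(s,\varphi)=(1,0)$. The central tool is the one-variable convexity inequality forced by $(H_5)$: for all $a,h\in\mb R$ and $s\ge0$,
\begin{equation}\label{ptw}
F(a)-F(sa+h)+f(a)\Big(\tfrac{s^2-1}{2}a+sh\Big)\le0,
\end{equation}
with equality iff $h=0$ and ($a=0$ or $s=1$), and similarly for $G,g$. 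For $a=0$ this reduces to $-F(h)\le0$ via $(H_2)$ and $(H_4)$; for $a\neq0$ it follows from the strict monotonicity of $f(t)/|t|$ exactly as in \cite[Section~2]{FOZ} (see also \cite{SW1}), using that $(H_3)$--$(H_4)$ force $f>0$ on $(0,\infty)$ and $f<0$ on $(-\infty,0)$, so that $f$ is strictly increasing and $F$ strictly convex, which pins down the equality case. Integrating \eqref{ptw} with $w=(u,v)$ and $\varphi=(\eta,-\eta)\in W^-$ yields
\begin{equation}\label{ptwW}
\Phi(w)-\Phi(sw+\varphi)+\big\langle\Phi'(w),\tfrac{s^2-1}{2}w+s\varphi\big\rangle\le0\qquad(s\ge0,\ \varphi\in W^-),
\end{equation}
with equality, when $w\neq0$, if and only if $sw+\varphi=w$.

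\textbf{Proof of (1).}
Fix $w\in\mc N$, so $\langle\mc J'(w),w\rangle=0$ and $\langle\mc J'(w),\varphi\rangle=0$ for all $\varphi\in W^-$; these read $\|w^+\|_W^2-\|w^-\|_W^2=\langle\Phi'(w),w\rangle$ and $-\langle w^-,\varphi\rangle_W=\langle\Phi'(w),\varphi\rangle$ for $\varphi\in W^-$. For $z=sw+\varphi\in\widetilde W(w)$ we have $z^+=sw^+$, $z^-=sw^-+\varphi$, so a direct expansion of $\mc J(z)$ and substitution of the two identities above give, after an elementary rearrangement,
\begin{equation*}
\mc J(z)-\mc J(w)=-\tfrac12\|\varphi\|_W^2+\Big[\Phi(w)-\Phi(sw+\varphi)+\big\langle\Phi'(w),\tfrac{s^2-1}{2}w+s\varphi\big\rangle\Big].
\end{equation*}
By \eqref{ptwW} the bracket is $\le0$ and the first term is $\le0$; if $z\neq w$ then either $\varphi\neq0$, making the first term strictly negative, or $\varphi=0$ and $s\neq1$, making the bracket strictly negative. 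Hence $\mc J(z)<\mc J(w)$ for every $z\in\widetilde W(w)\setminus\{w\}$, which is precisely (1).

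\textbf{Proof of (2).}
Let $w\in W\setminus W^-$, so $w^+\neq0$. By Lemma \ref{l3.1}, $\mc J\le0$ on $\widetilde W(w)\setminus B_\rho(0)$, while $\mc J(z)\le\tfrac12\|z^+\|_W^2\le\tfrac12\rho^2$ on $B_\rho(0)$; and along $s\mapsto sw^+$, using $(H_2)$ and Lemma \ref{TME} (which give $\Phi(z)=o(\|z\|_W^2)$ as $z\to0$), one has $\mc J(sw^+)=\tfrac{s^2}{2}\|w^+\|_W^2-o(s^2)>0$ for small $s>0$. Thus $c:=\sup_{\widetilde W(w)}\mc J\in(0,\infty)$, and any maximizing sequence $z_n=s_nw^++\xi_n$ eventually satisfies $\mc J(z_n)>0$, hence lies in $B_\rho(0)$ by Lemma \ref{l3.1}; therefore $s_n$ and $\xi_n$ are bounded, and along a subsequence $s_n\to s^*\ge0$, $\xi_n\rightharpoonup\xi^*$ in $W^-$, so $z_n\rightharpoonup z^*:=s^*w^++\xi^*\in\widetilde W(w)$. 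Since the $W^+$-component converges strongly, $\|\cdot\|_W^2$ is weakly lower semicontinuous on $W^-$, and $\Phi$ is weakly lower semicontinuous (Fatou, as $F,G\ge0$ and, along a further subsequence, $z_n\to z^*$ a.e.\ by the compact embedding $H^\frac12(\mb R)\hookrightarrow L^2_{loc}(\mb R)$), we obtain $\mc J(z^*)\ge\limsup_n\mc J(z_n)=c$, so $z^*$ is a maximizer with $\mc J(z^*)=c>0$; by Remark \ref{r2.4} this forces $z^*\notin W^-$, i.e.\ $s^*>0$. As $s^*>0$, $z^*$ is interior to $\widetilde W(w)$ in the $w^+$-direction, so optimality gives $\langle\mc J'(z^*),w^+\rangle=0$ and $\langle\mc J'(z^*),\varphi\rangle=0$ for all $\varphi\in W^-$, whence $\langle\mc J'(z^*),z^*\rangle=s^*\langle\mc J'(z^*),w^+\rangle+\langle\mc J'(z^*),\xi^*\rangle=0$, i.e.\ $z^*\in\mc N$. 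Set $\widetilde m(w):=z^*$. For uniqueness, if $z_1,z_2\in\mc N\cap\widetilde W(w)$ then $z_i^+$ is a positive multiple of $w^+$, so $\widetilde W(z_i)=\widetilde W(w)$, and by (1) each $z_i$ is \emph{the} unique global maximum point of $\mc J$ on $\widetilde W(w)$; hence $z_1=z_2=\widetilde m(w)$, which is therefore the unique global maximum of $\mc J|_{\widetilde W(w)}$.

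\textbf{Main obstacle.}
The crux is the sharp pointwise inequality \eqref{ptw} together with its precise equality case: this is exactly where $(H_5)$ enters, and it is what endows the strongly indefinite $\mc J$ with mountain-pass geometry on each slice $\widetilde W(w)$. A secondary subtlety, particular to this setting, is that the critical exponential growth rules out any compactness of $u\mapsto e^{u^2}$, so Lemma \ref{TME} must be used both to justify the $C^1$ regularity and the derivative formula for $\Phi$ and to pass to the weak limit in the direct-method step; the indefiniteness itself is handled directly, through the completion of squares in (1) and the confinement of maximizing sequences to $B_\rho(0)$ in (2), not through any compactness of the quadratic form.
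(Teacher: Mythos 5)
The paper does not prove Proposition \ref{p2.6} at all: it is one of the statements that the authors declare ``can be straightforwardly extended'' from \cite[Section 2]{FOZ} and is stated with the proof omitted. Your argument reconstructs precisely the Szulkin--Weth/Pankov route that \cite{FOZ} (and \cite{SW1}) follows: the pointwise convexity inequality forced by $(H_5)$, integrated and combined with the Nehari relations to get the completed-square identity in part (1), and the confinement of a maximizing sequence to $B_\rho(0)$ plus weak lower semicontinuity of $\Phi$ and of $\|\cdot\|_W$ on $W^-$ for part (2). The derivation of $\mc J(z)-\mc J(w)=-\tfrac12\|\varphi\|_W^2+[\Phi(w)-\Phi(sw+\varphi)+\langle\Phi'(w),\tfrac{s^2-1}{2}w+s\varphi\rangle]$ checks out against the Nehari identities, the equality discussion is handled correctly, and the use of Lemma \ref{TME} to ensure $\Phi(z)=o(\|z\|_W^2)$ near $0$ and to justify the Fatou step is appropriate. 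This is correct and is essentially the same approach as the one the paper outsources to \cite{FOZ}.
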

\begin{Corollary}\label{c3.3} We have,
\begin{equation*}
\inf_{\eta \in \mc N}\mc J(\eta)=\inf_{w\in W\setminus W^-} \max_{z\in \widetilde{W}(w)}\mc J(z).
\end{equation*}
\end{Corollary}
\begin{Proposition}\label{p3.4}
	Let 
	\begin{equation*}
		m^\ast := \inf_{w \in \mc N} \mc J(w),
	\end{equation*}
then $m^\ast >0$.
\end{Proposition}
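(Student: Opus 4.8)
The plan is to show that the Nehari manifold $\mc N$ stays uniformly away from $0$ in the $W$-norm, and then to combine this with the lower bound on $\mc J$ coming from $(H_3)$. First I would record the consequence of $(H_3)$ already noted in Remark \ref{r2.4}: for any $w=(u,v)\in\mc N$ one has
\begin{equation*}
	\mc J(w)=\mc J(w)-\tfrac12\langle\mc J'(w),w\rangle=\I\mb R\Big(\tfrac12 f(u)u-F(u)\Big)dx+\I\mb R\Big(\tfrac12 g(v)v-G(v)\Big)dx\geq \Big(\tfrac12-\tfrac1\mu\Big)\I\mb R\big(f(u)u+g(v)v\big)dx\geq 0,
\end{equation*}
so the infimum is at least $0$ and the only thing to rule out is $m^\ast=0$. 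By Proposition \ref{p2.6}(2) and Corollary \ref{c3.3} it is equivalent to produce a constant $\rho_0>0$ with $\|w\|_W\geq\rho_0$ for every $w\in\mc N$; indeed once $w\in\mc N$ is bounded below in norm, the displayed identity together with $(H_2)$--$(H_6)$ forces $\mc J(w)$ to be bounded below by a positive constant.

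The core estimate is the standard splitting of the nonlinear term near $0$ versus at infinity. Fix $q>2$. Using $(H_2)$ (so $f(t),g(t)=o(t^2)$, hence also $=o(|t|)$ in the relevant regime after integrating) and $(H_6)$ together with the Trudinger--Moser Lemma \ref{TME}, for every $\e>0$ there is $C_\e>0$ such that
\begin{equation*}
	|f(t)t|+|g(t)t|\leq \e\, t^2 + C_\e\, |t|^{q}\big(e^{\ba t^2}-1\big)\quad\text{for all }t\in\mb R,
\end{equation*}
for any fixed $\ba>\ba_0$. For $w=(u,v)\in\mc N$ we have in particular $\langle\mc J'(w),w\rangle=0$, which after using the decomposition \eqref{2.3} and $\langle\mc J'(w),\varphi\rangle=0$ for $\varphi\in W^-$ yields (as for the local case in \cite{FOZ})
\begin{equation*}
	\|w^+\|_W^2=\I\mb R\big(f(u)u+g(v)v\big)dx\quad\text{and}\quad \|w^-\|_W^2\le \I\mb R\big(f(u)u+g(v)v\big)dx .
\end{equation*}
Hence $\|w\|_W^2\le 2\I\mb R(f(u)u+g(v)v)\,dx$. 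Plugging in the pointwise bound, choosing $\e$ small so that the $\e t^2$ contribution is absorbed by the left-hand side via $\|u\|_{L^2}^2,\|v\|_{L^2}^2\le V_0^{-1}\|w\|_W^2$, and estimating the critical term by Hölder (exponent split between $|u|^q\in L^{2}$ by Sobolev embedding $H^{1/2}\hookrightarrow L^{2q}$ and $e^{\ba u^2}-1\in L^{2}$ by Lemma \ref{TME}, provided $\ba\|u\|_\frac12^2\cdot 2$ stays below the critical threshold $\pi$, which is guaranteed once $\|w\|_W$ is small), one arrives at
\begin{equation*}
	\|w\|_W^2\le C\,\|w\|_W^{q}\qquad\text{whenever }\|w\|_W\le R_0,
\end{equation*}
for suitable constants $C,R_0>0$. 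Since $q>2$, this is impossible for $\|w\|_W$ small and nonzero, so either $\|w\|_W\ge R_0$ or $\|w\|_W\ge (1/C)^{1/(q-2)}$; in all cases $\|w\|_W\ge\rho_0:=\min\{R_0,(1/C)^{1/(q-2)}\}>0$.

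Finally, to convert the norm bound into a positive energy bound I would argue by contradiction: if $m^\ast=0$ there is a sequence $w_n=(u_n,v_n)\in\mc N$ with $\mc J(w_n)\to 0$. From the identity in the first paragraph and $(H_3)$, $\I\mb R(f(u_n)u_n+g(v_n)v_n)\,dx\to 0$, whence $\|w_n\|_W^2\le 2\I\mb R(f(u_n)u_n+g(v_n)v_n)\,dx\to 0$, contradicting $\|w_n\|_W\ge\rho_0$. Therefore $m^\ast>0$. The main obstacle is the bookkeeping in the Trudinger--Moser step: one must make sure the parameter $\ba$ and the exponents in Hölder's inequality can be chosen so that $2\ba\|u\|_\frac12^2<\pi$ uniformly on the region $\|w\|_W\le R_0$, which is exactly where the subcriticality of the estimate $\|w\|_W^2\le C\|w\|_W^q$ comes from; this is routine but must be done carefully, and it is the only place where criticality (rather than mere subcritical growth) enters.
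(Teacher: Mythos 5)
Your overall strategy (split the nonlinearity via $(H_2)$, $(H_6)$, then Trudinger--Moser plus H\"older/Sobolev on a small ball) is the right one, but the algebraic identity you rest it on is wrong. For $w=(u,v)\in\mc N$, expanding $\langle\mc J'(w),w\rangle=0$ and $\langle\mc J'(w),w^-\rangle=0$ via \eqref{2.3} and \eqref{edj} gives
\begin{equation*}
 \|w^+\|_W^2=\frac12\I\mb R\big(f(u)+g(v)\big)(u+v)\,dx,\qquad
 \|w^-\|_W^2=-\frac12\I\mb R\big(f(u)-g(v)\big)(u-v)\,dx,
\end{equation*}
so that
\begin{equation*}
 \|w\|_W^2=\I\mb R\big(f(u)v+g(v)u\big)\,dx\qquad\text{and}\qquad \I\mb R\big(f(u)u+g(v)v\big)\,dx=2\langle u,v\rangle_\frac12.
\end{equation*}
Your relations $\|w^+\|_W^2=\I\mb R(f(u)u+g(v)v)\,dx$ and $\|w^-\|_W^2\le\I\mb R(f(u)u+g(v)v)\,dx$ hold only when $u=v$; for general $w\in\mc N$ the Nehari identity ties the norm to the \emph{cross} integral. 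The inequality you deduce, $\|w\|_W^2\le 2\I\mb R(f(u)u+g(v)v)\,dx$, even points the wrong way: since $\I\mb R(f(u)u+g(v)v)\,dx=2\langle u,v\rangle_\frac12\le\|w\|_W^2$ by Cauchy--Schwarz, the diagonal integral is dominated by the norm, not the reverse.

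This breaks your final step: from $\mc J(w_n)\to0$ and $(H_3)$ one does obtain $\I\mb R(f(u_n)u_n+g(v_n)v_n)\,dx\to0$, but this says only $\langle u_n,v_n\rangle_\frac12\to0$ and does not control $\|w_n\|_W^2=\I\mb R(f(u_n)v_n+g(v_n)u_n)\,dx$. (Moreover, your H\"older/TM step on $\mc N$ presupposes that $\|w_n\|_W$ is small, which is not available for an arbitrary sequence in $\mc N$.) The clean route, and essentially the one the quoted source \cite{FOZ} takes, needs no norm lower bound on $\mc N$ at all: by Proposition \ref{p2.6}, $\mc J(w)=\max_{z\in\widetilde{W}(w)}\mc J(z)\ge\mc J(tw^+)$ for all $t\ge0$. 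Choosing $t=\sigma/\|w^+\|_W$ (possible since $w\notin W^-$) gives $z:=tw^+=(\chi,\chi)\in W^+$ with $\|z\|_W=\sigma$, and $\mc J(z)=\frac{\sigma^2}{2}-\I\mb R(F(\chi)+G(\chi))\,dx$. Your TM/H\"older/Sobolev manipulation, applied to $F,G$ instead of $tf(t),tg(t)$, then yields $\I\mb R(F(\chi)+G(\chi))\,dx\le\e\,\sigma^2+C_\e\,\sigma^q$ for some $q>2$, uniformly over such $z$, once $\sigma$ is small enough that the TM exponent constraint holds. Choosing first $\e$ then $\sigma$ small gives $\mc J(w)\ge\sigma^2/4>0$ for every $w\in\mc N$, hence $m^\ast\ge\sigma^2/4>0$.
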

%
\begin{Proposition}\label{p3.5}
There exists $\de >0$ such that $\|w^+\|_W \geq \de$ for all $w \in \mc N$. In particular,
\begin{align*}
	\| \widetilde{m}(w)^+\|_W \geq \de~\text{for all}~w \in W\setminus W^-.
	\end{align*}
Moreover, for each compact subset $\mc K \subset W\setminus W^-$, there exists a constant $C_{\mc K}>0$ such that
\begin{equation*}
	\|\widetilde{m}(w)\|_W \leq C_{\mc K},~\text{for all}~w \in \mc K.
\end{equation*}
\end{Proposition}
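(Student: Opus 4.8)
The plan is to establish the three assertions in turn, leaning heavily on the coercivity structure encoded in the Ambrosetti–Rabinowitz type condition $(H_3)$ and on Proposition \ref{p2.6}. First, the lower bound $\|w^+\|_W\ge\de$. Suppose $w=(u,v)\in\mc N$. Since $\mc J(w)\ge m^\ast>0$ by Proposition \ref{p3.4}, and since $\mc J(w)=\frac12\|w^+\|_W^2-\frac12\|w^-\|_W^2-\Phi(w)\le\frac12\|w^+\|_W^2$ (using $\Phi\ge0$ from $(H_3)$ and dropping the negative $\|w^-\|_W^2$ term), I get $\frac12\|w^+\|_W^2\ge m^\ast$, hence $\|w^+\|_W\ge\sqrt{2m^\ast}=:\de>0$. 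For the second clause, recall $\widetilde m(w)\in\mc N$ by Proposition \ref{p2.6}(2), so the same bound applies to it: $\|\widetilde m(w)^+\|_W\ge\de$ for every $w\in W\setminus W^-$.

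Next, the uniform bound over a compact set $\mc K\subset W\setminus W^-$. Since $\widetilde m(w)=\widetilde m(w^+)$ and $w^+$ ranges over a compact subset of $W^+\setminus\{0\}$ as $w$ ranges over $\mc K$ (continuity of $w\mapsto w^+$), I may replace $\mc K$ by its image in $W^+$; moreover after normalizing I can reduce to a compact subset of the unit sphere in $W^+$. Suppose, for contradiction, that there is a sequence $w_n\in\mc K$ with $\|\widetilde m(w_n)\|_W\to\infty$. Write $\widetilde m(w_n)=t_n w_n^+ + z_n^-$ with $t_n\ge0$, $z_n^-\in W^-$; passing to a subsequence, $w_n^+/\|w_n^+\|_W\to \zeta$ with $\zeta\in W^+$, $\|\zeta\|_W=1$. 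The key point is that $\widetilde m(w_n)$ is the maximum of $\mc J$ over $\widetilde W(w_n)=\widetilde W(w_n^+)$, so $\mc J(\widetilde m(w_n))\ge\mc J(s w_n^+)$ for every $s\ge0$. Combining this with the superquadratic growth of $\Phi$ forced by $(H_3)$ — which gives $\Phi(\tau)\ge C_1|\tau|^\mu-C_2$ along rays for suitable constants, so that $\mc J$ is bounded above on $\widetilde W(w_n)$ by a quantity depending only on $\|w_n^+\|_W$ — I will derive a contradiction from $\|\widetilde m(w_n)\|_W\to\infty$: the energy $\mc J(\widetilde m(w_n))$ would have to stay bounded (indeed it is squeezed between $m^\ast$ and $\sup_{z\in\widetilde W(w_n)}\mc J(z)$, the latter uniformly bounded over $\mc K$ by Lemma \ref{l3.1} and the compactness of $\mc K$), yet along the sequence $\widetilde m(w_n)$ with norms blowing up the identity $\mc J(\widetilde m(w_n))=\mc J(\widetilde m(w_n))-\frac12\langle\mc J'(\widetilde m(w_n)),\widetilde m(w_n)\rangle=\I\mb R(\frac12 f(u_n)u_n-F(u_n))+(\frac12 g(v_n)v_n-G(v_n))$, together with $(H_3)$ (which makes each integrand nonnegative and controlled below by $(\frac{\mu}{2}-1)(F(u_n)+G(v_n))$), forces $\Phi(\widetilde m(w_n))$ bounded; but the decomposition $\|\widetilde m(w_n)^-\|_W^2=\|\widetilde m(w_n)^+\|_W^2-2\mc J(\widetilde m(w_n))-2\Phi(\widetilde m(w_n))$ then shows $\|\widetilde m(w_n)^-\|_W$ grows comparably to $\|\widetilde m(w_n)^+\|_W=t_n$, and testing the stationarity condition $\langle\mc J'(\widetilde m(w_n)),\varphi\rangle=0$ on $\varphi=\widetilde m(w_n)^-$ produces $\|\widetilde m(w_n)^-\|_W^2\le\I\mb R(|f(u_n)|+|g(v_n)|)|\widetilde m(w_n)^-|$, which with $(H_4)$ and the boundedness of $\Phi$ bounds the right side by $M\,\|\widetilde m(w_n)^-\|_\infty\cdot(\text{bounded})$ — and pushing this through the Trudinger–Moser control of Lemma \ref{TME} contradicts the blow-up. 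This last estimate is where I expect the real work to lie.

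The main obstacle, then, is the uniform bound over compact sets: the strong indefiniteness means one cannot simply invoke coercivity of $\mc J$, and one must instead exploit that $\widetilde m(w)$ sits at the top of the two-dimensional-plus-$W^-$ slice $\widetilde W(w)$ and that this maximal value is uniformly bounded over the compact family. The delicate quantitative point is controlling the $W^-$ component of $\widetilde m(w_n)$, for which the Ambrosetti–Rabinowitz condition $(H_3)$ (giving $\mu$-superlinearity, hence an a priori bound on $\Phi(\widetilde m(w_n))$ from the bounded energy) and condition $(H_4)$ (relating $F,G$ to $|f|,|g|$) are both essential. I would carry the argument out by contradiction along a maximizing-blow-up sequence as sketched, extracting the limiting direction $\zeta\in W^+$ and using that $\mc J$ restricted to $\widetilde W(w_n)$ cannot exceed the uniform bound while its value at $\widetilde m(w_n)$, by the identity above and $(H_3)$–$(H_4)$, would force $\|\widetilde m(w_n)\|_W$ to stay bounded.
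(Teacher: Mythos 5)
Your proofs of the first two assertions are correct: since Proposition \ref{p3.4} ($m^\ast>0$) is established beforehand, the chain $m^\ast\le\mc J(w)\le\frac12\|w^+\|_W^2$ for $w\in\mc N$ (using $\Phi\ge 0$ and dropping $-\frac12\|w^-\|_W^2$) gives $\|w^+\|_W\ge\sqrt{2m^\ast}$, and the second clause follows since $\widetilde m(w)\in\mc N$. This is a slightly different route than the usual one (which derives the lower bound directly from the Nehari identity $\|w^+\|_W^2=\langle\Phi'(w),w^+\rangle$ via $(H_2)$ and Trudinger--Moser, and then deduces $m^\ast>0$), but given the ordering of statements in the paper it is perfectly valid and arguably cleaner.

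The third assertion, however, has two genuine gaps. First, you claim that $\sup_{z\in\widetilde W(w_n)}\mc J(z)$ is uniformly bounded over $\mc K$ ``by Lemma \ref{l3.1} and compactness,'' but Lemma \ref{l3.1} only produces a radius $\rho(w)$ \emph{depending on} $w$ beyond which $\mc J\le 0$; it does not assert $\sup_{w\in\mc K}\rho(w)<\infty$, and inside $B_{\rho(w)}$ the bound you obtain on $\mc J$ is $\frac12\rho(w)^2$, so you have not bounded the maximum uniformly. In fact, the uniform bound on $\mc J(\widetilde m(w_n))$ is essentially equivalent to what you are trying to prove, so this step is circular as written. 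Second, the concluding estimate for $\|\widetilde m(w_n)^-\|_W$ misuses $(H_4)$: the inequality $0<F(t)\le M|f(t)|$ gives a \emph{lower} bound $|f(t)|\ge F(t)/M$, not an upper bound, so the boundedness of $\Phi(\widetilde m(w_n))$ does not control $\I{\mb R}(|f(u_n)|+|g(v_n)|)|\widetilde m(w_n)^-|\,dx$, and the $L^\infty$ control you invoke for $\widetilde m(w_n)^-$ is not available a priori.

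The standard (and the approach implicit in the paper's reference to \cite{FOZ} and \cite{SW1}) argument bypasses both issues. Suppose $w_n\to w_\infty$ in $\mc K$ and $\|\widetilde m(w_n)\|_W\to\infty$, and set $h_n:=\widetilde m(w_n)/\|\widetilde m(w_n)\|_W$. Since $h_n^+$ is a nonnegative multiple of $w_n^+/\|w_n^+\|_W\to w_\infty^+/\|w_\infty^+\|_W$, up to a subsequence $h_n^+\to \al\, w_\infty^+/\|w_\infty^+\|_W$ strongly for some $\al\ge 0$. If $\al>0$, then on the positive-measure set where $w_\infty^+\ne 0$ one has $|\widetilde m(w_n)|\to\infty$ a.e., and $(H_3)$ forces $\Phi(\widetilde m(w_n))/\|\widetilde m(w_n)\|_W^2\to\infty$, hence
\begin{equation*}
\mc J(\widetilde m(w_n))=\|\widetilde m(w_n)\|_W^2\left(\tfrac12\|h_n^+\|_W^2-\tfrac12\|h_n^-\|_W^2-\frac{\Phi(\widetilde m(w_n))}{\|\widetilde m(w_n)\|_W^2}\right)\ra -\infty,
\end{equation*}
contradicting $\mc J(\widetilde m(w_n))\ge m^\ast>0$. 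If $\al=0$, then $\|h_n^-\|_W\to 1$ and $\mc J(\widetilde m(w_n))\le \frac12\|\widetilde m(w_n)\|_W^2(\|h_n^+\|_W^2-\|h_n^-\|_W^2)\ra-\infty$, again a contradiction. You should replace your steps 3--6 with this dichotomy.
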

For each $z\in W\setminus W^-$, let 
\begin{align*}
	W(z)=W^- \oplus \mb R z =W^- \oplus \mb R^+z^+.
\end{align*}
Let 
\begin{equation*}
	S^+ := S\cap W^+ =\{w \in W^+ : \|w\|_W =1\}.
\end{equation*}
Obviously, $S^+$ is a $C^1-$ submanifold of $W^+$ and the tangent space of $S^+$ at $w \in S^+$ is 
\begin{equation*}
	T_w(S^+)=\{z \in W^+:(z,w)=0\}.
\end{equation*}
Then for any $w \in S^+$,
\begin{equation*}
	W =T_w(S^+)\oplus \mb Rw \oplus W^- =T_w(S^+)\oplus W(z).
\end{equation*}
Noting that $f$ and $g$ are merely continuous, the manifold $\mc N$ is not in general of class $C^1$.
Thus, the Nehari manifold technique cannot be used in a straghtforward way to find the critical points
of $\mc J$ in $\mc N$ . For this issue, following the approach used in \cite{SW1}, we consider another
related functional $\mc F : S^+ \ra  \mb R$ where for each $w \in S^+ \subset W\setminus W^-$, $\mc F(w)$ is defined as
$\mc F(w) := \mc J(m(w))$ with
\begin{equation*}
	m :=\widetilde{m}\arrowvert_{S^+}: S^+ \mapsto \mc N.
\end{equation*}
Now we have the
following propositions, which are consequences of \cite[Proposition 31]{SW1} and \cite[Corollary 33]{SW1} respectively:
\begin{Proposition}\label{p2.10}
	The mapping $\widetilde{m}:W\setminus W^- \ra \widetilde{W}(w)\cap \mc N$ is continuous and the mapping $m: S^+ \mapsto \mc N$ is a homeomorphism.
\end{Proposition}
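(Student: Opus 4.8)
The plan is to transplant the Szulkin--Weth argument (\cite[Proposition 31, Corollary 33]{SW1}) to the present nonlocal, strongly indefinite setting, using as the only inputs the a priori bounds of Proposition \ref{p3.5}, the maximization characterization of Proposition \ref{p2.6}, and weak/strong compactness in $W$. First I would reduce the statement to the continuity of $m=\widetilde{m}|_{S^+}$: since $\widetilde{W}(w)=\widetilde{W}(w^+)=\widetilde{W}(w^+/\|w^+\|_W)$, Proposition \ref{p2.6}(2) gives $\widetilde{m}(w)=m(w^+/\|w^+\|_W)$ for every $w\in W\setminus W^-$, and $w\mapsto w^+/\|w^+\|_W$ is continuous from $W\setminus W^-$ into $S^+$ (because $w^+\neq 0$ there); so it suffices to treat $m$ on $S^+$.

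Take $w_n\to w$ in $S^+$ and write $\widetilde{m}(w_n)=t_nw_n+z_n$ with $t_n\ge 0$, $z_n\in W^-$, so that $\widetilde{m}(w_n)^+=t_nw_n$ and $\widetilde{m}(w_n)^-=z_n$. The set $\{w_n\}\cup\{w\}$ is compact in $W\setminus W^-$, so Proposition \ref{p3.5} yields $\|\widetilde{m}(w_n)\|_W\le C$, whence $(t_n)$ and $(z_n)$ are bounded, while $t_n=\|\widetilde{m}(w_n)^+\|_W\ge\delta>0$. Passing to a subsequence, $t_n\to t\ge\delta$, $z_n\rightharpoonup z$ in $W^-$, and $\widetilde{m}(w_n)\rightharpoonup w_0:=tw+z$ in $W$ (since $t_nw_n\to tw$ strongly); passing to a further subsequence, $\widetilde{m}(w_n)\to w_0$ a.e. on $\mb R$ by the local compactness of $H^\frac12(\mb R)$.

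The core of the argument is the upgrade to strong convergence. Since $t\ge 0$, the element $tw_n+z$ belongs to $\widetilde{W}(w_n)$ for every $n$, so the maximality in Proposition \ref{p2.6} gives $\mc J(\widetilde{m}(w_n))\ge \mc J(tw_n+z)\to\mc J(tw+z)=\mc J(w_0)$ by strong continuity of $\mc J$. Conversely, writing $\mc J(\widetilde{m}(w_n))=\tfrac12 t_n^2-\tfrac12\|z_n\|_W^2-\Phi(\widetilde{m}(w_n))$ and using weak lower semicontinuity of $z\mapsto\|z\|_W^2$ on $W^-$ together with Fatou's lemma for $\Phi$ (legitimate as $F,G\ge 0$ by $(H_3)$ and $\widetilde{m}(w_n)\to w_0$ a.e.), one gets $\limsup_n\mc J(\widetilde{m}(w_n))\le\tfrac12 t^2-\tfrac12\|z\|_W^2-\Phi(w_0)=\mc J(w_0)$. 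Hence $\mc J(\widetilde{m}(w_n))\to\mc J(w_0)$, and comparing the three terms forces $\|z_n\|_W\to\|z\|_W$; with $z_n\rightharpoonup z$ in the Hilbert space $W^-$ this gives $z_n\to z$ strongly, so $\widetilde{m}(w_n)\to w_0$ strongly in $W$. Continuity of $\mc J'$ then lets the relations $\langle\mc J'(\widetilde{m}(w_n)),\widetilde{m}(w_n)\rangle=0$ and $\langle\mc J'(\widetilde{m}(w_n)),\varphi\rangle=0$ ($\varphi\in W^-$) pass to the limit, and $\|w_0^+\|_W=t\ge\delta$, so $w_0\in\mc N\cap\widetilde{W}(w)=\{\widetilde{m}(w)\}$ by Proposition \ref{p2.6}. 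Thus $w_0=\widetilde{m}(w)$; since every subsequence has a further subsequence converging to $\widetilde{m}(w)$, the whole sequence converges, proving continuity of $m$ and of $\widetilde{m}$.

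For the homeomorphism, I would define $m^{-1}:\mc N\to S^+$ by $m^{-1}(w)=w^+/\|w^+\|_W$, which is well defined and continuous since $\|w^+\|_W\ge\delta$ on $\mc N$ (Proposition \ref{p3.5}). For $w\in S^+$ one has $m(w)^+=tw$ with $t>0$, hence $m^{-1}(m(w))=w$; for $w\in\mc N$, Proposition \ref{p2.6}(1) identifies $w$ as the unique point of $\widetilde{W}(w)\cap\mc N$, so $m(m^{-1}(w))=\widetilde{m}(w^+)=\widetilde{m}(w)=w$. Therefore $m$ is a continuous bijection with continuous inverse, i.e.\ a homeomorphism. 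I expect the only genuinely delicate step to be the weak-to-strong upgrade of the $W^-$-components $z_n$: since no compactness is available on the infinite-dimensional space $W^-$, one must exploit the variational (maximization) characterization of $\widetilde{m}$ together with Fatou's lemma for $\Phi$ to extract norm convergence and thereby strong convergence.
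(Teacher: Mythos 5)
Your argument is correct and is essentially a re-derivation of the Szulkin--Weth proof that the paper cites directly (the paper gives no proof, simply invoking \cite[Proposition 31]{SW1} and \cite[Corollary 33]{SW1}); the ingredients you use — the a priori bounds of Proposition \ref{p3.5}, the maximization characterization of Proposition \ref{p2.6}, weak lower semicontinuity of the $W^-$-norm plus Fatou for $\Phi$ to upgrade weak to strong convergence of the $W^-$-components, and the subsequence principle — are exactly those of the abstract SW1 argument transplanted to this nonlocal setting. No gaps.
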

\begin{Proposition}\label{p2.11} We have
	\begin{itemize}
		\item [(a)]$\mc F \in C^1(S^+,\mb R)$ and
		\begin{equation*}
			\langle \mc F'(w),z\rangle=\|m(w)^+\|_W\langle \mc J'(m(w)),z\rangle~\text{for all}~z \in T_w(S^+).
		\end{equation*}
	\item [(b)] If $\{z_n\}\subset S^+$ is a Palais-Smale sequence for $\mc F$, then $\{m(z_n)\}\subset \mc N$ is a Palais-Smale sequence for $\mc J$. Namely, if $\mc F(z_n)\ra d$ for some $d>0$ and $\|\mc F'(z_n)\|_\ast \ra 0$ as $n \ra \infty$, then $\mc J(m(z_n)) \ra d$ and $\| \mc J'(m(z_n))\| \ra 0$ as $n \ra \infty$, where
	\begin{equation*}
		\|\mc F'(z_n)\|_\ast =\sup_{\substack{\phi \in T_{z_n}(S^+)\\ \|\phi\|_W=1}}\langle \mc F'(z_n),\phi\rangle
	~\text{and}~
	\|\mc J'(m(z_n))\| =\sup_{\substack{\phi \in W\\ \|\phi\|_W=1}}\langle {\mc J'(m(z_n))},\phi\rangle.
\end{equation*}
\item [(c)] $z \in S^+$ is a critical point of $\mc F$ if and only if $m(z) \in \mc N$ is a critical point of $\mc J$.
\item [(d)] $\inf_{S^+}\mc F =\inf_{\mc N} \mc J$.
	\end{itemize}
\end{Proposition}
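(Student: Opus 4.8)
The plan is to deduce Proposition~\ref{p2.11} from the abstract reduction scheme of Szulkin--Weth, so that the work amounts to (i) checking that $\mc J(w)=\frac12\|w^+\|_W^2-\frac12\|w^-\|_W^2-\Phi(w)$ on the splitting $W=W^+\oplus W^-$ fits the hypotheses behind \cite[Proposition~31]{SW1} and \cite[Corollary~33]{SW1}, and (ii) transcribing the conclusions into the present notation. The structural facts about $\Phi$ that the abstract theory requires are: $\Phi\in C^1(W,\mb R)$ with $\Phi(0)=0$ and $\Phi\geq 0$ (from $(H_1)$, $(H_3)$ and Lemma~\ref{TME}); the super-quadraticity $\langle\Phi'(w),w\rangle>2\Phi(w)>0$ on $W\setminus\{0\}$, already recorded above and coming from $(H_3)$; $\Phi(w)=o(\|w\|_W^2)$ as $w\to0$, from $(H_2)$ together with Lemma~\ref{TME}; $\Phi(t_nw_n)/t_n^2\to\infty$ whenever $t_n\to\infty$ and $w_n\to w\notin W^-$, from the Ambrosetti--Rabinowitz bound $F(s),G(s)\geq c|s|^\mu$ for $|s|$ large supplied by $(H_3)$; and the strict-convexity-type monotonicity forcing $\mc J|_{\widetilde{W}(w)}$ to have a unique maximum, which is a direct consequence of $(H_5)$. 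These are exactly the ingredients behind Lemma~\ref{l3.1}, Proposition~\ref{p2.6} and Proposition~\ref{p2.10}; in particular those three statements already give that $\widetilde m$ is well defined and continuous, that $m=\widetilde m|_{S^+}:S^+\to\mc N$ is a homeomorphism, and that $m(w)$ is the unique global maximiser of $\mc J|_{\widetilde{W}(w)}$.

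Granting this, part~(a) follows from the $C^1$-reduction of \cite[Proposition~31]{SW1}: even though $m$ itself need not be $C^1$ (since $\mc N$ is not a $C^1$ manifold when $f,g$ are merely continuous), the composite $\mc F=\mc J\circ m$ is, and the derivative formula is obtained by squeezing $\mc J(m(w+tz))$ between the values of $\mc J$ at competitors in $\widetilde{W}(w+tz)$ built by perturbing $m(w)$, using the global maximality of $m(\cdot)$ together with the splitting $W=T_w(S^+)\oplus\mb R w\oplus W^-$; the boundary contributions drop because $\langle\mc J'(m(w)),\zeta\rangle=0$ for all $\zeta\in W^-\oplus\mb R\, m(w)^+$, which is the definition of $\mc N$. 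Part~(c) then follows at once: for $z\in S^+$ the identity $\langle\mc F'(z),\cdot\rangle=\|m(z)^+\|_W\langle\mc J'(m(z)),\cdot\rangle$ on $T_z(S^+)$, together with the automatic vanishing of $\mc J'(m(z))$ on $W(z)=W^-\oplus\mb R z$ (as $m(z)\in\mc N$), shows $\mc F'(z)=0$ if and only if $\mc J'(m(z))=0$. Part~(d) is immediate from $\mc F=\mc J\circ m$ and the surjectivity of $m$ onto $\mc N$.

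For part~(b), if $\{z_n\}\subset S^+$ is a Palais--Smale sequence for $\mc F$ then $\mc J(m(z_n))=\mc F(z_n)\to d$ by definition, so only the gradient estimate needs attention. I would decompose an arbitrary unit vector $\phi\in W$ orthogonally as $\phi=\phi_T+s z_n+\phi^-$ with $\phi_T\in T_{z_n}(S^+)$, $\phi^-\in W^-$ (so $\|\phi_T\|_W\leq1$), observe that $\langle\mc J'(m(z_n)),\phi\rangle=\langle\mc J'(m(z_n)),\phi_T\rangle$ because $\mc J'(m(z_n))$ annihilates $\mb R z_n\oplus W^-$ (again from $m(z_n)\in\mc N$), and then use part~(a) to write $|\langle\mc J'(m(z_n)),\phi_T\rangle|=\|m(z_n)^+\|_W^{-1}|\langle\mc F'(z_n),\phi_T\rangle|\leq\delta^{-1}\|\mc F'(z_n)\|_\ast$, where $\delta>0$ is the uniform lower bound $\|m(z_n)^+\|_W\geq\delta$ from Proposition~\ref{p3.5}. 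Taking the supremum over $\phi$ yields $\|\mc J'(m(z_n))\|\to0$, which is the content of \cite[Corollary~33]{SW1}.

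The hard part is really hidden in (a): because $m$ is only a homeomorphism, $\mc F=\mc J\circ m$ cannot be differentiated naively, and the two-sided comparison argument establishing its $C^1$ character — which leans on the global maximality of $m(w)$ over $\widetilde{W}(w)$ (Proposition~\ref{p2.6}) and on the quantitative estimate $\|m(w)^+\|_W\geq\delta$ (Proposition~\ref{p3.5}) — is the only genuinely nontrivial step. Parts (b), (c), (d) are then bookkeeping on the orthogonal decomposition $W=T_w(S^+)\oplus\mb R w\oplus W^-$ and the defining relations of $\mc N$. The only thing to keep an eye on, given the critical exponential growth, is that $\mc F$ is well defined and $C^1$ at all, but this is ensured once $\Phi\in C^1(W,\mb R)$, which is precisely where Lemma~\ref{TME} enters; no weak continuity of $\Phi'$ is needed at this stage.
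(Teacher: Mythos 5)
Your proposal is correct and follows essentially the same route as the paper: the paper simply states that Proposition~\ref{p2.11} is a consequence of \cite[Proposition~31]{SW1} and \cite[Corollary~33]{SW1}, with the hypotheses of that abstract reduction scheme being supplied by Lemma~\ref{l3.1}, Proposition~\ref{p2.6}, Proposition~\ref{p2.10} and Proposition~\ref{p3.5}, together with the standard properties of $\Phi$ coming from $(H_1)$--$(H_5)$ and Lemma~\ref{TME}. You make the verification explicit and unpack the bookkeeping in parts (b)--(d), but the argument is the one the paper invokes.
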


\section{Proof of Theorem \ref{t1.1}}\label{s3}
By Proposition \ref{p3.4} and Proposition \ref{p2.11}, we have
$ \ds \inf_{S^+}\mc F =\inf_{\mc N} \mc J =m^\ast >0$. Since $S^+$
is a regular $C^1$-submanifold of $W^+$ and from the Ekeland variational principle,  there exists $\{z_n\} \subset S^+$ such that 
\begin{align*}
	\mc F(z_n)\ra m^\ast >0~\text{and}~\|\mc F'(z_n)\|_\ast \ra 0~\text{as}~n \ra \infty.
\end{align*}
Let $w_n =m(z_n) \in \mc N$. Then applying Proposition \ref{p2.11}, we have
\begin{align*}
	\mc J(w_n)\ra m^\ast>0~\text{and}~\| \mc J'(w_n)\| \ra 0~\text{as}~n \ra \infty.
\end{align*}
In the next proposition we prove that the sequence $\{w_n\}_{n\in\mathbb{N}}$ is bounded in $W$. Precisely, we have
\begin{Proposition}\label{p3.1}
	There exists $C>0$ independent of $m^*$ such that for all $n$
	\begin{enumerate}
		\item [(1)] $\|w_n\|_W=\|(u_n,v_n)\|_W \leq C (m^*+1)$;
		\item [(2)] $\I \mb R f(u_n)u_n dx\leq C(m^*+1)$ and $\I \mb R g(v_n)v_n dx\leq C(m^*+1)$;
		\item [(3)] $\I \mb R F(u_n)dx \leq C(m^*+1)$ and $\I \mb R G(v_n)dx \leq C(m^*+1)$.
	\end{enumerate}
\end{Proposition}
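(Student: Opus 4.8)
The strategy I would follow is to separate the easy nonlinear‑term estimates (2)--(3), which come directly from the Nehari constraint and the Ambrosetti--Rabinowitz condition $(H_3)$, from the genuine difficulty, which is the norm bound (1). For (2)--(3): since $w_n=m(z_n)\in\mc N$ the identity $\langle\mc J'(w_n),w_n\rangle=0$ holds \emph{exactly}, so
$\mc J(w_n)=\mc J(w_n)-\frac12\langle\mc J'(w_n),w_n\rangle=\int_{\mb R}\big(\frac12 f(u_n)u_n-F(u_n)\big)dx+\int_{\mb R}\big(\frac12 g(v_n)v_n-G(v_n)\big)dx$; by $(H_3)$ each integrand is $\ge(\frac12-\frac1\mu)f(u_n)u_n\ge0$ (resp. for $g$), and as $\mc J(w_n)\le m^*+1$ for $n$ large this gives $\int f(u_n)u_n+\int g(v_n)v_n\le C(m^*+1)$, hence (2) (both terms being nonnegative), then (3) and $\Phi(w_n)\le C(m^*+1)$ by a further application of $(H_3)$. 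For (1), I would use $\mc J(w_n)=\frac12\|w_n^+\|_W^2-\frac12\|w_n^-\|_W^2-\Phi(w_n)$ and $\langle u_n,v_n\rangle_\frac12=\frac12(\|w_n^+\|_W^2-\|w_n^-\|_W^2)$; from $\langle\mc J'(w_n),w_n\rangle=0$ one has $2\langle u_n,v_n\rangle_\frac12=\int f(u_n)u_n+\int g(v_n)v_n\le C(m^*+1)$, and since $\langle u_n,v_n\rangle_\frac12=\mc J(w_n)+\Phi(w_n)>0$ eventually, $\|w_n^-\|_W<\|w_n^+\|_W$ and $\|w_n\|_W^2=2\langle u_n,v_n\rangle_\frac12+2\|w_n^-\|_W^2$. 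Thus it suffices to bound $\|w_n^-\|_W$ (equivalently $\|w_n^+\|_W$) by $C(m^*+1)$.

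I would prove this last bound by contradiction: assume $\|w_n\|_W\to\infty$ along a subsequence, put $\bar w_n=(\bar u_n,\bar v_n)=w_n/\|w_n\|_W$, so $\|\bar w_n\|_W=1$ and $\|\bar w_n^+\|_W^2\to\frac12$; let $\bar p_n=(\bar u_n+\bar v_n)/2$ be the $H^\frac12$-representative of $\bar w_n^+$, which is bounded in $H^\frac12(\mb R)$ with $\|\bar p_n\|_\frac12$ bounded away from $0$. Then split via the concentration–compactness dichotomy applied to $\{\bar p_n\}$. If $\{\bar p_n\}$ does \emph{not} vanish, say $\int_{B_1(y_n)}\bar p_n^2\ge c>0$, I would pass to the translates $w_n(\cdot+y_n)$: by translation invariance of $\|\cdot\|_W$ and $\Phi$, $\bar w_n(\cdot+y_n)\rightharpoonup\tilde w=(\tilde u,\tilde v)$ in $W$ with $\tilde w^+\neq0$ (the mass $c$ survives in $L^2_{loc}$), so on a set of positive measure $\bar u_n(\cdot+y_n)\to\tilde u\neq0$ (or $\bar v_n(\cdot+y_n)\to\tilde v\neq0$) a.e., hence $|u_n(\cdot+y_n)|\to\infty$ there; since $(H_3)$ forces $F$ to be superquadratic (so $F(t)\to\infty$ as $|t|\to\infty$), Fatou gives $\Phi(w_n)=\Phi(w_n(\cdot+y_n))\to\infty$, contradicting $\Phi(w_n)\le C(m^*+1)$.

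In the remaining \emph{vanishing} case, $\sup_y\int_{B_1(y)}\bar p_n^2\to0$, so $\bar p_n\to0$ in $L^q(\mb R)$ for every $q\in(2,\infty)$. Since $w_n\in\mc N$ is the maximum point of $\mc J$ on $\widetilde W(w_n)=W^-\oplus\mb R^+w_n^+$, for each $s\ge0$ one has $\mc J(w_n)\ge\mc J(s\bar w_n^+)=\frac12 s^2\|\bar w_n^+\|_W^2-\int_{\mb R}\big(F(s\bar p_n)+G(s\bar p_n)\big)dx$. By $(H_2)$ and $(H_6)$, for any $\e>0$, $q>2$, $\ba>\ba_0$ there is $C$ with $F(t),G(t)\le\e t^2+C|t|^q(e^{\ba t^2}-1)$; estimating the exponential term by Hölder's inequality and Lemma \ref{TME} — which keeps $\int_{\mb R}(e^{r\ba s^2\bar p_n^2}-1)$ bounded in $n$ as long as $r\ba s^2\|\bar p_n\|_\frac12^2<\pi$ — and using $\bar p_n\to0$ in $L^{qr'}(\mb R)$, one gets $\int_{\mb R}(F(s\bar p_n)+G(s\bar p_n))\to0$ for each fixed $s$ with $s^2$ up to essentially $4\pi/\ba_0$. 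Choosing $\e$ small and letting $n\to\infty$ gives $\mc J(w_n)\ge\frac{s^2}{8}+o(1)$, hence $m^*\ge\frac{s^2}{8}$ for all such $s$, i.e. $m^*$ is bounded below by a positive constant of order $\pi/\ba_0$. This contradicts the fact that $(H_7)$ keeps the ground state level $m^*$ below the corresponding Trudinger–Moser threshold (the constant $\kappa_0>\frac{8e^{1/2}}{\ba_0}V_0$ in $(H_7)$ is tailored exactly for this).

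Either alternative being impossible, $\{w_n\}$ is bounded in $W$; keeping track of the constants throughout — the nonlinear terms and $\langle u_n,v_n\rangle_\frac12$ are all $\le C(m^*+1)$, and a quantitative version of the dichotomy argument bounds $\|w_n^-\|_W^2$ by $C(m^*+1)$ as well, so that $\|w_n\|_W^2=2\langle u_n,v_n\rangle_\frac12+2\|w_n^-\|_W^2\le C(m^*+1)$ — yields the stated bound with $C$ independent of $m^*$. I expect the vanishing case to be the main obstacle: controlling $\Phi$ on the rescaled profiles $s\bar w_n^+$ despite the critical exponential growth, and matching the resulting lower bound on $m^*$ against the smallness of $m^*$ produced by $(H_7)$ — the same delicate interplay of $\ba_0$ and $\kappa_0$ that the paper's introductory remarks single out. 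Everything else is routine once the generalized Nehari machinery of Section \ref{FNM} is in place.
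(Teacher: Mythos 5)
Your handling of parts (2) and (3) is correct and matches the standard argument: the Nehari constraint $\langle\mc J'(w_n),w_n\rangle=0$ together with $(H_3)$ gives $\mc J(w_n)\ge(\tfrac12-\tfrac1\mu)\int(f(u_n)u_n+g(v_n)v_n)$, and $\mc J(w_n)\le m^*+1$ for $n$ large; a further use of $(H_3)$ then bounds $\Phi(w_n)$. The reduction of (1) to bounding $\|w_n^-\|_W$ (equivalently $\|w_n^+\|_W$) is also fine.

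The concentration--compactness contradiction argument for (1) is, however, not the paper's route (which follows \cite[Prop. 3.1]{FOZ} directly), and it has a genuine gap: it only proves that $\{w_n\}$ is bounded, not the quantitative estimate $\|w_n\|_W\le C(m^*+1)$ with $C$ \emph{independent of} $m^*$. A contradiction argument by its nature produces some finite bound but gives no control on how that bound scales with $m^*$; the closing sentence about ``a quantitative version of the dichotomy argument'' bounding $\|w_n^-\|_W^2$ by $C(m^*+1)$ is an unsupported claim, and it is precisely this quantitative dependence that is needed later (Proposition \ref{p6.7} requires the constant to be uniform in $\e$, i.e.\ in $m^*_\e$).

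There is a second, structural issue. The contradiction you derive in the vanishing case is $m^*\ge\pi/\ba_0$, which you then say conflicts with the level estimate from $(H_7)$. But that level estimate, $m^*<\pi/\ba_0$, is Proposition \ref{pp3.4} and is proved \emph{after} Proposition \ref{p3.1} in the paper; the $(H_7)$ constant you cite as responsible ($\kappa_0>\tfrac{8e^{1/2}}{\ba_0}V_0$) is actually the wrong one --- it is the companion constant $\kappa_0>\tfrac{\pi}{\ba_0 r_1}$ that drives the proof of Proposition \ref{pp3.4}. While Proposition \ref{pp3.4} does not in turn rely on \ref{p3.1}, so there is no hard circularity, the paper's intended proof does not lean on the level estimate, and yours does, so the two arguments really diverge here. (There are also smaller slips: in the nonvanishing case the nonzero weak limit is the $W^+$-component $\tilde p$, not $\tilde u$ or $\tilde v$ individually, so the Fatou argument should be run on $|u_n+v_n|$; and the lower bound you compute should be $\frac{s^2}{4}+o(1)$, not $\frac{s^2}{8}+o(1)$, since $\|\bar w_n^+\|_W^2\to\tfrac12$.) In short, the qualitative boundedness is plausibly within reach by your method, but the stated proposition asserts a quantitative bound that your proof does not deliver, and the paper obtains it by a different, more direct estimate.
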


\begin{proof}
	The proof follows similarly as the proof of \cite[Proposition 3.1]{FOZ}.
 \QED
	\end{proof}
Since $\{w_n\}_{n\in\mathbb{N}}$ is bounded, $w_n \rightharpoonup w_0=(u_0,v_0)$ weakly in $W$. Now, we have the following proposition:
\begin{Proposition}
	The limit $w_0$ of the sequence $\{w_n\}$ is critical point of $\mc J$.
\end{Proposition}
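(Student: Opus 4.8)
The plan is to pass to the limit in the equation $\langle\mathcal J'(w_n),\cdot\rangle\to 0$ and show that $\langle\mathcal J'(w_0),(\varphi,\psi)\rangle=0$ for all test functions $(\varphi,\psi)\in H^{\frac12}(\mathbb R)\times H^{\frac12}(\mathbb R)$. Write $w_n=(u_n,v_n)$. By Proposition \ref{p3.1} the sequence is bounded in $W$, so up to a subsequence $u_n\rightharpoonup u_0$, $v_n\rightharpoonup v_0$ weakly in $H^{\frac12}(\mathbb R)$, strongly in $L^q_{loc}(\mathbb R)$ for every $q\ge 2$, and a.e. in $\mathbb R$. The bilinear (quadratic) part $\langle u_n,\psi\rangle_{\frac12}+\langle v_n,\varphi\rangle_{\frac12}$ passes to the limit immediately by weak convergence. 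The whole difficulty is therefore concentrated in showing
\begin{equation*}
\I\mb R f(u_n)\varphi\,dx \to \I\mb R f(u_0)\varphi\,dx, \qquad \I\mb R g(v_n)\psi\,dx \to \I\mb R g(v_0)\psi\,dx
\end{equation*}
for $\varphi,\psi$ first taken in $C_c^\infty(\mathbb R)$ and then extended to all of $H^{\frac12}(\mathbb R)$ by density, using the boundedness of $\{\I\mb R f(u_n)u_n\,dx\}$ and $\{\I\mb R g(v_n)v_n\,dx\}$ from Proposition \ref{p3.1}(2).

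The key step is a critical-growth convergence lemma for the nonlinear terms. First I would fix a compact set $K\supset\mathrm{supp}\,\varphi$ and split: by the critical growth hypothesis $(H_6)$ together with $(H_2)$, for any $\e>0$ and any $\ba>\ba_0$ there is $C_\e$ with $|f(t)|\le \e|t| + C_\e\,(e^{\ba t^2}-1)$. The linear part $\e\I K |u_n|\,|\varphi|\,dx$ is controlled uniformly and made small by choosing $\e$ small. For the exponential part one uses the generalized Trudinger–Moser inequality (Lemma \ref{TME}): since $\|u_n\|_{\frac12}$ is bounded, one has $\sup_n\I\mb R(e^{\ba' u_n^2}-1)\,dx<\infty$ for $\ba'$ slightly bigger than $\ba$ but still controllable (here one invokes that the bound on $\|w_n\|_W$ from Proposition \ref{p3.1}(1) gives a quantitative bound, or simply that along the Palais–Smale sequence the $H^{\frac12}$-norms are bounded, and $e^{\ba u_n^2}$ is then bounded in $L^r(K)$ for some $r>1$). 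By the a.e. convergence $u_n\to u_0$, $f(u_n)\varphi\to f(u_0)\varphi$ a.e.; combined with the uniform $L^r(K)$-equi-integrability coming from the Trudinger–Moser bound, a vitali/Lebesgue–Vitali convergence argument yields the desired convergence of the integrals. This is exactly the type of argument used in \cite{FOZ, CZ} for the $\mathbb R^2$ case and it transfers verbatim to $H^{\frac12}(\mathbb R)$ because Lemma \ref{TME} plays the role of the Moser–Trudinger inequality in two dimensions.

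Having established convergence of the nonlinear terms on $C_c^\infty$ test functions, I would conclude $\langle\mathcal J'(w_0),(\varphi,\psi)\rangle=0$ for $(\varphi,\psi)\in C_c^\infty\times C_c^\infty$. To extend to all of $W$ one uses the density of $C_c^\infty(\mathbb R)$ in $H^{\frac12}(\mathbb R)$ together with the fact that $w_0\in W$ makes the functional $(\varphi,\psi)\mapsto\langle\mathcal J'(w_0),(\varphi,\psi)\rangle$ continuous on $W$ — this continuity itself requires knowing $f(u_0)\in (H^{\frac12})^*$, i.e.\ that $\I\mb R f(u_0)\varphi\,dx$ is finite and continuous in $\varphi$, which follows once more from $(H_2)$, $(H_6)$, Lemma \ref{TME} and Fatou applied to $\I\mb R f(u_n)u_n\,dx$ to get $\I\mb R f(u_0)u_0\,dx<\infty$. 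Thus $w_0$ is a weak solution of \eqref{s1.4}, i.e.\ a critical point of $\mathcal J$. The main obstacle, as usual in the critical exponential regime, is the control of the exponential integrals along the sequence: one must be careful that the Trudinger–Moser threshold $\pi$ is not exceeded, so the quantitative bound $\|w_n\|_W\le C(m^*+1)$ from Proposition \ref{p3.1} (and, if needed, an estimate on $m^*$ itself of the type guaranteed by $(H_7)$) is what makes the equi-integrability argument go through.
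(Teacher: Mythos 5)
Your overall plan is the same as the paper's (pass to the limit against compactly supported test functions, get a.e.~convergence from the compact local embedding, close with Vitali), but the way you establish equi-integrability of $\{f(u_n)\}$ on a compact set $K$ has a genuine gap. You write that since $\|u_n\|_{\frac12}$ is bounded, $\sup_n\int_{\mb R}(e^{\ba' u_n^2}-1)\,dx<\infty$ ``for $\ba'$ slightly bigger than $\ba$''. This does not follow from Lemma~\ref{TME}. Lemma~\ref{TME} is a statement about a single fixed $u$, valid for every $\ba$; uniformity over a sequence only holds under the threshold, namely $\sup_{\|u\|_{\frac12}\le1}\int(e^{\ba u^2}-1)\,dx<\infty$ iff $\ba\le\pi$. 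For a bounded Palais--Smale sequence the norms $\|u_n\|_{\frac12}$ need not be below $\sqrt{\pi/\ba'}$ (Proposition~\ref{p3.1} only gives $\|w_n\|_W\le C(m^*+1)$, which is not a smallness estimate, and the bound $m^*<\pi/\ba_0$ is only proved later, in Proposition~\ref{pp3.4}, and would in any case not by itself guarantee $\ba'\|u_n\|_{\frac12}^2<\pi$). So the $L^r(K)$-boundedness of $e^{\ba' u_n^2}$ you invoke is not justified.

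The correct and simpler route — and the one the paper's proof sketch points to — is to obtain equi-integrability directly from Proposition~\ref{p3.1}(2), i.e.~from $\sup_n\int_{\mb R}f(u_n)u_n\,dx<\infty$, without any appeal to the Trudinger--Moser threshold. For any $E\subset K$ and $M>0$,
\begin{equation*}
\int_E |f(u_n)|\,dx
\;\le\; \Bigl(\sup_{|t|\le M}|f(t)|\Bigr)\,|E| \;+\;\frac1M\int_{\mb R} f(u_n)u_n\,dx
\;\le\; C_M\,|E| + \frac{C}{M},
\end{equation*}
which is $<\varepsilon$ by first choosing $M$ large and then $|E|$ small, uniformly in $n$. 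Combined with $u_n\to u_0$ a.e.~on $K$ (from the compact embedding $H^{\frac12}(\mb R)\hookrightarrow L^r_{\text{loc}}(\mb R)$) and the continuity of $f$, Vitali gives $\int_K f(u_n)\varphi\,dx\to\int_K f(u_0)\varphi\,dx$. This requires no control on the size of $\|u_n\|_{\frac12}$. You did cite Proposition~\ref{p3.1}(2), but only for the density extension step; it is in fact the tool that closes the local convergence as well. The rest of your argument (weak convergence of the bilinear part, density extension) is fine.
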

\begin{proof}
	The proof follows easily from Proposition \ref{p3.1} together with the compact embedding of $H^\frac12(\mb R) \hookrightarrow L^r_{\text{loc}}(\mb R)$ and Vitali's convergence theorem.
 \QED
	\end{proof}
The final step is to prove $u_0 \ne 0$ and $v_0 \ne 0$. To this aim, the following key result plays a crucial role to get strong convergence of the minimizing sequence $\{w_n\}_{n\in\mathbb{N}}$.
\begin{Proposition}\label{pp3.4}
	The level $m^\ast $ of the minimal energy defined in Proposition \ref{p3.4} satisfies
	\begin{equation}\label{e3.10}
	0<m^\ast<\frac{\pi}{\ba_0}.
	\end{equation}
\end{Proposition}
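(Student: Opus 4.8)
The plan is to prove the two inequalities in \eqref{e3.10} separately. The lower bound $m^\ast>0$ is already contained in Proposition \ref{p3.4}, so the real content is the strict upper bound $m^\ast<\pi/\ba_0$. To obtain it, I would use the minimax characterization from Corollary \ref{c3.3}, namely $m^\ast=\inf_{w\in W\setminus W^-}\max_{z\in\widetilde W(w)}\mc J(z)$, and exhibit a single well-chosen test pair $w\in W^+$ for which $\max_{z\in\widetilde W(w)}\mc J(z)<\pi/\ba_0$. The natural candidate is a pair built from a rescaled Moser-type function adapted to $H^\frac12(\mb R)$: define $w_r=(\omega_r,\omega_r)\in W^+$ where $\omega_r$ is (a suitable $H^\frac12$-normalization of) the standard Moser sequence on $\mb R$ with $\|\omega_r\|_\frac12=1$, concentrated at a scale $r$. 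Because $w_r\in W^+$, one has $\|w_r^+\|_W^2=\|w_r\|_W^2=2$ and $\|w_r^-\|_W=0$, so along $\widetilde W(w_r)=W^-\oplus\mb R^+w_r$ the energy reads $\mc J(sw_r+\zeta)=s^2-\tfrac12\|\zeta\|_W^2-\Phi(sw_r+\zeta)$ for $\zeta\in W^-$, and one checks that the maximum in $\zeta$ is attained near $\zeta=0$ (using the convexity/coercivity built into $(H_3)$ and Remark \ref{r2.4}), so it suffices to estimate $\max_{s\ge0}\bigl(s^2-\Phi(sw_r)\bigr)$ from above.

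The key estimate then reduces to a one-dimensional calculus problem combined with the Trudinger–Moser profile of $\omega_r$. Using $(H_7)$, for any $\e>0$ there is a constant $C_\e$ with $F(t)+G(t)\ge (\kappa_0-\e)\,\tfrac{|t|}{2}\,e^{\ba_0 t^2}-C_\e$ for $|t|$ large (more precisely, $tf(t)\ge(\kappa_0-\e)e^{\ba_0 t^2}$ for $|t|\ge R_\e$, and then integrate using $(H_3)$ to pass to $F$), so that $\Phi(sw_r)=\int_{\mb R}(F(s\omega_r)+G(s\omega_r))\,dx$ is bounded below by a term of the form $c\,(\kappa_0-\e)\,\|\omega_r\|_{L^1(\text{core})}\, e^{\ba_0 s^2\|\omega_r\|_{L^\infty}^2}$ up to lower-order corrections. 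Plugging in the known asymptotics of the Moser function on $\mb R$ (its $L^\infty$ norm behaves like $\sqrt{\tfrac{1}{\pi}\log(1/r)}$ after normalization, with an $O(1)$ correction, and the measure of its "core" where it is $\approx$ constant is $\approx r$, up to a controlled constant $r_1$), one gets that if $\max_s(s^2-\Phi(sw_r))$ were $\ge\pi/\ba_0$, then the maximizing $s=s_r$ would have to satisfy $s_r^2\|\omega_r\|_{L^\infty}^2\ge\log(1/r)+O(1)$, and then the lower bound for $\Phi$ would blow up faster than $s_r^2$ can compensate — a contradiction for $r$ small, provided $\kappa_0$ is large enough. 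This is exactly why the thresholds $\tfrac{8e^{1/2}}{\ba_0}V_0$ and $\tfrac{\pi}{\ba_0 r_1}$ appear in $(H_7)$: the first controls the contribution of the potential term $V_0\|\omega_r\|_{L^2}^2$ hidden inside $\|\omega_r\|_\frac12=1$ (equivalently the gap between $s_r^2$ and $\ba_0^{-1}\log(1/r)$), and the second is precisely the constant needed so that $(\kappa_0-\e)\ba_0 r_1/\pi>1$ after the $e^{1/2}$ and core-measure bookkeeping.

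Concretely, the steps in order are: (1) fix the Moser family $\omega_r\in H^\frac12(\mb R)$ with $\|\omega_r\|_\frac12=1$ and record the asymptotics of $\|\omega_r\|_{L^\infty}$, of $\omega_r$ on its core, and of $\|\omega_r\|_{L^2}$ as $r\to0$; (2) set $w_r=(\omega_r,\omega_r)\in W^+$ and reduce $\max_{\widetilde W(w_r)}\mc J$ to $\max_{s\ge0}h_r(s)$ with $h_r(s)=s^2-\int_{\mb R}(F+G)(s\omega_r)\,dx$, justifying that the $W^-$-direction does not help (it strictly decreases $\mc J$ by Remark \ref{r2.4}(2), since its quadratic part is $-\tfrac12\|\cdot\|_W^2$ and $\Phi\ge0$); (3) show $h_r$ attains its max at some $s_r>0$ with $h_r'(s_r)=0$, giving the identity $2s_r=\int_{\mb R}(f+g)(s_r\omega_r)\,\omega_r\,dx$; (4) argue by contradiction: assume $h_r(s_r)\ge\pi/\ba_0$ for a sequence $r\to0$, deduce via $(H_2)$–$(H_3)$ that $s_r$ stays bounded away from $0$ and, using $h_r(s_r)\le s_r^2$, that $\limsup s_r^2\le\pi/\ba_0 \cdot(1+o(1))$ is false unless $s_r^2$ is large — more carefully, combine the critical-point identity with the $(H_7)$ lower bound $f(t)t\ge(\kappa_0-\e)e^{\ba_0 t^2}$ on the core to get $2s_r\ge(\kappa_0-\e)r_1\,s_r^{-1}\cdot(\text{core value of }\omega_r)\cdot e^{\ba_0 s_r^2\|\omega_r\|_\infty^2}(1+o(1))$; (5) plug this back to estimate $h_r(s_r)=s_r^2-\int(F+G)$ and show it is eventually $<\pi/\ba_0$, using $F(t)\ge\tfrac{\mu-2}{2\mu}tf(t)$-type bounds from $(H_3)$ rearranged, and the precise value of the threshold constants in $(H_7)$. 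The main obstacle — and the step requiring genuine care — is step (4)–(5): pinning down the correct $O(1)$ corrections in the asymptotics of the $H^\frac12$ Moser function (unlike the $H^1$ or $W^{1,n}$ cases, the optimal profile on $\mb R$ at exponent $\beta=\pi$ is delicate) and tracking them through the exponential so that the sharp constant $\pi/\ba_0$ — not merely some constant — comes out on the right side. I expect one has to choose $\e$ small depending on how much room $(H_7)$ gives beyond the thresholds, and possibly optimize the concentration scale $r$ against $s_r$ simultaneously; this is where the factor $8e^{1/2}$ rather than a cruder constant is forced.
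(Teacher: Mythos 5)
Your overall strategy — invoke the minimax characterization from Corollary \ref{c3.3}, test with a Moser sequence on $\mb R$, apply the $(H_7)$ lower bound on the concentration core, and derive a contradiction against the sharp threshold $\pi/(\ba_0 r_1)$ — is the one the paper follows, and the lower bound $m^\ast>0$ is indeed already Proposition \ref{p3.4}. However, Step (2) of your plan contains a genuine gap: you claim that the maximum of $\mc J$ over $\widetilde W(w_r)=W^-\oplus\mb R^+w_r$ is attained (near) $\zeta=0$, so that the problem collapses to maximizing the one-variable function $h_r(s)=s^2-\int_{\mb R}(F+G)(s\omega_r)\,dx$. This is not justified and is in general false. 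Writing $\mc J(sw_r+\zeta)=s^2-\tfrac12\|\zeta\|_W^2-\Phi(sw_r+\zeta)$, the maximizer in $\zeta$ is the unique solution of $\Phi'(sw_r+\zeta)\vert_{W^-}=-\zeta$ (this is exactly what makes $\widetilde m(w_r)$ land on $\mc N$); it is $0$ only if $\Phi'(sw_r)$ already annihilates $W^-$, which has no reason to hold for an arbitrary pair $(f,g)$. Remark \ref{r2.4}(2) only says $\mc J\leq 0$ on $W^-$ itself and gives no control on the $W^-$ component of the maximizer, and $(H_3)$ is a superlinearity (Ambrosetti--Rabinowitz) condition, not a convexity condition on $\Phi$. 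Consequently your critical-point identity $2s_r=\int(f+g)(s_r\omega_r)\omega_r\,dx$ is not the one actually available.

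The paper keeps the $W^-$ component throughout: it writes $\widetilde m(\hat\omega_n,\hat\omega_n)=\tau_n(\hat\omega_n,\hat\omega_n)+(u_n,-u_n)$ and derives from $\widetilde m\in\mc N$ the two-component identity
\begin{equation*}
\tau_n^2-\|u_n\|_{\frac12}^2=\frac12\I\mb R\bigl[f(\tau_n\hat\omega_n+u_n)(\tau_n\hat\omega_n+u_n)+g(\tau_n\hat\omega_n-u_n)(\tau_n\hat\omega_n-u_n)\bigr]\,dx,
\end{equation*}
which replaces your $h_r'(s_r)=0$. The device that makes this tractable without any information on $u_n$ is the elementary pointwise bound $\max\{\tau_n\hat\omega_n+u_n,\ \tau_n\hat\omega_n-u_n\}\geq\tau_n\hat\omega_n$, valid regardless of the sign or size of $u_n$; hence on the concentration core $B_{r_1/n}(0)$ at least one of the two integrands is $\geq(\kappa_0-\e)e^{\ba_0\tau_n^2\hat\omega_n^2}$ for $n$ large. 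This single observation is what lets the argument dispense with the $W^-$ direction entirely, and it is exactly the ingredient missing from your plan. Two further small corrections: the threshold used in this proposition is only $\kappa_0>\pi/(\ba_0 r_1)$ (the contradiction the paper reaches is $\gamma_0\leq\pi/(\ba_0 r_1)$); the other threshold $8e^{1/2}V_0/\ba_0$ in $(H_7)$ plays no role here, and the delicate optimization over the concentration scale you anticipate in Steps (4)--(5) does not in fact arise: the paper only needs the soft limits $\tau_n^2\to\pi/\ba_0$ and $\liminf\int_{B_{r_1}}e^{\ba_0(\tau_n\hat\omega_n)^2}dx\geq 3r_1$.
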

\begin{proof}
	For this purpose, for $r_1$ given by $(H_7)$, we consider the following sequence of Moser's functions
	\begin{align}\label{ms}
		\omega_n=\begin{cases}
			(\log n)^\frac12,~&\mbox{if }0\leq |x|\leq \frac{r_1}{n},\\
			\frac{\log(r_1/|x|)}{(\log n)^\frac12}, ~&\mbox{ if }\frac{r_1}{n}\leq|x|\leq r_1,\\
			0,~&\mbox{ if }|x| \geq r_1.
		\end{cases}
	\end{align}
Then from \cite[Lemma 3.4]{GiMiSe}, we have the following estimates
\begin{equation*}
\| (-\De)^\frac14 \omega_n\|_{L^2}^2 =\pi \text{ and}~\lim\limits_{n\ra \infty} \|\omega_n\|_{L^2}^2=O((\log n)^{-1}).
\end{equation*}
{Setting} $\ds \hat{\omega}_n=\frac{\omega_n}{\|\omega_n\|_{\frac{1}{2}}}$, for $x \in B_\frac{r_1}{n}(0)$ and $n$ sufficiently large, we then have
\begin{equation}\label{e3.11}
	\hat{\omega}_n^2 \geq \frac{\log n}{\pi+O((\log{n})^{-1})}.
	\end{equation}
Now thanks to Corollary \ref{c3.3} and Proposition \ref{p3.4}, to prove \eqref{e3.10}, it is sufficient to show that there exists $n \in \mb N$ such that
\begin{equation*}
	\sup_{w \in \widetilde{W}(\hat{\omega}_n,\hat{\omega}_n)}\mc J(w) <\frac{\pi}{\ba_0}.
\end{equation*}
We follow the proof of \cite[Proposition 3.2]{CZ} in this regard.
Suppose on the contrary that this is not true. Then for each $n \in \mb N$, we have
\begin{equation*}
	\sup_{w \in \widetilde{W}(\hat{\omega}_n,\hat{\omega}_n)}\mc J(w) \geq \frac{\pi}{\ba_0}.
	\end{equation*}
Using Proposition \ref{p2.6}, we get
\begin{equation}\label{e3.13}
	\mc J(\widetilde{m}(\hat{\omega}_n,\hat{\omega}_n)) \geq \frac{\pi}{\ba_0}~\text{for all}~n,
\end{equation}
where $\widetilde{m}(\hat{\omega}_n,\hat{\omega}_n)\in \mc N \cap \widetilde{W}(\hat{\omega}_n,\hat{\omega}_n)$. Now writing
\begin{equation*}
	\widetilde{m}(\hat{\omega}_n,\hat{\omega}_n)=\tau_n(\hat{\omega}_n,\hat{\omega}_n)+(u_n,-u_n),~\tau_n \in \mb R^+,~u_n \in H^\frac12(\mb R),
\end{equation*}
using \eqref{e3.13} and the fact that  $ \widetilde{m}(\hat{\omega}_n,\hat{\omega}_n) \in \mc N$, we have 
\begin{equation*}
	\tau_n^2-\|u_n\|_\frac12^2 -\I\mb R [F(\tau_n\hat{\omega}_n+u_n)+G(\tau_n\hat{\omega}_n-u_n)]dx \geq \frac{\pi}{\ba_0}
\end{equation*}
and
\begin{equation}\label{e3.15}
	\tau_n^2-\|u_n\|_\frac12^2 =\frac12\I\mb R [f(\tau_n\hat{\omega}_n+u_n)(\tau_n\hat{\omega}_n+u_n)+g(\tau_n\hat{\omega}_n-u_n)(\tau_n\hat{\omega}_n-u_n)]dx.
\end{equation}
By $(H_7)$ given $\e >0$, we can find $R_\e >0$ such that 
\begin{equation}\label{e3.16}
	tf(t),~tg(t)\geq (\gamma_0-\e)e^{\ba_0t^2}~\text{for all}~t \geq R_\e.
\end{equation}
Also by noting that 
$	\tau_n^2 \hat{\omega}_n^2 \geq\frac{\log{n}}{\beta_0}(1-O((\log{n})^{-1}))\text{ on}~B_\frac{r_1}{n}(0),$
we see that for $n$ large enough, $\max\{\tau_n\hat{\omega}_n+u_n,\tau_n\hat{\omega}_n-u_n\} \geq R_\e$ for all $\ds x \in B_\frac{r_1}{n}(0)$ and so using \eqref{e3.11}, \eqref{e3.15} and \eqref{e3.16}, we have
\begin{align}\label{e3.17}
\displaystyle	\tau_n^2 \geq \displaystyle\frac{(\gamma_0-\e)}{2}\I{B_\frac{r_1}{n}(0)}e^{\ba_0\tau_n^2\hat{\omega}_n^2} dx\geq \displaystyle (\gamma_0-\e)r_1e^{\frac{\ba_0}{\pi+ O((\log{n})^{-1})}\tau_n^2\log n-\log n}.
\end{align}
From \eqref{e3.17}, we conclude that $\{\tau_n\}$ is bounded and also $\ds \tau_n^2 \ra \frac{\pi}{\ba_0}$ as $n \ra \infty$. Now taking into account that $\hat{\omega}_n \ra 0$ as $n \ra \infty$ a.e. in $\mb R$ and by the Lebesgue dominated convergence theorem, we get as $n \ra \infty$
\begin{equation}\label{e3.18}
	\int\limits_{\{x \in B_{r_1}(0):\tau_n\hat{\omega}_n<
		R_\e\}}e^{\ba_0(\tau_n\hat{\omega}_n)^2}dx\ra 2r_1
\end{equation}
and 
\begin{equation}\label{e3.19}
	\int\limits_{\{x \in B_{r_1}(0):\tau_n\hat{\omega}_n<R_\e\}}\min\{f(\tau_n\hat{w}_n)\tau_n\hat{\omega}_n,~g(\tau_n\hat{\omega}_n)\tau_n\hat{\omega}_n\}dx\ra 0.
\end{equation}
Again from \eqref{e3.15} together with \eqref{e3.18} and \eqref{e3.19} , we have
\begin{align*}
	\tau_n^2 \geq& \frac{1}{2}\I {B_{r_1}(0)}[f(\tau_n\hat{\omega}_n+u_n)(\tau_n\hat{\omega}_n+u_n)+g(\tau_n\hat{\omega}_n-u_n)(\tau_n\hat{\omega}_n-u_n)]dx\\
	\geq& (\gamma_0-\e)\I{B_{r_1}(0)}e^{\ba_0(\tau_n\hat{\omega}_n)^2}dx-(\gamma_0-\e)\int\limits_{\{x \in B_{r_1}(0):\tau_n\hat{\omega}_n<R_\e\}}e^{\ba_0(\tau_n\hat{\omega}_n)^2}dx\\
	&+\frac{1}{2}\int\limits_{\{x \in B_{r_1}(0):\tau_n\hat{\omega}_n<R_\e\}}\min\{f(\tau_n\hat{\omega}_n)\tau_n\hat{\omega}_n,~g(\tau_n\hat{\omega}_n)\tau_n\hat{\omega}_n\}dx\\
	=&(\gamma_0 -\e)\left(\I{B_{r_1}(0)}e^{\ba_0(\tau_n\hat{\omega}_n)^2}dx-2r_1\right)+o_n(1).
\end{align*}
Now, we estimate the term $\ds \I{B_{r_1}(0)}e^{\ba_0(\tau_n\hat{\omega}_n)^2}dx$.  Using \eqref{e3.11}, we have that
\begin{equation*}
\I{B_\frac{r_1}{n}(0)}e^{\ba_0(\tau_n\hat{\omega}_n)^2}dx	\geq 2r_1e^{\frac{\ba_0}{\pi+ O((\log{n})^{-1})}\tau_n^2\log n-\log n}.
\end{equation*}
Noting that $\ds \tau_n^2 \geq \frac{\pi}{\ba_0}$, $\ds \tau_n^2 \ra \frac{\pi}{\ba_0}$, we have
\begin{equation*}
	\liminf\limits_{n \ra \infty}\I{B_\frac{r_1}{n}(0)}e^{\ba_0(\tau_n\hat{\omega}_n)^2}dx	\geq 2r_1.
\end{equation*}
Using the change of variable $x =r_1e^{(-\|\omega_n\|_\frac12\sqrt{\frac{2\log n}{\pi}}y)}$, we get
\begin{align*}
	\I\frac{r_1}{n}^{r_1} e^{\pi \hat{\omega}_n^2}dx \geq r_1\left(1-e^{-\log n}\right).
\end{align*}
Thus
\begin{equation*}
	\liminf \I{B_{r_1}(0)}e^{\ba_0(\tau_n\hat{\omega}_n)^2}dx \geq 3r_1,
\end{equation*}
which implies that 
\begin{align*}
	\frac{\pi}{\ba_0}=\lim\limits_{n \ra \infty}\tau_n^2 \geq (\gamma_0-\e)r_1.
\end{align*}
Since $\e>0$ was arbitrary, we have
\begin{equation*}
	\gamma_0 \leq \frac{\pi}{\ba_0 r_1 },
\end{equation*}
which is a contradiction to $(H_7)$. This completes the proof. \QED
	\end{proof}
In the sequel, we use the following lemma in the setting of fractional Sobolev spaces (see \cite[lemma 2.6]{AM})  which is a consequence of Concentration Compactness Principle (\cite{Lions}):
\begin{Lemma}\label{l3.5}
	Assume that $\{\ell_n\}$ is a bounded sequence in $H^\frac12(\mb R)$ satisfying
	\begin{equation*}
		\lim\limits_{n \ra \infty} \sup_{y \in \mb R} \I{B_R(y)}\ell_n^2dx=0,
	\end{equation*}
	for some $ R>0$. Then $\ell_n \ra 0$ strongly in $L^p(\mb R)$, for $2 <p<\infty$.
\end{Lemma}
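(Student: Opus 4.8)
The statement is the $H^{\frac12}(\mb R)$ analogue of the classical vanishing alternative in P.-L. Lions' concentration--compactness principle, and the plan is to establish it by the standard covering argument combined with a local interpolation inequality, carried out directly on the Gagliardo seminorm so that no (non-local) splitting of $(-\De)^{\frac14}\ell_n$ is needed, followed by a short bootstrap to reach every exponent $p\in(2,\infty)$.

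First I would cover $\mb R$ by the balls $B_i:=B_R(iR)$, $i\in\mb Z$, which have the finite overlap property: every $x\in\mb R$ lies in at most two of them. Writing $[u]_I^2:=\I I\I I|x-y|^{-2}|u(x)-u(y)|^2\,dx\,dy$ for an interval $I$, this overlap gives immediately $\sum_i[\ell_n]_{B_i}^2\le 2\,[\ell_n]_{\mb R}^2$ and $\sum_i\|\ell_n\|_{L^2(B_i)}^2\le 2\|\ell_n\|_{L^2(\mb R)}^2$, both bounded uniformly in $n$ since $\{\ell_n\}$ is bounded in $H^{\frac12}(\mb R)$. Next, fix $p_0\in(2,4)$ and choose $q:=\frac{4}{4-p_0}\in(p_0,\infty)$ and $\theta:=\frac{2}{p_0}\in(0,1)$, so that $\frac{1}{p_0}=\frac{1-\theta}{2}+\frac{\theta}{q}$ and $\theta p_0=2$. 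On each ball the borderline fractional Sobolev embedding ($s=\frac12=\frac n2$) gives $\|u\|_{L^q(B_i)}\le C\big([u]_{B_i}^2+\|u\|_{L^2(B_i)}^2\big)^{1/2}$ with a constant independent of $i$ by translation invariance; combining with the interpolation inequality $\|u\|_{L^{p_0}(B_i)}\le\|u\|_{L^2(B_i)}^{1-\theta}\|u\|_{L^q(B_i)}^{\theta}$ and using $\theta p_0=2$ yields, for each $i$,
\[
\I{B_i}|\ell_n|^{p_0}\,dx\;\le\;C\,\|\ell_n\|_{L^2(B_i)}^{(1-\theta)p_0}\big([\ell_n]_{B_i}^2+\|\ell_n\|_{L^2(B_i)}^2\big).
\]

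Summing over $i$, estimating $\|\ell_n\|_{L^2(B_i)}^{(1-\theta)p_0}\le\big(\sup_{y}\I{B_R(y)}\ell_n^2\,dx\big)^{(1-\theta)p_0/2}$ for every $i$, and invoking the two bounds from the first step, I obtain
\[
\I{\mb R}|\ell_n|^{p_0}\,dx\;\le\;\sum_i\I{B_i}|\ell_n|^{p_0}\,dx\;\le\;C\Big(\sup_{y\in\mb R}\I{B_R(y)}\ell_n^2\,dx\Big)^{(1-\theta)p_0/2}\longrightarrow 0,
\]
so $\ell_n\to0$ in $L^{p_0}(\mb R)$. Finally, for an arbitrary $p\in(2,\infty)$ I would interpolate $\|\ell_n\|_{L^p(\mb R)}\le\|\ell_n\|_{L^{p_0}(\mb R)}^{1-\sigma}\|\ell_n\|_{L^{p_1}(\mb R)}^{\sigma}$ between $p_0$ and a suitable $p_1$ (take $p_1=2$ if $p<p_0$, and some $p_1>p$ otherwise), using that $\{\ell_n\}$ is bounded in $L^{p_1}(\mb R)$ thanks to $H^{\frac12}(\mb R)\hookrightarrow L^{p_1}(\mb R)$; since $\|\ell_n\|_{L^{p_0}(\mb R)}\to0$, this gives $\ell_n\to0$ in $L^p(\mb R)$, as claimed.

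The argument is routine once two bookkeeping points are handled: that the local Gagliardo seminorms $[\ell_n]_{B_i}^2$ add up, with finite multiplicity, to a multiple of the global seminorm (this is what makes the non-local operator harmless here), and that the exponents are arranged so that the vanishing quantity $\sup_y\int_{B_R(y)}\ell_n^2\,dx$ survives the summation raised to a strictly positive power. The latter constraint is precisely why the direct estimate is restricted to $p_0\in(2,4)$ and the general case must be recovered by the final interpolation step; keeping the local Sobolev constant uniform over the balls (translation invariance) is the only other point needing care. I expect this uniform-constant-plus-finite-overlap bookkeeping to be the main, though modest, obstacle.
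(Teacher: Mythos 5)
Your proof is correct and self-contained. The paper itself does not prove this lemma; it merely states it and cites \cite[Lemma~2.6]{AM} as a consequence of the Lions concentration--compactness principle, so a direct argument such as yours is a genuine addition rather than a rederivation of the paper's route. Your adaptation of the classical Lions covering argument hinges on the observation that the local Gagliardo seminorms have bounded overlap, i.e.\ $\sum_i [\ell_n]_{B_i}^2\le 2[\ell_n]^2_{\mb R}$ because $\sum_i\chi_{B_i}(x)\chi_{B_i}(y)\le 2$; this is exactly the right replacement for the local bookkeeping one does with $\nabla \ell_n$ in the $H^1$ case, and it is what makes the non-locality harmless. The exponent accounting is also correct: with $q=\frac{4}{4-p_0}$, $\theta=\frac{2}{p_0}$, one indeed has $\frac{1}{p_0}=\frac{1-\theta}{2}+\frac{\theta}{q}$, $\theta p_0=2$, and $(1-\theta)p_0=p_0-2>0$, so the vanishing supremum appears to a strictly positive power; and the borderline fractional Sobolev embedding $W^{1/2,2}(B_R)\hookrightarrow L^q(B_R)$ ($q<\infty$, $sp=n$) together with translation invariance supplies the needed $i$-uniform constant. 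The final $L^p$ interpolation for arbitrary $p\in(2,\infty)$ is standard and closes the argument. The only stylistic remark is that the case distinction in the last step could be avoided by simply taking $p_0=p$ directly whenever $p\in(2,4)$ and interpolating only for $p\ge 4$, but this is cosmetic.
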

Concerning the minimizing sequence $\{w_n\}$ with $w_n=(u_n, v_n)$, we have the following lemma:
\begin{Lemma}\label{l3.6}
	There exists $R>0$ such that 
	\begin{equation*}
		\lim\limits_{n \ra \infty} \sup_{y \in \mb R} \I{B_R(y)}(u_n^2 +v_n^2)dx >0.
	\end{equation*}
\end{Lemma}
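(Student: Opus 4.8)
The plan is to argue by contradiction: assume that for every $R > 0$ one has $\lim_{n\to\infty}\sup_{y\in\mathbb{R}}\int_{B_R(y)}(u_n^2+v_n^2)\,dx = 0$. Since $\{w_n\} = \{(u_n,v_n)\}$ is bounded in $W = H^{1/2}(\mathbb{R})\times H^{1/2}(\mathbb{R})$ by Proposition \ref{p3.1}, the sequences $\{u_n\}$ and $\{v_n\}$ are each bounded in $H^{1/2}(\mathbb{R})$, so Lemma \ref{l3.5} (vanishing lemma) applies separately to $\{u_n\}$ and $\{v_n\}$, giving $u_n \to 0$ and $v_n \to 0$ strongly in $L^p(\mathbb{R})$ for every $p \in (2,\infty)$.

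Next I would use this strong decay to show the nonlinear terms vanish: $\int_{\mathbb{R}} F(u_n)\,dx \to 0$, $\int_{\mathbb{R}} G(v_n)\,dx \to 0$, and likewise $\int_{\mathbb{R}} f(u_n)u_n\,dx \to 0$, $\int_{\mathbb{R}} g(v_n)v_n\,dx \to 0$. The mechanism is standard: by $(H_2)$ and $(H_6)$, for any $\e>0$ and $q>2$ one has pointwise bounds of the form $|f(t)t| \le \e t^2 + C_\e |t|^q\big(e^{\ba t^2}-1\big)$ for a chosen $\ba$ slightly above $\ba_0$; splitting the integral, using $\int t^2$ controlled by the (already known small) $L^2$ mass on the region where vanishing forces smallness, and applying Hölder together with the uniform Trudinger–Moser bound from Lemma \ref{TME} (the exponential factor stays uniformly bounded in $L^1$ once $\ba \|u_n\|_{1/2}^2$ stays below $\pi$, which holds because $m^* < \pi/\ba_0$ controls the norms via Proposition \ref{p3.1}) together with $\|u_n\|_{L^q}\to 0$, one concludes the nonlinear integrals tend to $0$. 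The subtlety worth checking is that $\ba\|u_n\|_{1/2}^2 < \pi$ uniformly — this is exactly where Proposition \ref{pp3.4} ($m^*<\pi/\ba_0$) is used, so I would invoke the bound from Proposition \ref{p3.1}(1) or a sharper energy estimate to keep $\ba_0\limsup\|u_n\|_{1/2}^2 < \pi$, then pick $\ba$ slightly larger than $\ba_0$ but still admissible.

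Then I would derive the contradiction from the energy level. Since $\langle \mathcal{J}'(w_n), w_n\rangle \to 0$ and $\mathcal{J}(w_n)\to m^*$, combining gives
\begin{align*}
m^* + o_n(1) = \mathcal{J}(w_n) - \tfrac12\langle \mathcal{J}'(w_n),w_n\rangle = \int_{\mathbb{R}}\Big(\tfrac12 f(u_n)u_n - F(u_n)\Big)dx + \int_{\mathbb{R}}\Big(\tfrac12 g(v_n)v_n - G(v_n)\Big)dx.
\end{align*}
By the previous step the right-hand side tends to $0$, forcing $m^* = 0$, which contradicts $m^* > 0$ from Proposition \ref{p3.4}. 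Hence the assumed vanishing is impossible, proving the lemma.

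The main obstacle I anticipate is the uniform control of the exponential integrals: one must ensure that along the minimizing sequence the quantity $\ba_0\|u_n\|_{1/2}^2$ (and the analogue for $v_n$) stays strictly below $\pi$, so that a Trudinger–Moser estimate with exponent $\ba$ marginally above $\ba_0$ remains uniformly bounded. This requires using the sharp level estimate $m^* < \pi/\ba_0$ from Proposition \ref{pp3.4} together with the relation (from the Nehari/energy identities) between $\mathcal{J}(w_n)$ and $\|w_n\|_W^2$; a standard argument shows $\limsup_n \|u_n\|_{1/2}^2, \limsup_n\|v_n\|_{1/2}^2$ are bounded by roughly $2m^* < 2\pi/\ba_0$ on the relevant positive subspace, and one then splits off the $L^2$ part (which vanishes under the vanishing hypothesis) so that the effective exponent in the Trudinger–Moser bound can be taken just below $\pi$. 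Handling this splitting carefully, and choosing the exponent $q>2$ and $\ba>\ba_0$ appropriately, is the technical heart of the proof; everything else is routine.
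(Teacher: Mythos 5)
Your overall strategy --- argue by contradiction, invoke the vanishing lemma \ref{l3.5} to obtain $u_n, v_n \to 0$ in $L^p(\mb R)$ for $p>2$, show the nonlinear contributions to the energy vanish, and contradict $m^*>0$ --- is exactly what the paper does (the paper defers to \cite[Proposition 2.2]{CZ} for the details, and your outline matches that argument). However, the two specific technical claims you make about the Trudinger--Moser step do not hold up. First, the estimate $\limsup_n\|u_n\|_{1/2}^2 \lesssim 2m^*$ is not readily available: applying $(H_3)$ to $\mc J(w_n) - \tfrac12\langle\mc J'(w_n),w_n\rangle \to m^*$ does bound $\int f(u_n)u_n\,dx$ and $\int g(v_n)v_n\,dx$ by $\tfrac{2\mu}{\mu-2}m^* + o(1)$, but the Nehari relation $\langle\mc J'(w_n),(0,u_n)\rangle = o(1)$ expresses $\|u_n\|_{1/2}^2$ as the \emph{cross} term $\int g(v_n)u_n\,dx + o(1)$, which none of these bounds control. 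Second --- and this would be fatal even granting the factor-of-two bound --- the proposal to absorb the deficit by ``splitting off the $L^2$ part, which vanishes under the vanishing hypothesis'' is incorrect: under vanishing only the $L^p$ norms for $p>2$ go to zero, while $\|u_n\|_{L^2}$ does not (if it did, $u_n\to 0$ strongly in $H^{\frac12}(\mb R)$ and the whole argument would trivialize).

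One constructive remark: showing $\int_{\mb R} F(u_n)\,dx\to 0$ and $\int_{\mb R} G(v_n)\,dx\to 0$ in fact requires no Trudinger--Moser input. Split $\int F(u_n)$ over $\{|u_n|\le\de\}$, $\{\de<|u_n|\le K\}$ and $\{|u_n|>K\}$: the first two pieces are $O(\|u_n\|_{L^3}^3)\to 0$ by $(H_2)$ and Chebyshev, while on the tail $(H_4)$ gives $F(u_n)\le M|f(u_n)|\le (M/K)\,f(u_n)u_n$, so the tail is at most $MC/K$ by Proposition~\ref{p3.1}. The genuinely delicate point --- which your sketch does not yet close --- is then to show $\int f(u_n)u_n\,dx\to 0$ (equivalently $\langle u_n,v_n\rangle_{\frac12}\to 0$), since $\Phi(w_n)\to 0$ alone only gives $\langle u_n,v_n\rangle_{\frac12}\to m^*$, not $0$. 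This is where the sharp level bound $m^*<\pi/\ba_0$, the growth hypothesis $(H_7)$, and the de Souza--do \'{O} inequality \eqref{e3.26} (which the paper reuses in the proof of Proposition~\ref{pC}) all enter, and you should consult \cite[Proposition 2.2]{CZ} for how they combine rather than relying on a naive norm bound.
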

\begin{proof}
	The proof follows from the proof of \cite[Proposition 2.2]{CZ} with the help of Lemma \ref{l3.5} and slight modifications. 
 \QED
	\end{proof}
Our final step is to prove that \eqref{s1.4} has a nontrivial ground state solution. For that matter, we use the concentration compactness lemma \cite[Lemma I.1]{Lions} to get that $w_n$ satisfies only one of the following conditions:\\
Vanishing:
\begin{equation*}
	\lim\limits_{n \ra \infty} \sup_{y \in \mb R} \I{B_R(y)}(u_n^2 +v_n^2)dx=0~\text{for all}~R>0.
\end{equation*}
Nonvanishing: there exists $\nu >0$, $R'>0$ and $\{y_n\}\subset \mb R$ such that
\begin{equation*}
	\lim\limits_{n \ra \infty} \I{B_{R'}(y_n)}(u_n^2+v_n^2)dx \geq \nu.
\end{equation*}
From Lemma \ref{l3.6} one has that vanishing does not occur. So thanks to this compactness property we can complete the proof of Theorem \ref{t1.1}.\\
\textbf{End of Theorem \ref{t1.1}}: 
Let $\widetilde{u}_n(\cdot):=u_n(\cdot +y_n)$ and $\widetilde{v}_n(\cdot):= v_n (\cdot +y_n)$. Then
\begin{equation}\label{e3.33}
	\lim\limits_{n \ra \infty} \I{B_{R_0}(0)}(\tilde{u}_n^2+\tilde{v}_n^2)dx \geq \nu,
	\end{equation}
and $\mc J(\widetilde{w}_n) \ra m^\ast>0$ and $ \mc J'(\widetilde{w}_n) \ra 0$ as $n \ra \infty$, where $\widetilde{w}_n =(\widetilde{u}_n,\widetilde{v}_n)$. Obviously, $\{\widetilde{w}_n\}$ is bounded in $W$. Up to a subsequence, we assume that $\widetilde{w}_n \rightharpoonup \widetilde{w}$ weakly in $W$ for some $\widetilde{w} =(\widetilde{u},\widetilde{v}) \in W$. Similarly, as above, $\mc J'(\widetilde{w})=0$. By \eqref{e3.33}, $\widetilde{u}\neq 0$ and $\widetilde{v}\neq 0$. By $(H_3)$ and Fatou's lemma, we have
\begin{align*}
	{m^*} +o_n(1) =&\mc J(\widetilde{w}_n)-\frac12 \ld \mc J'(\widetilde{w}_n),\widetilde{w}_n\rd\\
	=& \I\mb R\left[\frac12 f(\widetilde{u}_n)\widetilde{u}_n-F(\widetilde{u}_n))\right]dx +\I\mb R\left[\frac12 g(\widetilde{v}_n)\widetilde{v}_n-F(\widetilde{v}_n))\right]dx\\
	\geq & \I\mb R\left[\frac12 f(\widetilde{u})\widetilde{u}-F(\widetilde{u}))\right]dx +\I\mb R\left[\frac12 g(\widetilde{v})\widetilde{v}-F(\widetilde{v}))\right]dx\\
	=&J(\widetilde{w})-\frac12 \ld \mc J'(\widetilde{w}),\widetilde{w}\rd +o_n(1)\\
	=&J(\widetilde{w}) +o_n(1).
	\end{align*}
On the other hand obviously $\mc J(\widetilde{w})\geq m^\ast=\displaystyle\inf _{\ell\in\mc N} \mc J(\ell)$. Thus $\widetilde{w}$ is a ground state solution of \eqref{s1.4}. This completes the proof. \QED
\section{A priori estimates}\label{s4}
In this section, we obtain a priori estimates for any $(u,v) \in \mc S$ and complete the proof of Theorem \ref{t1.3}. To achieve this, we  prove a series of preliminary results.
 \begin{Proposition}\label{p4.1}
 	Let $(u,v)\in \mc S$. Then $(u,v) \in L^\infty(\mb R)\cap C^{0,\gamma}_{\text{loc}}(\mb R)$ for every $\gamma \in (0,1)$.
 \end{Proposition}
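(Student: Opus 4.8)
The plan is to establish $L^\infty$ regularity first by a bootstrap argument exploiting the exponential integrability coming from Lemma~\ref{TME}, and then upgrade to local H\"older continuity via known regularity theory for the half-Laplacian. Starting from $(u,v)\in\mc S\subset W$, we know $u,v\in H^\frac12(\mb R)$, so by Lemma~\ref{TME} we have $e^{\ba u^2},e^{\ba v^2}\in L^p_{\mathrm{loc}}(\mb R)$ for every $\ba>0$ and every $p\ge 1$. By $(H_2)$ and $(H_6)$, for any $q\ge 1$ there is a constant $C_q$ with $|f(u)|\le C_q(|u|^{q}+e^{2\ba_0 u^2})$ (and similarly for $g(v)$); combining this with the Sobolev embedding $H^\frac12(\mb R)\hookrightarrow L^r(\mb R)$ for all $r\in[2,\infty)$ and the exponential integrability, we get $f(u),g(v)\in L^p(\mb R)$ for every $p\in[1,\infty)$ (or at least for every $p$ on bounded sets together with a global $L^2$-bound, which suffices).

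Next I would invoke a Calder\'on--Zygmund type estimate for the nonlocal equation $(-\De)^\frac12 u + V_0 u = g(v)$: since the right-hand side $g(v)\in L^p$ for all finite $p$, the solution $u$ belongs to the Bessel-potential space $W^{1,p}_{\mathrm{loc}}$ (or $H^{s,p}$ with $s$ close to $1$) for all such $p$. The excerpt already signals this by referencing the estimates of \cite{MSY} and stating Proposition~\ref{p4.1}; so the precise tool is the $L^p$-regularity for $(-\De)^\frac12$ from that reference. Choosing $p$ large enough that the relevant fractional Sobolev space embeds into $L^\infty$ (and into $C^{0,\gamma}_{\mathrm{loc}}$), we obtain $u\in L^\infty(\mb R)\cap C^{0,\gamma}_{\mathrm{loc}}(\mb R)$; the same argument applied to $(-\De)^\frac12 v + V_0 v = f(u)$ gives $v\in L^\infty(\mb R)\cap C^{0,\gamma}_{\mathrm{loc}}(\mb R)$. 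One should note a subtlety here: once $u$ is known to be bounded, the quantity $e^{2\ba_0 u^2}$ is bounded, so $f(u)\in L^\infty$, which then allows one to push to the optimal H\"older exponent $\gamma\in(0,1)$ via Schauder-type estimates for the half-Laplacian, closing the argument cleanly.

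The main obstacle is the first bootstrap step: the nonlinearity has critical exponential growth, so one cannot a priori assume $f(u)\in L^p$ with the exponent as large as one likes just from $u\in H^\frac12$. The delicate point is to use the full strength of the Trudinger--Moser inequality (Lemma~\ref{TME}) — valid for \emph{every} $\ba>0$ once $u$ is a \emph{fixed} function, not merely a function with small norm — to conclude $e^{\ba_0 u^2}\in L^p_{\mathrm{loc}}$ for all $p<\infty$, and then to patch these local estimates together with the global $L^2$ control of $u$ and the decay built into $(H_2)$ to get a genuine $L^p(\mb R)$ bound on $g(v)$ (or at least an estimate good enough to feed into the nonlocal regularity theory on all of $\mb R$). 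Once that is in hand, the iteration terminates in finitely many steps because each application of the $(-\De)^\frac12$ regularity gain increases the integrability/differentiability, and a finite number of such gains suffices to reach $L^\infty$. I would also take care that the constants in the Calder\'on--Zygmund estimates from \cite{MSY} are applied in the correct functional framework (Bessel potential spaces adapted to $(-\De)^\frac12$), as flagged in the remark following Theorem~\ref{t1.3}.
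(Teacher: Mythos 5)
Your first step is essentially the same as the paper's: since $u,v$ are \emph{fixed} elements of $H^{1/2}(\mb R)$, Lemma \ref{TME} holds for every $\ba>0$, which together with $(H_2)$, $(H_6)$ gives $f(u),g(v)\in L^p$ for every finite $p$, and both you and the paper then invoke the Calder\'on--Zygmund estimate of \cite[Cor.~3.1]{MSY}. The divergence is in how global $L^\infty$ boundedness is obtained. The paper first extracts only a \emph{local} $W^{1/2,p}(B_r)$ bound and local H\"older continuity, and then proves $v(x)\to 0$ as $|x|\to\infty$ by a separate translation/contradiction argument: translating $v_n=v(\cdot+x_n)$, using the translation-invariant local $C^\gamma$ estimate to extract uniform convergence on compacts, and showing by an $L^2$ orthogonality computation that the limit must vanish; local continuity plus decay at infinity then yields $v\in L^\infty(\mb R)$. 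This decay is not just a means to the end --- it is reused later (Proposition \ref{p4.4} and the compactness results), which is why the paper sets it up here.

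There are two concrete issues with your route. First, the local-to-global passage is skipped: the cited MSY estimate is of the form $\|(-\De)^{1/4}v\|_{L^p(B_r(x_0))}\le C(\|f(u)\|_{L^p(B_{2r}(x_0))}+\|v\|_{1/2})$ with a \emph{local} left-hand side, so ``choosing $p$ large enough that the fractional Sobolev space embeds into $L^\infty$'' does not immediately give $v\in L^\infty(\mb R)$. You would need to argue either that the constant $C$ is translation-invariant and $\|f(u)\|_{L^p(B_{2r}(x_0))}\le\|f(u)\|_{L^p(\mb R)}$ uniformly in $x_0$ (giving a uniform bound on the local $C^\gamma$ norms, hence $\|v\|_\infty<\infty$), or that a genuinely global CZ estimate $\|v\|_{W^{1,p}(\mb R)}\lesssim\|f(u)\|_{L^p(\mb R)}+\|v\|_{1/2}$ is available. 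Either is fillable, but it is a real omission. Second, your Schauder bootstrap to reach $\gamma\in(0,1)$ requires the right-hand side $f(u)-V_0v$ to be H\"older, but $(H_1)$ only gives $f$ \emph{continuous}, so $u\in L^\infty$ only yields $f(u)\in L^\infty$, not $f(u)\in C^\alpha$; the correct second pass is to re-apply the $L^p$ CZ estimate with $f(u)\in L^\infty\subset L^p_{\mathrm{loc}}$ for all $p$, not Schauder. Finally, note that even with your route one still needs the decay $u,v\to 0$ at infinity elsewhere in the paper, so the contradiction-by-translation argument is not avoidable in the larger scheme.
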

\begin{proof}
For any $r>0$, note that $v$ is a weak solution of the following problem
\begin{equation*}
	(-\De)^\frac12 V +V_0V =f(u),~V-v \in H^\frac12(B_{2r}(0)).
\end{equation*}
By the Trudinger-Moser inequality \eqref{eTME} , we see that $f(u) \in L^p(B_{2r}(0))$, for any  $p \in [2,\infty)$. Now by \cite[Corollary 3.1]{MSY} we have 
\begin{equation*}
	\|(-\De)^\frac14 v\|_{L^p(B_r(0))}\leq C(\|f(u)\|_{L^p(B_{2r}(0))}+\|v\|_\frac12),
\end{equation*}
and so 
\begin{equation}\label{4.7}
	\|v\|_{W^{\frac12,p}(B_r(0))} \leq C(\|f(u)\|_{L^p(B_r(0))}+\|v\|_\frac12),
\end{equation}
where $C>0$ is a constant independent of $v$. Since $p \in [2,\infty)$, we have from Sobolev embeddings that $v \in C^{\gamma}(B_r(0))$ for every $\gamma \in (0,1)$ and
\begin{equation}\label{4.8}
	\|v\|_{C^\gamma(\overline{B_r(0)})} \leq C'\|v\|_{W^{\frac12,p}(B_r(0))},
\end{equation}
where again $C'>0$ is independent of $v$ and obviously $\gamma$ is dependent on $p$. From now, we will work with fixed $p$ and $\gamma$. Combining \eqref{4.7} and \eqref{4.8}, we have
\begin{equation}\label{4.9}
		\|v\|_{C^\gamma(\overline{B_r(0)})}\leq C(\|f(u)\|_{L^p(B_r(0))}+\|v\|_\frac12),
\end{equation}
for some positive constant $C$. Next we prove that $\ds \lim_{|x| \ra \infty} v(x)=0$. Indeed, otherwise there exists $\{x_n\}\subset \mb R$ with $|x_n| \ra \infty$, as $n \ra \infty$ and $\liminf_{n \ra \infty} |v(x_n)|>0$. Let $u_n(x)=u(x+x_n)$ and $v_n(x)=v(x+x_n)$. Then $\|v_n\|_\frac12 =\|v\|_\frac12$ and 
\begin{equation}\label{4.10}
	(-\De)^\frac12v_n +V_0v_n =f(u_n),~ u_n, ~v_n \in H^\frac12(\mb R).
\end{equation}
Assume that $v_n \rightharpoonup \hat{v} $ weakly in $H^\frac12(\mb R)$. We claim that $\hat{v} \not \equiv 0$. Indeed, since $v_n$ is a weak solution of \eqref{4.10} replacing $f(u)$ by $f(u_n)$, it follows from \eqref{4.9} that, up to a subsequence, $v_n \ra \hat{v}$ uniformly in $\overline{B_r(0)}$. Hence
\begin{align*}
	\hat{v}(0)=\liminf\limits_{n \ra \infty }v_n(0) = \liminf\limits_{n \ra \infty }v(x_n) \ne 0.
	\end{align*}
On the other hand, for any fixed $R>0$ and $n$ large enough, we have
\begin{align*}
	\I\mb R v^2 dx \geq \I{B_R(0)}v^2dx +\I{B_R(x_n)}v^2dx
	{=}& \I{B_R(0)}v^2dx +\I{B_R(0)}v_n^2dx\\
	=&\I{B_R(0)}v^2dx +\I{B_R(0)}\hat{v}^2dx +{o_n(1)},
\end{align*}
where $o_n(1)\ra 0$, as $n \ra \infty$. Since $R$ is arbitrary, we get $\hat{v} =0$, which is a contradiction. Thus $v(x) \ra 0$ as $|x| \ra \infty$. Moreover, since $v \in C(B_r(0))$, for any $r>0$, we have $v \in L^\infty(\mb R)$. Similarly, we get $u \in L^\infty (\mb R)$.\QED 
\end{proof}
\begin{Proposition}\label{pC}
	Let $x_w \in \mb R$ be the maximum point of $|u(x)|+|v(x)|$, then the set 
	\begin{align*}
		\{(u(\cdot + x_w),~v(\cdot+x_w))| ~(u,v)\in \mc S\}
	\end{align*}
	is compact in $H^\frac12(\mb R)\times H^\frac12(\mb R)$.
\end{Proposition}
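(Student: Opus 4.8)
The plan is to prove that every sequence in the translated family $\{(u(\cdot+x_w),v(\cdot+x_w)):(u,v)\in\mc S\}$ has a subsequence converging strongly in $W$ to a member of the same family, which is exactly (sequential) compactness in $H^\frac12(\mb R)\times H^\frac12(\mb R)$. The first ingredient is that $\mc S$ is bounded in $W$: every $w=(u,v)\in\mc S$ satisfies $\mc J(w)=m^\ast$ and $\mc J'(w)=0$, so the estimates of Proposition \ref{p3.1} — which only use $(H_3)$ with $\mu>2$ — give $\|w\|_W\le C(m^\ast+1)$ together with uniform bounds on $\I\mb R f(u)u\,dx$, $\I\mb R g(v)v\,dx$, $\I\mb R F(u)\,dx$ and $\I\mb R G(v)\,dx$, and translations preserve all of these.

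Next I would fix $(u_n,v_n)\in\mc S$, translate so that the maximum point of $|u_n|+|v_n|$ is the origin, and write $w_n=(u_n,v_n)$. First one checks $\liminf_n(\|u_n\|_\infty+\|v_n\|_\infty)>0$: if not, then along a subsequence $\|u_n\|_\infty+\|v_n\|_\infty\to 0$, so by $(H_2)$ one has $|f(u_n)|\le\frac{V_0}{4}|u_n|$ and $|g(v_n)|\le\frac{V_0}{4}|v_n|$ for large $n$, and testing the two equations of \eqref{s1.4} with $u_n$ and $v_n$ and adding would force $\|w_n\|_W\to 0$, contradicting $\|w_n^+\|_W\ge\de$ (Proposition \ref{p3.5}). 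Passing to a subsequence with $|u_n(0)|+|v_n(0)|\ge c>0$ and invoking the local $C^{0,\ga}$ estimate produced in the proof of Proposition \ref{p4.1} — uniform in $n$ because, by Lemma \ref{TME} and $\sup_n\|w_n\|_W<\infty$, the data $f(u_n),g(v_n)$ are bounded in $L^p(B_2(0))$ for every finite $p$ — one obtains $\I{B_1(0)}(u_n^2+v_n^2)\,dx\ge c'>0$. Hence, after a further subsequence, $w_n\rightharpoonup w_0=(u_0,v_0)$ weakly in $W$ with $u_n\to u_0$, $v_n\to v_0$ locally uniformly, and $w_0\ne 0$.

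Letting $n\to\infty$ in $\mc J'(w_n)=0$ by means of Lemma \ref{TME} and Vitali's theorem gives $\mc J'(w_0)=0$; since $w_0\ne 0$, Remark \ref{r2.4} puts $w_0\in\mc N$, so $\mc J(w_0)\ge m^\ast$. Conversely, because $\mc J'(w_n)=0$ the number $\mc J(w_n)$ equals the sum of the nonnegative integrals $\I\mb R(\frac12 f(u_n)u_n-F(u_n))\,dx$ and $\I\mb R(\frac12 g(v_n)v_n-G(v_n))\,dx$, so Fatou's lemma gives $\mc J(w_0)\le\liminf_n\mc J(w_n)=m^\ast$; thus $\mc J(w_0)=m^\ast$ and $w_0\in\mc S$, and by Proposition \ref{p4.1} also $u_0,v_0\in L^\infty(\mb R)$ with $u_0(x),v_0(x)\to 0$ as $|x|\to\infty$. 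Equality must then hold in both Fatou estimates, which — the integrands being nonnegative and convergent a.e.\ — upgrades to $L^1$-convergence of $\frac12 f(u_n)u_n-F(u_n)$ and $\frac12 g(v_n)v_n-G(v_n)$; since $0\le f(u_n)u_n\le\frac{2\mu}{\mu-2}(\frac12 f(u_n)u_n-F(u_n))$ by $(H_3)$, a generalized dominated convergence argument then yields $L^1$-convergence (hence tightness) of $f(u_n)u_n$, $g(v_n)v_n$, $F(u_n)$, $G(v_n)$ to the obvious limits, so in particular $\Phi(w_n)\to\Phi(w_0)$ and $\ld\Phi'(w_n),w_n\rd\to\ld\Phi'(w_0),w_0\rd$.

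The last and hardest step is to promote $w_n\rightharpoonup w_0$ to strong convergence. Put $\widetilde w_n=w_n-w_0\rightharpoonup 0$, so $\widetilde w_n^\pm\rightharpoonup 0$ in $W^\pm$. Using $\mc J'(w_n)=0$ with test functions $\widetilde w_n$ and $\widetilde w_n^-$, the orthogonal decomposition $W=W^+\oplus W^-$ and $\ld u,v\rd_\frac12=\frac12(\|w^+\|_W^2-\|w^-\|_W^2)$, one is reduced to the two identities $\|\widetilde w_n^+\|_W^2-\|\widetilde w_n^-\|_W^2=\ld\Phi'(w_n),\widetilde w_n\rd+o(1)$ and $\|\widetilde w_n^-\|_W^2=-\ld\Phi'(w_n),\widetilde w_n^-\rd+o(1)$, so everything comes down to showing $\ld\Phi'(w_n),\varphi_n\rd\to 0$ for the weakly-null sequences $\varphi_n\in\{\widetilde w_n,\widetilde w_n^-\}$. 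Over any fixed ball this follows from $f(u_n)\to f(u_0)$, $g(v_n)\to g(v_0)$ in $L^2(B_R(0))$ (equi-integrability via Lemma \ref{TME}) and $\varphi_n\to 0$ in $L^2_{loc}$; the delicate issue is the tail $\I{\mb R\setminus B_R(0)}$, which must be small uniformly in $n$ for $R$ large. The tightness of $\{f(u_n)u_n\}$, $\{g(v_n)v_n\}$ from the previous step controls the terms carrying a factor $u_n$ or $v_n$, but to control the terms where the test component is the ``extra'' factor one needs a \emph{uniform} decay estimate for the translated family, namely $\sup_n\big(\|u_n\|_{L^\infty(\mb R\setminus B_R(0))}+\|v_n\|_{L^\infty(\mb R\setminus B_R(0))}\big)\to 0$ as $R\to\infty$; this is precisely the information missing from \cite[eq.\ (3.26)]{CZ}, and I would supply it by a contradiction argument in the spirit of Proposition \ref{p4.1} (producing a nontrivial weakly convergent translate together with an incompatible $L^2$-mass estimate). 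Granting it, the second identity gives $\|\widetilde w_n^-\|_W\to 0$, whence the first gives $\|\widetilde w_n^+\|_W\to 0$, so $w_n\to w_0$ in $W$; by the local uniform convergence $u_n\to u_0$, $v_n\to v_0$ the limit $w_0\in\mc S$ still has its maximum of $|u_0|+|v_0|$ at the origin, hence lies in the translated family, which is therefore compact. The principal obstacle is exactly this uniform-decay input: weak convergence plus convergence of the energies controls only the difference $\|\widetilde w_n^+\|_W^2-\|\widetilde w_n^-\|_W^2$, so the negative component cannot be annihilated without first ruling out escape of mass to infinity.
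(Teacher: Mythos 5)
Your overall skeleton matches the paper's: extract a sequence in $\mc S$, translate, find a nontrivial weak limit, show $w_0\in\mc S$ via Fatou and $(H_3)$, and then fight for strong convergence. Where the two arguments diverge is exactly in that last step, and this is where your proposal has a genuine gap.

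The paper does not go through the $W^\pm$ decomposition and the weakly-null test functions $\widetilde w_n,\widetilde w_n^-$. Instead it uses the much more direct trick of testing $\mc J'(u_n,v_n)=0$ and $\mc J'(u_0,v_0)=0$ against $(0,u_n)$, which yields the two exact identities
\[
\|u_n\|_\frac12^2=\int_{\mb R}g(v_n)u_n\,dx,\qquad \ld u_n,u_0\rd_\frac12=\int_{\mb R}g(v_0)u_n\,dx,
\]
so the whole question collapses to a single scalar statement, $\int_{\mb R}(g(v_n)-g(v_0))u_n\,dx\to 0$, after which $\|u_n\|_\frac12\to\|u_0\|_\frac12$ and weak plus norm convergence gives strong convergence. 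That scalar statement is then proved in two steps: (i) $g(v_n)\to g(v_0)$ in $L^1(\mb R)$, using only the tightness of $g(v_n)v_n$ you already derived, uniform $L^2$-bounds, $(H_2)$, and a small/medium/large splitting of $|v_n|$; and (ii) the hard cross-term $\int_{\{|u_n|\ge K,\ |v_n|\ge K'\}}|g(v_n)u_n|\,dx$ is killed by the de Souza--do \'{O} inequality $st\le t^2(e^{t^2}-1)+|s|(\log|s|)^{1/2}$, the Trudinger--Moser inequality, and---crucially---the strict level bound $m^\ast<\pi/\ba_0$ from Proposition \ref{pp3.4}. The latter lets one renormalize $\hat u_n=(\pi/\ba_0-\de)^{1/2}u_n/\|u_n\|_\frac12$ so that $e^{p\ba_0\hat u_n^2}$ is uniformly integrable for some $p>1$, which is exactly what closes the tail.

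Your proposal never invokes any of this. You correctly diagnose that the obstruction is a possible escape of mass to infinity, and you correctly observe that tightness of $f(u_n)u_n,g(v_n)v_n$ does not control the cross-terms $\int f(u_n)\widetilde v_n\,dx$, etc. But the fix you claim you ``would supply''---a uniform decay estimate $\sup_n\|u_n\|_{L^\infty(\mb R\setminus B_R)}\to 0$---is not carried out, and it cannot be imported from elsewhere in the paper at this point: the uniform decay statement is Proposition \ref{p4.4}, which is stated and proved \emph{after} the compactness proposition and (following \cite[Proposition 2.8]{CZ}) relies on it. A pointwise-decay argument in the spirit of Proposition \ref{p4.1} controls one fixed $(u,v)$, not the whole family $\mc S$ uniformly, so it does not obviously yield the uniformity you need without already having compactness. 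In short: the structural reduction is sound, but the critical analytic input---the exponential/Trudinger--Moser estimate driven by $m^\ast<\pi/\ba_0$---is missing from your proposal, and the substitute you propose is either circular or unproved. To repair it you should replace the $W^\pm$ decomposition with the $(0,u_n)$ test and reproduce the two-step estimate of $\int(g(v_n)-g(v_0))u_n\,dx$ sketched above.
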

\begin{proof}
	Let $\{w_n\}\subset \mc S$. Then we have the following
	\begin{equation*}
		\mc J(w_n)=m^\ast~\text{and}~\mc J'(w_n)=0,~\text{for all}~n \in \mb N.
	\end{equation*} 
	Thus {as in the proof of} Theorem \ref{t1.1}, we see that there exists $\{y_n\}\subset \mb R$ and $w_0 \not \equiv 0$ such that 
	\begin{equation*}
		w_n(\cdot+y_n) \rightharpoonup w_0~\text{weakly in}~W~\text{and}~w_n(\cdot+y_n)\ra w_0~\text{a.e in}~\mb R,~\text{as}~n \ra \infty.
	\end{equation*}
	Also it is easy to show that $w_0$ is a critical  point of $\mc J$. Now to complete the proof, we show that $w_0 \in \mc S$ and $w_n(\cdot+y_n) \ra w_0$ strongly in $W$, as $n \ra \infty$, and hence $\mc S$ is a compact set. To this end, first note that $\mc J$ is invariant under translation. Thus for sake of simplicity, let us write $w_n$ in place of $w_n(\cdot+y_n)$ and let $w_n=(u_n,v_n)$ and $w_0=(u_0,v_0)$. Now we prove that $w_n \ra w_0$ in $W$. By $(H_3)$ and Fatou's lemma, we have
	\begin{align}\nonumber\label{4.5}
		m^\ast=& \lim\limits_{n \ra \infty} \left( \mc  J(w_n)-\frac12 \ld \mc J'(w_n),w_n \rd\right)\\ \nonumber
		\geq & \lim\limits_{n \ra \infty}\left( \I\mb R\left(\frac12 f({u}_n){u}_n-F({u}_n)\right)dx +\I\mb R\left(\frac12 g({v}_n){v}_n-G({v}_n)\right)dx\right)\\ \nonumber
		\geq & \I\mb R\left(\frac12 f({u}_0){u}_0-F({u}_0)\right)dx+ \I\mb R\left(\frac12 g({v}_0){v}_0-G({v}_0)\right)dx\\
		=&\mc  J(w_0)-\frac12 \ld \mc J'(w_0),w_0 \rd= \mc J(w_0).
	\end{align}
On the other hand, since $w_0 \not \equiv 0$ and $\mc J^\prime(w_0)=0$, one has $\mc  J(w_0) \geq m^\ast$. Thus $w_0 \in \mc S$. Next we prove that $w_n \ra w_0$ in $W$. By \eqref{4.5} and $\mc J(w_0)= m^\ast$, we have
	\begin{equation*}
		\lim\limits_{n \ra \infty}\I\mb R\left(\frac12 f({u}_n){u}_n-F({u}_n)\right)dx= \I\mb R\left(\frac12 f({u}_0){u}_0-F({u}_0)\right)dx
	\end{equation*}
	and
	\begin{equation*}
		\lim\limits_{n \ra \infty}\I\mb R\left(\frac12 g({v}_n){v}_n-G({v}_n)\right)dx= \I\mb R\left(\frac12 g({v}_0){v}_0-G({v}_0)\right)dx.
	\end{equation*}
	Now by $(H_3)$, we get 
	\begin{align*}
		0\leq \frac{\mu-2}{2}F(u_n)\leq \frac12f(u_n)u_n-F(u_n),~0\leq \frac{\mu-2}{2}G(v_n)\leq \frac12g(v_n)v_n-G(v_n).
	\end{align*}
	Thus by the dominated convergence theorem, we have
	\begin{align*}
		\lim\limits_{n \ra \infty} \I\mb R F(u_n)dx= \I\mb R F(u_0)dx,~\lim\limits_{n \ra \infty} \I\mb R G(v_n)dx= \I\mb R G(v_0)dx.
	\end{align*}
	and so 
	\begin{align}\label{4.2}
		\lim\limits_{n \ra \infty}\I\mb R\frac12 f({u}_n){u}_ndx=\I\mb R\frac12 f({u}_0){u}_0dx,~\lim\limits_{n \ra \infty}\I\mb R\frac12 g({v}_n){v}_ndx=\I\mb R\frac12 g({v}_0){v}_0dx. 
	\end{align}
	Now from $\ds \ld J'(u_n,v_n),(0,u_n)\rd =0$ and $\ds \ld J'(u_0,v_0),(0,u_n)\rd =0$, we obtain
	\begin{equation}\label{e4.3}
		\|u_n\|_\frac12^2 =\I\mb Rg(v_n)u_ndx ~\text{and}~	\ld u_n,u_0\rd_\frac12 =\I\mb Rg(v_0)u_ndx.
	\end{equation}
	By using the fact that $u_n \rightharpoonup u_0$ weakly in $H^\frac12(\mb R)$, we conclude that
	\begin{equation}\label{e4.9}
		\lim\limits_{n \ra \infty}\I\mb R g(v_0)u_ndx =\|u_0\|_\frac12^2.
	\end{equation}
	Combining \eqref{e4.3} and \eqref{e4.9} to prove the required convergence, it is sufficient to show that 
	\begin{equation}\label{e4.5}
		\lim\limits_{n\ra \infty}\I\mb R (g(v_n)-g(v_0))u_ndx =0.
	\end{equation}
	We divide the proof of \eqref{e4.5} into two steps:\\
	\textbf{Step 1.} In this step we prove
	\begin{equation}\label{e4.6}
		\I\mb R |g(v_n)-g(v_0)|dx \ra 0,~\text{as}~n \ra \infty.
	\end{equation}
	Using Proposition \ref{p3.1} and Vitali's convergence theorem, we can see that $\{g(v_n)\}$ is relatively compact in $L^1_{\text{loc}}(\mb R)$. Thus to prove \eqref{e4.6} it is sufficient to show that there exists $R'>0$ large enough such that
	\begin{equation*}
		\I{\mb R \setminus B_{R'}(0)}|g(v_n)| dx\leq \e.
	\end{equation*}
	Indeed using $(H_2       )$ there exists $\de >0$ such that
	\begin{equation}\label{eq4.11}
		|g(t)| \leq  \e t^2,~\text{for}~|t| \leq \de.
	\end{equation}
	{Now for any $R>0$ and $K>0$, using Proposition \ref{p3.1} and \eqref{eq4.11} we have
	\begin{align} \nonumber\label{eq4.12}
		\I{\mb R \setminus B_{R}(0)}|g(v_n)|dx =&\I{\{x \in \mb R\setminus B_{R}(0):
			~|v_n(x)|\leq \de\}}|g(v_n)| dx+\I{\{x \in \mb R\setminus B_{R}(0):~\de\leq|v_n(x)|\leq K\}}|g(v_n)|dx\\ \nonumber
		&+\I{\{x \in \mb R\setminus B_{R}(0):~|v_n(x)|\geq K\}}|g(v_n)|dx\\
		&\leq \e \|v_n\|_{L^2(\mb R)}^2 + \I{\{x \in \mb R\setminus B_{R}(0):~\de\leq|v_n(x)|\leq K\}}|g(v_n)|dx +\frac{C}{K}.
	\end{align}
Again using \eqref{4.2} for any $\e^\prime >0$, we have for $n$ large
\begin{align}\label{eq4.13}
	 \I{\{x \in \mb R\setminus B_{R}(0):~\de\leq|v_n(x)|\leq K\}}|g(v_n)|dx \leq \frac{1}{\de} \I{\mb R \setminus B_R(0)}g(v_n)v_ndx \leq  \frac{1}{\de} \left( \I{\mb R \setminus B_R(0)}g(v_0)v_0dx + \e^\prime\right).
 \end{align}
From \eqref{eq4.12}, \eqref{eq4.13} and again employing Proposition \ref{p3.1}, we conclude for $R^\prime>0$,  $K^\prime>0$ and $n$ large enough that
\begin{equation*}
		\I{\mb R \setminus B_{R^\prime}(0)}|g(v_n)|dx \leq  \e.
\end{equation*}
	This completes Step 1.}\\
	\textbf{Step 2.} In this step we complete the proof of \eqref{e4.5}. Now for any $K>0$, we have
	\begin{align*}
		\left| \I\mb R (g(v_n)-g(v_0))u_ndx\right|\leq K \I{|u_n| \leq K}|g(v_n)-g(v_0)|dx +\I{|u_n| \geq K}|g(v_n)-g(v_0)||u_n|dx
		\leq K I_1 +I_2,
	\end{align*}
	where
	\begin{equation*}
		I_1 =\I\mb R|g(v_n)-g(v_0)|dx~\text{and}~I_2=\I{\{|u_n| \geq K\}}|g(v_n)-g(v_0)||u_n|dx.
	\end{equation*}
	Using Step 1, we see that $I_1 \ra 0$ as $n \ra \infty$. Now we estimate $I_2$. We have
	\begin{align*}
		I_2 \leq &\I{\{ |u_n|\geq K, |v_n|\leq \de \}}|g(v_n)u_n|dx +\I{\{ |u_n|\geq K, \de\leq |v_n|\leq K' \}}|g(v_n)u_n|dx +\I{\{ |u_n|\geq K, |v_n|\geq K' \}}|g(v_n)u_n|dx\\
		&+\frac{C}{K}\|u_n\|_{L^2(\mb R)}^2,
	\end{align*}
	where $K' >0$ and $\de >0$ is chosen so that by $(H_1)$, we have
	\begin{equation*}
		g(t)\leq \e t,~\text{for}~|t|\leq \de.
	\end{equation*}
Thus by $(H_1)$, H\"{o}lder's inequality and Proposition \ref{p3.1}, we have
\begin{equation*}
	I_2 \leq C\e +\frac{2C}{K}+ I_{2,1},
\end{equation*}	
where
 \begin{equation*} I_{2,1}=\I{\{ |u_n|\geq K, |v_n|\geq K' \}}|g(v_n)u_n|dx.
	\end{equation*}
 Lastly, we will estimate $I_{2,1}.$
  We need the following inequality (see \cite[Lemma 4.1]{SoO})
 \begin{equation}\label{e3.26}
 	st \leq t^2(e^{t^2}-1) +|s|(\log|s|)^\frac12,~\text{for all}~t \in \mb R,~|s| \geq e^\frac{1}{\sqrt[3]{4}}.
 \end{equation}
 Since from $(H_6)$, $f$ and $g$ have $\ba_0$-critical growth of Trudinger-Moser type and hence for any fixed $\e >0$, there exists $C_\e>0$ such that
 \begin{equation}\label{e3.29}
 	|f(t)|,~|g(t)| \leq C_\e e^{(\ba_0+\e)t^2},~\text{for}~t \in \mb R.
 \end{equation}
By Proposition \ref {pp3.4}, choosing $\de >0$ such that $m^\ast \in \left(0,\frac{\pi}{\ba_0}-\de\right)$ and taking 
 \begin{equation*}
 	\hat{u}_n =\left(\frac{\pi}{\ba_0}-\de\right)^\frac12 \frac{u_n}{\|u_n\|_\frac12},
 \end{equation*}
 we can write
 \begin{align}\label{gep}
 	\I{\{ |u_n|\geq K, |v_n|\geq K' \}}|g(v_n)u_n|dx = \frac{\|u_n\|_\frac12}{\left(\frac{\pi}{\ba_0}-\de\right)^\frac12}\I{\{ |u_n|\geq K, |v_n|\geq K' \}} |g(v_n)||\hat{u}_n|dx=\frac{\|u_n\|_\frac12}{\left(\frac{\pi}{\ba_0}-\de\right)^\frac12} I_{2,1}^\prime,
\end{align}
where 
\begin{equation*}
	I_{2,1}^\prime =\frac{C_\e}{\sqrt{\ba_0}}\I{\{ |u_n|\geq K, |v_n|\geq K' \}} \frac{g(v_n)}{C_\e}\sqrt{\ba_0}|\hat{u}_n|dx.
\end{equation*}
Setting 
\begin{equation*}
	A_n:= \{x \in \mb R:|u_n(x)| \geq K,~|v_n(x)| \geq K',~ |g(v_n(x))|/C_\e \geq e^{\frac{1}{\sqrt[3]{4}}}\}
\end{equation*}
and
\begin{equation*}
	B_n=\{x \in \mb R:|u_n(x)| \geq K,~|v_n(x)| \geq K',~ |g(v_n(x))|/C_\e \leq e^{\frac{1}{\sqrt[3]{4}}}\},
\end{equation*}	
and using inequality \eqref{e3.26} with $s=g(v_n)/C_\e$ and $t =\sqrt{\ba_0} |\hat{u}_n|$, we can estimate
\begin{align*}
	I_{2,1}^\prime \leq \frac{C_\e}{\sqrt{\ba_0}}\I {A_n} \frac{|g(v_n)|}{C_\e}\left[\log\left(\frac{|g(v_n)|}{C_\e}\right)\right]^\frac12dx+\I{B_n}|g(v_n)\hat{u}_n|dx+C_\e\sqrt{\ba_0} \I{A_n} \hat{u}_n^2 (e^{\ba_0\hat{u}_n^2}-1)dx.
\end{align*}
Using \eqref{e3.29}, we get
\begin{align}\label{e4.13}
I_{2,1}^\prime &\leq \sqrt{\frac{\ba_0+\e}{\ba_0}}\I {A_n} g(v_n)v_ndx+\I{B_n}|g(v_n)\hat{u}_n|dx
+C_\e\sqrt{\ba_0} \I {A_n} \hat{u}_n^2 (e^{\ba_0\hat{u}_n^2}-1)dx.
\end{align}	
Using \eqref{4.2}, we see that the first integral is uniformly small for $K'$ large enough. Now
\begin{align*}
\frac{\|u_n\|_\frac12}{\left(\frac{\pi}{\ba_0}-\de\right)^\frac12}	\I{B_n}|g(v_n)||\hat{u}_n|dx \leq C_\e e^{\frac{1}{\sqrt[3]{4}}}\I{B_n}|u_n|dx \leq \frac{C}{K}\I{B_n}|u_n|^2 dx\leq \frac{C^{\prime\prime}}{K}
\end{align*}
which can be made sufficiently small for $K$ large enough. Lastly, we have
\begin{align*}
	\I{A_n}\hat{u}_n^2 (e^{\ba_0\hat{u}_n^2}-1)dx\leq\frac{1}{K} \I{A_n}\hat{u}_n^3 (e^{\ba_0\hat{u}_n^2}-1)dx.
\end{align*}
Since $ \ds \|\hat{u}_n\|_\frac12^2 =\frac{\pi}{\ba_0}-\de$, there exists $p>1$ such that $\ds p\ba_0 \left(\frac{\pi}{\ba_0}-\de\right)<\pi$. Thus by Trudinger-Moser inequality, as $n \ra \infty$,
\begin{equation*}
\I{A_n}\hat{u}_n^2 (e^{\ba_0\hat{u}_n^2}-1)dx	\leq \frac{1}{K}\left(\I\mb R |\hat{u}_n|^{3q}dx\right)^\frac1q\left(\I\mb R(e^{p\ba_0\hat{u}_n^2}-1)dx\right)^\frac1p 
\end{equation*}
where $\ds \frac{1}{p}+\frac{1}{q}=1$,	
which can be made arbitrarily small for $K$ sufficiently large. Hence using \eqref{gep}, \eqref{e4.13} and Proposition \ref{p3.1} we see that $I_{2,1}$ can be made arbitrarily small for $K$ and $K'$ large enough. This completes the proof of \eqref{e4.5}. Thus
\begin{equation*}
	\|u_n\|_\frac12 \ra \|u_0\|_\frac12,~\text{as}~n \ra \infty
\end{equation*}	
	and hence $u_n \ra u_0$ strongly in $H^\frac12(\mb R)$. Similarly, $v_n \ra v_0$ strongly in $H^\frac12(\mb R)$. This proves that $\mc S$ is compact up to translation.\QED
\end{proof}
\begin{Proposition}\label{p4.2}
	Let $w_n=(u_n,v_n) \subset \mc S$ such that $\hat{w}_n=w_n(\cdot +y_n) \ra w_0=(u_0,v_0) \in \mc S$ in $W$, then
	\begin{equation*}
		\sup_{n \geq 1} (\|u_n\|_\infty + \|v_n\|_\infty) <\infty.
	\end{equation*}
\end{Proposition}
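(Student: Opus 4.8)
The plan is to reduce the $L^\infty$-bound to a uniform $L^p$-bound, on balls of a fixed radius, for the right-hand sides $f(\hat u_n)$ and $g(\hat v_n)$, and then to invoke the Calder\'on--Zygmund estimate of \cite{MSY}. Since $(-\De)^\frac12$ and the embeddings used in the proof of Proposition \ref{p4.1} commute with translations, the inequality \eqref{4.9} holds with a constant $C$ independent of the centre: for every $x_0\in\mb R$ and every fixed $p>2$ (with $\ga:=\frac12-\frac1p$),
\[
\|\hat v_n\|_{C^{0,\ga}(\overline{B_1(x_0)})}\le C\big(\|f(\hat u_n)\|_{L^p(B_2(x_0))}+\|\hat v_n\|_\frac12\big),
\]
and symmetrically for $\hat u_n$ with $g(\hat v_n)$ in place of $f(\hat u_n)$. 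Since $\|\hat u_n\|_\infty=\|u_n\|_\infty$, $\|\hat v_n\|_\infty=\|v_n\|_\infty$ and $\sup_n\|\hat w_n\|_W<\infty$, it therefore suffices to prove
\[
\sup_{n\ge1}\ \sup_{x_0\in\mb R}\ \big(\|f(\hat u_n)\|_{L^p(B_2(x_0))}+\|g(\hat v_n)\|_{L^p(B_2(x_0))}\big)<\infty .
\]

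The key ingredient — and the point at which, in our view, the argument around \cite[equation (3.26)]{CZ} must be completed — is a uniform localisation of the $H^\frac12$-norm. Fix $\eta\in C_c^\infty(\mb R)$ with $0\le\eta\le1$, $\eta\equiv1$ on $B_2(0)$ and $\mathrm{supp}\,\eta\subset B_3(0)$, and set $\eta_{x_0}:=\eta(\cdot-x_0)$. I would first prove
\[
\lim_{|x_0|\to\infty}\ \sup_{n\ge1}\ \|\eta_{x_0}\hat u_n\|_\frac12=0 ,
\]
and similarly for $\hat v_n$. For this: expanding $\eta_{x_0}(x)u(x)-\eta_{x_0}(y)u(y)=\eta_{x_0}(x)\big(u(x)-u(y)\big)+u(y)\big(\eta_{x_0}(x)-\eta_{x_0}(y)\big)$ in the Gagliardo seminorm and using $|\eta_{x_0}(x)-\eta_{x_0}(y)|\le\min\{2\|\eta\|_\infty,\|\eta'\|_\infty|x-y|\}$ shows that $u\mapsto\eta_{x_0}u$ is bounded on $H^\frac12(\mb R)$ with operator norm independent of $x_0$; the same expansion, together with $\|u_0\|_{L^2(B_3(x_0))}\to0$ and the absolute continuity — over bounded regions receding to infinity — of the integral defining $\|(-\De)^\frac14 u_0\|_{L^2}^2$, yields $\|\eta_{x_0}u_0\|_\frac12\to0$ as $|x_0|\to\infty$. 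If the displayed limit failed there would be $\e_0>0$, $|x_0^{(j)}|\to\infty$ and $n_j$ with $\|\eta_{x_0^{(j)}}\hat u_{n_j}\|_\frac12\ge\e_0$; if $\{n_j\}$ were bounded this would contradict $\|\eta_{x_0}\hat u_{n_j}\|_\frac12\to0$ for the finitely many fixed functions $\hat u_{n_j}$, while if $n_j\to\infty$ then $\|\eta_{x_0^{(j)}}\hat u_{n_j}\|_\frac12\le C\|\hat u_{n_j}-u_0\|_\frac12+\|\eta_{x_0^{(j)}}u_0\|_\frac12\to0$ because $\hat w_n\to w_0$ in $W$, again a contradiction.

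Granting this, I would split into two regions. By \eqref{e3.29}, fix $C_{\e_0}>0$ with $|f(t)|,|g(t)|\le C_{\e_0}e^{(\ba_0+\e_0)t^2}$, and pick $\e>0$ so small that $p(\ba_0+\e_0)\e^2<\pi$. If $|x_0|\ge R_\e$, with $R_\e$ obtained from the previous step for this $\e$, then $\|\eta_{x_0}\hat u_n\|_\frac12<\e$ for all $n$, hence $p(\ba_0+\e_0)\|\eta_{x_0}\hat u_n\|_\frac12^2<\pi$, and the sharp Trudinger--Moser inequality recorded after Lemma \ref{TME} (in its rescaled form) gives
\[
\I{B_2(x_0)}e^{p(\ba_0+\e_0)\hat u_n^2}\,dx\le|B_2|+\I{\mb R}\big(e^{p(\ba_0+\e_0)(\eta_{x_0}\hat u_n)^2}-1\big)\,dx\le C,
\]
so $\|f(\hat u_n)\|_{L^p(B_2(x_0))}\le C$ uniformly. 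If $|x_0|<R_\e$, then $B_2(x_0)\subset K:=\overline{B_{R_\e+2}(0)}$; since $\sqrt{p(\ba_0+\e_0)}\,\hat u_n\to\sqrt{p(\ba_0+\e_0)}\,u_0$ strongly in $H^\frac12(\mb R)$, the continuity of the map $u\mapsto e^{u^2}$ from $H^\frac12(\mb R)$ into $L^1(K)$ noted after Lemma \ref{TME} makes $\{e^{p(\ba_0+\e_0)\hat u_n^2}\}$ bounded in $L^1(K)$, so $\|f(\hat u_n)\|_{L^p(B_2(x_0))}\le C$ there too. The same reasoning applies to $g(\hat v_n)$. Feeding these uniform bounds into the translated estimate of the first paragraph, $\|\hat u_n\|_{C^{0,\ga}(\overline{B_1(x_0)})}$ and $\|\hat v_n\|_{C^{0,\ga}(\overline{B_1(x_0)})}$ are bounded uniformly in $x_0$ and $n$; taking the supremum over $x_0$ and using the translation invariance of $\|\cdot\|_\infty$ completes the argument. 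The one genuinely delicate step is the uniform localisation of the $H^\frac12$-norm in the second paragraph; everything else is a routine interplay of the Calder\'on--Zygmund estimate, Trudinger--Moser, and the strong convergence $\hat w_n\to w_0$.
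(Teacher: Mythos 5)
Your proof is correct and takes essentially the same approach as the paper, which at this point simply defers to \cite[Proposition~2.6]{CZ}: the translated Calder\'on--Zygmund estimate of \cite{MSY} reduces the claim to a uniform-in-$n$-and-$x_0$ bound for $\|f(\hat u_n)\|_{L^p(B_2(x_0))}$ and $\|g(\hat v_n)\|_{L^p(B_2(x_0))}$, which you obtain from the sharp Trudinger--Moser inequality on the region where the localised $H^{\frac12}$-norms are uniformly small, and from compactness and the strong continuity of $u\mapsto e^{u^2}$ on the complementary compact set. The uniform localisation lemma $\sup_n\|\eta_{x_0}\hat u_n\|_{\frac12}\to 0$ as $|x_0|\to\infty$ is exactly the right key step, and your three-part argument for it (uniformly bounded cut-off multiplier, vanishing for the fixed limit $u_0$, strong convergence to pass between the two) is sound; the only place to be a bit more careful in a final write-up is the assertion that $\|\eta_{x_0}u_0\|_{\frac12}\to 0$, where the commutator term $\iint u_0(y)^2\,|\eta_{x_0}(x)-\eta_{x_0}(y)|^2|x-y|^{-2}\,dx\,dy$ is not literally absolute continuity of a single $L^1$ integral but needs the usual split into $|x-y|\le1$ (Lipschitz bound plus $\|u_0\|_{L^2(B_4(x_0))}\to0$) and $|x-y|>1$ (tail estimate via dominated convergence), which is routine.
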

\begin{proof}
	Let $\hat{u}_n=u(\cdot +y_n)$, $\hat{v}_n =v_n(\cdot+y_n)$. Similarly as above, $\hat{v}_n$ is a weak solution of the following problem
	\begin{equation*}
		(-\De)^\frac12V+V_0 V= f(\hat{u}_n)~\text{in}~B_{2r}(0),~V-\hat{v}_n \in H_0^\frac12(B_{2r}(0)).
	\end{equation*}
Then similar to \eqref{4.9}, we have
\begin{equation*}
	\|\hat{v}_n\|_{C^\gamma(\overline{B_r(0)})}\leq C(\|f(\hat{u}_n)\|_{L^p(B_r(0))}+\|\hat{v}_n\|_\frac12).
\end{equation*}
The rest of the proof follows similarly as the proof of \cite[Propositon 2.6]{CZ} (after equation $(3.34)$ there).
	\end{proof}
\begin{Proposition}\label{p4.3}
	The following a priori estimates hold
	\begin{equation*}
		0<\inf\limits_{w=(u,v) \in  \mc S} \min\{\|u\|_\infty,\|v\|_\infty\}<\sup\limits_{w=(u,v) \in \mc S}(\|u\|_\infty +\|v\|_\infty)<\infty.	\end{equation*}
\end{Proposition}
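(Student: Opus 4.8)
\proof
The assertion is a chain of three inequalities, and the middle one is essentially free. Indeed $\mc S\neq\emptyset$ by Theorem \ref{t1.1}, and any $w_0=(u_0,v_0)\in\mc S$ must have $u_0\not\equiv0$ and $v_0\not\equiv0$: if, say, $v_0\equiv0$, then the first equation of \eqref{s1.4} together with $g(0)=0$ forces $u_0\equiv0$, contradicting $\mc J(w_0)=m^\ast>0$ (Proposition \ref{p3.4}). Since $u_0,v_0\in L^\infty(\mb R)$ by Proposition \ref{p4.1}, both $\|u_0\|_\infty$ and $\|v_0\|_\infty$ are strictly positive, whence
\begin{equation*}
\inf_{w\in\mc S}\min\{\|u\|_\infty,\|v\|_\infty\}\leq\min\{\|u_0\|_\infty,\|v_0\|_\infty\}<\|u_0\|_\infty+\|v_0\|_\infty\leq\sup_{w\in\mc S}(\|u\|_\infty+\|v\|_\infty).
\end{equation*}
So it remains to show that the supremum is finite and the infimum positive.

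For the finiteness of the supremum we argue by contradiction: if $\sup_{w\in\mc S}(\|u\|_\infty+\|v\|_\infty)=\infty$, choose $w_n=(u_n,v_n)\in\mc S$ with $\|u_n\|_\infty+\|v_n\|_\infty\ra\infty$. By Proposition \ref{pC} the set $\mc S$ is compact in $W$ up to translations, so along a subsequence there exist $y_n\in\mb R$ with $w_n(\cdot+y_n)\ra w_0$ in $W$ for some $w_0\in\mc S$. Proposition \ref{p4.2} applied to this sequence then yields $\sup_n(\|u_n\|_\infty+\|v_n\|_\infty)<\infty$ (the $L^\infty$-norm being translation invariant), a contradiction.

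For the positivity of the infimum we again argue by contradiction: suppose $w_n=(u_n,v_n)\in\mc S$ with $\min\{\|u_n\|_\infty,\|v_n\|_\infty\}\ra0$, and, after passing to a subsequence and interchanging the roles of the two equations if necessary, assume $\|v_n\|_\infty\ra0$. The first step is to prove $\|u_n\|_\frac12\ra0$: as in \eqref{e4.3}, the identity $\langle\mc J'(w_n),(0,u_n)\rangle=0$ gives $\|u_n\|_\frac12^2=\I\mb R g(v_n)u_n\,dx$, and since by $(H_2)$ one has $|g(v_n)|\leq\e v_n^2$ pointwise for $n$ large while $\{\|u_n\|_\frac12\}$ and $\{\|v_n\|_\frac12\}$ are bounded (Proposition \ref{p3.1}), this leads to $\|u_n\|_\frac12^2\leq(\e/V_0)\|v_n\|_\infty\|v_n\|_\frac12\|u_n\|_\frac12$, forcing $\|u_n\|_\frac12\ra0$ and hence $u_n\ra0$ in $L^q(\mb R)$ for every $q\in[2,\infty)$. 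The second step uses $w_n\in\mc S\subset\mc N$ and $(H_3)$ to write
\begin{equation*}
m^\ast=\mc J(w_n)-\tfrac12\langle\mc J'(w_n),w_n\rangle=\I\mb R\Big(\tfrac12 f(u_n)u_n-F(u_n)\Big)dx+\I\mb R\Big(\tfrac12 g(v_n)v_n-G(v_n)\Big)dx .
\end{equation*}
The $v_n$-integral is bounded by $C\|v_n\|_\infty\|v_n\|_{L^2}^2\ra0$ by $(H_2)$. For the $u_n$-integral, the growth bound \eqref{e3.29} gives $\I\mb R|f(u_n)u_n|\,dx+\I\mb R|F(u_n)|\,dx\leq\e\|u_n\|_{L^2}^2+C_\e\I\mb R|u_n|\big(e^{4\ba_0 u_n^2}-1\big)dx$; since $\|u_n\|_\frac12\ra0$ we have $8\ba_0\|u_n\|_\frac12^2<\pi$ for $n$ large, so the sharp Trudinger--Moser inequality bounds $\I\mb R(e^{8\ba_0 u_n^2}-1)dx$ uniformly, and Hölder's inequality then bounds the $u_n$-integral by a constant times $\|u_n\|_{L^2}\ra0$. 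Therefore $m^\ast=0$, contradicting Proposition \ref{p3.4}.

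The arguments above amount to a bootstrap of estimates already established in Sections \ref{s3}--\ref{s4}. The only genuinely delicate point is the control of $\I\mb R f(u_n)u_n\,dx$ (and $\I\mb R F(u_n)\,dx$) in the positivity step, where one only knows $\|u_n\|_\frac12\ra0$ and \emph{not} that $\|u_n\|_\infty$ is small; it is precisely the subcriticality of $\|u_n\|_\frac12$ with respect to the Trudinger--Moser threshold that must be exploited to keep the exponential nonlinearity uniformly integrable.
\QED
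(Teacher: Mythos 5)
The paper defers this proposition to \cite[Proposition 2.7]{CZ} without giving details, so a direct comparison with the paper's text is not possible; judged on its own merits, however, your proof is correct and relies on exactly the ingredients the surrounding section makes available. The non-degeneracy of each component (hence the ``free'' middle inequality), the use of Proposition \ref{pC} together with Proposition \ref{p4.2} for the finiteness of the supremum, and the contradiction argument for the positivity of the infimum (testing with $(0,u_n)$, invoking $(H_2)$ to force $\|u_n\|_\frac12\to 0$, then killing both pieces of the energy via $(H_3)$ and the sharp Trudinger--Moser bound once $\|u_n\|_\frac12$ is below the threshold) are all sound. Two small points deserve tightening. First, the uniform boundedness of $\{\|v_n\|_\frac12\}$ and $\{\|u_n\|_\frac12\}$ that you attribute to Proposition \ref{p3.1} really comes from the compactness of $\mc S$ up to translations (Proposition \ref{pC}), since Proposition \ref{p3.1} concerns the specific Palais--Smale sequence of Section \ref{s3}; the conclusion is the same but the citation is off. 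Second, your decomposition $\int|f(u_n)u_n|\,dx\leq\e\|u_n\|_{L^2}^2+C_\e\int|u_n|(e^{4\ba_0u_n^2}-1)\,dx$ is fine, but it is worth stating explicitly that it rests on the pointwise bound $|f(t)|\leq\e|t|+C_\e(e^{4\ba_0t^2}-1)$ obtained by combining $(H_2)$ (near zero) with $(H_6)$ (at infinity), and that the $F$-integral does not need a separate estimate because $(H_3)$ gives $F(t)\leq\mu^{-1}tf(t)$. Your closing remark -- that one only knows smallness of $\|u_n\|_\frac12$ and not of $\|u_n\|_\infty$, so it is the subcriticality with respect to the Trudinger--Moser threshold that saves the argument -- is precisely the delicate point, and you handle it correctly.
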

\begin{proof}
	The proof is similar to the proof of \cite[Proposition 2.7]{CZ} and hence omitted.
\QED
	\end{proof}
Finally following the proof of \cite[Proposition 2.8]{CZ}, we can easily prove the following proposition:
\begin{Proposition}\label{p4.4}
	Let $x_w \in \mb R$ be a maximum point of $|u(x)|+|v(x)|$, $w=(u,v) \in \mc S$. Then $u(x+x_w) \ra 0$ and $v(x+x_w) \ra 0$, as $|x| \ra \infty$, uniformly for any $(u,v) \in\mc S. $
\end{Proposition}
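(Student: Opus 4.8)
The plan is a contradiction argument that upgrades decay of the $L^2$-mass at infinity into uniform pointwise decay, relying on three facts already available: the compactness of $\mc S$ up to translations (Proposition \ref{pC}), the uniform $L^\infty$ bound over $\mc S$ (Proposition \ref{p4.3}), and the interior H\"older estimate \eqref{4.9}. In spirit this follows the local argument of \cite[Proposition 2.8]{CZ}, and I spell out the steps below.

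Suppose the conclusion fails. Negating the statement, there are $\varepsilon_0>0$, ground states $w_n=(u_n,v_n)\in\mc S$, and points $z_n\in\mb R$ with $|z_n|\to\infty$ such that
\[
|u_n(z_n+x_{w_n})|+|v_n(z_n+x_{w_n})|\ge\varepsilon_0\qquad\text{for all }n .
\]
Set $U_n:=u_n(\cdot+x_{w_n})$ and $V_n:=v_n(\cdot+x_{w_n})$. Since \eqref{s1.4} and $\mc J$ are translation invariant, $(U_n,V_n)\in\mc S$ and $0$ is a maximum point of $|U_n|+|V_n|$, so $(U_n,V_n)$ belongs to the compact set of Proposition \ref{pC}; hence a subsequence converges strongly in $W$ to some $(U_0,V_0)\in\mc S$. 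In particular $U_0,V_0\in L^2(\mb R)$, so $\int_{B_2(z_n)}(U_0^2+V_0^2)\,dx\to0$ because $|z_n|\to\infty$, and combining with the strong $L^2$-convergence $U_n\to U_0$, $V_n\to V_0$ we get $\int_{B_2(z_n)}(U_n^2+V_n^2)\,dx\to0$.

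Next I would establish a uniform H\"older bound on fixed balls centred at the $z_n$. As $(U_n,V_n)$ solves \eqref{s1.4}, the function $V_n$ is a weak solution of $(-\De)^\frac12 V_n+V_0V_n=f(U_n)$ and $U_n$ solves the companion equation. By Proposition \ref{p4.3} the norms $\|U_n\|_\infty,\|V_n\|_\infty$ are bounded uniformly in $n$, so $f(U_n)$ and $g(V_n)$ are bounded in $L^\infty(\mb R)$, hence in $L^p$ of every fixed ball, uniformly in $n$; likewise $\|U_n\|_\frac12$ and $\|V_n\|_\frac12$ are bounded, again by Proposition \ref{pC}. Feeding these into the translated form of \eqref{4.9} (the operator has constant coefficients, so the estimate holds around any centre) yields
\[
\|U_n\|_{C^\gamma(\overline{B_2(z_n)})}+\|V_n\|_{C^\gamma(\overline{B_2(z_n)})}\le\bar C
\]
with $\bar C$ independent of $n$. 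An elementary interpolation then closes the loop: if $\|U_n\|_{L^\infty(\overline{B_1(z_n)})}\not\to0$, then along a subsequence $|U_n(\zeta_n)|\ge\delta$ for some $\delta>0$ and $\zeta_n\in\overline{B_1(z_n)}$, and H\"older continuity forces $|U_n|\ge\delta/2$ on $B_\rho(\zeta_n)$ with $\rho=(\delta/2\bar C)^{1/\gamma}<1$ (note $\delta\le\|U_n\|_\infty\le\bar C$), hence on a subset of $B_2(z_n)$; then $\int_{B_2(z_n)}U_n^2\,dx\ge(\delta/2)^2\cdot2\rho>0$, contradicting the previous paragraph. Therefore $\|U_n\|_{L^\infty(\overline{B_1(z_n)})}\to0$ and, symmetrically, $\|V_n\|_{L^\infty(\overline{B_1(z_n)})}\to0$, so $|U_n(z_n)|+|V_n(z_n)|\to0$, contradicting the choice of $z_n$. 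This proves the uniform decay.

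The regularity and interpolation steps are routine; the point that needs care is the compatibility between the centring translation $x_{w_n}$ used to invoke Proposition \ref{pC} and the diverging bad points $z_n+x_{w_n}$. After recentring by $x_{w_n}$ the witnessing points become $z_n$, still with $|z_n|\to\infty$, so the argument rests on the $L^2$-tail of the strong limit $(U_0,V_0)$ rather than of the individual $w_n$ — this is precisely where strong convergence up to translation, not mere boundedness of $\mc S$, is indispensable.
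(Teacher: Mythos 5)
Your proof is correct and is essentially the argument of \cite[Proposition 2.8]{CZ} that the paper cites: translate by $x_{w_n}$ to invoke the compactness of Proposition \ref{pC}, deduce $L^2$-smallness of $(U_n,V_n)$ on diverging balls from the $L^2$-tail of the strong limit, upgrade to pointwise smallness via the translation-invariant Calder\'on--Zygmund/H\"older estimate \eqref{4.9} and the uniform bounds of Proposition \ref{p4.3}, and conclude by contradiction. The remark that the recentred bad points $z_n$ must be used together with the strong (not merely weak) limit's $L^2$-tail is exactly where the argument hinges, and you handle it correctly.
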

\section{Poho\v{z}aev type identity}\label{s5}
In this subsection, we assume $f,g$ are locally lipschitz and  establish a Poho\v{z}aev type identity \eqref{e5.1} for the system \eqref{s1.4} following the approach in \cite{BMS}.  
 For $\e \in (0,1]$ we construct a family of functions $\{w_\e\}_\e$ which approximate $w =(u,v)\in \mc S$. For $\e \in (0,1]$  $w_\e=(u_\e, v_\e)$ is the solution of the following system
	\begin{align}\label{pe5.2}
	\begin{cases}
		(-\Delta)^\frac12 u_\e+V_0 u_\e=g_\e\;\text{in}\;\mb R,\\
		(-\Delta)^\frac12 v_\e+ V_0 v_\e=f_\e\;\text{in}\;\mb R.\\
	\end{cases}
\end{align}
where the choice of $f_\e$, $g_\e$ is provided by the following lemma (see \cite[Lemma 3.1]{BMS}):
\begin{Lemma}\label{pl5.2}
	Let $ \mu$ be a standard compactly supported positive mollifier and for $\e \in (0,1]$ assume 
	\begin{equation*}
		\mu_\e (x)=\frac{1}{\e}\mu\left(\frac{x}{\e}\right),~x \in \mb R.
	\end{equation*}
Again let $\hslash \in C_0^\infty(B_1)$ be a function such that 
\begin{equation*}
	0\leq \hslash \leq 1,~\hslash\equiv 1~\text{on}~B_\frac{1}{2},~|\hslash'|\leq C.
\end{equation*}
For every $\e \in (0,1]$, we define $\hslash_\e(x)=\hslash(\e x)$, for $x \in \mb R$. Now define the operator $ \mc H_\e: H^\frac{-1}{2}(\mb R) \ra H^\frac{-1}{2}(\mb R)$ as
\begin{equation*}
	\ld \mc H_\e (\Lambda), \psi\rd:= \ld \Lambda, \hslash_\e(\psi \ast \mu_\e) \rd,~\text{for every }~\psi \in H^\frac12(\mb R).\end{equation*}
Then $ \mc H_\e(\Lambda) \in C_0^\infty (\mb R)$ for every $\Lambda \in H^\frac{-1}{2}(\mb R)$ and 
\begin{equation*}
	\lim\limits_{\e \ra 0} \|\mc H_\e(\Lambda)-\Lambda\|_\frac{-1}{2} =0.
\end{equation*}
\end{Lemma}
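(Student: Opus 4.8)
The plan is to recognize $\mc H_\e$ as (the adjoint of) a convolution-plus-cutoff operator, deduce the smoothness and compact support from a convolution representation, and then obtain the convergence by a density argument built on a uniform bound for the family $\{\mc H_\e\}_{\e\in(0,1]}$ on $H^{\frac{-1}{2}}(\mb R)$.

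\textbf{Reformulation and the first assertion.} Writing $\check\mu_\e(x):=\mu_\e(-x)$, for $\psi\in H^\frac12(\mb R)$ I would move the smooth compactly supported factor $\hslash_\e$ inside the $H^{\frac{-1}{2}}$--$H^\frac12$ pairing and use the transpose identity for convolution, to get
$$\ld\mc H_\e(\Lambda),\psi\rd=\ld\Lambda,\hslash_\e(\psi\ast\mu_\e)\rd=\ld\hslash_\e\Lambda,\mu_\e\ast\psi\rd=\ld(\hslash_\e\Lambda)\ast\check\mu_\e,\psi\rd ,$$
where $\ld T\ast\check\mu_\e,\psi\rd=\ld T,\mu_\e\ast\psi\rd$ is checked first for smooth $\psi$ and then extended to all of $H^\frac12(\mb R)$ by density, both sides being continuous in $\psi$. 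Thus $\mc H_\e(\Lambda)=(\hslash_\e\Lambda)\ast\check\mu_\e$ as distributions. Since $\hslash_\e$ is supported in $\{|x|\le 1/\e\}$, the distribution $\hslash_\e\Lambda$ has compact support; its convolution with $\check\mu_\e\in C_0^\infty(\mb R)$ is therefore a $C^\infty$ function supported in $\{|x|\le 1/\e\}+\mathrm{supp}\,\check\mu_\e$, which is bounded. This proves $\mc H_\e(\Lambda)\in C_0^\infty(\mb R)$.

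\textbf{Uniform bound and convergence.} Next I would show $\|\mc H_\e(\Lambda)\|_{\frac{-1}{2}}\le C\|\Lambda\|_{\frac{-1}{2}}$ with $C$ independent of $\e\in(0,1]$. By duality this reduces to an $\e$-uniform bound on $H^\frac12(\mb R)$ for $\psi\mapsto\hslash_\e(\psi\ast\mu_\e)$: convolution with $\mu_\e$ is a contraction on $H^\frac12(\mb R)$ because its Fourier symbol $\widehat\mu(\e\,\cdot)$ satisfies $|\widehat\mu|\le\|\mu\|_{L^1}=1$, and multiplication by $\hslash_\e$ is bounded on $H^\frac12(\mb R)$ with constant controlled only by $\|\hslash_\e\|_{L^\infty}\le1$ and $\|\hslash_\e'\|_{L^\infty}=\e\|\hslash'\|_{L^\infty}\le C$ --- the latter via splitting $\hslash_\e(x)v(x)-\hslash_\e(y)v(y)$ in the Gagliardo seminorm and estimating $|\hslash_\e(x)-\hslash_\e(y)|\le\min(2\|\hslash_\e\|_{L^\infty},\|\hslash_\e'\|_{L^\infty}|x-y|)$. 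For the convergence, I would treat $\Lambda_0\in C_0^\infty(\mb R)$ first: once $\e$ is small enough that $\mathrm{supp}\,\Lambda_0\subset B_{1/(2\e)}$, where $\hslash_\e\equiv1$, one has $\hslash_\e\Lambda_0=\Lambda_0$, hence $\mc H_\e(\Lambda_0)=\Lambda_0\ast\check\mu_\e$, and since $\{\check\mu_\e\}$ is an approximate identity ($\int\check\mu_\e=1$, support shrinking to $0$) this converges to $\Lambda_0$ in $L^2(\mb R)$, hence in $H^{\frac{-1}{2}}(\mb R)$. For general $\Lambda\in H^{\frac{-1}{2}}(\mb R)$ and $\eta>0$, choosing $\Lambda_0\in C_0^\infty(\mb R)$ with $\|\Lambda-\Lambda_0\|_{\frac{-1}{2}}<\eta$ (legitimate since $C_0^\infty$ is dense in $H^{\frac{-1}{2}}$), linearity and the uniform bound give
$$\|\mc H_\e(\Lambda)-\Lambda\|_{\frac{-1}{2}}\le(C+1)\eta+\|\mc H_\e(\Lambda_0)-\Lambda_0\|_{\frac{-1}{2}},$$
and letting $\e\to0$ then $\eta\to0$ finishes the proof.

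\textbf{Main obstacle.} The delicate point is structural rather than computational: $\mc H_\e$ is the adjoint of $\psi\mapsto\hslash_\e(\psi\ast\mu_\e)$, and although the latter converges \emph{strongly} to the identity on $H^\frac12(\mb R)$, strong operator convergence need not pass to adjoints, so a naive dualization fails. This is precisely why the argument must route through the explicit convolution representation (which makes the $C_0^\infty$ case transparent) together with the $\e$-uniform operator bound and the density of $C_0^\infty(\mb R)$ in $H^{\frac{-1}{2}}(\mb R)$; the one genuinely quantitative ingredient is the uniform boundedness on $H^\frac12(\mb R)$ of multiplication by the rescaled cutoffs $\hslash_\e$.
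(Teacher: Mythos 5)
Your argument is correct and self-contained. Note that the paper does not actually prove this lemma but imports it wholesale from \cite[Lemma 3.1]{BMS}, so there is no in-text proof to compare against; your route --- identifying $\mc H_\e(\Lambda)=(\hslash_\e\Lambda)\ast\check\mu_\e$, establishing the $\e$-uniform bound on $H^{-1/2}$ by dualizing the uniform bound on $H^{1/2}$ for $\psi\mapsto\hslash_\e(\psi\ast\mu_\e)$ (contraction via Fourier symbol plus Gagliardo-seminorm multiplier estimate with the crucial observation $\|\hslash_\e'\|_\infty=\e\|\hslash'\|_\infty$), and then concluding by density --- is exactly the natural and standard way to prove it, and your flagging of why naive dualization of strong operator convergence fails is a genuine and well-placed remark.
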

 Recalling that $f(u),~g(v) \in L_{\text{loc}}^1(\mb R) \cap H^\frac{-1}{2}(\mb R)$ and using the above lemma we define 
 \begin{equation*}
 	f_\e=\mc H_\e(f(u))=\left(f(u)\hslash_\e\right)\ast \mu_\e~\text{and}~g_\e=\mc H_\e(g(v))=\left(g(v)\hslash_\e\right)\ast \mu_\e.
 \end{equation*}
Then we have the following
	\begin{equation}\label{eq5.2}
		f_\e \ra f(u)~\text{and}~g_\e \ra g(v)~\text~\text{strongly in }~ H^\frac{-1}{2}(\mb R),
	\end{equation}
\begin{equation}\label{eq5.3}
	\sup\limits_{\e \in (0,1]}(\|f_\e\|_\infty+ \|g_\e\|_\infty)\leq C<+\infty,~f_\e \ra f(u),~g_\e \ra g(v)~\text{in}~L_{\text{loc}}^p(\mb R),~\text{for every}~p \geq 1, 
\end{equation}
\begin{equation}\label{eq5.4}
\text{and}~	\sup\limits_{\e \in (0,1]}(\|f_\e\|_{\frac12}+ \|g_\e\|_{\frac12})\leq D<+\infty .
\end{equation}
Equation \eqref{eq5.2} follows from Lemma \ref{pl5.2}, while equation \eqref{eq5.3} follows using the fact that $u,v \in L^\infty(\mb R^n)$. Now to prove \eqref{eq5.4} we use the  Fourier transform (denoted by $\mc F $). We first note that for $\xi \in \mb R$:
\begin{align*}
	|\xi|^\frac12|\mc F (f_\e)(\xi)| =|\xi|^\frac12|\mc F ((f(u)\hslash_\e)\ast \mu_\e)(\xi)|= |\xi|^\frac12|\mc F((f(u)\hslash_\e))(\xi)| |\mc F(\mu_\e)(\xi)| \leq C |\xi|^\frac12|\mc F((f(u)\hslash_\e))(\xi)|,
\end{align*}
where $C>0$ is a constant independent of $\e$. Thus to prove \eqref{eq5.4} {it is sufficient to show that} \eqref{eq5.4} holds for $f(u)\hslash_\e$. Now using \eqref{eq5.3}, that $u \in L^\infty(\mb R)$ and since $f$ is locally Liptschitz we get  that
\begin{align*}
	\|f(u)\hslash_\e\|_\frac12^2 =&\I{\mb R} \I{\mb R} \frac{|f(u(x))\hslash_\e(x)-f(u(y))\hslash_\e(y)|^2}{|x-y|^2}dxdy \\
	\leq& C \I{\mb R} \I{\mb R} \frac{|f(u(x))-f(u(y))|^2}{|x-y|^2}dxdy +C\I{\mb R}\I{\mb R}\frac{|\hslash_\e(x)-\hslash_\e(y)|^2}{|x-y|^2}dxdy\\
	\leq& C \I{\mb R} \I{\mb R} \frac{|u(x)-u(y)|^2}{|x-y|^2}dxdy +C\I{\mb R}\I{\mb R}\frac{|\hslash(x)-\hslash(y)|^2}{|x-y|^2}dxdy <+\infty.
\end{align*} 
Similarly \eqref{eq5.4} holds for $g_\e$.\\
The next proposition gives the existence of the family $\{w_\e\}_\e$.
\begin{Proposition}
	For $\e \in (0,1]$, the minimization problem 
	\begin{equation*}
		j := \inf\limits_{z=(\varphi,\psi) \in W} J_\e(z),
	\end{equation*}
has a unique solution $w_\e\in W$ where
\begin{equation*}
	J_\e(z)=\frac12\|z\|_W^2-\I\mb R (f_\e \psi+g_\e \varphi) dx, ~\text{for every}~z=(\varphi,\psi) \in W.
\end{equation*} Moreover we have
\begin{equation}\label{pe5.4}
	\ld w_\e, z\rd_W -\I\mb R (f_\e \psi+g_\e \varphi) dx =0,
\end{equation}
for every $z=(\varphi,\psi) \in W$.
\end{Proposition}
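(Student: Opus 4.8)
The plan is to use the direct method of the calculus of variations. The key point is that, contrary to the strongly indefinite functional $\mc J$ attached to the original system, the auxiliary functional $J_\e$ is strictly convex and coercive, because the coupling between $\varphi$ and $\psi$ now appears only through the affine lower-order term $z=(\varphi,\psi)\mapsto\I\mb R(f_\e\psi+g_\e\varphi)\,dx$, and not through a product; hence the quadratic part $\frac12\|z\|_W^2$ controls everything.

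First I would record that $J_\e$ is well defined and of class $C^1$ on the Hilbert space $W$. By \eqref{eq5.3} and \eqref{eq5.4}, $f_\e,g_\e\in L^\infty(\mb R)\cap H^\frac12(\mb R)\subset L^2(\mb R)$, so $\left|\I\mb R(f_\e\psi+g_\e\varphi)\,dx\right|\leq(\|f_\e\|_{L^2}+\|g_\e\|_{L^2})\|z\|_W\leq C\|z\|_W$ with $C$ independent of $\e$; thus the linear term is a bounded linear functional on $W$, $J_\e$ is finite, and $\langle J_\e'(z),\zeta\rangle=\langle z,\zeta\rangle_W-\I\mb R(f_\e\psi'+g_\e\varphi')\,dx$ for every $\zeta=(\varphi',\psi')\in W$.

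Next I would check the hypotheses of the direct method. Coercivity is immediate: $J_\e(z)\geq\frac12\|z\|_W^2-C\|z\|_W\to+\infty$ as $\|z\|_W\to\infty$, so $j>-\infty$ and every minimizing sequence is bounded in $W$. Weak sequential lower semicontinuity holds because $z\mapsto\|z\|_W^2$ is convex and strongly continuous, hence weakly lower semicontinuous, while the linear term is weakly continuous. Since $W$ is reflexive, a minimizing sequence has a weakly convergent subsequence whose limit $w_\e$ attains $j=\inf_W J_\e$. Uniqueness follows from strict convexity: $\frac12\|\cdot\|_W^2$ is strictly convex since $\|\cdot\|_W$ comes from an inner product, and adding an affine functional preserves this. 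Finally, $w_\e$ being a global, interior minimizer of the $C^1$ functional $J_\e$, one has $J_\e'(w_\e)=0$, which is precisely identity \eqref{pe5.4}.

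A shorter route is available: since the bilinear form $(z,\zeta)\mapsto\langle z,\zeta\rangle_W$ is exactly the inner product of $W$ and $z\mapsto\I\mb R(f_\e\psi+g_\e\varphi)\,dx$ is a bounded linear functional, the Riesz representation theorem produces a unique $w_\e\in W$ satisfying \eqref{pe5.4}, and the algebraic identity $J_\e(z)=J_\e(w_\e)+\frac12\|z-w_\e\|_W^2$ exhibits $w_\e$ as the unique minimizer. In either presentation there is no substantive obstacle here; the only place where the preceding work is used is in guaranteeing, via \eqref{eq5.3}--\eqref{eq5.4}, that $f_\e$ and $g_\e$ act as bounded linear functionals on $W$, and crucially that the corresponding constant is uniform in $\e$, which is what will be needed for the passage $\e\to0$ in the proof of the Poho\v{z}aev identity.
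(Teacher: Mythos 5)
Your argument follows the same route as the paper: coercivity of $J_\e$ (you estimate the linear term via $L^2$ rather than the $H^{-1/2}$--$H^{1/2}$ duality the authors use, but both give the needed bound, uniform in $\e$), weak lower semicontinuity and strict convexity to extract a unique minimizer, and the first-order optimality condition to obtain \eqref{pe5.4}. Your alternative Riesz-representation observation is a nice shortcut, but the substance matches the paper's proof.
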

\begin{proof}
	 We first prove that the functional $J_\e$ is coercive. Indeed, using Young's inequality, we have
	 \begin{align}\nonumber\label{pe5.5}
	 	J_\e(z) \geq& \frac12\|z\|_W^2-\|f_\e\|_\frac{-1}{2}\|\psi\|_\frac12-\|g_\e\|_\frac{-1}{2}\|\varphi\|_\frac12\\
	 	\geq& \frac14 \|z\|_W^2 -4\left(\|f_\e\|_\frac{-1}{2}^2+\|g_\e\|_\frac{-1}{2}^2\right).
	 \end{align}
 This implies that $j\in\mb R$ and that any minimizing sequence $\{z_n\} \subset W$ is bounded.  Then,  the existence and uniqueness of $w_\e$ follows from the weak lower semi-continuity and strict convexity of  $J_\e$.
 Finally, \eqref{pe5.4} results from the first order optimality condition. \QED
 	\end{proof}
 
The next proposition shows that the sequence $\{w_\e\}_\e$ actually converges strongly to $w$ in $W$.
\begin{Proposition}\label{pp5.4}
	We have
	\begin{equation*}
		\lim\limits_{\e \ra 0} \|w_\e -w\|_W =0. 
	\end{equation*}
\end{Proposition}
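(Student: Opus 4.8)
\noindent The plan is to exploit that both $w=(u,v)$ and $w_\e=(u_\e,v_\e)$ solve \emph{linear} problems governed by the same coercive operator $(-\De)^\frac12+V_0$, with data that differ only by $g_\e-g(v)$ and $f_\e-f(u)$; strong convergence in $W$ then follows as a quantitative stability estimate, with the decay supplied by the $H^\frac{-1}{2}$-convergence \eqref{eq5.2}.

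Concretely, I would first record the componentwise weak formulations. Since $w\in\mc S$ solves \eqref{s1.4}, testing \eqref{edj} with $(\varphi,0)$ and with $(0,\psi)$ gives $\ld v,\varphi\rd_\frac12=\I\mb R f(u)\varphi\,dx$ and $\ld u,\psi\rd_\frac12=\I\mb R g(v)\psi\,dx$ for all $\varphi,\psi\in H^\frac12(\mb R)$ (these pairings are legitimate because $f(u),g(v)\in H^\frac{-1}{2}(\mb R)$, as recalled before Lemma \ref{pl5.2}). Likewise, testing \eqref{pe5.4} with $(\varphi,0)$ and $(0,\psi)$ yields $\ld u_\e,\varphi\rd_\frac12=\I\mb R g_\e\varphi\,dx$ and $\ld v_\e,\psi\rd_\frac12=\I\mb R f_\e\psi\,dx$. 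Subtracting the corresponding identities and choosing the admissible test functions $\varphi=u_\e-u$, $\psi=v_\e-v$ (these lie in $H^\frac12(\mb R)$ since $w,w_\e\in W$), I would add the two resulting relations to obtain
\[
\|w_\e-w\|_W^2=\I\mb R(g_\e-g(v))(u_\e-u)\,dx+\I\mb R(f_\e-f(u))(v_\e-v)\,dx.
\]

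The right-hand side is then bounded, via the $H^\frac12$--$H^\frac{-1}{2}$ duality and the Cauchy--Schwarz inequality, by $\big(\|g_\e-g(v)\|_\frac{-1}{2}^2+\|f_\e-f(u)\|_\frac{-1}{2}^2\big)^{1/2}\|w_\e-w\|_W$, so that
\[
\|w_\e-w\|_W\le\big(\|g_\e-g(v)\|_\frac{-1}{2}^2+\|f_\e-f(u)\|_\frac{-1}{2}^2\big)^{1/2},
\]
and the right-hand side tends to $0$ as $\e\ra0$ by \eqref{eq5.2}. This finishes the proof.

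There is essentially no genuine obstacle here beyond bookkeeping; the one point deserving care is that all the dual pairings used above are well defined, which is exactly what the memberships $f(u),g(v),f_\e,g_\e\in H^\frac{-1}{2}(\mb R)$ from \eqref{eq5.2}--\eqref{eq5.4} guarantee. (An alternative, slightly longer route would compare $J_\e(w_\e)\le J_\e(w)$ with $J_0(w)\le J_0(w_\e)$, where $J_0$ is the limiting functional with data $f(u),g(v)$ whose unique minimizer is $w$, and then invoke the uniform coercivity \eqref{pe5.5}; but the direct test-function computation above is shorter and self-contained.)
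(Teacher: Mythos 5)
Your proof is correct, and it takes a genuinely different and in fact cleaner route than the paper's. The paper proceeds indirectly: it first uses $J_\e(w_\e)\le J_\e(0)=0$ together with \eqref{pe5.5} to get a uniform bound on $\{w_\e\}$, extracts a weak limit $\hat w$, identifies $\hat w=w$ by showing $\hat w$ minimizes the strictly convex limiting functional $\mathbf{h}\mapsto\frac12\|\mathbf{h}\|_W^2-\I\mb R(f(u)\psi+g(v)\varphi)dx$ (whose unique critical point is $w$), then passes to the limit in the weak formulation to obtain $\|w_\e\|_W\ra\|w\|_W$, and finally invokes uniform convexity of the Hilbert norm to upgrade weak convergence to strong. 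Your argument bypasses all of this: since both $w$ and $w_\e$ solve linear problems driven by the same coercive bilinear form $\ld\cdot,\cdot\rd_\frac12$, you subtract the weak formulations componentwise and test with $u_\e-u$ and $v_\e-v$, arriving at the identity $\|w_\e-w\|_W^2=\I\mb R(g_\e-g(v))(u_\e-u)\,dx+\I\mb R(f_\e-f(u))(v_\e-v)\,dx$; duality and Cauchy--Schwarz then yield the quantitative estimate $\|w_\e-w\|_W\le\bigl(\|g_\e-g(v)\|_\frac{-1}{2}^2+\|f_\e-f(u)\|_\frac{-1}{2}^2\bigr)^{1/2}$, which goes to $0$ by \eqref{eq5.2}. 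What your approach buys is an explicit rate of convergence in terms of the $H^{-1/2}$-distance of the data, and it avoids any appeal to weak compactness, strict convexity of the limiting functional, or uniform convexity of the norm. What the paper's approach buys is perhaps a more robust template that would survive if the problem were nonlinear in $w_\e$ rather than linear; but for the present linear approximating problem your direct stability estimate is the better proof.
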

\begin{proof}
	Note that $\{w_\e\}_\e$ is bounded in $W$. Indeed, since $J_\e(w_\e)\leq J_\e(0)=0$ and using \eqref{pe5.5}, we obtain
	\begin{equation*}
		\|w_\e\|_W^2 \leq 16 \left(\|f_\e\|_\frac{-1}{2}^2 +\|g_\e\|_\frac{-1}{2}^2\right).
	\end{equation*}
Since $f_\e,~g_\e$ are uniformly bounded in $H^\frac{-1}{2}(\mb R)$, we get the claimed bound. Up to an extraction of a subsequence let us denote the weak limit of $w_\e$, $\hat{w}=(\hat{u},\hat{v})$. We first prove that $\hat{w}=w$. For this, note that since $w_\e \rightharpoonup \hat{w}$ in $ W$,  $f_\e \ra f(u)$ and $g_\e \ra g(v)$ in $H^\frac{-1}{2}(\mb R)$, we have
\begin{align*}
 \frac12 \|\hat{w}\|^2_W -\I\mb R (f(u)\hat{v} +g(v)\hat{u})dx\leq& \liminf\limits_{\e \ra 0} \left(\frac12 \|w_\e\|_W^2 -\I\mb R (f(u)v_\e +g(v)u_\e) dx\right)\\
  \leq& \liminf\limits_{\e \ra 0}\left( \frac12 \|w_\e\|_W^2 -\I\mb R (f_\e v_\e +g_\e u_\e) dx\right).
\end{align*}
Now let $\bf h$ = $(\varphi,\psi) \in C_0^\infty (\mb R) \times C_0^\infty (\mb R)$. Using that $\mathrm{argmin} J_\e=w_\e$ and the fact that $f_\e \ra f(u)$ and $g_\e \ra g(v)$ in $L_{\text{loc}}^1(\mb R)$ we infer that
\begin{align*}
	\frac12 \|\hat{w}\|^2_W -\I\mb R (f(u)\hat{v} +g(v)\hat{u})dx \leq& \liminf\limits_{\e \ra 0}\frac12 \|\textbf{h}\|_W^2 -\I\mb R (f_\e \psi +g_\e \varphi) dx\\
	=& \frac12 \|\textbf{h}\|_W^2 -\I\mb R (f(u) \psi +g(v) \varphi) dx.
\end{align*}
By density of $C_0^\infty (\mb R) \times C_0^\infty (\mb R)$ in $W$, the previous estimate implies that $\hat{w}$ minimizes the strictly convex functional
\begin{equation*}
	W \ni\textbf{h} \mapsto \frac12 \|\textbf{h}\|_W^2 -\I\mb R (f(u) \psi +g(v) \varphi) dx.
\end{equation*}
Since $w$ is a critical point of the latter, the strict convexity forces $w=\hat{w}$. Recall that 
\begin{equation*}
	\|u\|_\frac12^2=\I \mb R g(v)udx~\text{and}~\|v\|_\frac12^2=\I \mb R f(u)vdx.
\end{equation*}
Similarly from \eqref{pe5.4}, we have
\begin{equation*}
	\|u_\e\|_W^2 =\I\mb R g_\e u_\e dx~\text{and}~\|v_\e\|_W^2 =\I\mb R f_\e v_\e dx.
\end{equation*}
By the strong convergence of $f_\e$ to $f(u)$ and $g_\e$ to $g(u)$ in $H^\frac{-1}{2}(\mb R)$, we conclude that 
\begin{align*}
	\|w\|_W^2 =\I \mb R (f(u)v+g(v)u)dx =\lim\limits_{\e \ra 0} \I \mb R (f_\e v_\e +g_\e u_\e)dx=\lim\limits_{\e \ra 0} \|w_\e\|_W^2.
\end{align*}
Finally the conclusion follows using the uniform convexity of the norm. \QED
\end{proof}
 \subsection{Regularity for the approximating problem}
In this subsection we will prove some regularity estimates for the solutions $w_\e$ of \eqref{pe5.2}. We first recall the following useful inequality:
\begin{Lemma}\cite[Lemma C.1]{BLP}\label{pl5.5}
	Let $1<p<\infty$ and $\ba \geq 1$. For every $a,~b,~m \geq 0$ there holds
	\begin{equation*}
		|a-b|^{p-2}(a-b)(a_m^\ba-b_m^\ba)\geq \frac{\ba p^p}{(\ba+p-1)^p}\left(a_m^\frac{\ba+p-1}{p}-b_m^\frac{\ba+p-1}{p}\right)^p
	\end{equation*}
where we set $a_m =\min\{a,m\}$ and $b_m=\min\{b,m\}$.
\end{Lemma}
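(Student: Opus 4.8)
The plan is to reduce the assertion to a one-variable inequality that follows from a single application of Hölder's inequality. By symmetry we may assume $a\ge b\ge0$, so that $a_m\ge b_m\ge0$ and, since $|a-b|^{p-2}(a-b)=(a-b)^{p-1}$ here, the claim reads
\[
(a-b)^{p-1}\bigl(a_m^{\ba}-b_m^{\ba}\bigr)\ \ge\ \frac{\ba\,p^p}{(\ba+p-1)^p}\,\bigl(a_m^{\ga}-b_m^{\ga}\bigr)^p,\qquad \ga:=\frac{\ba+p-1}{p}\ (\ge1).
\]
Next I would invoke the elementary truncation bound $0\le a_m-b_m\le a-b$, checked in the cases $b\ge m$ (then $a_m=b_m=m$), $b<m\le a$ (then $a_m-b_m=m-b\le a-b$), and $a\le m$ (then $a_m-b_m=a-b$). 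Since $p-1\ge0$ this gives $(a-b)^{p-1}\ge(a_m-b_m)^{p-1}$, so it suffices to prove the inequality with $a,b$ replaced by $A:=a_m\ge B:=b_m\ge0$; that is, to show
\[
(A-B)^{p-1}\bigl(A^{\ba}-B^{\ba}\bigr)\ \ge\ \frac{\ba\,p^p}{(\ba+p-1)^p}\,\bigl(A^{\ga}-B^{\ga}\bigr)^p .
\]

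To prove this reduced inequality I would write $A^{\ga}-B^{\ga}=\ga\int_{B}^{A}s^{\ga-1}\,ds$ and apply Hölder's inequality on $[B,A]$ with exponents $p$ and $p/(p-1)$, pairing $s^{\ga-1}$ with the constant $1$:
\[
A^{\ga}-B^{\ga}\ \le\ \ga\Bigl(\int_{B}^{A}s^{(\ga-1)p}\,ds\Bigr)^{1/p}(A-B)^{1-1/p}.
\]
The crucial point is the exponent identity $(\ga-1)p=\ba-1$, immediate from the definition of $\ga$; as $\ba\ge1$ and $p>1$ we have $\ga\ge1$ and $\ba-1\ge0$, so every integral converges (even when $B=0$) and $\int_{B}^{A}s^{(\ga-1)p}\,ds=(A^{\ba}-B^{\ba})/\ba$. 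Raising the displayed estimate to the $p$-th power and rearranging gives
\[
(A-B)^{p-1}\bigl(A^{\ba}-B^{\ba}\bigr)\ \ge\ \frac{\ba}{\ga^{p}}\,\bigl(A^{\ga}-B^{\ga}\bigr)^p,
\]
and since $\ga^{p}=(\ba+p-1)^p/p^p$ one has $\ba/\ga^{p}=\ba\,p^p/(\ba+p-1)^p$, exactly the claimed constant.

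The substantive content is just the single Hölder step, so the "hard part" is only bookkeeping: organizing the reduction through the truncations, disposing of the degenerate cases ($a_m=b_m$, where both sides vanish; $B=0$, where the integrals are improper but convergent), and keeping track of which exponents are $\ge1$ so that the monotonicity of $t\mapsto t^{p-1}$ and the convergence of the integrals are legitimate. The argument also makes the sharpness of the constant transparent: equality holds in Hölder when $s\mapsto s^{(\ga-1)p}$ is constant on $[B,A]$, i.e. in the limit $B\to A$, or identically when $\ba=1$ — then $\ga=1$ and the reduced inequality is the identity $(A-B)^p=(A-B)^p$.
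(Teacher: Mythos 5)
Your proof is correct. Since the paper just cites this as \cite[Lemma C.1]{BLP} without reproducing an argument, there is no in-paper proof to compare against; the route you take (reduce by monotonicity of $t\mapsto t^{p-1}$ to the truncated variables $a_m\ge b_m$, then write $A^{\ga}-B^{\ga}=\ga\int_B^A s^{\ga-1}\,ds$ and apply H\"older with exponents $p,\,p/(p-1)$ against the constant $1$, using $(\ga-1)p=\ba-1$ to close the loop) is exactly the standard mechanism behind this estimate, and your bookkeeping of the cases ($b\ge m$, $b<m\le a$, $a\le m$) and of the exponent constraints ($\ga\ge1$, $\ba-1\ge0$, so the integrals converge at $0$) is sound.

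Two small points worth flagging, neither of which affects correctness. First, the reduction to $a\ge b$ presupposes that the right-hand side is interpreted symmetrically, e.g.\ as $\bigl|a_m^{\ga}-b_m^{\ga}\bigr|^p$; for non-integer $p$ the raw expression $(a_m^{\ga}-b_m^{\ga})^p$ would otherwise be ill-defined when $a<b$, and you implicitly adopt the symmetric reading. Second, when you invoke $(a-b)^{p-1}\ge(a_m-b_m)^{p-1}$ you also use that the companion factor $a_m^{\ba}-b_m^{\ba}\ge 0$ so that the product inequality is preserved --- you state the ordering $a_m\ge b_m$, so this is there, but it is the step where that ordering is actually used and deserves to be said out loud.
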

\begin{Proposition}\label{pp5.6} 
Let  $\e \in (0,1]$ and $w_\e=(u_\e,v_\e)$ be the solution of \eqref{pe5.2}. Then we have $u_\e,~v_\e \in L^\infty(\mb R)$.	
\end{Proposition}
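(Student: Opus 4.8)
The plan is to run a Moser-type iteration directly on the fractional equations, exploiting the fact that the right-hand sides $f_\e,g_\e$ are \emph{bounded} in $L^\infty(\mb R)$ uniformly in $\e$ (by \eqref{eq5.3}), so that the argument is considerably softer than the $L^\infty$ estimate for the original system. First I would fix $\e$ and work with $v_\e$, which solves $(-\Delta)^{1/2}v_\e+V_0v_\e=f_\e$ with $\|f_\e\|_\infty\le C$. Since $w_\e\in W$ we already know $u_\e,v_\e\in H^{1/2}(\mb R)\subset L^q(\mb R)$ for every $q\in[2,\infty)$, so the only issue is the $L^\infty$ bound. Testing the equation with the truncated power $(v_\e)_m^{\ba}$ (more precisely $\varphi=|v_\e|^{p-2}v_\e$ suitably truncated at level $m$, then letting $m\to\infty$), and using Lemma \ref{pl5.5} with $p=2$ to bound the nonlocal quadratic form from below by $\|\,|v_\e|_m^{(\ba+1)/2}\,\|_{1/2}^2$ up to constants, gives an inequality of the form
\begin{equation*}
\big\||v_\e|^{(\ba+1)/2}\big\|_{1/2}^2\le C(\ba)\Big(\|f_\e\|_\infty+V_0\Big)\int_{\mb R}|v_\e|^{\ba}dx,
\end{equation*}
where the constant $C(\ba)$ grows polynomially (like $(\ba+1)^2$) in $\ba$. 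The fractional Sobolev embedding $H^{1/2}(\mb R)\hookrightarrow L^{q}(\mb R)$ for all finite $q$ then upgrades the left-hand side to an $L^{q}$-norm of $v_\e$ for arbitrarily large $q$; iterating along a sequence $\ba_k\to\infty$ and tracking the product of the constants (which converges because $\sum k/2^k<\infty$ after taking logarithms) yields $\|v_\e\|_\infty\le C\big(\|v_\e\|_{L^{q_0}},\|f_\e\|_\infty\big)$ for some fixed starting exponent $q_0$, uniformly controllable since $\|v_\e\|_{1/2}$ is bounded. The same argument applied to $u_\e$ (using $g_\e$ in place of $f_\e$) finishes the proof.

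One technical point to handle carefully is the use of truncated test functions in the fractional framework: $\varphi=(v_\e)_m^\ba$ is admissible in $H^{1/2}(\mb R)$ because $v_\e\in L^2\cap L^\infty_{\mathrm{loc}}$ is not yet known to be globally bounded, so I would introduce $\varphi_m=\operatorname{sgn}(v_\e)\min\{|v_\e|,m\}^{\ba}$ (Lipschitz in $v_\e$, hence in $H^{1/2}$ since $v_\e$ is), derive the iteration inequality with $m$ fixed, pass to the limit $m\to\infty$ via monotone/dominated convergence once the right-hand side is finite (which it is inductively, starting from $v_\e\in L^q$ for all $q<\infty$), and only then iterate in $\ba$. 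The lower bound for the bilinear form
\begin{equation*}
\frac{1}{2\pi}\int_{\mb R}\int_{\mb R}\frac{(v_\e(x)-v_\e(y))(\varphi_m(x)-\varphi_m(y))}{|x-y|^2}\,dx\,dy
\end{equation*}
by a multiple of $\||v_\e|_m^{(\ba+1)/2}\|_{1/2}^2$ is exactly the content of Lemma \ref{pl5.5} with $p=2$, $a=v_\e(x)_+$, $b=v_\e(y)_+$ (and the analogous splitting for the negative part), so this step is essentially quoted.

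The main obstacle I anticipate is bookkeeping the constants through the iteration so as to keep the final bound \emph{finite} — i.e.\ checking that $\prod_k C(\ba_k)^{1/\ba_k}<\infty$ — rather than any deep analytic difficulty; with the choice $\ba_k+1=2\cdot(q/2)^k$ (where $q>2$ is the fixed Sobolev gain exponent in one dimension, which can be taken as large as we like) one has $\log$ of the product equal to $\sum_k c\,k\,(2/q)^k<\infty$, so the scheme closes. Note also that uniformity in $\e$ is automatic: the only $\e$-dependent quantities entering are $\|f_\e\|_\infty,\|g_\e\|_\infty$ and $\|u_\e\|_{1/2},\|v_\e\|_{1/2}$, all of which are bounded uniformly in $\e\in(0,1]$ by \eqref{eq5.3}, \eqref{eq5.4} and Proposition \ref{pp5.4}; hence the proof in fact delivers $\sup_{\e\in(0,1]}(\|u_\e\|_\infty+\|v_\e\|_\infty)<\infty$, which is what the subsequent Pohozaev argument will need.
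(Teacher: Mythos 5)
Your proposal is correct and follows essentially the same route as the paper's proof: a Moser iteration on the weak formulation \eqref{pe5.4}, testing with a truncated power $h_L(u_\e)^\ba$ (the paper) or $\operatorname{sgn}(v_\e)\min\{|v_\e|,m\}^\ba$ (you), using Lemma \ref{pl5.5} with $p=2$ to bound the nonlocal form from below, the one-dimensional Sobolev embedding $H^{1/2}(\mb R)\hookrightarrow L^q(\mb R)$ to upgrade integrability, and the uniform $L^\infty$ bound \eqref{eq5.3} on $f_\e,g_\e$ to keep the right-hand side under control. The only cosmetic differences are the order in which you remove the truncation versus iterate in $\ba$ (both are fine since $v_\e\in\bigcap_{q<\infty}L^q$), and that the paper treats $u_\e^+$ and $u_\e^-$ separately while you fold the sign in; your closing remark that the argument actually yields $\sup_{\e\in(0,1]}(\|u_\e\|_\infty+\|v_\e\|_\infty)<\infty$ is a valid and useful observation.
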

\begin{proof}
We prove the proposition by using Moser iterations type technique. For every $L>0$, we define
	\begin{equation*}
		h_L(t)=\begin{cases}
			\min\{t,L\},~&\text{if}~t \geq 0,\\
			0,~&\text{if}~t<0.
		\end{cases}
	\end{equation*}
Now define
$	u_{\e,L}=h_L \circ u_\e, ~v_{\e,L}=h_L \circ v_\e \in H^\frac12(\mb R)\cap L^\infty(\mb R).$
Then for every $\ba \geq 1$, taking $(u_{\e,L}^\ba,0)$ as a test function in \eqref{pe5.4} and using Lemma \ref{pl5.5}, we obtain
\begin{align*}
	\frac{4\ba}{(\ba+1)^2}\I\mb R\I \mb R \frac{\left( (u_{\e,L}(x))^\frac{\ba+1}{2}-(u_{\e,L}(y))^\frac{\ba+1}{2}\right)}{|x-y|^2}dxdy \leq \I\mb R |g_\e|u_{\e,L}^\ba dx.
\end{align*}
By the Sobolev inequality, we have for any $q>2$ 
\begin{align*}
	\left(\I\mb R u_{\e,L}^{q\frac{\ba+1}{2}}dx\right)^\frac{2}{q}\leq C\frac{1}{\ba}\left(\frac{\ba+1}{2}\right)^2 \I\mb R |g_\e|u_{\e,L}^\ba dx.
\end{align*}
By using the uniform bound on $|g_\e|$ and $\ba >1$ we obtain
\begin{align*}
	\|u_{\e,L}\|_{L^{q\frac{\ba+1}{2}}}\leq C^\frac{1}{\ba+1}\left(q\frac{\ba+1}{2}\right)^\frac{2}{\ba+1}\|u_{\e,L}\|_{L^\ba}^\frac{\ba}{\ba+1}
\end{align*}
for some constant $C>0$. By setting
	$\ba_{n+1}=q\frac{\ba_n+1}{2},~\ba_0=q,~\zeta_n=\frac{\ba_n}{\ba_n+1}<1,$
we obtain $\ba_n \ra +\infty$ and
\begin{equation*}
	\|u_{\e,L}\|_{\ba_{n+1}}\leq C^\frac{1}{\ba_{n+1}}\ba_{n+1}^\frac{q}{\ba_{n+1}}\|u_{\e,L}\|_{L^{\ba_n}}^{\zeta_n}.
\end{equation*}
On iterating the above inequality, we get for any $n \geq 1$,
\begin{equation*}
	\|u_{\e,L}\|_{L^{\ba_n+1}} \leq C^{\sum\limits_{i=1}^{n+1}\frac{1}{\ba_i}}\left(\prod\limits_{i=1}^{n+1}\ba_i^\frac{1}{\ba_1}\right)^q\|u_{\e,L}\|_{L^q}^{\prod\limits_{i=1}^{n}\zeta_i}.
\end{equation*}
Now $\ba_n$ can be determined explicitly: by setting $\sigma =q/2>1$, that is
	$\ba_n =\sigma^n\ba_0 +\ds\frac{\sigma^{n+1}-\sigma}{\sigma-1}.$
So it holds
\begin{equation*}
	\lim\limits_{n \ra \infty} \frac{\ba_n}{\sigma^n}=\ba_0 +\frac{\sigma}{\sigma-1},
~\text{and}~
	\sum\limits_{i=1}^{+\infty}\frac{1}{\ba_i} < +\infty ~\text{and}~\prod\limits_{i+1}^{+\infty}\ba_i^\frac{1}{\ba_1}<+\infty.
\end{equation*}
Lastly we see
	\begin{align*}\lim\limits_{n \ra \infty} \prod\limits_{i=1}^{n}\sigma_i=	\lim\limits_{n \ra \infty} \prod\limits_{i=1}^{n}\sigma^{n+1}\frac{\ba_0}{\ba_{n+1}}=\frac{q-2}{q-1}.
\end{align*}
Thus we finally have
	$\|u_{\e,L}\|_\infty \leq \ds C\|u_{\e,L}\|_q^\frac{q-2}{q-1}.$
We now let $L \ra \infty$, which gives $u_\e ^+\in L^\infty(\mb R)$. By repeating the argument above for $u_\e^-$, we get $u_\e \in L^\infty (\mb R)$. Similarly we have $v_\e \in L^\infty(\mb R)$. \QED		\end{proof}
\begin{Proposition}\label{p5.6}
	We have $Du_\e,~Dv_\e \in H^\frac12(\mb R)$ for any $\e \in (0,1]$, where $D$ denotes the derivative (with respect to the variable $x \in \mb R$). Moreover
	\begin{equation*}
		\|Du_\e\|_\frac12 \leq \|Dg_\e\|_\frac{-1}{2}~\text{and}~\|Dv_\e\|_\frac12 \leq \|Df_\e\|_\frac{-1}{2}.
	\end{equation*}
\end{Proposition}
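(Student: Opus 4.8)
The idea is that \eqref{pe5.2} involves the translation-invariant operator $(-\De)^\frac12+V_0$ and right-hand sides $f_\e,g_\e\in C_0^\infty(\mb R)$, so one may legitimately differentiate the equations in $x$ and read off the estimate from a one-line energy identity. First I would note that, since $\mc H_\e(\Lambda)\in C_0^\infty(\mb R)$ for every $\Lambda$ (Lemma \ref{pl5.2}), both $f_\e$ and $g_\e$ lie in $C_0^\infty(\mb R)$; hence $Df_\e,Dg_\e\in C_0^\infty(\mb R)\subset L^2(\mb R)\subset H^\frac{-1}{2}(\mb R)$, so the right-hand sides of the claimed inequalities are finite. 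From the optimality condition \eqref{pe5.4}, taking $z=(\varphi,0)$ and then $z=(0,\psi)$, we have
\begin{align*}
\ld u_\e,\varphi\rd_\frac12=\int_{\mb R}g_\e\varphi\,dx\quad\text{and}\quad\ld v_\e,\psi\rd_\frac12=\int_{\mb R}f_\e\psi\,dx\qquad\text{for all }\varphi,\psi\in H^\frac12(\mb R).
\end{align*}

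The shortest route is the Fourier transform. Since $g_\e\in C_0^\infty(\mb R)$, $\mc F(g_\e)$ is a Schwartz function; testing the first identity above against $\varphi\in C_0^\infty(\mb R)$ and applying Plancherel's theorem yields $(|\xi|+V_0)\mc F(u_\e)(\xi)=\mc F(g_\e)(\xi)$, and since $|\xi|+V_0\ge V_0>0$ this gives $\mc F(u_\e)(\xi)=\mc F(g_\e)(\xi)/(|\xi|+V_0)$ a.e. Therefore $\mc F(Du_\e)(\xi)=i\xi\,\mc F(u_\e)(\xi)=\mc F(Dg_\e)(\xi)/(|\xi|+V_0)$, and
\begin{align*}
\|Du_\e\|_\frac12^2=\int_{\mb R}(|\xi|+V_0)\,|\mc F(Du_\e)(\xi)|^2\,d\xi=\int_{\mb R}\frac{|\mc F(Dg_\e)(\xi)|^2}{|\xi|+V_0}\,d\xi=\|Dg_\e\|_\frac{-1}{2}^2,
\end{align*}
which is finite since $\mc F(Dg_\e)(\xi)=i\xi\,\mc F(g_\e)(\xi)$ decays rapidly. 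In particular $Du_\e\in H^\frac12(\mb R)$, and the stated bound holds (in fact with equality); the argument for $v_\e$ with $f_\e$ is verbatim the same.

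One can also avoid Fourier analysis with a difference-quotient argument. For $h\neq0$ put $D_h\phi(x)=h^{-1}(\phi(x+h)-\phi(x))$. Because the bilinear form $\ld\cdot,\cdot\rd_\frac12$ is invariant under the translation $x\mapsto x-h$, testing the weak equation for $u_\e$ with the shifted function $\varphi(\cdot-h)$ gives $\ld D_hu_\e,\varphi\rd_\frac12=\ld D_hg_\e,\varphi\rd$ for all $\varphi\in H^\frac12(\mb R)$; choosing $\varphi=D_hu_\e\in H^\frac12(\mb R)$ and using the Cauchy--Schwarz inequality in the $H^\frac12$--$H^\frac{-1}{2}$ duality yields $\|D_hu_\e\|_\frac12\leq\|D_hg_\e\|_\frac{-1}{2}$. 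As $g_\e\in C_0^\infty(\mb R)$ we have $D_hg_\e\to Dg_\e$ in $L^2(\mb R)$, hence in $H^\frac{-1}{2}(\mb R)$, as $h\to0$; thus $\{D_hu_\e\}$ is bounded in $H^\frac12(\mb R)$, a weakly convergent subsequence has distributional limit $Du_\e$, and weak lower semicontinuity gives $\|Du_\e\|_\frac12\leq\liminf_{h\to0}\|D_hu_\e\|_\frac12\leq\|Dg_\e\|_\frac{-1}{2}$. In either approach there is no genuine difficulty; the only point that needs care is justifying that differentiation of the equation --- equivalently, the translation-testing step --- is admissible, which is precisely where the smoothness and compact support of $f_\e,g_\e$ together with the translation invariance of $(-\De)^\frac12+V_0$ enter, and that the weak $H^\frac12$-limit of the difference quotients is the distributional derivative; both are routine.
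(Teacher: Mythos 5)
Your proposal is correct. Your second (difference-quotient) argument is essentially the paper's proof, which follows \cite{BMS}: the paper translates the test function in \eqref{pe5.4}, subtracts, tests the resulting difference-quotient equation \eqref{pe5.8} with $\de_h u_\e$ itself, applies Cauchy--Schwarz duality, divides by $|h|$, and then passes to the limit by invoking \cite[Lemma 2.2]{BMS} to compare $\|\de_h g_\e/h\|_{-\frac12}$ with $\|Dg_\e\|_{-\frac12}$; you instead use the strong convergence $D_h g_\e\to Dg_\e$ in $L^2(\mb R)\hookrightarrow H^{-\frac12}(\mb R)$, which is equivalent here since $g_\e\in C_0^\infty(\mb R)$ by Lemma \ref{pl5.2}. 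Your first (Fourier) argument is a genuinely different route that the paper does not take: $(-\De)^{\frac12}+V_0$ is a translation-invariant Fourier multiplier with symbol $|\xi|+V_0\geq V_0>0$ and $f_\e,g_\e\in C_0^\infty(\mb R)$ are Schwartz, so $\mc F(u_\e)=\mc F(g_\e)/(|\xi|+V_0)$ and $\mc F(v_\e)=\mc F(f_\e)/(|\xi|+V_0)$ directly, and the claimed bound upgrades to the Plancherel identity $\|Du_\e\|_{\frac12}=\|Dg_\e\|_{-\frac12}$ (understanding $\|\cdot\|_{-\frac12}$ as the norm dual to the $V_0$-weighted $H^{\frac12}$ norm used throughout this section, exactly as the paper's Cauchy--Schwarz step does). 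The Fourier route is shorter and gives the sharper equality; the difference-quotient route makes no use of translation invariance or of the constancy of $V_0$ and is the argument that carries over to the fractional $p$-Laplacian setting of \cite{BMS}, which is presumably why the authors keep it. Both arguments are rigorous here.
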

\begin{proof}
Let us take $h \in \mb R \setminus \{0\}$. We use the following notations in the course of the proof:\\
If $h \in \mb R\setminus \{0\}$ and $\varphi: \mb R \rightarrow \mb R$ is a measurable function, we set
\begin{equation*}
	\varphi_h(x)=\varphi(x+h)~\text{and} ~ \delta_h \varphi (x)=\varphi(x+h)-\varphi(x).
\end{equation*}	
In case $\varphi: \mb R \times \mb R \rightarrow \mb R$, we use the same notation for 
\begin{equation*}
	\varphi_h(x,y)=\varphi (x+h,y+h)~\text{and}~\de_h \varphi(x,y)=\varphi(x+h,y+h)-\varphi(x,y).
\end{equation*}	
We also set 
\begin{equation*}
\mc G \varphi(x,y) =\frac{\varphi(x)-\varphi(y)}{|x-y|^\frac12}~\text{and}~d\mu=\frac{dxdy}{|x-y|}.	
\end{equation*}
 For the sake of convenience we drop the subscript $\e$.	
Now using the quotient difference method, let $\psi \in H^\frac12(\mb R)$ and take $(\psi_{-h},0)$ as test function in \eqref{pe5.4} and using change of variables, we get 	
\begin{align}\label{pe5.6}
	\I \mb R \I\mb R  \mc G u_h \mc G \psi d\mu + \I\mb R u_h \psi dx -\I\mb R g_h \psi dx=0.
\end{align}	
Also by taking $(\psi,0)$ as test function in \eqref{pe5.4}, we have	
	\begin{align}\label{pe5.7}
		\I \mb R \I\mb R  \mc G u \mc G \psi d\mu + \I\mb R u \psi dx-\I\mb R g \psi dx=0.
	\end{align}
On subtracting \eqref{pe5.7} from \eqref{pe5.6}, we have the following equation:
\begin{equation}\label{pe5.8}
	\I\mb R \I\mb R \de_h \mc G u \mc G\psi d\mu +\I\mb R \de_h u \psi dx-\I\mb R \de_h g \psi dx =0,
\end{equation}	
which holds true for every $\psi \in H^\frac12(\mb R)$. We now test \eqref{pe5.8} with the test function $\psi = \de_h u$, we obtain	
\begin{equation}\label{pe5.9}
	\I\mb R \I\mb R \de_h \mc G u \mc G\de_h u d\mu+\I\mb R |\de_h u|^2dx -\I\mb R \de_h g \de_h udx =0.
\end{equation}	
	Observing that 
	\begin{equation*}
		\mc G(\de_h u) = \frac{u(x+h)-u(y+h)-(u(x)-u(y))}{|x-y|^\frac12}=\de_h \mc G u
	\end{equation*}
and so from \eqref{pe5.9} we obtain
\begin{align*}
	\|\mc G (\de_h u)\|_\frac12^2 =\I\mb R \de_h g \de_h u dx \leq \| \de_h g\|_\frac{-1}{2}\|\de_h u\|_\frac12. 
\end{align*}	
Now divide by $|h|$ and use \cite[Lemma 2.2]{BMS} to get
\begin{equation}\label{pe5.10}
\left\|	\frac{\de_h u}{h}\right\|_\frac12 \leq\|g'\|_\frac{-1}{2}.
\end{equation}	
In particular, we have that
\begin{equation*}
	\sup\limits_{|h|>0} \left\|	\frac{\de_h u}{h}\right\|_{L^2}< +\infty.
\end{equation*}	
Since $\ds\frac{\de_h u}{h} \ra Du$ in the sense of distribution, we get $Du \in L^2(\mb R)$. Moreover, there exists $\{h_n\}_{n \in \mb N} \subset \mb R \setminus \{0\}$ converging to $0$ such that 
\begin{equation*}
	\frac{\de_{h_n}u}{h_n} \ra Du~\text{strongly in}~L^2(\mb R).
\end{equation*}	 
We can thus pass to the limit in \eqref{pe5.10} by using Fatou's Lemma and get the desired result. Similarly the result holds for $v$. \QED	
	\end{proof}
\begin{Remark}\label{r5.7}
	Since by \eqref{eq5.4}, $f_\e, g_\e$ are uniformly bounded in $H^\frac12(\mb R)$, we have $Df_\e,~D g_\e$ are uniformly bounded in $H^\frac{-1}{2}(\mb R)$. This combined with
	Proposition \ref{p5.6} imply that $ Du_\e,~D v_\e$ are uniformly bounded in $H^\frac12(\mb R)$.
\end{Remark}
\begin{Proposition}\label{p5.8}
 We have $Du_\e,~Dv_\e \in L^\infty(\mb R)$, for every $\e \in (0,1]$.
\end{Proposition}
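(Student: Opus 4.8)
The plan is to differentiate the system \eqref{pe5.2} and then run the interior regularity machinery already used for Proposition \ref{p4.1}. \emph{Step 1: the derivatives solve a differentiated system.} By Lemma \ref{pl5.2} we have $f_\e,g_\e \in C_0^\infty(\mb R)$, hence for each fixed $\e \in (0,1]$ both $Df_\e$ and $Dg_\e$ belong to $L^p(\mb R)$ for every $p \in [1,\infty]$ (in particular to $L^2(\mb R)\cap L^\infty(\mb R)$), with finite norms. Arguing as in the proof of Proposition \ref{p5.6} — that is, passing to the limit $h \to 0$ along the sequence $\{h_n\}$ in the difference-quotient identity \eqref{pe5.8}, using that $\de_{h_n}u_\e/h_n \to Du_\e$ strongly in $L^2(\mb R)$ and (by the uniform bound \eqref{pe5.10}) weakly in $H^\frac12(\mb R)$, while $\de_{h_n}g_\e/h_n \to Dg_\e$ in $L^2(\mb R)$ — I would conclude that $Du_\e$ is a weak solution of
\begin{equation*}
	(-\De)^\frac12 (Du_\e) + V_0\, Du_\e = Dg_\e \quad \text{in } \mb R ,
\end{equation*}
and likewise that $Dv_\e$ solves the analogous equation with $Df_\e$ as right-hand side.

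\emph{Step 2: interior regularity and globalization.} Once the differentiated equations are at hand, I would argue exactly as in Proposition \ref{p4.1}. For any $r>0$ and any $p \in [2,\infty)$, \cite[Corollary 3.1]{MSY} applied to the equation for $Du_\e$ gives
\begin{equation*}
	\|Du_\e\|_{W^{\frac12,p}(B_r(0))} \leq C\big(\|Dg_\e\|_{L^p(B_{2r}(0))} + \|Du_\e\|_\frac12\big),
\end{equation*}
and the Sobolev embedding $W^{\frac12,p}(B_r(0)) \hookrightarrow C^{0,\gamma}(\ov{B_r(0)})$ for a suitable $\gamma=\gamma(p)\in(0,1)$ yields $Du_\e \in C^{0,\gamma}_{\mathrm{loc}}(\mb R)$ together with
\begin{equation*}
	\|Du_\e\|_{C^{0,\gamma}(\ov{B_r(0)})} \leq C\big(\|Dg_\e\|_{L^p(\mb R)} + \|Du_\e\|_\frac12\big).
\end{equation*}
To upgrade this local bound to a global one I would exploit translation invariance: for every $y \in \mb R$ the function $Du_\e(\cdot+y)$ solves the same equation with right-hand side $Dg_\e(\cdot+y)$, and both $\|Dg_\e(\cdot+y)\|_{L^p(\mb R)}=\|Dg_\e\|_{L^p(\mb R)}$ and $\|Du_\e(\cdot+y)\|_\frac12=\|Du_\e\|_\frac12$ are finite and independent of $y$; hence $\sup_{y \in \mb R}\|Du_\e(\cdot+y)\|_{C^{0,\gamma}(\ov{B_r(0)})}<\infty$, which forces $Du_\e \in L^\infty(\mb R)$. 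The same reasoning applies to $Dv_\e$. Alternatively, since $Dg_\e \in L^\infty(\mb R)$ for fixed $\e$, the Moser-iteration argument in the proof of Proposition \ref{pp5.6} applies verbatim with $(u_\e,g_\e)$ replaced by $(Du_\e,Dg_\e)$, giving $\|Du_\e\|_\infty \leq C\|Du_\e\|_q^{(q-2)/(q-1)}<\infty$.

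I expect the main obstacle to be Step 1: rigorously justifying that the distributional derivative $Du_\e$ is a genuine $H^\frac12$-weak solution of the differentiated equation, the delicate point being the passage to the limit in the nonlocal bilinear form, which must be routed through the difference-quotient identities of Proposition \ref{p5.6} and the uniform estimate \eqref{pe5.10}. Once the differentiated equations are available, Step 2 is a routine bootstrap relying only on tools already used in the paper, so no further serious difficulty is anticipated.
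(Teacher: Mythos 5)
Your proposal is correct, but it takes a different route from the paper, and the difference is worth noting. The paper's proof of Proposition \ref{p5.8} never actually forms the differentiated equation: instead it works entirely at the level of the difference-quotient identity \eqref{pe5.8}, testing it with $\bigl(h_L(\de_h u/h)^\ba,0\bigr)$ and running the Moser iteration from Proposition \ref{pp5.6} directly on the quotients $\de_h u_\e/h$. Since $\|\de_h g_\e/h\|_\infty \leq \|Dg_\e\|_\infty < \infty$, the resulting $L^\infty$ bound is uniform in $h$ (and in $L$), and one then lets $h \to 0$. Your Step~1 — rigorously deriving the equation $(-\De)^{1/2}Du_\e + V_0 Du_\e = Dg_\e$ by passing to the limit in \eqref{pe5.8} — is a legitimate extra step (and does go through, using weak $H^{1/2}$-convergence of $\de_{h_n}u_\e/h_n$ from the bound \eqref{pe5.10} and the strong $L^2$/$H^{-1/2}$ convergence of $\de_{h_n}g_\e/h_n \to Dg_\e$ available because $g_\e \in C_0^\infty$), but the paper deliberately avoids it. Your primary Step~2, applying \cite[Corollary~3.1]{MSY} plus the Sobolev embedding $W^{1/2,p}\hookrightarrow C^{0,\gamma}$ and globalizing by translation invariance, is a genuinely different argument from the one in the paper: it relies on the Calder\'on--Zygmund machinery already set up for Proposition \ref{p4.1} rather than on Moser iteration, and as a by-product gives local H\"older continuity of $Du_\e$, which is more than the proposition asks for. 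Your alternative at the end (Moser iteration on the pair $(Du_\e,Dg_\e)$) is the closest to what the paper does, modulo the extra Step~1. In short: both of your routes are correct; the paper's proof is shorter because it never needs the differentiated equation.
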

\begin{proof}
   The proof to show $Du_\e \in L^\infty(\mb R)$ follows similarly as the proof of Proposition \ref{pp5.6} by taking the test function $\left(h_L\left(\frac{\de_h u}{h}\right)^\ba,0\right)$ in \eqref{pe5.8}, where $\ba \geq 1$ and $h_L$ are as in Proposition \ref{pp5.6} and using the fact that $Dg_\e \in L^\infty(\mb R)$. Similarly, we get $D v_\e \in L^\infty(\mb R)$. \QED
	\end{proof}
\subsection{End of Theorem \ref{pt5.1}} 
 In this subsection we complete the proof of the Poho\v{z}aev identity \eqref{e5.1}.\\
 \begin{proof}
 Following \cite{BMS}, we will divide the proof into various intermediate steps:\\
 	\textbf{Step 1: Construction of the perturbation}. Let $r>1$ and $\eta \in C_0^\infty(\mb R)$ be a positive cut-off function supported in $B_r$. For $|t| <\de <1$ ($\de$ depending on $\eta$), the map defined by 
 	\begin{equation*}
 		x \mapsto x+  t\eta(x)x :=\bar{x} = H_t(x)
 	\end{equation*} 
 is a smooth diffeomorphism of $\mb R$ which is uniformly bilipschitz for $|t|<\de$, i.e.,
 \begin{equation*}
 	\sup\limits_{|t|<\de} (\| D H_t(x)\|_{L^\infty}+\|  DH_t^{-1}\|_{L^\infty}) <+\infty.
 \end{equation*}	
Since $\partial_t H_t(x)=\eta(x)x$, it holds
\begin{equation*}
	\partial_t (H_t^{-1}(H_t(x))) + DH_t^{-1}(H_t(x))\partial_tH_t(x)=0,
\end{equation*}
so that
\begin{equation}\label{pe5.11}
	\partial_t H_t^{-1}(\bar{x})=-DH_t^{-1}(\bar{x})\eta(H_t^{-1}(\bar{x}))H_t^{-1}(\bar{x}),~~\partial_tH_t^{-1}(\bar{x})_{\vert_{t=0}}=-\eta(\bar{x})\bar{x}.
\end{equation}
Moreover for any fixed $\bar{x}$, we have
\begin{equation}\label{pe5.12}
	\partial_tDH_t^{-1}(\bar{x})=D\partial_tH_t^{-1}(\bar{x})~\text{and}~	\partial_tDH_t^{-1}(\bar{x})_{\vert_{t=0}}=-D(\bar{x}\eta(\bar{x})). 
\end{equation}
Now we define
\begin{equation*}
	u_{\e,t}=u_\e \circ H_t(x)=u_\e(x+t\eta(x)x)~\text{and}~v_{\e,t}=v_\e \circ H_t(x)=v_\e(x+t\eta(x)x).
\end{equation*}
 By using the change of variable $\bar{x}=H_t(x)$, we see that
\begin{align}\nonumber\label{pe5.13}
	\I\mb R \I\mb R \mc Gu_{\e,t} \mc G v_{\e,t}d\mu =& \I\mb R \I\mb R\frac{(u_\e(\bar{x})-u_\e(\bar{y}))(v_\e(\bar{x})-v_\e(\bar{y}))}{|H_t^{-1}(\bar{x})-H_t^{-1}(\bar{y})|^2} DH_t^{-1}(\bar{x})DH_t^{-1}(\bar{y})d\bar{x}d\bar{y}\\
	=& \I\mb R \I\mb R  \mc G u_\e \mc G v_\e\mc S_t \mc R_t^2d\mu,
\end{align}
where we set for $x \ne y$
\begin{equation*}
	\mc R_t(x,y)=\frac{|x-y|}{|H_t^{-1}(x)-H_t^{-1}(y)|},~\mc S_t(x,y)=DH_t^{-1}(x)DH_t^{-1}(y).
\end{equation*}
Observe that, for $|t| <\de$,
\begin{equation}\label{pe5.14}
	\sup\limits_{|t|<\de} (\|\mc S_t\|_{L^\infty} +	 \|\mc R_t\|_{L^\infty}) <+\infty
\end{equation}
and for any $(x,y)\in \mb R^2$, the maps $t \mapsto \mc S_t(x,y)$ and $t \mapsto \mc R_t(x,y)$ are smooth. Also we have
\begin{equation*}
	\partial_t \mc R_t^2=-2\mc R_t^2 \frac{H_t^{-1}(x)-H_t^{-1}(y)}{|H_t^{-1}(x)-H_t^{-1}(y) |} \frac{\partial_t H_t^{-1}(x)-\partial_t H_t^{-1}(y)}{|H_t^{-1}(x)-H_t^{-1}(y) |},
\end{equation*}
\begin{equation*}
	\partial_t \mc S_t= \partial_tDH_t^{-1}(x)DH_t^{-1}(y)+DH_t^{-1}(x)\partial_tD H_t^{-1}(y).
\end{equation*}
According to \eqref{pe5.11} and the bilipschitz character of $H_t$,
\begin{equation*}
\frac{|\partial_t H_t^{-1}(y)-\partial_t H_t^{-1}(y)|}{|H_t^{-1}(x)-H_t^{-1}(y) |} \leq \|DH_t\|_{L^\infty}\text{Lip}(D(H_t^{-1})\eta(H_t^{-1})H_t^{-1})\leq C(\|\eta\|_{C^2})<+\infty.
\end{equation*}
By \eqref{pe5.12} and \eqref{pe5.14}, we infer that $t \mapsto \partial_t \mc S_t$ and $t \mapsto \partial_t \mc R_t$ are continuous for any $(x,y) \in \mb R^2$ and
\begin{equation*}
	\sup\limits_{|t|<\de} \|\partial_t\mc S_t\|_{L^\infty} +	 \|\partial_t\mc R_t\|_{L^\infty} <+\infty. 
\end{equation*}
\textbf{Step 2: Differentiating under the integral sign.} According to \eqref{pe5.13}, our aim is to prove the following chain of equalities for $t =0$
\begin{align}\nonumber\label{pe5.17}
	\I \mb R \I\mb R \frac{d}{dt}\left(\mc G u_{\e,t}\mc G v_{\e,t}\right)d\mu=\frac{d}{dt} 	\I\mb R\I\mb R \mc G u_{\e,t}\mc G v_{\e,t}d\mu=&\frac{d}{dt}\I\mb R \I\mb R \mc G u_\e\mc G v_\e\mc S_t \mc R_t^2d\mu\\
	=&\I\mb R \I\mb R \frac{d}{dt}\left(\mc Gu_\e \mc G v_\e\mc S_t\mc R_t^2\right)d\mu.
\end{align}
First note that since $u_\e$ is Lipschitz, all the integrands above are well defined. We are going to use \cite[Theorem 5.2]{BMS} here to interchange the integral and the differential. In view of this, we claim that the maps
\begin{align*}
	&t \mapsto 	\I\mb R\I\mb R \mc G u_{\e,t}\mc G v_{\e,t}d\mu=\I\mb R \I\mb R  \mc G u_\e \mc G v_{\e} \mc S_t \mc R_t d\mu,\\
	&t \mapsto 	\I \mb R \I\mb R \frac{d}{dt} \mc G u_{\e,t} G v_{\e,t}d\mu,\\
	&t \mapsto \I\mb R \I\mb R \frac{d}{dt}\left( \mc G u_\e \mc G v_\e\mc S_t\mc R_t^2\right)d\mu
\end{align*}
are well defined and continuous in $|t| \leq \de$.
The first map is continuous by the dominated convergence using the smoothness of $\mc S_t$ and $\mc R_t$ and the bound \eqref{pe5.14}. For the second map, using the change of variables $\bar{x}=H_t(x)$, we have
\begin{align}\label{pe5.18}
	\I\mb R \I\mb R \frac{d}{dt}\mc G u_{e,t}\mc G v_{\e,t}d\mu &=\I\mb R\I\mb R (\mc Gu_{\e,t}\mc G (\eta x Dv_\e\circ H_t) +\mc G (\eta x Du_\e\circ H_t)\mc G v_{\e,t} )d\mu\\ \nonumber
	&=\I\mb R \I\mb R  ( \mc G u_\e\mc G(\eta\circ H_t^{-1}H_t^{-1}Dv_\e) +\mc G(\eta\circ H_t^{-1}H_t^{-1}Du_\e)\mc G v_\e)\mc R_t^2 \mc S_t d\mu,
\end{align}
and the integrand is pointwise continuous in $t$, therefore it is sufficient to dominate it uniformly in $|t| \leq \de$. Using \eqref{pe5.14}, we obtain from \eqref{pe5.18} that
\begin{equation*}
	| \mc G u_\e\mc G(\eta\circ H_t^{-1}H_t^{-1}Dv_\e)\mc R_t^2 \mc S_t| \leq C| \mc G u_\e\mc G(\eta\circ H_t^{-1}H_t^{-1}Dv_\e)|.
\end{equation*}
Notice that supp$(\eta\circ H_t^{-1}) \subset B_{R+1}$, hence the last factor is supported in {$\mc A :=(B_{R+1}\times \mb R) \cup (\mb R \times B_{R+1})$}. Note that
\begin{align*}
	| \mc G(\eta\circ H_t^{-1}H_t^{-1}Dv_\e)|\leq&\left| \mc G(\eta \circ H_t^{-1}H_t^{-1})\frac{Dv_\e(x)+Dv_\e(y)}{2}\right|\\
	&+\left|\mc G Dv_\e\frac{\eta(H_t^{-1}(x))H_t^{-1}(x)+\eta(H_t^{-1}(y))H_t^{-1}(y)}{2}\right|\\
	\leq&\|Dv_\e\|_{L^\infty}|\mc G(\eta \circ H_t^{-1}H_t^{-1})|+\|\eta(H_t^{-1})H_t^{-1}\|_{L^\infty}|\mc GD v_\e|.
\end{align*} 
Using the bounds 
\begin{equation*}
	\sup\limits_{|t|<\de}\|\eta(H_t^{-1})H_t^{-1}\|_{L^\infty}=\|\eta x\|_{L^\infty} \leq R
\end{equation*}
and
\begin{equation*}
	\sup\limits_{|t|<\de}|\mc G(\eta\circ H_t^{-1}H_t^{-1})\leq C(\eta,\de)\min\{|x-y|^\frac{-1}{2},|x-y|^\frac12\} \in L^2(\mb R^2,d\mu),
\end{equation*}
we have
\begin{align*}
| \mc G u_\e\mc G(\eta\circ H_t^{-1}H_t^{-1}Dv_\e)\mc R_t^2 \mc S_t| \leq C |\mc G u_\e|_{\chi_{\mc A}}\left(|\mc G v_\e|+\min\{|x-y|^\frac{-1}{2},|x-y|^\frac12\}\right) \in L^1(\mb R^2,d\mu).
\end{align*}
Similarly the second term is dominated by a $L^1$ function.
Thus we get the required continuity due to the dominated convergence theorem.\\
Lastly, we have
\begin{align}\label{pe5.19}
\I\mb R \I\mb R \frac{d}{dt}\left( \mc G u_\e \mc G v_\e\mc S_t\mc R_t^2\right)d\mu 
= \I\mb R \I\mb R  \mc G u_\e \mc G v_\e\left(\partial_t \mc S_t \mc R_t^2+\mc S_t \partial_t \mc R_t^2\right)d\mu.
\end{align}
 Thus we conclude by dominated convergence theorem.\\
Thus \eqref{pe5.17} is proved for $|t|<\de$ and the right-hand sides of \eqref{pe5.18} and \eqref{pe5.19} are equal.\\
\textbf{Step 3: Poho\v{z}aev identity for the approximating problem.} We evaluate right hand sides of \eqref{pe5.18} and \eqref{pe5.19} at $t =0$ to get
\begin{align*}
	\I\mb R \I\mb R (\mc Gu_{\e}\mc G(\eta x Dv_\e)+\mc Gv_{\e}\mc G(\eta x Du_\e))d\mu=2\I\mb R \I\mb R \mc G u_\e \mc Gv_\e \left[\frac{x-y}{|x-y|}.\frac{\eta(x)x-\eta(y)y}{|x-y|}-D(\eta(x)x)\right]d\mu.
\end{align*}
Since $(\eta x Dv_\e,\eta x Du_\e)$ is an admissible test function for \eqref{pe5.4}, we obtain the identity
\begin{align}\label{pe5.20}\nonumber
	\I\mb R (g_\e\eta x Dv_\e+f_\e\eta xDu_\e)dx-&nV_0\I\mb R (u_\e \eta xDv_\e+v_\e \eta xDu_\e)dx\\
	=&2\I\mb R \I\mb R \mc G u_\e \mc Gv_\e \left[\frac{x-y}{|x-y|}.\frac{\eta(x)x-\eta(y)y}{|x-y|}-D(\eta(x)x)\right]d\mu.
\end{align}
\textbf{Step 4: Taking the limit.} We now let $\e \ra 0$ in the previous equality. For the right hand side since $\ds \frac{x-y}{|x-y|}.\frac{\eta(x)x-\eta(y)y}{|x-y|}$,  $D(\eta(x)x)$ are bounded and by Proposition \ref{pp5.4}, $\mc G u_\e \ra \mc G u$ and $\mc G v_\e \ra \mc G v$ strongly in $L^2(\mb R^2,d\mu)$, we have
\begin{align*}
\I\mb R \I\mb R \mc G u_\e \mc Gv_\e \left[\frac{x-y}{|x-y|}.
\frac{\eta(x)x-\eta(y)y}{|x-y|}\right]d\mu-\I\mb R &\mc G u_\e \mc Gv_\e D(\eta(x)x)d\mu\\
 \ra& \I\mb R \I\mb R \mc G u \mc Gv \left[\frac{x-y}{|x-y|}.\frac{\eta(x)x-\eta(y)y}{|x-y|}-D(\eta(x)x)\right]d\mu.
\end{align*}
Now using integration by parts, we have
\begin{align*}
\lim\limits_{\e \ra 0}	\I\mb R (u_\e \eta x Dv_\e+v_\e\eta x Du_\e)dx  = \lim\limits_{\e \ra 0} \I\mb RD(u_\e v_\e)\eta xdx&=-\lim\limits_{\e \ra 0} \I\mb R u_\e v_\e D(\eta(x)x)dx\\
&=-\I\mb R u v D(\eta(x)x)dx.
\end{align*}
Now to compute $\ds \I\mb R g_\e\eta x Dv_\e dx$, we split it as
\begin{align*}
\I\mb R g_\e \eta x Dv_\e dx= \I\mb R (g_\e-g(v_\e))\eta x Dv_\e dx + \I\mb R g(v_\e)\eta x Dv_\e dx.
\end{align*}
Again using integrating by parts, we get
\begin{align}\label{e5.23}
	\lim\limits_{\e \ra 0}\I\mb R g(v_\e)\eta x Dv_\e dx= -\I\mb R G(v)D(\eta(x)x) dx.
\end{align}
For the other term, note that $v_\e$ is uniformly bounded by Proposition \ref{pp5.6} and $g$ is continuous. Since by Proposition \ref{pp5.4} we have $v_\e \ra v $ in $L^p(\mb R)$ for every $ p \in [2,\infty)$, $g_\e - g(v_\e) \ra 0$ in $L^p_{\text{loc}}(\mb R)$. Indeed for every compact set $K \subset \mb R$ we have
\begin{align*}
	\|g_\e-g(v_\e)\|_{L^p(K)} \leq 	\|g_\e-g(v)\|_{L^p(K)}+	\|g(v)-g(v_\e)\|_{L^p(K)}.
	\end{align*}
The first term of the right hand side of the above expression converges to $0$ thanks to Lemma \ref{pl5.2}, while by applying the dominated convergence theorem the second term tends to $0$.\\
Now we claim that
\begin{align}\label{eq5.23}
	\lim\limits_{\e \ra 0} \I\mb R (g_\e-g(v_\e))\eta x Dv_\e dx=0.
\end{align}
Indeed, using \eqref{eq5.2} and Remark \ref{r5.7}, we get
\begin{align*}
	\left|  \I\mb R (g_\e-g(v_\e))\eta x Dv_\e dx\right| \leq C \|g_\e -g(v_\e)\|_\frac{-1}{2}\|\eta xDv_\e\|_\frac12 \ra 0~\text{as}~\e \ra 0.
\end{align*}
Combining \eqref{e5.23} and \eqref{eq5.23}, we obtain
\begin{align*}
	\lim\limits_{\e \ra 0} \I\mb R g_\e \eta x Dv_\e dx= -\I\mb R G(v)D(\eta(x)x)dx.
\end{align*}
Similarly, we have
\begin{align*}
	\lim\limits_{\e \ra 0} \I\mb R f_\e \eta x Du_\e dx= -\I\mb R F(u)D(\eta(x)x) dx.
\end{align*}
Thus by letting $\e\ra 0$ in \eqref{pe5.20}, we obtain
\begin{align}\label{pe5.21}\nonumber
	\I\mb R (F(u)+G(v))D(\eta(x)x)dx-&V_0\I\mb R uvD(\eta(x)x)dx\\
	=&2\I\mb R \I\mb R \mc G u \mc Gv \left[\frac{x-y}{|x-y|}.\frac{\eta(x)x-\eta(y)y}{|x-y|}-D(\eta(x)x)\right]d\mu.
\end{align}
\textbf{Step 5: Conclusion.} Finally, we take $\eta$ of the form $\eta_R(x)=\phi(x/R)$ with $\phi \in C_0^\infty(B_1)$ positive such that $\phi \equiv 1$ in $B_\frac12$. Clearly
\begin{align*}
	\left|\frac{x-y}{|x-y}.\frac{\eta_R(x)x-\eta_R(y)y}{|x-y|}\right| \leq \|\phi\|_{L^\infty} +\|D\phi\|_{L^\infty},
	\end{align*}
and 
\begin{align*}
	|D(\eta_R(x)x)| \leq \left|\frac1R D\phi\left(\frac{x}{R}\right)x\right| + \left|\phi\left(\frac{x}{R}\right)\right|\leq C(\|\phi\|_{L^\infty}+\|D\phi\|_{L^\infty}).
\end{align*}
Moreover, for any $(x,y) \in \mb R^2$
\begin{align*}
	\frac{x-y}{|x-y|}.\frac{\eta_R(x)x-\eta_R(y)y}{|x-y|}\ra 1,~D(\eta_R(x)x) \ra 1
	\end{align*}
as $R \ra \infty$. Hence, using Dominated Convergence theorem (note that $|\mc G(u)\mc G(v)|$ is in $L^1(\mb R^2, d\mu)$), we can let $ R \ra \infty$ in \eqref{pe5.21} to obtain
\begin{align*}
		\I\mb R (F(u)+G(v)- V_0 uv)dx=0. 
	\end{align*}
as desired. This completes the proof. \QED
\end{proof}

\section{Singularly perturbed system}\label{s6}
In this section we will study the singularly perturbed system \eqref{1.1} and completes the proof of Theorem \ref{t6.1}.

\subsection{Functional setting}
First note that by setting $u(x)=\varphi(\e x),~v(x)=\psi(\e x)$, and $V_\e(x)=V(\e x)$, system \eqref{1.1} is equivalent to 
\begin{align}\label{6.2}
	\begin{cases}	
		(-\De)^\frac12 u +V_\e(x) u =g(v), \\
		(-\De)^\frac12v +V_\e(x)v =f(u).
	\end{cases}
\end{align}
We next consider \eqref{6.2}. We define $H_\e$ as the completion of $C_0^\infty(\mb R)$ with respect to the inner product
\begin{equation*}
	\ld u,v\rd_{\frac12,\e}:=  \frac{1}{2 \pi}\I\mb R \I\mb R \frac{(u(x)-u(y))(v(x)-v(y))}{|x-y|^2}dxdy +\I\mb R V_\e(x)uvdx
\end{equation*}
and the norm
\begin{equation*}
	\|u\|_{\frac12,\e}^2:=\ld u,u\rd_{\frac12,\e},~u,~v \in H_\e.
\end{equation*}
Let $W_\e:=H_\e\times H_\e$ with the inner product
\begin{equation*}
	\ld w_1,w_2\rd_\e :=\ld u_1,u_2\rd_{\frac12,\e}+\ld v_1,v_2\rd_{\frac12,\e},~w_i=(u_i,v_i)\in W_\e,~i=1,2,
\end{equation*}
and the norm $\|w\|_\e^2=\|(u,v)\|_{\e}^2=\|u\|_{\frac12,\e}^2+\|v\|_{\frac12,\e}^2$. We have the orthogonal space decomposition $W_\e=W_\e^+ \oplus W_\e^-$, where
\begin{equation*}
	W_\e^+:=\{(u,u)|~u \in H_\e\}~\text{and}~W_\e^-:=\{(u,-u)|~u \in H_
	\e\}.
\end{equation*}
For each $w=(u,v) \in W_\e$,
\begin{equation*}
	w=w^+ +w^- =((u+v)/2, (u+v)/2)+((u-v)/2,(v-u)/2).
\end{equation*}
The associated energy functional corresponding to \eqref{6.2} is given by 
\begin{equation*}
	\mc J_\e(w):= \ld u,v\rd_\e -\Phi(w),~w=(u,v)\in W_\e,
\end{equation*}
where $\Phi(w)=\ds \I\mb R F(u)+G(v)$. Then $\mc J_\e \in C^1(W_\e,\mb R)$ and
\begin{equation*}
\ld	\mc J'_\e(w),z\rd =\ld w_1,z_1\rd_\e +\ld w_2,z_2\rd_\e-\I\mb R(f(w_1)z_1+g(w_2)z_2)dx,
\end{equation*}
for all $w=(w_1,w_2),~z=(z_1,z_2) \in W_\e$. Moreover, $\mc J_\e$ can be written as follows
\begin{equation*}
	\mc J_\e(w):= \frac12 \|w^+\|_\e^2 -\frac12\|w^-\|_\e^2 -\Phi(w).
\end{equation*}
\begin{Definition}
	We define weak solutions of \eqref{6.2} as the critical points of the associated energy functional $\mc J_\e$.
\end{Definition}
We know that if $w \in W_\e$ is a nontrivial critical point of $J_\e$, then $w \in W_\e \setminus W_\e^-$. Again, in the spirit of \cite{P}, we define the generalized Nehari manifold
\begin{equation*}
	\mc N_\e:=\{w \in W_\e\setminus W_\e^-:\ld \mc J_\e'(w),w\rd=0, \ld \mc J'_\e(w),\varphi\rd=0~\text{for all}~\varphi \in W_\e^- \}.
\end{equation*}
Let
\begin{equation*}
	m^\ast_{\e}:= \inf_{w \in \mc N_\e} \mc J_\e(w),
\end{equation*}
then $m^\ast_{\e}$ is the least energy for system \eqref{6.2}, the so-called ground-state level.\\
For $w \in W_\e \setminus W_\e^-$, set 
\begin{equation*}
	\widetilde{W}_\e(w)=W_\e^- \oplus \mb R^+ w=W_\e^- \oplus \mb R^+ w^+,
\end{equation*}
where $\mb R^+ w:=\{tw: t\geq 0\}$. Following Proposition \ref{p2.6}, we have the following properties of $\mc N_\e$:
\begin{Lemma}\label{l6.3}
	Under the assumptions in Theorem \ref{t6.1}, we have:
	\begin{itemize}
		\item [(1)]For any $w \in \mc N_\e$, $\mc J_\e\arrowvert_{\widetilde{W}_\e(w)}$ admits a unique maximum point which occurs precisely at $w$.
		\item[(2)] For any $w \in W_\e \setminus W_\e^-$, the set $\widetilde{W}_\e(w)$ intersects $\mc N_\e$ at exactly one point $\widetilde{m}_\e(w)$, which is the unique global maximum point of $\mc J_\e \arrowvert_{\widetilde{W}_\e(w)}$.
	\end{itemize}
\end{Lemma}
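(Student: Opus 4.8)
The plan is to observe that $\mc J_\e$ on $W_\e$ has exactly the same structure as $\mc J$ on $W$ in Section~\ref{FNM} --- the splitting $W_\e=W_\e^+\oplus W_\e^-$, the representation $\mc J_\e(w)=\frac12\|w^+\|_\e^2-\frac12\|w^-\|_\e^2-\Phi(w)$ with the \emph{same} nonlinear part $\Phi(w)=\int_{\mb R}(F(u)+G(v))\,dx$, so that $\Phi\ge0$, $\Phi(0)=0$ and $\langle\Phi'(w),w\rangle>2\Phi(w)$ for $w\ne0$, and with $(H_1)$--$(H_7)$ unchanged --- and then to transcribe the proof of Proposition~\ref{p2.6} verbatim. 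The only new ingredient to record is that, by $(V)$, $V_0\|u\|_{L^2}^2\le\int_{\mb R}V_\e u^2\,dx\le V_\infty\|u\|_{L^2}^2$ for every $\e$, so $\|\cdot\|_{\frac12,\e}$ is equivalent to $\|\cdot\|_{\frac12}$; in particular the analogue of Lemma~\ref{l3.1} holds, namely for each $w\in W_\e\setminus W_\e^-$ there is $\rho(w)>0$ with $\mc J_\e\le0$ on $\widetilde W_\e(w)\setminus B_\rho(0)$.

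For (2), I would fix $w\in W_\e\setminus W_\e^-$ (so $w^+\ne0$), recall $\widetilde W_\e(w)=\widetilde W_\e(w^+)=\{sw^++\zeta:s\ge0,\ \zeta\in W_\e^-\}$, and show that $\mc J_\e|_{\widetilde W_\e(w)}$ attains a positive maximum at an interior point. It is $\le0$ outside $B_\rho$ by the analogue of Lemma~\ref{l3.1}, and $\mc J_\e(sw^+)=\frac{s^2}{2}\|w^+\|_\e^2-\Phi(sw^+)>0$ for small $s>0$ by $(H_2)$. A maximizing sequence $z_n=s_nw^++\zeta_n$ is bounded, so up to a subsequence $s_n\to s_0\ge0$, $\zeta_n\rightharpoonup\zeta_0$ in $W_\e^-$, and $z_n\to z_0:=s_0w^++\zeta_0$ a.e.\ in $\mb R$; weak lower semicontinuity of $\|\cdot\|_\e$ on the $W_\e^-$-part, continuity in $s$ on the $W_\e^+$-part, and Fatou's lemma for $\int(F+G)$ (integrands $\ge0$) give $\mc J_\e(z_0)\ge\limsup_n\mc J_\e(z_n)=\sup_{\widetilde W_\e(w)}\mc J_\e>0$. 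Since $\mc J_\e\le0$ on $W_\e^-$ (by $(H_3)$), this forces $s_0>0$, so $z_0$ is an interior maximum, and the vanishing of the derivative of $\mc J_\e$ along $w^+$ and along every direction of $W_\e^-$ yields $\langle\mc J_\e'(z_0),z_0\rangle=0$ and $\langle\mc J_\e'(z_0),\varphi\rangle=0$ for all $\varphi\in W_\e^-$, i.e.\ $z_0\in\mc N_\e$; I set $\widetilde m_\e(w):=z_0$.

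Both the uniqueness in (2) and the whole of (1) would then follow from the single inequality: for $w\in\mc N_\e$, $\mc J_\e(w)>\mc J_\e(z)$ for all $z\in\widetilde W_\e(w)\setminus\{w\}$. This is where $(H_5)$ enters. Writing $z=sw^++\zeta$ and using that $w\in\mc N_\e$ (so the $w^+$- and $W_\e^-$-components of $\mc J_\e'(w)$ vanish), one rearranges $\mc J_\e(w)-\mc J_\e(z)$ into a nonnegative quadratic term in $\zeta-w^-$ plus the integral of pointwise expressions of the type
\begin{equation*}
	\left(\frac{1-s^2}{2}\,t-s\,\sigma\right)f(t)+F(st+\sigma)-F(t)\ \ge\ 0
\end{equation*}
(and its analogue with $g,G$), the inequality --- strict unless $s=1$ and $\sigma=0$ --- being a consequence of the strict monotonicity of $\tau\mapsto f(\tau)/|\tau|$, $g(\tau)/|\tau|$; this is precisely the pointwise lemma used in \cite{FOZ,SW1}. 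Summing, $\mc J_\e(w)-\mc J_\e(z)>0$ unless $s=1$ and $\zeta=w^-$, i.e.\ $z=w$. Granting this, (1) is immediate (for $w\in\mc N_\e$ one has $w\in\widetilde W_\e(w)$ and $w$ is the strict maximum), and for (2) the point $\widetilde m_\e(w)$ built above is the global maximum of $\mc J_\e|_{\widetilde W_\e(w)}$, while if $z_1,z_2\in\mc N_\e\cap\widetilde W_\e(w)$ then $\widetilde W_\e(z_i)=\widetilde W_\e(w)$, so each $z_i$ is that same (unique) maximum, hence $z_1=z_2=\widetilde m_\e(w)$; I would also record $\|w^+\|_\e>0$ for $w\in\mc N_\e$ (else $w\in W_\e^-$), which makes $\widetilde W_\e(w)=\widetilde W_\e(w^+)$ meaningful.

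The hard part is the uniqueness step: the pointwise inequality driven by $(H_5)$ together with the algebra needed to cast $\mc J_\e(w)-\mc J_\e(z)$ in the above form (using $w\in\mc N_\e$), plus excluding the boundary case $s=0$ of the cone, which is handled by $(H_3)$ and the fact that $\mc J_\e\le0$ on $W_\e^-$. Everything else is a routine transcription of Section~\ref{FNM} with $\langle\cdot,\cdot\rangle_{\frac12}$ replaced by $\langle\cdot,\cdot\rangle_{\frac12,\e}$; in particular, no estimate uniform in $\e$ is needed here, such uniformity being relevant only later in the concentration analysis.
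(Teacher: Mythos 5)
Your proof is correct and follows essentially the same route as the paper: the paper simply refers back to Proposition~\ref{p2.6} (which itself is a transcription of the Szulkin--Weth/\cite{FOZ} argument to the nonlocal setting), and your outline --- structural identity of $\mc J_\e$ with $\mc J$, existence of an interior maximizer via the analogue of Lemma~\ref{l3.1}, weak u.s.c.\ and Fatou, then the pointwise $(H_5)$-inequality of \cite{SW1,FOZ} for uniqueness --- is precisely that transcription. One minor slip: $(V)$ does not give $\int V_\e u^2 \le V_\infty\|u\|_{L^2}^2$ since $V$ need not be bounded by its limit at infinity; the correct upper bound is $\sup_{\mb R}V$, which is finite because $V$ is continuous with a finite limit at infinity, so the norm equivalence you invoke still holds.
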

\subsection{ Lower and upper bounds for $m_\e^\ast$}
\begin{Proposition}\label{p6.4}
Assume that $f$, $g$ satisfy $(H_7)$. Then there exists $m_0$ (independent of $\e$) such that for $\e >0$ sufficiently small
	\begin{equation*}
		m^\ast_{\e} =\inf\limits_{w \in W_\e\setminus W_\e^-} \max\limits_{z \in \widetilde{W}_\e(w)}\mc J_\e(z) \in (m_0,\pi/\ba_0).
	\end{equation*}
\end{Proposition}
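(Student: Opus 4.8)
The plan is to establish the two bounds separately; the unifying observation is that $V_\e(x)=V(\e x)\ge V_0=\inf_{\mb R}V$ for every $\e>0$ and every $x\in\mb R$, so that $\|u\|_{\frac12,\e}\ge\|u\|_{\frac12}$ for all $u\in H^{\frac12}(\mb R)$. Consequently the Trudinger--Moser inequality of Lemma \ref{TME} (and the finiteness of the associated supremum for exponents $\le\pi$), the Sobolev embeddings, and the growth estimates for $F$ and $G$ furnished by $(H_2)$ and $(H_6)$ all hold with constants \emph{independent of $\e$}. The identity $m_\e^*=\inf_{w\in W_\e\setminus W_\e^-}\max_{z\in\widetilde W_\e(w)}\mc J_\e(z)$ is the exact analogue of Corollary \ref{c3.3} and follows from Lemma \ref{l6.3} by the same argument, so it remains to bound this minimax value from both sides uniformly in $\e$.

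For the lower bound, fix $w\in W_\e\setminus W_\e^-$ and write $w^+=(\phi,\phi)$; after scaling we may assume $\|\phi\|_{\frac12,\e}=1$. Since $\mb R^+w^+\subset\widetilde W_\e(w)$, we have $\max_{z\in\widetilde W_\e(w)}\mc J_\e(z)\ge\sup_{t\ge0}\mc J_\e(tw^+)$ and $\mc J_\e(tw^+)=t^2-\I\mb R(F(t\phi)+G(t\phi))\,dx$. Combining $(H_2)$ and $(H_6)$, for every $\eta>0$ there are $q>2$ and $C_\eta>0$ with $|F(s)|+|G(s)|\le\eta s^2+C_\eta|s|^q e^{(\ba_0+\eta)s^2}$ for all $s$; using $\|\phi\|_{L^2}^2\le V_0^{-1}$, $\|\phi\|_{\frac12}\le\|\phi\|_{\frac12,\e}=1$, H\"older's inequality, the Sobolev embedding $H^{\frac12}(\mb R)\hookrightarrow L^r(\mb R)$, and Lemma \ref{TME}, one obtains a fixed $t_1>0$ (depending only on $\ba_0,\eta,q$, not on $\e$ or $\phi$) and a constant $C>0$ (independent of $\e$ and of $\phi$) such that $\I\mb R(F(t\phi)+G(t\phi))\,dx\le 2\eta V_0^{-1}t^2+Ct^q$ for $t\in[0,t_1]$. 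Picking $\eta$ with $2\eta V_0^{-1}\le\frac12$ gives $\mc J_\e(tw^+)\ge\frac12t^2-Ct^q$ on $[0,t_1]$, hence $\sup_{t\ge0}\mc J_\e(tw^+)\ge\max_{0\le t\le t_1}(\frac12t^2-Ct^q)=:2m_0>0$, a constant independent of $\e$ and of $w$. Therefore $m_\e^*\ge 2m_0>m_0$.

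For the upper bound, I would rerun the proof of Proposition \ref{pp3.4} with the Moser functions $\omega_n$ from \eqref{ms} and $\hat\omega_n:=\omega_n/\|\omega_n\|_{\frac12,\e}$, using $m_\e^*\le\sup_{z\in\widetilde W_\e(\hat\omega_n,\hat\omega_n)}\mc J_\e(z)$ for every $n$. The only point at which the constant potential $V_0$ entered Proposition \ref{pp3.4} is through the asymptotics $\|\omega_n\|_{\frac12}^2=\pi+O((\log n)^{-1})$. Since $\omega_n$ is supported in $B_{r_1}$, here $\|\omega_n\|_{\frac12,\e}^2=\pi+\I{B_{r_1}}V_\e\omega_n^2\,dx$ and $\I{B_{r_1}}V_\e\omega_n^2\,dx\le\big(\sup_{|y|\le\e r_1}V(y)\big)\|\omega_n\|_{L^2}^2$; since $0\in\mc L$, that is $V(0)=V_0$, and $V$ is continuous, $\sup_{|y|\le\e r_1}V(y)\to V_0$ as $\e\to0$, so for all small $\e$ we get $\|\omega_n\|_{\frac12,\e}^2=\pi+O((\log n)^{-1})$ with the $O$-term uniform in such $\e$. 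With this input every step \eqref{e3.11}--\eqref{e3.19} of Proposition \ref{pp3.4} carries over verbatim, and assuming $\sup_{z\in\widetilde W_\e(\hat\omega_n,\hat\omega_n)}\mc J_\e(z)\ge\pi/\ba_0$ for all $n$ we arrive, exactly as there, at $\kappa_0\le\pi/(\ba_0 r_1)$, contradicting $(H_7)$. Hence for every small $\e$ some $n$ satisfies $\sup_{z\in\widetilde W_\e(\hat\omega_n,\hat\omega_n)}\mc J_\e(z)<\pi/\ba_0$, whence $m_\e^*<\pi/\ba_0$.

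The genuine difficulty in both halves is uniformity in $\e$. For the lower bound this is automatic from $V_\e\ge V_0$, which upgrades all the standard estimates to $\e$-uniform ones. For the upper bound the delicate point is that the contradiction argument of Proposition \ref{pp3.4} is carried out over $n$ with $\e$ \emph{fixed}, so one must control the remainder $\I{B_{r_1}}V_\e\omega_n^2\,dx$, and hence the $O((\log n)^{-1})$ correction, uniformly as $\e\to0$; this is precisely where the normalisation $0\in\mc L$ (equivalently $V(0)=V_0$) enters.
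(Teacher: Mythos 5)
Your proof is correct and follows essentially the same route as the paper: for the lower bound you supply the standard $\e$-uniform mountain-pass estimate (the paper defers to \cite[Proposition 3.2]{CZ}, but the argument is the same, exploiting $V_\e\ge V_0$ to keep constants $\e$-independent), and for the upper bound you use the Moser functions normalised in the $\e$-norm together with the observation that $\sup_{|x|\le\e r}V(x)\to V(0)=V_0$, which is exactly the paper's mechanism. The only mild imprecision is the phrase that the contradiction argument of Proposition \ref{pp3.4} ``carries over verbatim'' to give $\kappa_0\le\pi/(\ba_0 r_1)$: strictly speaking one obtains $\kappa_0\le\pi/(\ba_0 r_1)(1+o_\e(1))$ because $\|\omega_n\|_{\frac12,\e}\ge\|\omega_n\|_{\frac12}$ slightly deflates $\hat\omega_{n,\e}$, and one must then invoke the \emph{strict} inequality in $(H_7)$ for $\e$ small enough — but you clearly identify this as the point where $0\in\mc L$ enters, so the idea is in place.
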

\begin{proof}
	The min-max characterization follows as in Corollary \ref{c3.3}. Next we estimate from below and above the critical level $m^\ast_{\e}$. \\
	\textit{Lower bound.} The estimate on the lower bound follows on the similar lines of the proof of\\ \cite[Proposition 3.2]{CZ}.
\\
\textit{Upper bound}. Now we give the estimate for the upper bound. By $(H_7)$ and assuming without loss of generality $V(0)=V_0$, we can fix $r_0>0$ and $\e_0>0$  such that
\begin{equation*}
	\al_0 > \frac{2\pi}{\ba_0 r_0} e^{2r_0/\pi \max_{|x|\leq r_0\e}V(x)},~\e \in (0,\e_0).
\end{equation*}
We then consider the Moser type sequence as defined in \eqref{ms} with $r_1$ replaced by $r_0$. In this case we need a more precise estimate of $\|\omega_n\|_{L^2}^2$. We observe that $\ds\|\omega_n\|_{L^2}^2=4r_0/\log n +o(r_0/\log n)$. Define $b_n(r_0)= 4r_0/\pi +o_n(1)$, where $o_n(1) \ra 0$, as $n \ra \infty$ and let $\hat{\omega}_{n,\e}:= \omega_n/\| \omega_n\|_{\frac12,\e}$. Then $\|\hat{\omega}_{n,\e}\|_{\frac12,\e}=1$ and for $n$ large enough,
\begin{equation*}
	\hat{\omega}_{n,\e}^2 \geq \frac{1}{\pi}(\log n-b_{\e,n}(r_0))~\text{for}~|x| \leq \frac{r_0}{n},
\end{equation*} 
where $b_{\e,n}(r_0)=b_n(r_0)\max_{|x| \leq \e r_0} V(x)$.\\
Then, we can argue similarly as in the proof of Proposition \ref{pp3.4} (see also \cite[Proposition 3.2]{CZ}) to get 
\begin{equation*}
\sup\limits_{w \in \widetilde{W}_\e(\hat{\omega}_{n,\e},\hat{\omega}_{n,\e})} J_\e(w) <\pi/\ba_0
\end{equation*}
and the desired upper bound.
\QED
	\end{proof}
\subsection{Existence of solutions to system \eqref{6.2}} 
Let us define 
\begin{equation*}
	\widetilde{m}_\e:w \in W_\e\setminus W_\e^- \mapsto \widetilde{m}_\e(w)\in \widetilde{W}_\e(w)\cap \mc N_\e.
\end{equation*}
We have the following lemma
\begin{Lemma}\label{l6.5}
	There exists $\de>0$ (independent of $\e$) such that $\|w^+\|_\e \geq \de$ for all $w \in \mc N_\e$. In particular,
	\begin{equation*}
		\|\widetilde{m}_\e(w)\|_\e \geq \de~\text{for all}~w \in W_\e\setminus W_\e^-.
	\end{equation*}
Moreover, for each compact subset $\mc K \subset W_\e\setminus W_\e^-$, there exists a constant $C_{\mc K,\e}$ such that
\begin{equation*}
	\|\widetilde{m}_\e(w)\|_\e \leq C_{\mc K,\e}~\text{for all}~w \in \mc K.
\end{equation*}
\end{Lemma}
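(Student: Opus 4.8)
The plan is to transcribe the proof of Proposition \ref{p3.5} to the perturbed functional $\mc J_\e$; the one genuinely new issue is to obtain a constant $\de$ that does not depend on $\e$, and this is handed to us by the $\e$-uniform lower bound for $m^\ast_\e$ in Proposition \ref{p6.4}. \emph{Lower bound.} Let $w\in\mc N_\e$. By $(H_3)$ we have $F,G\ge 0$, hence $\Phi\ge 0$ on $W_\e$, and together with $\|w^-\|_\e^2\ge 0$ the identity $\mc J_\e(w)=\frac12\|w^+\|_\e^2-\frac12\|w^-\|_\e^2-\Phi(w)$ gives $\mc J_\e(w)\le\frac12\|w^+\|_\e^2$. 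On the other hand $\mc J_\e(w)\ge m^\ast_\e$ by definition of $m^\ast_\e$, and by Proposition \ref{p6.4} we have $m^\ast_\e>m_0$ with $m_0>0$ independent of $\e$ for all sufficiently small $\e>0$. Combining the two estimates yields $\|w^+\|_\e\ge(2m_0)^{1/2}=:\de$, a bound independent of $\e$. If $w\in W_\e\setminus W_\e^-$, then $\widetilde{m}_\e(w)\in\mc N_\e$ by Lemma \ref{l6.3}, so the previous bound applies to $\widetilde{m}_\e(w)$, and since $\|z\|_\e^2=\|z^+\|_\e^2+\|z^-\|_\e^2\ge\|z^+\|_\e^2$ we conclude $\|\widetilde{m}_\e(w)\|_\e\ge\de$.

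\emph{Upper bound on compact sets.} Here $\e$ is fixed and $C_{\mc K,\e}$ is allowed to depend on $\e$, so no uniformity is required. I would argue by contradiction: if $\mc K\subset W_\e\setminus W_\e^-$ is compact and $\{w_n\}\subset\mc K$ satisfies $\|\widetilde{m}_\e(w_n)\|_\e\to\infty$, then up to a subsequence $w_n\to\bar w\in\mc K$, so $\bar w\notin W_\e^-$ and $\bar w^+\ne 0$. By the $\e$-analogue of Lemma \ref{l3.1} there is $\rho_\e(w)>0$ with $\mc J_\e\le 0$ on $\widetilde{W}_\e(w)\setminus B_{\rho_\e(w)}(0)$, and a standard argument using $w_n\to\bar w$ shows that $\rho_\e$ may be taken bounded along $\{w_n\}$; then $\widetilde{m}_\e(w_n)$ eventually lies outside that ball, whence $\mc J_\e(\widetilde{m}_\e(w_n))\le 0$, contradicting $\mc J_\e(\widetilde{m}_\e(w_n))\ge m^\ast_\e>0$. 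Equivalently, the $\e$-analogue of Proposition \ref{p2.10} gives continuity of $\widetilde{m}_\e$ on $W_\e\setminus W_\e^-$, so the image $\widetilde{m}_\e(\mc K)$ of a compact set is bounded.

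I do not expect a serious obstacle here: the content of the lemma is essentially the inequality $\mc J_\e(w)\le\frac12\|w^+\|_\e^2$ on $\mc N_\e$ combined with Proposition \ref{p6.4}. The only point deserving a word is the independence of $\de$ from $\e$, which is immediate once Proposition \ref{p6.4} is in force since its $m_0$ is $\e$-free; and the compactness argument for the upper bound is routine and parallels the unperturbed case treated in \cite{FOZ}.
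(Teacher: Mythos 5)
Your proof is correct and follows the same approach as the paper, which explicitly states that it transcribes the proof of Proposition \ref{p3.5} using the $\e$-uniform lower bound on $m^\ast_\e$ from Proposition \ref{p6.4}. Note only that your alternative remark (``Equivalently, the $\e$-analogue of Proposition \ref{p2.10} gives continuity of $\widetilde{m}_\e$'') would be circular in the paper's logical order, since there, as in Section \ref{FNM}, continuity of $\widetilde{m}_\e$ is derived \emph{from} the present lemma via \cite{SW1}; your primary contradiction argument via the $\e$-analogue of Lemma \ref{l3.1} is the one to keep.
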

\begin{proof}
The proof follows similarly as the proof of Proposition \ref{p3.5} and using the uniform lower bound of $m^\ast_{\e}$ obtained in Proposition \ref{p6.4}.	\QED
	\end{proof}
Let 
\begin{equation*}
	S_\e^+ :=\{w \in W_\e^+:\|w\|_\e=1\},
\end{equation*}
then $S_\e^+$ is a $C^1$-submanifold of $W_\e^+$ and the tangent manifold of $S_\e^+$ at $w\in S_\e^+$ is
\begin{equation*}
	T_w(S_\e^+)=\{w \in W_\e^+:(w,z)_\e=0\}.
\end{equation*}
Define
\begin{equation*}
	m_\e:=\widetilde{m}_\e\arrowvert_{S_\e^+}:S_\e^+\ra \mc N_\e,
\end{equation*}
then by Lemma \ref{l6.5}, $\widetilde{m}_\e$ is continuous and $m_\e$ is a homeomorphism between $S_\e^+$ and $\mc N_\e$.\\
 Define
\begin{equation*}
	\mc F_\e:S_\e^+ \ra \mb R, ~\mc F_\e:=\mc J_\e(m_\e(w)),~w\in S_\e^+,
\end{equation*}
then as in Proposition \ref{p2.11}, for any fixed $\e>0$, we have the following
\begin{Proposition}\label{p6.6}
	We have
	\begin{itemize}
		\item [(a)]$\mc F_\e \in C^1(S_\e^+,\mb R)$ and
		\begin{equation*}
			\langle \mc F_\e'(w),z\rangle=\|m_\e(w)^+\|_\e\langle \mc J_\e'(m_\e(w)),z\rangle~\text{for all}~z \in T_w(S_\e^+).
		\end{equation*}
		\item [(b)] If $\{z_n\}\subset S_\e^+$ is a Palais-Smale sequence for $\mc F_\e$, then $\{m_\e(z_n)\}\subset \mc N_\e$ is a Palais-Smale sequence for $\mc J_\e$. Namely, if $\mc F_\e(z_n)\ra d$ for some $d>0$ and $\|\mc F_\e'(z_n)\|_\ast \ra 0$ as $n \ra \infty$, then $\mc J_\e(m_\e(z_n)) \ra d$ and $\| \mc J_\e'(m_\e(z_n))\| \ra 0$ as $n \ra \infty$, where
		\begin{equation*}
			\|\mc F_\e'(z_n)\|_\ast =\sup_{\substack{\phi \in T_{z_n}(S_\e^+)\\ \|\phi\|_\e=1}}\langle \mc F_\e'(z_n),\phi\rangle
			~\text{and}~
			\|\mc J_\e'(m_\e(z_n))\| =\sup_{\substack{\phi \in W_\e\\ \|\phi\|_\e=1}}\langle {\mc J'(m(z_n))},\phi\rangle.
		\end{equation*}
		\item [(c)] $z \in S_\e^+$ is a critical point of $\mc F_\e$ if and only if $m_\e(z) \in \mc N_\e$ is a critical point of $\mc J_\e$.
		\item [(d)] $\inf_{S_\e^+}\mc F_\e =\inf_{\mc N_\e} \mc J_\e$.
	\end{itemize}
\end{Proposition}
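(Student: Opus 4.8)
The plan is to follow verbatim the argument behind Proposition \ref{p2.11}, since the only structural facts used there carry over to the $\e$-dependent setting: the strongly indefinite splitting $W_\e=W_\e^+\oplus W_\e^-$ with $\mc J_\e$ anti-coercive on $W_\e^-$ (by $(H_3)$, exactly as in Remark \ref{r2.4}), the fibering map $\widetilde m_\e$ supplied by Lemma \ref{l6.3}, and the uniform estimates of Lemma \ref{l6.5}. With these in hand, assertions (a)--(d) are a direct application of the abstract results of Szulkin--Weth, namely \cite[Proposition 31]{SW1} and \cite[Corollary 33]{SW1}, with $\e>0$ playing the role of a fixed parameter throughout. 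So the proof is essentially a translation of the $\e=1$ case treated in Section \ref{FNM}.

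First I would record that $\mc J_\e\in C^1(W_\e,\mb R)$ with the gradient written above, that $\mc J_\e|_{W_\e^-}\le 0$, and that on each $\widetilde W_\e(w)$ the functional has a unique maximizer, which lies on $\mc N_\e$ (Lemma \ref{l6.3}); hence $m_\e:=\widetilde m_\e|_{S_\e^+}$ is well defined. Continuity of $\widetilde m_\e$ and the homeomorphism property of $m_\e:S_\e^+\ra \mc N_\e$ then follow from Lemma \ref{l6.5} combined with \cite[Proposition 31]{SW1}. Part (a) is \cite[Corollary 33]{SW1}: $\mc F_\e=\mc J_\e\circ m_\e\in C^1(S_\e^+,\mb R)$, and for $z\in T_w(S_\e^+)$ the chain rule gives $\ld \mc F_\e'(w),z\rd=\|m_\e(w)^+\|_\e\ld \mc J_\e'(m_\e(w)),z\rd$, because the components of $\mc J_\e'(m_\e(w))$ along $W_\e^-$ and along $\mb R\, m_\e(w)$ vanish by the definition of $\mc N_\e$ and by the maximality in Lemma \ref{l6.3}.

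For part (b), given a Palais--Smale sequence $\{z_n\}\subset S_\e^+$ for $\mc F_\e$ I would set $w_n=m_\e(z_n)\in\mc N_\e$ and use the splitting $W_\e=T_{z_n}(S_\e^+)\oplus\mb R z_n\oplus W_\e^-$: since $\ld \mc J_\e'(w_n),z_n\rd=0$ and $\ld \mc J_\e'(w_n),\varphi\rd=0$ for all $\varphi\in W_\e^-$, the only possibly nonzero component of $\mc J_\e'(w_n)$ is the tangential one, which the identity in (a) controls by $\|\mc F_\e'(z_n)\|_\ast$; the uniform lower bound $\|m_\e(z_n)^+\|_\e\ge\de$ from Lemma \ref{l6.5} prevents degeneration, so $\|\mc J_\e'(w_n)\|\ra 0$ and $\mc J_\e(w_n)=\mc F_\e(z_n)\ra d$. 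Parts (c) and (d) are then immediate: a critical point of $\mc F_\e$ produces, via the same identity, a zero of $\mc J_\e'$ on $\mc N_\e$ and conversely, while $\inf_{S_\e^+}\mc F_\e=\inf_{\mc N_\e}\mc J_\e$ since $m_\e$ is a bijection onto $\mc N_\e$ and $\mc F_\e=\mc J_\e\circ m_\e$.

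The hard part is not in (a)--(d) as such but in justifying the applicability of the Szulkin--Weth machinery: one must verify that $\mc N_\e$, which fails to be $C^1$ because $f,g$ are merely continuous, nonetheless has the geometric structure isolated in Lemmas \ref{l6.3} and \ref{l6.5}. This rests on the strict monotonicity $(H_5)$ and on $(H_3)$ via the arguments of Proposition \ref{p2.6} and Proposition \ref{p3.5}, now carried out with the $\e$-dependent inner product $\ld\cdot,\cdot\rd_\e$; the key point is that the constant $\de$ can be chosen \emph{independent of} $\e$, which is exactly what the uniform lower bound $m^\ast_\e>m_0$ of Proposition \ref{p6.4} provides and what the subsequent concentration analysis will need. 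Once this bookkeeping is in place, the proof reduces to repeating Section \ref{FNM} word for word.
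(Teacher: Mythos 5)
Your proposal is correct and follows essentially the same route as the paper: the paper disposes of Proposition \ref{p6.6} by saying ``as in Proposition \ref{p2.11}, for any fixed $\e>0$,'' and Proposition \ref{p2.11} is itself invoked as a direct consequence of \cite[Proposition 31]{SW1} and \cite[Corollary 33]{SW1}, which is exactly the Szulkin--Weth machinery you deploy after checking that Lemmas \ref{l6.3} and \ref{l6.5} furnish the required geometric structure of $\mc N_\e$. Your proposal is in fact more explicit than the paper (spelling out the decomposition $W_\e=T_{z_n}(S_\e^+)\oplus\mb R z_n\oplus W_\e^-$ and why the lower bound $\|m_\e(z_n)^+\|_\e\ge\de$ is the non-degeneracy input for part (b)), but the logical skeleton is the same; the only minor overstatement is that the $\e$-uniformity of $\de$ is not needed for Proposition \ref{p6.6} itself (which is for fixed $\e$) but only for the subsequent concentration analysis, as you yourself note.
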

Since $S_\e^+$ is a regular $C^1$-submanifold of $W_\e^+$, by {Propositions \ref{p6.4} and \ref{p6.6}}, it follows from the Ekeland variational principle (see \cite[Theorem 3.1]{E}) that there exists $\{z_n\}\subset S_\e^+$ such that
\begin{equation*}
	\mc F_\e(z_n)\ra m^\ast_\e>0~\text{and}~\|\mc F_\e'(z_n)\|_\ast \ra 0,~\text{as}~n \ra \infty.
\end{equation*} 
Let $w_n=m_\e(z_n)\in \mc N_\e$, then
\begin{equation*}
	\mc F_\e(w_n)\ra m^\ast_\e>0~\text{and}~\|\mc F_\e'(w_n)\|_\ast \ra 0,~\text{as}~n \ra \infty.
\end{equation*} 
Similarly as in Section \ref{s3}, we have the following two propositions:
\begin{Proposition}\label{p6.7}
	There exists $C$ (independent of $\e$) such that for all $\e>0$ and $n \in \mb N$:
	\begin{itemize}
		\item [(1)] $\|w_n\|_\e=\|(u_n,v_n)\|_\e\leq C(1+m^\ast_\e)$;
		\item [(2)] $\I \mb R f(u_n)u_ndx \leq C(1+m^\ast_\e)$ and $\I \mb R g(v_n)v_ndx \leq C(1+m^\ast_\e)$;
		\item [(3)] $\I \mb R F(u_n)dx \leq C(1+m^\ast_\e)$ and $\I \mb R G(v_n)dx \leq C(1+m^\ast_\e)$.
	\end{itemize}
	\end{Proposition}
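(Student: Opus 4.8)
The plan is to follow the argument of Proposition~\ref{p3.1} (which itself reproduces \cite[Proposition~3.1]{FOZ}); the only genuinely new point is to check that every constant produced along the way is independent of $\e$. This uniformity is available from the outset: by condition $(V)$ one has $V_0\le V_\e(x)=V(\e x)\le\sup_{\mb R}V<\infty$ for all $x\in\mb R$ and all $\e>0$, so the norms $\|\cdot\|_{\frac12,\e}$ are uniformly equivalent to the fixed norm $\|\cdot\|_\frac12$; hence the embeddings $H_\e\hookrightarrow L^p(\mb R)$ ($p\ge2$) and the Trudinger--Moser inequality of Lemma~\ref{TME} hold with $\e$-independent constants, the threshold $\pi$ in \eqref{eTME} being universal. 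Combined with the two-sided bound $m_0<m^\ast_\e<\pi/\ba_0$ of Proposition~\ref{p6.4}, this is exactly what keeps the final constant $\e$-free.

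Estimates $(2)$ and $(3)$ come first. Since $w_n=m_\e(z_n)\in\mc N_\e$ we have $\ld\mc J_\e'(w_n),w_n\rd=0$, so
\begin{equation*}
\mc J_\e(w_n)=\mc J_\e(w_n)-\tfrac12\ld\mc J_\e'(w_n),w_n\rd=\I\mb R\Big(\tfrac12 f(u_n)u_n-F(u_n)\Big)dx+\I\mb R\Big(\tfrac12 g(v_n)v_n-G(v_n)\Big)dx.
\end{equation*}
By $(H_3)$, $\tfrac12 tf(t)-F(t)\ge\tfrac{\mu-2}{2\mu}\,tf(t)\ge\tfrac{\mu-2}{2}F(t)\ge0$ for every $t\in\mb R$, and likewise for $g$; since $\mc J_\e(w_n)\to m^\ast_\e$ we may take $\mc J_\e(w_n)\le m^\ast_\e+1$, and $(2)$, $(3)$ follow with $C=2\mu/(\mu-2)$, resp. $C=2/(\mu-2)$, depending only on $\mu$.

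For $(1)$ I would use the structure of $\mc N_\e$: from $\ld\mc J_\e'(w_n),w_n\rd=0$ and $\ld\mc J_\e'(w_n),w_n^-\rd=0$ (as $w_n^-\in W_\e^-$) one gets $\ld\mc J_\e'(w_n),w_n^+\rd=0$, hence $\ld\mc J_\e'(w_n),w_n^+-w_n^-\rd=0$; writing $w_n^+-w_n^-=(v_n,u_n)$ this reads
\begin{equation*}
\|w_n\|_\e^2=\|u_n\|_{\frac12,\e}^2+\|v_n\|_{\frac12,\e}^2=\I\mb R\big(f(u_n)v_n+g(v_n)u_n\big)dx.
\end{equation*}
It remains to bound the right-hand side by $\tfrac14\|w_n\|_\e^2+C(1+m^\ast_\e)$ and absorb. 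On $\{|v_n|\le|u_n|\}$ one uses that by $(H_3)$ $f(u_n)$ has the sign of $u_n$, so $f(u_n)v_n\le f(u_n)u_n$, whose integral is controlled by $(2)$. On $\{|u_n|\le\de\}$, $(H_2)$ gives $|f(u_n)v_n|\le\eta'(u_n^2+v_n^2)$ with $\eta'$ arbitrarily small (for $\de$ small), so this part is absorbed into $(\eta'/V_0)\|w_n\|_\e^2$. On the remaining region $\{|u_n|>\de,\ |v_n|>|u_n|\}$ one combines the monotonicity $(H_5)$, the critical growth $(H_6)$ and a Young-type inequality of the form \eqref{e3.26}, together with the uniform Trudinger--Moser bound, to control this contribution by $(2)$ and a further small multiple of $\|w_n\|_\e^2$, exactly as in \cite[Proposition~3.1]{FOZ}; the term $\I\mb R g(v_n)u_n\,dx$ is handled symmetrically. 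Choosing the small parameters appropriately and absorbing yields $\|w_n\|_\e^2\le C(1+m^\ast_\e)$, hence $(1)$, with $C$ depending only on $\mu$, $V_0$, $\ba_0$ and the universal Trudinger--Moser constant.

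The main obstacle is this last estimate. Because $f$ and $g$ act on different components, on the set where $|v_n|$ dominates $|u_n|$ there is no pointwise comparison of $f(u_n)v_n$ with a controlled quantity, and one has to interlace the monotonicity $(H_5)$, the Ambrosetti--Rabinowitz bounds $(H_3)$--$(H_4)$ and the exponential control $(H_6)$ with care. The second, more characteristic, point is the bookkeeping: unlike in Section~\ref{s3}, $\mc J_\e$ is not translation invariant (the weight $V_\e$ depends on $x$), so one cannot recenter; however the a priori estimates above never invoke translation invariance, and the inequality $V_\e\ge V_0$ together with the $\e$-independence of $\pi$ and of the range $(m_0,\pi/\ba_0)$ of $m^\ast_\e$ guarantees that $C$ does not degenerate as $\e\to0$.
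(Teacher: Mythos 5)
Your proposal is correct and follows essentially the same route the paper takes: like the authors, you reduce the proof to the argument of \cite[Proposition~3.1]{FOZ} (via Proposition~\ref{p3.1}), deferring the technical estimate on the region where $|v_n|>|u_n|>\de$ to that reference, and the genuinely new observation you make explicit --- that $V_0\le V_\e\le\sup_{\mb R}V$ makes every constant (norm equivalence, Sobolev embeddings, the Trudinger--Moser threshold $\pi$, and the range $(m_0,\pi/\ba_0)$ of $m^\ast_\e$) uniform in $\e$, and that translation invariance is never invoked --- is exactly what the paper implicitly relies on when it writes ``similarly as in Section~\ref{s3}.'' The algebraic identities you derive from $w_n\in\mc N_\e$ ($\ld\mc J_\e'(w_n),w_n^+-w_n^-\rd=0$, hence $\|w_n\|_\e^2=\I\mb R(f(u_n)v_n+g(v_n)u_n)\,dx$) and the use of $(H_3)$ for items (2)--(3) are correct.
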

Up to a subsequence, there exists $w_\e=(u_\e,v_\e)\in W_\e$ such that $w_n \rightharpoonup w_\e$ in $W_\e$ and $w_n \ra w_\e$ a.e. in $\mb R$, as $n \ra \infty$, which is actually a weak solution to \eqref{6.2}. Precisely we have
\begin{Proposition}\label{p6.8}   	The weak limit $w_\e$ is a critical point of $\mc J_\e$.
\end{Proposition}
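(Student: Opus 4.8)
The plan is to pass to the limit in the equation $\langle\mc J_\e'(w_n),z\rangle\to0$, exactly as in the corresponding step of Section \ref{s3}. Recall that $\{z_n\}\subset S_\e^+$ is a Palais--Smale sequence for $\mc F_\e$ at the level $m^\ast_\e>0$, so by Proposition \ref{p6.6}(b) the sequence $w_n=m_\e(z_n)\in\mc N_\e$ satisfies $\mc J_\e(w_n)\to m^\ast_\e$ and $\|\mc J_\e'(w_n)\|\to0$. By Proposition \ref{p6.7} it is bounded in $W_\e$, hence, up to a subsequence, $w_n\rightharpoonup w_\e=(u_\e,v_\e)$ weakly in $W_\e$, $u_n\to u_\e$ and $v_n\to v_\e$ strongly in $L^r_{\text{loc}}(\mb R)$ for every $r\in[2,\infty)$ (compact embedding of $H^\frac12(\mb R)$), and $u_n\to u_\e$, $v_n\to v_\e$ a.e. in $\mb R$. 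For $z=(\varphi,\psi)\in C_0^\infty(\mb R)\times C_0^\infty(\mb R)$ one has
\begin{equation*}
\langle\mc J_\e'(w_n),z\rangle=\langle u_n,\psi\rangle_{\frac12,\e}+\langle v_n,\varphi\rangle_{\frac12,\e}-\I\mb R\big(f(u_n)\varphi+g(v_n)\psi\big)dx,
\end{equation*}
and it suffices to show the right-hand side converges to the same expression with $w_n$ replaced by $w_\e$.

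The two bilinear terms pass to the limit immediately from $w_n\rightharpoonup w_\e$ in $W_\e$. The only delicate point is $\I\mb R f(u_n)\varphi\,dx\to\I\mb R f(u_\e)\varphi\,dx$ (and likewise for $g$), which I would establish by Vitali's convergence theorem on $K=\mathrm{supp}\,\varphi$: since $f(u_n)\varphi\to f(u_\e)\varphi$ a.e.\ on $K$, it remains to check that $\{f(u_n)\}$ is uniformly integrable on $K$. This is where the critical exponential growth $(H_6)$ enters: splitting any measurable $E\subset K$ into $\{x\in E:|u_n(x)|\le M\}$ and $\{x\in E:|u_n(x)|>M\}$, on the first set $(H_1)$--$(H_2)$ give a pointwise bound $|f(u_n)|\le C_M(1+|u_n|)$ and hence control by $|E|$ plus the $L^2$-bound, while on the second set $|f(u_n)|\le \tfrac1M f(u_n)u_n$ by $(H_3)$ and one invokes the uniform bound $\I\mb R f(u_n)u_n\,dx\le C(1+m^\ast_\e)$ of Proposition \ref{p6.7}(2). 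Since $V_0\le V_\e(x)\le V_\infty$ makes $\|\cdot\|_{\frac12,\e}$ equivalent to $\|\cdot\|_\frac12$, the Trudinger--Moser inequality \eqref{eTME} guarantees the required integrability (and equi-integrability after a further refinement of $M$). Thus the nonlinear terms converge and $\langle\mc J_\e'(w_\e),z\rangle=0$ for all $z\in C_0^\infty(\mb R)\times C_0^\infty(\mb R)$; by density of $C_0^\infty(\mb R)\times C_0^\infty(\mb R)$ in $W_\e$ this extends to all $z\in W_\e$, so $w_\e$ is a critical point of $\mc J_\e$.

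I expect the uniform integrability of $\{f(u_n)\}$ and $\{g(v_n)\}$ on compact sets to be the main obstacle: in contrast to the subcritical situation one cannot dominate $f(u_n)$ by a fixed $L^1_{\text{loc}}$ function, and one must use the energy bounds of Proposition \ref{p6.7} (which are uniform in $\e$) together with the Trudinger--Moser inequality to rule out concentration of the mass of $f(u_n)$. Once this is in place, all remaining steps are routine and parallel the argument already given for the limit problem in Section \ref{s3}.
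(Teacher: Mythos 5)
Your proposal is correct and follows the same route as the paper: the paper refers Proposition \ref{p6.8} back to the corresponding step in Section \ref{s3}, whose proof is explicitly stated to follow from the bounds of Proposition \ref{p3.1} (here Proposition \ref{p6.7}), the compact embedding $H^\frac12(\mb R)\hookrightarrow L^r_{\text{loc}}(\mb R)$, and Vitali's convergence theorem. Your unpacking of the Vitali step — splitting on $\{|u_n|\le M\}$ and $\{|u_n|>M\}$ and using $(H_3)$ together with the uniform bound $\int_\mb R f(u_n)u_n\,dx\le C(1+m^\ast_\e)$ to obtain equi-integrability on compact sets — is precisely what the paper's sketch leaves to the reader; the appeal to the Trudinger--Moser inequality is not actually needed once the $(H_3)$ split is in place, but that is a harmless redundancy.
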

Now the only thing that remains to show is $w_\e \not \equiv 0$.
For this, we study the asymptotic behaviour of $m^\ast_\e$. More precisely, we investigate the relation between $m^\ast$ and $m^\ast_\e$, where $m^\ast$, $m^\ast_\e$ are the corresponding least energies to systems \eqref{1.1} and \eqref{6.2}, respectively. We have the following result: 
\begin{Lemma}\label{l6.9}
	With the assumptions of Theorem \ref{t6.1}, we have
	\begin{equation*}
		\limsup\limits_{\e \ra 0} m^\ast_\e \leq m^\ast.
	\end{equation*}
\end{Lemma}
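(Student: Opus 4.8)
The plan is to test the min-max characterization $m^\ast_\e=\inf_{w\in W_\e\setminus W_\e^-}\max_{z\in\widetilde{W}_\e(w)}\mc J_\e(z)$ of Proposition \ref{p6.4} with a fixed element coming from the limit problem. Let $w=(u,v)\in\mc S$ be a ground state of \eqref{s1.4}, so that $\mc J(w)=m^\ast$ and, by Proposition \ref{p2.6}(1), $\max_{z\in\widetilde{W}(w)}\mc J(z)=\mc J(w)=m^\ast$. Since $(V)$ forces $V_0\le V_\e(x)=V(\e x)\le\|V\|_\infty<\infty$ for every $\e$, the spaces $W_\e$ and $W$ coincide as sets with uniformly equivalent norms, and $W_\e^-=W^-$, hence $\widetilde{W}_\e(w)=\widetilde{W}(w)=W^-\oplus\mb R^+w^+$ for all $\e$. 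Writing $w^+=(s,s)$ with $s=(u+v)/2$, a direct computation gives, for every $z=(z_1,z_2)\in W$,
\begin{equation*}
	\mc J_\e(z)=\mc J(z)+\I{\mb R}(V_\e-V_0)z_1z_2\,dx,
\end{equation*}
and for $z=\tau w^++(u',-u')\in\widetilde{W}(w)$ one has $z_1z_2=\tau^2s^2-u'^2$, hence
\begin{equation*}
	\mc J_\e(z)\le\mc J(z)+\tau^2\I{\mb R}(V_\e-V_0)s^2\,dx=:\mc J(z)+\tau^2\de_\e,\qquad \de_\e\ge 0.
\end{equation*}
Since $V(\e x)\to V(0)=V_0$ pointwise, $0\le V_\e-V_0\le\|V\|_\infty-V_0$ and $s^2\in L^1(\mb R)$, dominated convergence yields $\de_\e\to 0$ as $\e\to 0$.

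Denote by $\widetilde{m}_\e(w)=\tau_\e w^++h_\e$ the unique maximizer of $\mc J_\e$ on $\widetilde{W}_\e(w)$ provided by Lemma \ref{l6.3}(2), which lies in $\mc N_\e$; thus $m^\ast_\e\le\mc J_\e(\widetilde{m}_\e(w))$. Because $\widetilde{m}_\e(w)\in\widetilde{W}(w)$ as well, the previous inequality together with $\mc J(\widetilde{m}_\e(w))\le\max_{\widetilde{W}(w)}\mc J=m^\ast$ gives
\begin{equation*}
	m^\ast_\e\le\mc J_\e(\widetilde{m}_\e(w))\le m^\ast+\tau_\e^2\,\de_\e.
\end{equation*}
Hence the whole argument reduces to showing that $\{\tau_\e\}_{\e\in(0,\e_0]}$ is bounded, and then letting $\e\to 0$.

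For the uniform bound on $\tau_\e$ I would introduce the perturbed functional $\widehat{\mc J}(z):=\mc J(z)+C\tau^2$ on $\widetilde{W}(w)$, where $C:=(\|V\|_\infty-V_0)\|s\|_{L^2(\mb R)}^2$ depends only on $w$ and $V$ (in particular not on $\e$); since $\de_\e\le C$ we have $\mc J_\e(z)\le\widehat{\mc J}(z)$ for all $z\in\widetilde{W}(w)$. Running the argument of Lemma \ref{l3.1} for $\widehat{\mc J}$ instead of $\mc J$ — which only uses $\Phi\ge 0$ and the superquadratic growth of $F,G$ from $(H_3)$, whereby $\int(F(z_1)+G(z_2))$ dominates any fixed quadratic term along the relevant directions in $\widetilde{W}(w)$ — produces a radius $\widehat{\rho}=\widehat{\rho}(w)>0$, independent of $\e$, such that $\widehat{\mc J}(z)\le 0$ whenever $z\in\widetilde{W}(w)$ and $\|z\|_W\ge\widehat{\rho}$. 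Consequently $\mc J_\e(z)\le 0$ on $\widetilde{W}_\e(w)\setminus B_{\widehat{\rho}}(0)$ for every $\e$; as $\mc J_\e(\widetilde{m}_\e(w))\ge m^\ast_\e>0$ by Proposition \ref{p6.4}, this forces $\|\widetilde{m}_\e(w)\|_W<\widehat{\rho}$, and therefore $\tau_\e=\|\widetilde{m}_\e(w)^+\|_W/\|w^+\|_W\le\widehat{\rho}/\|w^+\|_W$ uniformly in $\e$.

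Combining the last two observations, $m^\ast_\e\le m^\ast+(\widehat{\rho}/\|w^+\|_W)^2\,\de_\e$, and letting $\e\to 0$ with $\de_\e\to 0$ gives $\limsup_{\e\to 0}m^\ast_\e\le m^\ast$. The only genuinely delicate point is the uniform radius $\widehat{\rho}$: one must verify that the ``$\mc J\to-\infty$ at infinity along $\widetilde{W}(w)$'' mechanism of Lemma \ref{l3.1} survives the addition of the fixed perturbation $C\tau^2$; this is fine precisely because $C<\infty$ is fixed while $(H_3)$ makes $\int(F(z_1)+G(z_2))$ grow strictly faster than quadratically. As a minor alternative, one may avoid invoking Theorem \ref{t1.1}: replace the ground state $w$ by an almost-minimizer $w_\eta$ of $\inf_{W\setminus W^-}\max_{\widetilde{W}(\cdot)}\mc J$ from Corollary \ref{c3.3}, obtain $\limsup_{\e\to 0}m^\ast_\e\le m^\ast+\eta$, and send $\eta\to 0$.
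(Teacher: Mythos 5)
Your proof is correct, and it follows the natural strategy that the paper itself defers to (via the reference to \cite[Lemma 3.7]{CZ}): test the min-max characterization of $m^\ast_\e$ against the generalized Nehari ray $\widetilde{W}(w)$ of a fixed ground state $w\in\mathcal S$ of the limit problem, control the discrepancy $\mathcal J_\e-\mathcal J$ by $\tau^2\int(V_\e-V_0)s^2\,dx$ using $V_\e-V_0\ge 0$ and the sign of $z_1z_2$ along $\widetilde W(w)$, and let $V(\e\cdot)\to V(0)=V_0$. The one point that genuinely needs an argument — the $\e$-uniform bound on the coefficients $\tau_\e$ of the maximizers $\widetilde m_\e(w)$ — you handle correctly by dominating $\mathcal J_\e$ on $\widetilde W(w)$ by a fixed perturbation $\widehat{\mathcal J}=\mathcal J+C\tau^2$ and observing that the superquadratic growth in $(H_3)$ still drives $\widehat{\mathcal J}\to-\infty$ along $\widetilde W(w)$, exactly as in Lemma \ref{l3.1}.
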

\begin{proof}
	The proof follows on the similar lines of the proof of \cite[Lemma 3.7]{CZ}. \QED
\end{proof}
\subsection{Existence of ground state solutions for \eqref{6.2}}
For any $\theta >0$, let us consider the following problem in $\mb R$
\begin{equation}\label{6.6}
	\begin{cases}
		(-\De)^\frac12u +\theta u=g(v),\\
		(-\De)^\frac12 v +\theta v=f(u).
	\end{cases}
\end{equation}
The associated energy functional to \eqref{6.6} is given by 
\begin{equation*}
	\mc J_\theta(w):= \ld u,v \rd +\I\mb R\theta uvdx-\Phi(w),~w=(u,v) \in W.
\end{equation*}
As above, one can define the generalized Nehari manifold $\mc N_\theta$ and the least energy
\begin{equation*}
	m^\ast_\theta:= \inf\limits_{w \in \mc N_\theta} \mc J_\theta(w).
\end{equation*}
Moreover, under the assumptions of Theorem \ref{t1.1}, if $m^\ast_\theta \in (0,\pi/\ba_0)$ for some $\theta>0$, then there exists $w_\theta =(u_\theta,v_\theta) \in \mc N_\theta$ such that $\mc J_\theta(w_\theta)=m^\ast_\theta$. 
\begin{Lemma}\label{l6.10}
 Under the assumptions of Theorem \ref{t1.1}, for any $\theta >0$ the map $\theta \mapsto m^\ast_\theta \in (0,\pi/\ba_0)$ is strictly increasing.
\end{Lemma}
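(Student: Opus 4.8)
The plan is to deduce the monotonicity from a direct comparison of the min--max levels, using the ground state of the problem with the larger mass as a competitor in the min--max for the smaller one. Fix $0<\theta_1<\theta_2$. By the analogue of Corollary~\ref{c3.3} for a general mass parameter,
\[
m^\ast_\theta=\inf_{w\in W\setminus W^-}\ \max_{z\in\widetilde W(w)}\ \mc J_\theta(z),
\]
and, arguing as in Propositions~\ref{p3.4} and~\ref{pp3.4}, $m^\ast_\theta\in(0,\pi/\ba_0)$; hence (cf. the paragraph preceding the lemma) a ground state $w_2=(u_2,v_2)\in\mc N_{\theta_2}$ with $\mc J_{\theta_2}(w_2)=m^\ast_{\theta_2}$ exists, and after extending $f,g$ by $0$ on $\mb R^-$ as in the Remark following Theorem~\ref{t1.1} we may assume $u_2,v_2>0$, so that $\I\mb R u_2v_2\,dx>0$. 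Since only the quadratic part of the functional depends on the mass, we have the pointwise identity
\[
\mc J_{\theta_1}(z)=\mc J_{\theta_2}(z)-(\theta_2-\theta_1)\I\mb R ab\,dx,\qquad z=(a,b)\in W .
\]

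Now take $w_2$ as competitor. Since $w_2\in\mc N_{\theta_2}$, Proposition~\ref{p2.6}(1) gives $\max_{z\in\widetilde W(w_2)}\mc J_{\theta_2}(z)=\mc J_{\theta_2}(w_2)=m^\ast_{\theta_2}$, whereas $z^\ast:=\widetilde m_{\theta_1}(w_2)=(a^\ast,b^\ast)\in\mc N_{\theta_1}\cap\widetilde W(w_2)$ realises $\max_{z\in\widetilde W(w_2)}\mc J_{\theta_1}(z)$. Using the identity above and $z^\ast\in\widetilde W(w_2)$,
\[
m^\ast_{\theta_1}\le\mc J_{\theta_1}(z^\ast)=\mc J_{\theta_2}(z^\ast)-(\theta_2-\theta_1)\I\mb R a^\ast b^\ast\,dx\le m^\ast_{\theta_2}-(\theta_2-\theta_1)\I\mb R a^\ast b^\ast\,dx .
\]
So the lemma reduces to the single inequality $\I\mb R a^\ast b^\ast\,dx\ge 0$: granting it, $m^\ast_{\theta_1}\le m^\ast_{\theta_2}$, and the inequality must be strict, because if $m^\ast_{\theta_1}=m^\ast_{\theta_2}$ then every estimate in the last display is an equality, forcing $\I\mb R a^\ast b^\ast\,dx=0$ and $\mc J_{\theta_2}(z^\ast)=\max_{z\in\widetilde W(w_2)}\mc J_{\theta_2}(z)$; by the uniqueness in Proposition~\ref{p2.6}(2) this yields $z^\ast=w_2$, hence $\I\mb R a^\ast b^\ast\,dx=\I\mb R u_2v_2\,dx>0$, a contradiction.

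The hard part is therefore the $L^2$--domination $\I\mb R a^\ast b^\ast\,dx\ge 0$. Writing $z^\ast=s^\ast w_2^++(\phi^\ast,-\phi^\ast)$ with $w_2^+=(p_2,p_2)$, $p_2=(u_2+v_2)/2>0$, $s^\ast>0$, one has $a^\ast=s^\ast p_2+\phi^\ast$, $b^\ast=s^\ast p_2-\phi^\ast$, so
\[
\I\mb R a^\ast b^\ast\,dx=(s^\ast)^2\|p_2\|_{L^2}^2-\|\phi^\ast\|_{L^2}^2 ,
\]
i.e. the claim is that the $W^+$--component of the generalized Nehari point $\widetilde m_{\theta_1}(w_2)$ dominates its $W^-$--component in $L^2$. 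I would prove this as the one--dimensional fractional counterpart of the estimate carried out for the planar system in \cite{CZ,FOZ}: comparing $\mc J_{\theta_1}(z^\ast)$ with its value at the sub--cone point $s^\ast w_2^+$ and using that $F,G$ are convex on $[0,\infty)$ (a consequence of $(H_2)$ and $(H_5)$, which force $f,g$ to be nondecreasing on $(0,\infty)$) controls $\phi^\ast$ in terms of $f(s^\ast p_2)-g(s^\ast p_2)$, after which one invokes the positivity, regularity and decay of $w_2$ obtained in Section~\ref{s4}. A softer alternative avoids the full estimate: the map $\theta\mapsto\widetilde m_\theta(w_2)$ is continuous into $W$ (by the locally uniform $C^1$--convergence $\mc J_\theta\to\mc J_{\theta_2}$, the uniform bounds of Lemma~\ref{l3.1} and Proposition~\ref{p3.5}, and the uniqueness of Proposition~\ref{p2.6}, exactly as in Proposition~\ref{p2.10}), and $\widetilde m_{\theta_2}(w_2)=w_2$ with $\I\mb R u_2v_2\,dx>0$; hence $\I\mb R a^\ast b^\ast\,dx>0$ whenever $\theta_1$ lies in a left--neighbourhood of $\theta_2$, giving strict monotonicity near every $\theta_2$, which, $\theta\mapsto m^\ast_\theta$ being continuous, propagates to strict monotonicity on all of $(0,\infty)$. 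The remaining bookkeeping — that $m^\ast_\theta$ indeed stays in $(0,\pi/\ba_0)$ on the relevant range — is handled as in Propositions~\ref{p3.4} and~\ref{pp3.4}.
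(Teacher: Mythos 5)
Your reduction is clean and correct: using the ground state $w_2\in\mc N_{\theta_2}$ (positive after the extension by zero, so $\int u_2v_2>0$) as a competitor, invoking Proposition~\ref{p2.6}, and writing $\mc J_{\theta_1}=\mc J_{\theta_2}-(\theta_2-\theta_1)\int ab$, you correctly reduce the lemma to the single inequality $\int_{\mb R}a^\ast b^\ast\,dx\geq 0$ for $z^\ast=\widetilde m_{\theta_1}(w_2)=(a^\ast,b^\ast)$, and you correctly show that strictness then comes for free from the uniqueness in Proposition~\ref{p2.6}. This is plausibly the same architecture as \cite[Lemma 3.8]{CZ}, to which the paper defers.

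The gap is that the pivotal inequality is never actually proved. Your route (a) stops at ``I would prove this as the \dots counterpart of the estimate in \cite{CZ,FOZ}'', i.e.\ it does not carry out the $L^2$ domination of the $W^-$ component $\phi^\ast$ by the $W^+$ component $s^\ast p_2$; note that the Nehari relation $\langle\mc J'_{\theta_1}(z^\ast),z^\ast\rangle=0$ only gives positivity of the full $H^{1/2}_{\theta_1}$ cross product $\langle a^\ast,b^\ast\rangle_{\frac12,\theta_1}$, which does \emph{not} control the sign of the pure $L^2$ term $\int a^\ast b^\ast$ because the fractional Dirichlet cross term can be negative, so the claimed estimate is genuinely substantive. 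Your route (b) leans on the continuity of $\theta\mapsto\widetilde m_\theta(w_2)$ ``exactly as in Proposition~\ref{p2.10}''; but Proposition~\ref{p2.10} (from \cite{SW1}) is continuity in the base point $w$ for a fixed functional, not continuity in the mass parameter $\theta$ for a fixed $w$ --- the latter requires a separate argument (uniform-in-$\theta$ versions of Lemma~\ref{l3.1} and Proposition~\ref{p3.5}, weak upper semicontinuity of $\mc J_{\theta_2}$ on $\widetilde W(w_2)$, and a Vitali-type step upgrading weak to strong $W$-convergence of the maximizers) which is not supplied. You also invoke the continuity of $\theta\mapsto m^\ast_\theta$ without proof (it does follow by a short two-sided competitor estimate, but should be stated), and the propagation step ``left-local strict growth plus continuity implies global strict growth'' deserves a one-line justification. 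Until $\int a^\ast b^\ast\,dx\geq 0$ (or the continuity-in-$\theta$ surrogate) is actually established, the argument is a correct reduction followed by an unproved key step.
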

\begin{proof}
	For a proof see proof of  \cite[Lemma 3.8]{CZ} with straightforward modifications.
\QED
	\end{proof}
Now, we show that the weak limit obtained in Proposition \ref{p6.8} is non trivial, precisely
\begin{Lemma}\label{l6.11}
	$w_\e \not \equiv 0$ provided $\e>0$ is sufficiently small.
\end{Lemma}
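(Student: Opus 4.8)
The plan is to argue by contradiction: if $w_\e\equiv 0$, then the minimizing sequence for $\mc J_\e$ splits off a nontrivial solution of the autonomous ``problem at infinity'' (constant potential $V_\infty$), so that $m^\ast_\e$ is bounded below by the ground state level of that problem, and this collides with the two-sided control on $m^\ast_\e$ already obtained. So suppose the conclusion fails, i.e. $w_\e\equiv 0$ along a sequence $\e\to 0$, and fix such an $\e$. Recall the minimizing sequence $\{w_n\}=\{(u_n,v_n)\}\subset\mc N_\e$ produced above: $\mc J_\e(w_n)\to m^\ast_\e$, $\|\mc J_\e'(w_n)\|\to 0$, $\{w_n\}$ is bounded in $W_\e$ by Proposition \ref{p6.7}, and $w_n\rightharpoonup w_\e=0$ in $W_\e$.

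I would first exclude vanishing for $\{w_n\}$. Exactly as in Section \ref{s3} (the analogue of Lemma \ref{l3.6} and the concentration--compactness alternative following it), using the strict upper bound $m^\ast_\e<\pi/\ba_0$ from Proposition \ref{p6.4} together with the critical growth $(H_6)$, one produces $\nu>0$, $R'>0$ and $\{y_n\}\subset\mb R$ with $\liminf_{n\to\infty}\int_{B_{R'}(y_n)}(u_n^2+v_n^2)\,dx\ge\nu$. Set $\tilde w_n:=w_n(\cdot+y_n)=(\tilde u_n,\tilde v_n)$; up to a subsequence $\tilde w_n\rightharpoonup\tilde w=(\tilde u,\tilde v)$ in $W$ and a.e., and the compact embedding $H^\frac12(\mb R)\hookrightarrow L^2_{\mathrm{loc}}(\mb R)$ gives $\int_{B_{R'}(0)}(\tilde u^2+\tilde v^2)\,dx\ge\nu$, so $\tilde w\not\equiv 0$. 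Moreover $|y_n|\to\infty$: otherwise, along a subsequence $y_n\to y_0\in\mb R$, and for every $\phi\in C_0^\infty(\mb R)$ we would have $\int_{\mb R}\tilde w_n\,\phi\,dx=\int_{\mb R}w_n\,\phi(\cdot-y_n)\,dx\to 0$ (since $\phi(\cdot-y_n)\to\phi(\cdot-y_0)$ strongly in $L^2$ while $w_n\rightharpoonup 0$), which would force $\tilde w\equiv 0$, a contradiction.

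Next I pass to the limit in the translated problem. As $\e>0$ is fixed and $|y_n|\to\infty$, we have $\e y_n\to\pm\infty$ along subsequences, hence $V_\e(\cdot+y_n)=V(\e\,\cdot+\e y_n)\to V_\infty$ locally uniformly on $\mb R$; since $V$ is bounded, the $W_\e$-norm of a translate of a fixed test function is bounded uniformly in $n$. Using $\|\mc J_\e'(w_n)\|\to 0$, testing $\mc J_\e'(w_n)$ against $(\varphi(\cdot-y_n),\psi(\cdot-y_n))$ with $\varphi,\psi\in C_0^\infty(\mb R)$, changing variables $x\mapsto x+y_n$, and letting $n\to\infty$ (with the weak convergence $\tilde w_n\rightharpoonup\tilde w$, the local uniform convergence $V_\e(\cdot+y_n)\to V_\infty$, and $f(\tilde u_n)\to f(\tilde u)$, $g(\tilde v_n)\to g(\tilde v)$ in $L^1_{\mathrm{loc}}$ by Vitali's theorem and Proposition \ref{p6.7}, exactly as for Proposition \ref{p6.8}), one finds that $\tilde w$ is a nontrivial weak solution of the autonomous system \eqref{6.6} with $\theta=V_\infty$; by Remark \ref{r2.4} applied to $\mc J_{V_\infty}$, $\tilde w\in\mc N_{V_\infty}$, so $\mc J_{V_\infty}(\tilde w)\ge m^\ast_{V_\infty}$. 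On the other hand, by $(H_3)$ the functions $\frac12 f(t)t-F(t)$ and $\frac12 g(t)t-G(t)$ are nonnegative, so by translation invariance of the Lebesgue measure, the fact that $\langle\mc J_\e'(w_n),w_n\rangle\to 0$ (from $\|\mc J_\e'(w_n)\|\to 0$ and the boundedness of $\{w_n\}$), and Fatou's lemma,
\begin{align*}
 m^\ast_\e&=\lim_{n\to\infty}\Big(\mc J_\e(w_n)-\frac12\langle\mc J_\e'(w_n),w_n\rangle\Big)\\
 &=\lim_{n\to\infty}\I{\mb R}\Big[\big(\tfrac12 f(\tilde u_n)\tilde u_n-F(\tilde u_n)\big)+\big(\tfrac12 g(\tilde v_n)\tilde v_n-G(\tilde v_n)\big)\Big]dx\\
 &\ge\I{\mb R}\Big[\big(\tfrac12 f(\tilde u)\tilde u-F(\tilde u)\big)+\big(\tfrac12 g(\tilde v)\tilde v-G(\tilde v)\big)\Big]dx\\
 &=\mc J_{V_\infty}(\tilde w)-\frac12\langle\mc J_{V_\infty}'(\tilde w),\tilde w\rangle=\mc J_{V_\infty}(\tilde w)\ge m^\ast_{V_\infty}.
\end{align*}

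Finally I combine this with the bounds on $m^\ast_\e$. Since $V_\infty>V_0$, Lemma \ref{l6.10} gives $m^\ast_{V_\infty}>m^\ast_{V_0}=m^\ast$ (if instead $m^\ast_{V_\infty}\ge\pi/\ba_0$, this inequality is immediate because $m^\ast<\pi/\ba_0$ by Proposition \ref{pp3.4}), while Lemma \ref{l6.9} gives $\limsup_{\e\to 0}m^\ast_\e\le m^\ast$. Hence, for $\e>0$ small enough, $m^\ast_\e<m^\ast_{V_\infty}$, contradicting $m^\ast_\e\ge m^\ast_{V_\infty}$. Therefore $w_\e\not\equiv 0$ for all sufficiently small $\e>0$, as claimed. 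I expect the main obstacle to be the exclusion of vanishing: as in the proof of Theorem \ref{t1.1} it rests on the strict bound $m^\ast_\e<\pi/\ba_0$ and a Trudinger--Moser estimate; once this is secured, the remaining ``splitting into the problem at infinity'' argument is standard and works precisely because $V_\e(\cdot+y_n)\to V_\infty$ whenever $|y_n|\to\infty$.
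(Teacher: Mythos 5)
Your proposal is correct and follows essentially the same route as the paper: contradiction via concentration--compactness, excluding vanishing via the bound $m^\ast_\e<\pi/\ba_0$, translating by $|y_n|\to\infty$, identifying the weak limit as a nontrivial critical point of $\mc J_{V_\infty}$, then deriving $m^\ast_\e\ge m^\ast_{V_\infty}>m^\ast\ge\limsup_{\e\to 0}m^\ast_\e$ from Lemmas \ref{l6.10} and \ref{l6.9}. You supply a few details the paper leaves implicit (the proof that $|y_n|\to\infty$, the passage to the limit using $V_\e(\cdot+y_n)\to V_\infty$, and the edge case in Lemma \ref{l6.10} when $m^\ast_{V_\infty}\ge\pi/\ba_0$), which is a welcome clarification rather than a different approach.
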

\begin{proof}
{We proceed by contradiction and suppose $w_\e =0$ for $\e >0$ small, then the associated Palais Smale sequence (see Proposition \ref{p6.6}) $w_n=(u_n,v_n)\rightharpoonup 0$ in $W_\e$ and $w_n \ra 0$ a.e. in $\mb R$, as $n \ra \infty$. Obviously $\{w_n\}$ satisfies one of the following alternatives:\\
	\textit{Vanishing}:
	\begin{equation*}
		\lim\limits_{n \ra \infty} \sup_{y \in \mb R} \I{B_R(y)}(u_n^2 +v_n^2)dx=0~\text{for all}~R>0.
	\end{equation*}
\textit{	Nonvanishing}: there exists $\nu >0$, $R'>0$ and $\{y_n\}\subset \mb R$ such that
	\begin{equation*}
		\lim\limits_{n \ra \infty} \I{B_{R'}(y_n)}(u_n^2+v_n^2)dx \geq \nu.
	\end{equation*}
Due to $m^\ast_\e \in (m_0,\pi/\ba_0$) we can rule out \textit{Vanishing} as in the proof of Lemma \ref{l3.6}. So the \textit{Nonvanishing} occurs. Let $\hat{u}_n(\cdot):=u_n(\cdot +y_n)$ and $\hat{v}_n(\cdot):=v_n(\cdot +y_n)$, then $|y_n| \ra \infty$, as $n \ra \infty$ and
\begin{equation}\label{6.8}
	\lim\limits_{n \ra \infty} \I{B_{R'}(0)}(\hat{u}_n^2+\hat{v}_n^2)dx \geq \nu.
\end{equation}
Let $\hat{w}_n=(\hat{u}_n,\hat{v}_n)$. Then $\{\hat{w}_n\}$ is bounded in $W$. Up to a subsequence, by \eqref{6.8}, we assume that $\hat{w}_n \ra \hat{w} \ne 0$ weakly in $W$ for some $\hat{w}=(\hat{u},\hat{v}) \in W$ and $\mc J'_{V_\infty}(\hat{w})=0$, where
\begin{equation*}
	\mc J_{V_\infty}(w)=\I\mb R\I\mb R \frac{(u(x)-u(y))(v(x)-v(y))}{|x-y|^2}dxdy + V_\infty \I\mb R uv-\Phi(w),~w=(u,v)\in W.
\end{equation*}
By $(H_2)$ and Fatou's lemma, for fixed $\e>0$,
\begin{align*}
	m^\ast_\e+o_n(1)&=\mc J_\e(\hat{w}_n)-\frac12 \ld \mc J'_\e(\hat{w}_n),\hat{w}_n\rd\\
	&=\I\mb R \frac12 f(\hat{u}_n)\hat{u}_n-F(\hat{u}_n) +\I\mb R\frac12 g(\hat{v}_n)\hat{v}_n-G(\hat{v}_n)\\
	&\geq \I\mb R \frac12 f(\hat{u})\hat{u}-F(\hat{u}) +\I\mb R\frac12 g(\hat{v})\hat{v}-G(\hat{v})\\
	&= \mc J_{V_\infty}(\hat{w})-\frac12 \ld \mc J'_{V_\infty}(\hat{w}),\hat{w}\rd+o_n(1)\geq m^\ast_{V_\infty}+o_n(1).
\end{align*}
It follows that $m^\ast_\e \geq m^\ast_{V_\infty}$ for $\e >0$ small enough. By Lemma \ref{l6.10}, we get $m^\ast_{V_\infty}> m^\ast$ for $\e >0$ small enough and by Lemma \ref{l6.9}, we finally get a contradiction.}
 \QED
\end{proof}
By virtue of Lemma \ref{l6.11}, the following is straightforward.
\begin{Corollary}
	For $\e >0$ small enough, $\mc J_\e(w_\e)=m^\ast_\e$, namely, $w_\e$ is ground-state solution of \eqref{6.2}.
\end{Corollary}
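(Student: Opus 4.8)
The plan is to identify the weak limit $w_\e$ obtained in Proposition~\ref{p6.8} as a minimizer of $\mc J_\e$ on the generalized Nehari manifold $\mc N_\e$, via the two-sided estimate $m^\ast_\e \le \mc J_\e(w_\e) \le m^\ast_\e$. For the first inequality I would note that, by Lemma~\ref{l6.11}, $w_\e \not\equiv 0$ whenever $\e$ is small, so $w_\e$ is a \emph{nontrivial} critical point of $\mc J_\e$; as recorded in Section~\ref{s6}, such a point lies in $W_\e\setminus W_\e^-$, and since $\mc J_\e'(w_\e)=0$ the constraints defining $\mc N_\e$ (that $\langle\mc J_\e'(w_\e),w_\e\rangle=0$ and $\langle\mc J_\e'(w_\e),\varphi\rangle=0$ for all $\varphi\in W_\e^-$) hold automatically. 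Hence $w_\e\in\mc N_\e$ and $\mc J_\e(w_\e)\ge m^\ast_\e$.

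For the reverse inequality I would return to the Palais--Smale sequence $w_n=m_\e(z_n)\in\mc N_\e$ furnished by Proposition~\ref{p6.6} and the Ekeland variational principle: it satisfies $\mc J_\e(w_n)\to m^\ast_\e$, $\|\mc J_\e'(w_n)\|\to 0$, and, up to a subsequence, $w_n\rightharpoonup w_\e$ in $W_\e$ with $w_n\to w_\e$ a.e. By Proposition~\ref{p6.7} the sequence $\{w_n\}$ is bounded, so $\langle\mc J_\e'(w_n),w_n\rangle\to 0$, and using the explicit form of $\mc J_\e$ together with $(H_3)$ one has
\begin{equation*}
	\mc J_\e(w_n)-\tfrac12\langle\mc J_\e'(w_n),w_n\rangle=\I\mb R\Big(\tfrac12 f(u_n)u_n-F(u_n)\Big)dx+\I\mb R\Big(\tfrac12 g(v_n)v_n-G(v_n)\Big)dx,
\end{equation*}
with both integrands nonnegative. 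Passing to the limit with Fatou's lemma and reading the right-hand side off for $w_\e$ (again using $\mc J_\e'(w_\e)=0$) gives
\begin{equation*}
	m^\ast_\e=\lim_{n\to\infty}\Big(\mc J_\e(w_n)-\tfrac12\langle\mc J_\e'(w_n),w_n\rangle\Big)\ge \mc J_\e(w_\e)-\tfrac12\langle\mc J_\e'(w_\e),w_\e\rangle=\mc J_\e(w_\e).
\end{equation*}
Combining the two inequalities yields $\mc J_\e(w_\e)=m^\ast_\e$, i.e. $w_\e$ is a ground-state solution of \eqref{6.2}.

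This is the perturbed analogue of the last step in the proof of Theorem~\ref{t1.1}, so I do not anticipate a genuine obstacle: the only points needing care are the $\e$-uniform boundedness of $\{w_n\}$ (already supplied by Proposition~\ref{p6.7}) and the a.e.\ convergence required for Fatou (coming from the concentration/nonvanishing analysis). All the substantive work — the bound $m^\ast_\e\in(m_0,\pi/\ba_0)$, the comparison $\limsup_{\e\to 0}m^\ast_\e\le m^\ast$, the strict monotonicity $\theta\mapsto m^\ast_\theta$, and the elimination of $w_\e\equiv 0$ through $m^\ast_{V_\infty}>m^\ast$ — has been carried out in Proposition~\ref{p6.4} and Lemmas~\ref{l6.9}--\ref{l6.11}.
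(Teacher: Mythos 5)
Your argument is correct and is exactly the "straightforward" argument the paper has in mind: combining Lemma~\ref{l6.11} (so $w_\e\ne 0$, hence $w_\e\in\mc N_\e$ and $\mc J_\e(w_\e)\ge m^\ast_\e$) with the Fatou/$(H_3)$ lower-semicontinuity of $w\mapsto\mc J_\e(w)-\tfrac12\langle\mc J_\e'(w),w\rangle$ along the Palais--Smale sequence, precisely as in the final step of the proof of Theorem~\ref{t1.1} and in Proposition~\ref{pC}. No gaps.
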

\subsection{Concentration}
Reasoning as in Proposition \ref{p4.1}, we have
\begin{Proposition}\label{6.13}
	Let $\e>0$ and $w_\e=(u_\e,v_\e)$ be a ground-state solution to \eqref{6.2}. Then, $u_\e,~v_\e \in L^\infty(\mb R)\cap C_{loc}^\gamma(\mb R)$ for every $\gamma \in (0,1)$. Moreover, $u_\e(x),~v_\e(x) \ra 0$, as $|x| \ra \infty$.
\end{Proposition}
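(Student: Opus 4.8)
The plan is to follow the proof of Proposition~\ref{p4.1} essentially verbatim, the only structural change being that the constant potential $V_0$ is replaced by $V_\e(x)=V(\e x)$; by $(V)$ the latter is continuous and satisfies $V_0\le V_\e\le\|V\|_\infty<\infty$, so that $\|\cdot\|_{\frac12,\e}$ and $\|\cdot\|_\frac12$ are equivalent norms (with constants independent of $\e$) and in particular $u_\e,v_\e\in H^\frac12(\mb R)\hookrightarrow L^q(\mb R)$ for all $q\in[2,\infty)$. First I would rewrite the second equation of \eqref{6.2} as $(-\De)^\frac12 v_\e = f(u_\e)-V_\e v_\e$ on $B_{2r}(0)$; the Trudinger--Moser inequality \eqref{eTME} together with the growth bound $|f(t)|\le C_\de e^{(\ba_0+\de)t^2}$ coming from $(H_6)$ shows $f(u_\e)\in L^p(B_{2r}(0))$ for every $p\in[2,\infty)$, while $V_\e v_\e\in L^p(B_{2r}(0))$ as well, so the right-hand side lies in $L^p(B_{2r}(0))$. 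The Calder\'on--Zygmund estimate \cite[Corollary~3.1]{MSY} then yields
\[
\|v_\e\|_{W^{\frac12,p}(B_r(0))}\le C\big(\|f(u_\e)\|_{L^p(B_{2r}(0))}+\|V_\e v_\e\|_{L^p(B_{2r}(0))}+\|v_\e\|_\frac12\big),
\]
with $C=C(r,p,\e)$, and taking $p$ large the Sobolev embedding $W^{\frac12,p}(B_r(0))\hookrightarrow C^\gamma(\overline{B_r(0)})$ gives $v_\e\in C^\gamma_{loc}(\mb R)$ for all $\gamma\in(0,1)$; the same reasoning applies to $u_\e$.

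For the decay at infinity I would argue by contradiction, exactly as in Proposition~\ref{p4.1}: assuming there is $\{x_n\}$ with $|x_n|\to\infty$ and $\liminf_n|v_\e(x_n)|>0$, I would set $u_{\e,n}:=u_\e(\cdot+x_n)$, $v_{\e,n}:=v_\e(\cdot+x_n)$, which solve $(-\De)^\frac12 v_{\e,n}+V_\e(\cdot+x_n) v_{\e,n}=f(u_{\e,n})$ in $\mb R$. Since the $H^\frac12$-seminorm and the $L^2$-norm are translation invariant, $\{v_{\e,n}\}$ is bounded in $H^\frac12(\mb R)$, say $v_{\e,n}\rightharpoonup\hat v$. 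The point is that the estimate of the first step applies to the translated equations uniformly in $n$: indeed $\|V_\e(\cdot+x_n)\|_\infty=\|V\|_\infty$, $\|v_{\e,n}\|_{L^p(B_{2r}(0))}\le\|v_\e\|_{L^p(\mb R)}$, and
\[
\int_{B_{2r}(0)}e^{p(\ba_0+\de)u_{\e,n}^2}\,dx=\int_{B_{2r}(x_n)}e^{p(\ba_0+\de)u_\e^2}\,dx\le|B_{2r}|+\int_{\mb R}\big(e^{p(\ba_0+\de)u_\e^2}-1\big)\,dx<\infty
\]
independently of $n$ by \eqref{eTME}; hence $\{v_{\e,n}\}$ is bounded in $C^\gamma(\overline{B_r(0)})$, so by Arzel\`a--Ascoli $v_{\e,n}\to\hat v$ uniformly on $\overline{B_r(0)}$ along a subsequence and therefore $\hat v(0)=\lim_n v_\e(x_n)\ne0$. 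On the other hand, for fixed $R>0$ and $n$ large the balls $B_R(0)$ and $B_R(x_n)$ are disjoint, so $\|v_\e\|_{L^2(\mb R)}^2\ge\int_{B_R(0)}v_\e^2\,dx+\int_{B_R(0)}v_{\e,n}^2\,dx$; letting $n\to\infty$ and then $R\to\infty$ forces $\hat v\equiv0$, a contradiction. Thus $v_\e(x)\to0$ as $|x|\to\infty$, and likewise $u_\e(x)\to0$; being continuous (indeed $C^\gamma_{loc}$) and vanishing at infinity, $u_\e,v_\e\in L^\infty(\mb R)$, which finishes the argument.

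The only genuinely delicate point---and the step I expect to need the most care---is verifying that the Calder\'on--Zygmund/Sobolev estimate and the Trudinger--Moser bound on the nonlinearity remain uniform under the translations $x\mapsto x+x_n$ (so that the translated solutions $v_{\e,n}$ form a compact family in $C^\gamma(\overline{B_r(0)})$). The dependence of the constant $C$ on $\e$ (through $\|V\|_\infty$ and $r$) is immaterial since $\e>0$ is fixed throughout, and no new phenomenon arises from the variable potential because $V$ is continuous and bounded above, with $\inf V=V_0>0$.
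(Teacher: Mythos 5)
Your proposal is correct and follows exactly the route the paper intends: the paper's proof of this proposition consists solely of the remark ``Reasoning as in Proposition~\ref{p4.1}, we have,'' and your write-up is precisely that adaptation, carefully spelling out why the Calder\'on--Zygmund/Sobolev chain and the translation-compactness argument survive the passage from the constant potential $V_0$ to $V_\e(x)=V(\e x)$ (namely, $V_0\le V_\e\le\|V\|_\infty$ makes $\|\cdot\|_{\frac12,\e}$ equivalent to $\|\cdot\|_\frac12$, the zero-order term is harmlessly moved to the right-hand side, and all the local bounds are translation-uniform). No gap; nothing genuinely different from the paper.
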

By Proposition \ref{6.13}, there exists $y_\e \in \mb R$ such that
\begin{equation*}
	|u_\e(y_\e)|+|v_\e(y_\e)|=\max_{x\in \mb R} (|u_\e(x)|+|v_\e(x)|).
\end{equation*}
Moreover, $x_\e:=\e y_\e$ is a maximum point of $|\varphi_\e(x)| +|\psi_\e(x)|$, where $(\varphi_\e(\cdot),\psi_\e(\cdot))=(u_\e(\cdot/\e),v_\e(\cdot/\e))$
is a ground state solution of the original problem \eqref{1.1}. We complete the proof of Theorem \ref{t6.1} by proving the two next propositions. The first one follows using similar arguments as in the proof of \cite[Proposition 3.12]{CZ} and hence omitted.
\begin{Proposition} We have
\begin{enumerate}
	\item $\lim_{\e \ra 0}$ dist$(x_\e,\mc L)$ =0;
	\item $(u_\e(\cdot+x_\e/\e),v_\e(\cdot+x_\e/\e))$ converges (up to a subsequence) to a ground-state solution of 
	\begin{equation*}
	\begin{cases}
		(-\De)^\frac12 u+V_0 u=g(v),~\text{in}~\mb R,\\
		(-\De)^\frac12 v+V_0 v=f(u),~\text{in}~\mb R.
	\end{cases}
\end{equation*}
\item $u_\e(x+x_\e/\e),~v_\e(x+x_\e/\e)\ra0$, uniformly as $|x| \ra 0$, for $\e>0$ sufficiently small.
	\end{enumerate}
\end{Proposition}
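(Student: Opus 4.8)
The plan is to work with the translated sequence $\hat w_\e:=w_\e(\cdot+y_\e)=(\hat u_\e,\hat v_\e)$, which solves
\begin{equation*}
	(-\De)^\frac12\hat u_\e+V(\e(\cdot+y_\e))\hat u_\e=g(\hat v_\e),\qquad (-\De)^\frac12\hat v_\e+V(\e(\cdot+y_\e))\hat v_\e=f(\hat u_\e),
\end{equation*}
and for which $0$ is a maximum point of $|\hat u_\e|+|\hat v_\e|$. Since $V_0\le V(\e(\cdot+y_\e))$, Propositions \ref{p6.4} and \ref{p6.7} show that $\{\hat w_\e\}$ is bounded in $W$ uniformly in $\e$, with $\mc J_\e(w_\e)=m^\ast_\e\in(m_0,\pi/\ba_0)$. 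First I would check that $\hat w_\e$ does not vanish: by the regularity of Proposition \ref{6.13}, which is uniform in $\e$ by the arguments of Propositions \ref{p4.2}--\ref{p4.3}, the quantities $\|\hat u_\e\|_\infty+\|\hat v_\e\|_\infty$ stay bounded away from $0$, and a uniform $C^\gamma_{loc}$ bound read off from the equation then forces $\liminf_{\e\to0}\int_{B_R(0)}(\hat u_\e^2+\hat v_\e^2)\,dx>0$ for some fixed $R>0$. Hence, up to a subsequence, $\hat w_\e\rightharpoonup\hat w=(\hat u,\hat v)\neq0$ weakly in $W$ and a.e.\ in $\mb R$.

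Next I would locate $x_\e=\e y_\e$. Passing to a further subsequence, either $\e y_\e\to x_0\in\mb R$ or $|\e y_\e|\to\infty$, and in either case $V(\e(\cdot+y_\e))$ converges locally uniformly to a constant $\theta_0$, namely $\theta_0=V(x_0)$ or $\theta_0=V_\infty$. Passing to the limit in the weak formulation, using $H^\frac12\hookrightarrow L^r_{loc}$ and a Vitali/Trudinger--Moser argument to get $g(\hat v_\e)\to g(\hat v)$ and $f(\hat u_\e)\to f(\hat u)$ in $L^1_{loc}$, shows that $\hat w$ solves the limit system \eqref{6.6} with $\theta=\theta_0$, so $\hat w\in\mc N_{\theta_0}\setminus\{0\}$ and $\mc J_{\theta_0}(\hat w)\ge m^\ast_{\theta_0}$. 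Running the Fatou and $(H_3)$ comparison exactly as in the proof of Lemma \ref{l6.11} --- from $\mc J_\e(\hat w_\e)-\frac12\langle\mc J_\e'(\hat w_\e),\hat w_\e\rangle=m^\ast_\e+o_\e(1)$ and lower semicontinuity of $\int\left(\frac12 f(u)u-F(u)\right)$ and $\int\left(\frac12 g(v)v-G(v)\right)$ --- one gets
\begin{equation*}
	m^\ast_{\theta_0}\le\mc J_{\theta_0}(\hat w)-\frac12\langle\mc J_{\theta_0}'(\hat w),\hat w\rangle\le\liminf_{\e\to0}m^\ast_\e\le\limsup_{\e\to0}m^\ast_\e\le m^\ast=m^\ast_{V_0},
\end{equation*}
the last inequality being Lemma \ref{l6.9}. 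Since $\theta_0\ge V_0$ and $\theta\mapsto m^\ast_\theta$ is strictly increasing (Lemma \ref{l6.10}), this forces $\theta_0=V_0$; in particular $|\e y_\e|\to\infty$ is excluded, since it would give $\theta_0=V_\infty>V_0$. Thus $\e y_\e\to x_0$ with $V(x_0)=V_0$, i.e.\ $x_0\in\mc L$, and as every subsequential limit of $x_\e$ belongs to $\mc L$ we obtain $\operatorname{dist}(x_\e,\mc L)\to0$, which is assertion (1).

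The displayed chain has collapsed to equalities, so $\mc J_{V_0}(\hat w)=m^\ast_{V_0}$ and $\hat w$ is a ground state of the limit system; this is assertion (2) once I upgrade weak to strong convergence. For that I would follow the compactness proof for $\mc S$ (Proposition \ref{pC}): the equalities in the Fatou step give $\int\left(\frac12 f(\hat u_\e)\hat u_\e-F(\hat u_\e)\right)\to\int\left(\frac12 f(\hat u)\hat u-F(\hat u)\right)$ and its analogue in $\hat v_\e$, hence by $(H_3)$ and dominated convergence $\int F(\hat u_\e)\to\int F(\hat u)$ and $\int f(\hat u_\e)\hat u_\e\to\int f(\hat u)\hat u$ (and similarly for $\hat v_\e$). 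Testing the equations with $(0,\hat u_\e)$ and $(\hat v_\e,0)$, letting the $\e$-dependent potential tend to $V_0$, and controlling the tails $\int g(\hat v_\e)\hat u_\e$ and $\int f(\hat u_\e)\hat v_\e$ by the $L^1$- and Trudinger--Moser-type estimates of Steps 1--2 of the proof of Proposition \ref{pC} then yields $\|\hat w_\e\|_W\to\|\hat w\|_W$, so by uniform convexity $\hat w_\e\to\hat w$ strongly in $W$. Assertion (3), the uniform decay, then follows from the uniform $C^\gamma_{loc}$ and $L^\infty$ bounds together with this strong convergence, via the translation--contradiction argument of Proposition \ref{p4.4}.

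I expect the main obstacle to be the passage from weak to strong convergence of $\hat w_\e$ under the critical exponential growth: no compact Sobolev embedding is available, and the tail estimates on $\int g(\hat v_\e)\hat u_\e$ rely on the Trudinger--Moser inequality together with the strict bound $m^\ast_\e<\pi/\ba_0$ from Proposition \ref{p6.4}, exactly as in the delicate terms $I_1,I_2,I_{2,1}$ of the proof of Proposition \ref{pC}. A secondary subtlety is that the potential $V(\e(\cdot+y_\e))$ in the translated equation depends on $\e$, so before taking any weak limit one must first pass to a subsequence along which $\e y_\e$ converges in $[-\infty,+\infty]$ so that the shifted potential converges locally uniformly to a constant; the energy comparison with Lemmas \ref{l6.9} and \ref{l6.10} then does double duty, pinning that constant to $V_0$ and simultaneously giving assertion (1).
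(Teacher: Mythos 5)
Your proposal correctly identifies the main skeleton of the Cassani--Zhang type argument to which the paper defers (translate by the maximum points, pass to a weak limit, use the energy comparison $m^\ast_{\theta_0}\le\liminf m^\ast_\e\le\limsup m^\ast_\e\le m^\ast_{V_0}$ together with the strict monotonicity of $\theta\mapsto m^\ast_\theta$ to pin $\theta_0=V_0$, then upgrade to strong convergence via the mechanism of Proposition~\ref{pC}). The energy comparison and the Fatou step are set up correctly, and the identification of the limiting potential via the dichotomy $\e y_\e\to x_0\in\mb R$ versus $|\e y_\e|\to\infty$ is the right way to obtain assertion~(1) and, simultaneously, the limiting system.

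There is, however, a genuine gap in your non-vanishing step. You assert a \emph{uniform in $\e$} bound $\|\hat u_\e\|_{C^\gamma(\overline{B_r(0)})}+\|\hat v_\e\|_{C^\gamma(\overline{B_r(0)})}\le C$ and use it to convert the pointwise lower bound $|\hat u_\e(0)|+|\hat v_\e(0)|\ge c>0$ into the integral lower bound $\liminf\int_{B_R(0)}(\hat u_\e^2+\hat v_\e^2)>0$. But the $C^\gamma$-estimate \eqref{4.9} needs $\|f(\hat u_\e)\|_{L^p(B_{2r})}$ to be uniformly bounded, and with critical exponential growth this does \emph{not} follow from the uniform $H^{1/2}$-bound alone: the Trudinger--Moser constant $\pi$ is not large enough relative to $\|\hat w_\e\|_\e\le C(1+m^\ast_\e)$. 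In the paper, the uniform $L^p$ bound on $f(\hat u_n)$ (hence the uniform $C^\gamma$ estimate, and then Propositions~\ref{p4.2}--\ref{p4.4}) is only obtained \emph{after} strong convergence $\hat w_n\to w_0$ has been established, precisely because the Lions-type improvement of the exponential integrability requires $\|\hat u_n-u_0\|_{1/2}$ to be small. Using uniform regularity to prove non-vanishing at $0$ therefore risks being circular. The standard repair is to first apply the vanishing/non-vanishing dichotomy (vanishing being excluded by $m^\ast_\e\ge m_0>0$) to find possibly \emph{different} translations $z_\e$ with $\int_{B_{R'}(z_\e)}(\hat u_\e^2+\hat v_\e^2)\ge\nu$, run your energy comparison and the strong-convergence upgrade for $\bar w_\e:=w_\e(\cdot+y_\e+z_\e)$, deduce uniform $L^\infty$ and uniform decay for $\bar w_\e$ from Propositions~\ref{p4.2}--\ref{p4.4}, and only then observe that the maximum of $|\bar u_\e|+|\bar v_\e|$ (which is $\ge c$ and sits at $-z_\e$) forces $\{z_\e\}$ to be bounded, so that the argument transfers back to the translation by $y_\e$ alone. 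With this intermediate translation inserted, the rest of your proposal goes through.
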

The second one states as follows
\begin{Proposition}
	Let $(\varphi_\e, \psi_\e)$ be a ground-state solution to \eqref{1.1} and $x_\e^1$, $x_\e^2$ be any maximum point of $|\varphi_\e|$ and $|\psi_\e|$ respectively. Then
	\begin{equation*}
		\lim\limits_{\e \ra 0} \text{dist}(x_\e^k,\mc L)=0,~\lim\limits_{\e \ra0} |x_\e^k-x_\e|=0,~k=1,2.
	\end{equation*}
\end{Proposition}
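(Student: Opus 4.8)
The plan is to deduce the statement entirely from the concentration properties of the ground states $w_\e=(u_\e,v_\e)$ of \eqref{6.2} already established, together with the change of variables $(\varphi_\e,\psi_\e)=(u_\e(\cdot/\e),v_\e(\cdot/\e))$. Recall that $y_\e:=x_\e/\e$ is a maximum point of $|u_\e|+|v_\e|$, that $\mathrm{dist}(x_\e,\mc L)\to 0$, and that, up to a subsequence, the translated pair $\hat u_\e:=u_\e(\cdot+y_\e)$, $\hat v_\e:=v_\e(\cdot+y_\e)$ converges to a ground state $(u_0,v_0)$ of the limit system while $\hat u_\e(x),\hat v_\e(x)\to 0$ as $|x|\to\infty$, uniformly for $\e$ small (these are the contents of the preceding proposition). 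By the regularity estimates of Proposition \ref{6.13} this convergence can be taken to hold locally uniformly, and since $(u_0,v_0)$ is a nontrivial ground state both components are nonzero, so $\|u_0\|_\infty,\|v_0\|_\infty>0$.

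The key step is the claim that $\{y_\e^k-y_\e\}$ stays bounded as $\e\to 0$, where $y_\e^k:=x_\e^k/\e$; observe that $y_\e^1$ is a maximum point of $|u_\e|$ and $y_\e^2$ a maximum point of $|v_\e|$, so that $\|u_\e\|_\infty=|\hat u_\e(y_\e^1-y_\e)|$ and $\|v_\e\|_\infty=|\hat v_\e(y_\e^2-y_\e)|$. Suppose, for contradiction, that $|y_{\e_n}^1-y_{\e_n}|\to\infty$ along some sequence $\e_n\to 0$; refining to a subsequence we may assume $\hat u_{\e_n}\to u_0$ locally uniformly with $\|u_0\|_\infty>0$. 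Then on one hand $\liminf_n\|u_{\e_n}\|_\infty\ge \tfrac12\|u_0\|_\infty>0$, while on the other hand the uniform decay at infinity forces
\[
\|u_{\e_n}\|_\infty=\bigl|\hat u_{\e_n}(y_{\e_n}^1-y_{\e_n})\bigr|\longrightarrow 0,
\]
a contradiction. Hence $\{y_\e^1-y_\e\}$, and symmetrically $\{y_\e^2-y_\e\}$, are bounded.

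It follows that $|x_\e^k-x_\e|=\e\,|y_\e^k-y_\e|\to 0$ as $\e\to 0$, which is the second assertion. For the first, combine this with $\mathrm{dist}(x_\e,\mc L)\to 0$ through the triangle inequality,
\[
\mathrm{dist}(x_\e^k,\mc L)\le |x_\e^k-x_\e|+\mathrm{dist}(x_\e,\mc L)\longrightarrow 0 ,
\]
which completes the proof. I expect the only genuinely delicate points to be the uniform-in-$\e$ decay at infinity of $\hat u_\e,\hat v_\e$ and the nontriviality bound $\|u_0\|_\infty,\|v_0\|_\infty>0$; both follow from the a priori estimates already obtained (Proposition \ref{6.13} and the preceding proposition, which are the analogues for \eqref{6.2} of Propositions \ref{p4.2}--\ref{p4.4}), so once these are invoked the remaining argument is elementary.
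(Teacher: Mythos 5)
Your proof is correct and follows the same route as the paper: use the uniform decay at infinity of the translated ground states together with the lower bound $\liminf_{\e\to 0}\min\{\|u_\e\|_\infty,\|v_\e\|_\infty\}>0$ (which you recover from locally uniform convergence to a nontrivial limit ground state) to deduce that $|y_\e^k-y_\e|$ stays bounded, and then rescale and apply the triangle inequality. The paper states this argument much more tersely, but your fleshed-out contradiction argument is exactly what the terse version is appealing to.
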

\begin{proof}
	Note that $x^1_\e/\e$,  $x^2_\e/\e$ are the maximum points of $u_\e$, $v_\e$, respectively. Thanks to the decay of $u_\e$, $v_\e$ and the following fact
	\begin{equation*}
		\liminf\limits_{\e \ra 0} \min\{\|u_\e\|_\infty,\|v_\e\|_\infty\}>0,
	\end{equation*}
we get $|x^1_\e/\e-x_\e/\e|$ is bounded for $k=1,2$ and $\e >0$ small enough. Then, $\ds \lim_{\e \ra 0} \text{dist}(x^k_\e,\mc L)=0$, $k=1,2$, $\lim_{\e \ra0}|x^k_\e-x_\e|=0$, $k=1,2$ and $\lim_{\e \ra0}|x^1_\e-x^2_\e|=0$. \QED
	\end{proof}


\begin{thebibliography}{11}
	
		
		\bibitem{ADM} C. O. Alves, J. M. do \'{O} and O. H. Miyagaki, {\it Concentration phenomena for fractional elliptic equations involving exponential critical growth}, Adv. Nonlinear. Stud. 16(4) (2016), 843-861.
		
		\bibitem{Am} V. Ambrosio, {\it Concentration phenomena for critical fractional Schr\"{o}dinger systems}, Commun. Pure Appl. Anal. 17(5) (2018), 2085-2123.
		
		\bibitem{BF} V. Benci and D. Fortunato, {\it Variational Methods in Nonlinear Field Equations}, Springer
		Monographs in Mathematics. Basel: Springer (2014).
		
		\bibitem{BR}V. Benci and P.H. Rabinowitz, {\it Critical point theorems for indefinite functionals,}
		Invent. Math. 52(3) (1979), 241–273. 
		

	\bibitem{BLP}  L. Brasco, E. Lindgren and E. Parini, {\it The fractional Cheeger problem,} Interfaces Free Bound. 16 (2014), 419–458.	
		
	\bibitem{BMS} L. Brasco, S. Mosconi and M. Squassina, {\it Poho\v{z}aev identity for the fractional $p$-Laplacian on $\mb R^n$}, http://cvgmt.sns.it/media/doc/paper/3292/BMS-GlobalPohozaev.pdf.
		\bibitem{BCM} H. P. Bueno, E. H. Caqui and O. H. Miyagaki, {\it Critical fractional elliptic equations with exponential growth}, J. Elliptic Parabol. Equ. 7 (2021), 75-99.
		\bibitem{C}  L.A. Caffarelli, {\it Nonlocal equations, drifts and games,} Nonlinear Partial Differential Equations, Abel Symp. 7
		(2012), 37-52.
		\bibitem{CZ} D. Cassani and J. Zhang, {\it A priori estimates and positivity for semiclassical ground states for systems of critical Schr\"{o}dinger equations in dimension two}, Comm. Partial Differential Equations 42:5 (2017), 655-702.
		\bibitem{CD} W. Chen and S. Deng, {\it The Nehari manifold for a fractional $p$-Laplacian system involving concave convex nonlinearities}, Nonlinear Anal. Real world Appl. 27 (2016), 80-92.
		\bibitem{CFM}P. Cl\'{e}ment , D.G. de Figueiredo and  E. Mitidieri, {\it Positive solutions of semilinear
		elliptic systems}, Comm. Partial Differential Equations 17(5–6) (1992), 923–940. 
	\bibitem{CMM} A. C. R. Costa, B. B. V. Maia, O. H. Miyagaki, {\it On existence and concentration of solutions for Hamiltonian systems involving fractional operator with critical exponential growth}, Math. Nachr. 295 (2022), 1-33.
		\bibitem{AM} J.C. de Albuquerque and J.M. do Ó, {\it Coupled elliptic systems involving the square root of the Laplacian and Trudinger-Moser critical growth}, Differential Integral Equations 31(5/6) (2018), 403–434.

	\bibitem{FOR}  D. G. de Figueiredo, J. M. do  Ó and B. Ruf, {\it Critical and subcritical elliptic systems in 
	dimension two}, Indiana Univ. Math. J. 53 (2004), 1037–1054.
\bibitem{FOZ} D. G. de Figueiredo, J. M. do \'{O} and J. Zhang, {\it Ground state solutions of Hamiltonian system in dimension two,} Proc. Roy. Sec. Edinburg Sect. A 150(4) (2020), 1737-1768.
\bibitem{FF}D. G. de Figueiredo and P.L. Felmer, {\it On superquadratic elliptic systems,} Trans.
Am. Math. Soc. 343(1) (1994), 99–116.
\bibitem{DGM} J. M. do \'{O}, J. Giacomoni and P. K. Mishra, {\it Nonautonomous fractional Hamiltonian system with critical exponential
growth,} Nonlinear Differ. Equ. Appl. 26 (2019), 1-25.

\bibitem{SoO} M. de Souza and J.M. do Ó, {\it Hamiltonian elliptic systems in $\mb R^2$ with subcritical and critical exponential growth}, Ann. Mat. 195 (2016), 935-956.
\bibitem{DPV} E. Di Nezza, G. Palatucci and E. Valdinoci, {\it Hitchhiker’s Guide to the fractional Sobolev spaces,
Bull. Sci. Math.} 136 (2012), 521–573.
\bibitem{E} I. Ekeland, {\it On the variational principle}, J. Math. Anal. Appl. 47(1974), 324-353.
\bibitem{GMS}  J. Giacomoni, P. Mishra and K. Sreenadh, {\it Critical growth fractional systems with exponential nonlinearity}, Nonlinear Anal. 136 (2016), 117-135.
\bibitem{GiMiSe}  J. Giacomoni, P. Mishra and K. Sreenadh, {\it Critical growth problems for $\frac{1}{2}$-Laplacian in $\mb R$}, Differ. Equ. Appl. 8(3) (2016), 295-317.
\bibitem{HSZ} X. He, M. Squassina and W. Zou, {\it The Nehari manifold for fractional systems involving critical nonlinearites}, Commun. Pure Appl. Anal. 15 (2016), no. 4, 1286-1308.
\bibitem{HV} J. Hulshof and R. van der Vorst, {\it Differential systems with strongly indefinite variational structure}, J. Funct. Anal. 114(1) (1993), 32–58.
\bibitem{IS} A. Iannizzotto and M. Squassina, {\it $1/2$-Laplacian problems with exponential nonlinearity,} J. Math. Anal. Appl. 414 (2014), 372-385.
	\bibitem{IMM} S. Iula, A. Maalaoui and L. Martinazzi, {\it A fractional Moser–Trudinger type inequality in one dimension and its critical
	points,} Differential Integral Equations 29(5-6) (2016), 455–492.	

\bibitem{LL} N. Lam and G. Lu, {\it Elliptic equations and systems with subcritical and critical exponential growth without the Ambrosetti–Rabinowitz condition}, J. Geom. Anal.
24(1) (2014), 118–143.
\bibitem{LY}G. Li and J. Yang, {\it Asymptotically linear elliptic systems}, Comm. Partial Differential
Equations 29(5–6) (2004), 925–954. 
	\bibitem{Lions}P.L. Lions, {\it The concentration-compactness principle in the calculus of variations, the
		locally compact case, parts I and II,} Ann. Inst. H. Poincar\'e Anal. Non Lin\'eaire 1(2) (1984), 109-145.
	\bibitem{MSY}  T. Mengesha,  A. Schikorra and  S. Yeepo , {\it Calderon-Zygmund type estimates for nonlocal PDE with H\"{o}lder continuous kernel}, Adv. Math. 383 ( 2021 ), 107692.
\bibitem{TO} T. Ozawa, {\it On critical cases of Sobolev’s inequalities}, J. Funct. Anal. 127 (1995), 259–269.
	
\bibitem{P}  A. Pankov, {\it Periodic nonlinear Schr\"{o}dinger equation with application to photonic crystals},
Milan J. Math. 73 (2005), 259–287.
\bibitem{QTZ} D. Qin, X. Tang and J.Zhang, {\it Ground states for planar Hamiltonian elliptic systems with critical exponential growth}, J. Differential Equations 308 (2022), 130-159.
\bibitem{S} B. Sirakov, {\it On the existence of solutions of Hamiltonian elliptic systems in $\mb R^n$},
Adv. Differ. Equ. 5(10–12) (2000), 1445–1464.
\bibitem{SS} B. Sirakov and S. H. M. Soares, {\it Soliton solutions to systems of coupled Schr\"{o}dinger
equations of Hamiltonian type}, Trans. Am. Math. Soc. 362(11) (2010), 5729–5744.
	
\bibitem{SW1} A. Szulkin and T. Weth, {\it The method of Nehari manifold} , In Handbook of nonconvex analysis
and applications (ed D. Y. Gao and D. Motreanu) pp. 597–632 (Boston, MA: International
Press, 2010).	
\bibitem{ZDS} J. Zhang, J. M. do \'{O} and M. Squassina, {\it Fractional Schr\"{o}dinger-Poisson systems with a general subcritical or critical nonlinearity}, Adv. Nonlinear Stud. 16 (2016), no. 1, 15-30.
\end{thebibliography}
	\end{document}